\theoremstyle{plain}
\newtheorem{thm}{Theorem}[section] 
\newtheorem*{thm*}{Theorem}
\newtheorem{cor}[thm]{Corollary}
\newtheorem{lem}[thm]{Lemma}  
\newtheorem{prop}[thm]{Proposition}
\newtheorem{conj}[thm]{Conjecture} 
\theoremstyle{definition}
\newtheorem{defn}[thm]{Definition}
\newtheorem{rek}[thm]{Remark}
\newtheorem{exe}[thm]{Example}
    \let\c@equation\c@thm
\numberwithin{equation}{section}
\newcommand{\q}[1]{\left( #1 \right)}
\newcommand{\scrf}{\mathscr{F}}
\newcommand{\scr}[1]{\mathscr{#1}}
\newcommand{\bb}[1]{\mathbb{#1}}
\newcommand{\rr}[1]{\mathrm{#1}}
\newcommand{\cc}[1]{\mathcal{#1}}
\newcommand{\Het}[1]{\mathrm{H}^{#1}_{\acute{e}t}}
\newcommand{\Hetc}[1]{\mathrm{H}^{#1}_{\acute{e}t,c}}
\newcommand{\inv}{^{-1}}
\newcommand{\cros}{^{\times}}
\let\emptyset\varnothing
\let\bar\overline
\let\tilde\widetilde
\let\hat\widehat
\let\oplus\bigoplus
\newcommand{\dR}{\mathrm{dR}}
\newcommand{\frob}{\mathrm{Frob}}
\DeclareMathOperator{\FT}{FT}
\newcommand{\gal}{\mathrm{Gal}}
\newcommand{\GL}{\mathrm{GL}}
\newcommand{\im}{\mathrm{im}}
\newcommand{\id}{\mathrm{id}}
\newcommand{\Kl}{\mathrm{Kl}}
\newcommand{\pr}{\mathrm{pr}}
\newcommand{\rk}{\mathrm{rk}}
\DeclareMathOperator{\Spec}{Spec}
\newcommand{\Sym}{\mathrm{Sym}}
\newcommand{\SL}{\mathrm{SL}}
\newcommand{\sign}{\mathrm{sign}}
\newcommand{\sw}{\mathrm{Sw}}
\newcommand{\tr}{\mathrm{Tr}}
\newcommand{\bck}{\bar{\mathcal{K}}}
\newcommand{\WD}{\mathrm{WD}}
\newcommand{\bql}{\bar{\mathbb{Q}}_\ell}
\newcommand{\bq}{\bar{\mathbb{Q}}}
\newcommand{\Qp}{\mathbb{Q}_p}
\newcommand{\bfp}{\bar{\mathbb{F}}_p}
\newcommand{\fpp}{\mathbb{F}_p}
\newcommand{\fqq}{\mathbb{F}_q}
\newcommand{\piet}{\pi_1^{\acute{e}t}}
\newcommand{\et}{\acute{e}t}
\newcommand{\gr}{\mathrm{gr}}
\begin{document}
\title{\texorpdfstring{$\mathrm{L}$}{L}-functions of Kloosterman sheaves}

\author{Yichen Qin}
\date{}
\subjclass{Primary 11G40; Secondary 11F30, 11F80, 11L05.}

\begin{abstract}
	In this article, we study a family of motives $\mathrm{M}_{n+1}^k$ associated with the symmetric power of Kloosterman sheaves constructed by Fresán, Sabbah, and Yu. They demonstrated that for $n=1$ the $L$-functions of $\mathrm{M}_{2}^k$ extend meromorphically to $\bb{C}$ and satisfy the functional equations conjectured by Broadhurst and Roberts. Our work aims to extend these results to the $L$-functions of some of the motives $\mathrm{M}_{n+1}^k$, with $n>1$, as well as other related two-dimensional motives. In particular, we prove several conjectures of Evans type, which relate moments of Kloosterman sheaves and Fourier coefficients of modular forms. 
\end{abstract}

\maketitle

\tableofcontents

\section{Introduction}

The \textit{Kloosterman sums} are exponential sums over finite fields, defined for each power of prime numbers $q=p^r$ and each $a\in \fqq$, by 
	\begin{equation*}
		\begin{split}
			\Kl_2(a;q):=\sum_{x\in \fqq\cros}\exp\Bigl(2\pi i/p\cdot   \tr_{\fqq/\fpp}\left(x+\frac{a}{x}\right)\Bigr),
		\end{split}
	\end{equation*}
where  $\tr_{\fqq/\fpp}$ is the trace from $\fqq$ to $\fpp$. These sums can be regarded as finite field versions of Bessel functions,  
	$$\rr{Be}(z):=\oint_{S^1}\exp\left(x+\frac{z}{x}\right)\frac{\mathrm{d}x}{x},$$
which satisfy the Bessel differential equations $(z\partial_z)^{2}-z=0$. 

When $a\neq 0$, Weil showed in \cite{weil1948some} that $\Kl_2(a;q)=-(\alpha_a+\beta_a)$ for some algebraic numbers $\alpha_a,\beta_a$ of complex norm $q^{1/2}$. For $k\geq 1$, the \textit{$k$-th symmetric power moments of Kloosterman sums} are integers $m_2^k(q)$ defined by 
	$$m_2^k(q)=\sum_{a\in \fqq} \sum_{i=0}^k \alpha_a^i\beta_a^{k-i}.$$
To package the information of these moments as $q$ varies across all powers of $p$, we consider the generating series	$$\exp\Biggl(\sum_{r\geq 1} \frac{m_{2}^k(p^r)}{r}T^r\Biggr),$$
which serves as the analog of the Hasse--Weil zeta function for varieties over finite fields. 

We define the \textit{(partial) $L$-function attached to $k$-th symmetric power moments of Kloosterman sheaves}, denoted by $L_k^S(s)$, by considering the Euler product, where the local factors at $p$ are made from the aforementioned generating series. These $L$-functions are a priori defined on the domain $\{s\in \bb{C}\mid \mathrm{Re}(s)> 1+\frac{k+1}{2}\}$ by construction and the work of Fu--Wan \cite{fu2005functions}. Hence, it is natural to question whether this $L$-function can be extended meromorphically to the complex plane and whether it satisfies a functional equation.

\begin{exe}\label{exe:evans-conj}
	The cases for $k \leq 8$ have been proven indirectly by demonstrating that the expressions of moments of Kloosterman sums consist of polynomials in $p$, Dirichlet characters, and Fourier coefficients of modular forms (holomorphic cuspidal Hecke eigenforms). 
\begin{itemize}
	\item When $k\leq 4$, the moments $m_2^k(p)$ can be computed explicitly. We find that the $L$-function is trivial if $k=1,2,$ or $4$, and is the Dirichlet $L$-function $L\left(s,(\frac{\bullet}{3})\right)$ if $k=3$.
	\item When $k=5$, there exists a holomorphic cuspidal Hecke eigenform $f\in S_3\bigl(\Gamma_0(15),\left(\frac{\cdot}{15}\right) \bigr)$ such that 
		$$a_f(p)=-\frac{1}{p^2}(m_2^5(p)+1)$$ 
   if $p\nmid 15$, proved by Peters et al. \cite{peters1992hasse} and Livné \cite{livne1995motivic}. 
   \item  When $k=6$, there exists a holomorphic cuspidal Heck eigenform $f\in S_4(\Gamma_0(6))$ such that 
	   $$a_f(p)=-\frac{1}{p^2}(m_2^6(p)+1)$$ 
   if $p\nmid 6$, proved by Hulek et al. \cite{Hulek2001a}.
	 \item When $k=7$, there exists a holomorphic cuspidal Hecke eigenform $f\in S_3(\Gamma_0(525),\epsilon_f)$, where $\epsilon_f=\left(\frac{\cdot}{21}\right)\cdot \epsilon_5$ and $\epsilon_5$ is a quartic character with conductor $5$, such that
	   $$a_f(p)^2\epsilon_f(p)\inv -p^2=-\frac{1}{p^2}\left(\frac{p}{105}\right)(m_2^7(p)+1)$$
   for $p>7$, conjectured by Evans \cite{Evans2010a} and proved by Yun \cite{yun2015galois}.
	  \item When $k=8$, there exists a holomorphic cuspidal Hecke eigenform $f\in S_6(\Gamma_0(6))$, such that
	  $$a_f(p)=-\frac{1}{p^2}(m_2^8(p)+1)$$
	  for $p\nmid 6$, conjectured by Evans \cite{Evans2010} and proved by Yun and Vincent \cite{yun2015galois}.
\end{itemize}
\end{exe}

From the examples discussed, we deduce that $L^S_k( s)$ can be extended meromorphically to $\mathbb{C}$ and satisfies a functional equation when $k \leq 8$. For general $k$, Broadhurst and Roberts predicted precise formulas for the functional equations of $L^S_{k}(s)$ in \cite{Broadhurst2016a, Broadhurst2017}. And then, Fresán--Sabbah--Yu established the following theorem:

\begin{thm}[Fresán--Sabbah--Yu]\label{thm::FSY}
	The partial $L$-function $L^S_k(s)$ can be extended meromorphically to the complex plane. Furthermore, we can complete $L_k^S(s)$ to a holomorphic function $\Lambda_k(s)$ such that 
		$$\Lambda_k(s)=\epsilon_k\Lambda_k(k+2-s),$$ 
	where $\epsilon_k\in \{\pm 1\}$ and $\epsilon_k$ is $1$ if $k$ is odd.
\end{thm}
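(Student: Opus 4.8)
The plan is to reinterpret the generating series of moments cohomologically, to extract from it a pure motive $\mathrm{M}_2^k$ over $\bb{Q}$ whose $L$-function agrees with $L_k^S(s)$ up to elementary factors, and then to prove the analytic continuation and functional equation of the latter by combining autoduality (functional equation and sign), the irregular Hodge filtration (archimedean factor), and potential automorphy (meromorphic continuation). First I would recall Katz's description of $\Kl_2$: for each $\ell\neq p$ it is a rank-$2$ lisse $\bql$-sheaf on $\bb{G}_{m,\fpp}$, pure of weight $1$, with trivial determinant, geometric monodromy group $\SL_2$, tame and unipotent with a single Jordan block at $0$, and totally wild with Swan conductor $1$ at $\infty$. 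Hence for $k\geq 1$ the sheaf $\Sym^k\Kl_2$ is geometrically irreducible and nontrivial, so $H^0_c=H^2_c=0$ on $\bb{G}_{m,\bfp}$ and the Grothendieck--Lefschetz trace formula identifies $\exp\bigl(\sum_r m_2^k(p^r)T^r/r\bigr)$ with $\det\bigl(1-\frob_pT\mid H^1_c(\bb{G}_{m,\bfp},\Sym^k\Kl_2)\bigr)$; thus $L_k^S(s)=\prod_p\det(1-\frob_pp^{-s}\mid H^1_c)^{-1}$, convergent for $\mathrm{Re}(s)>1+\tfrac{k+1}{2}$ in accordance with Fu--Wan. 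As $\Kl_2$ and its symmetric powers descend to a single $\ell$-adic system over $\bb{Z}[1/p]$ (and to a connection over $\bb{Q}$), these $H^1_c$ form a compatible system; $H^1_c$ is mixed of weights $\leq k+1$, its weight-$(k+1)$ quotient equals $H^1(\bb{P}^1,j_{!*}\Sym^k\Kl_2)$ and is where $\mathrm{M}_2^k$ sits, while the lower-weight pieces --- controlled by the explicitly known local monodromy at $0$ and $\infty$ --- are Tate twists of Artin motives contributing only elementary factors with standard continuation and functional equation (these also account for the ``$+1$''-type corrections visible in Example~\ref{exe:evans-conj}). So it suffices to analyse $L(\mathrm{M}_2^k,s)$.

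For the functional equation the decisive input is autoduality: since $\Kl_2$ is symplectically self-dual, $\Kl_2^\vee\cong\Kl_2(1)$ via an alternating pairing, one gets $(\Sym^k\Kl_2)^\vee\cong(\Sym^k\Kl_2)(k)$ with a pairing that is symmetric for $k$ even and alternating for $k$ odd; Poincaré duality for the middle extension on $\bb{P}^1$ then gives $(\mathrm{M}_2^k)^\vee\cong\mathrm{M}_2^k(k+1)$, so $\mathrm{M}_2^k$ is essentially self-dual of motivic weight $k+1$, orthogonal for $k$ even and symplectic for $k$ odd, and the functional equation must relate $s$ and $(k+1)+1-s=k+2-s$. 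To fix the completed function $\Lambda_k$ I also need its $\Gamma$-factor, hence the Hodge numbers of $\mathrm{M}_2^k$, which I would read off the de Rham side: the Kloosterman/Bessel connection on $\bb{G}_m$, its $k$-th symmetric power, and the irregular Hodge filtration on $\Hdr{1}$ in the sense of Deligne--Sabbah--Yu, whose jumps for Kloosterman-type connections are computed explicitly by Sabbah--Yu; here one must check --- this being part of the statement that $\mathrm{M}_2^k$ is a classical rather than merely exponential motive --- that after a Tate twist this filtration is an honest Hodge filtration of a pure $\bb{Q}$-Hodge structure, and that the Hodge numbers are multiplicity-free. The sign $\epsilon_k$ is then the global root number $\prod_v\epsilon_v$: the self-pairing forces each $\epsilon_v\in\{\pm1\}$, the bad finite places form a small explicit set, and the local factors are computed from Katz's local descriptions at $0,\infty$ (via Laumon's local Fourier transform and stationary phase) together with the archimedean Hodge data; a direct computation of these local root numbers gives $\epsilon_k=1$ when $k$ is odd.

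The main obstacle is the meromorphic continuation of $L(\mathrm{M}_2^k,s)$, which cannot come from the $\ell$-adic or Hodge theory of $\bb{G}_m$ alone; for this I would invoke potential automorphy. By the steps above the compatible system attached to $\mathrm{M}_2^k$ is pure, regular (distinct Hodge--Tate weights), essentially self-dual and satisfies the relevant parity condition, with ramification bounded at an explicit finite set of primes --- and, with some work, $\Gal$-irreducible --- so the potential-automorphy theorems of the Taylor school (Patrikis--Taylor, Barnet-Lamb--Gee--Geraghty--Taylor and their orthogonal/symplectic refinements) show that $L(\mathrm{M}_2^k,s)$ extends meromorphically to $\bb{C}$ and satisfies the functional equation $s\leftrightarrow k+2-s$ with the $\Gamma$- and $\varepsilon$-factors found above. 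Since $\Sym^k\Kl_2$ is geometrically irreducible and nontrivial, $\mathrm{M}_2^k$ carries no Tate class, and combining this with the automorphic input upgrades ``meromorphic'' to ``holomorphic'' for $\Lambda_k$. Reassembling $L_k^S(s)=L(\mathrm{M}_2^k,s)\cdot(\text{elementary factors})$ and matching $\Gamma$- and $\varepsilon$-factors yields the stated functional equation with $\epsilon_k\in\{\pm1\}$, equal to $1$ for $k$ odd. The delicate bookkeeping --- that the elementary factors recombine into exactly the predicted shape, that no spurious poles survive, and that the local root numbers at the few bad primes are as claimed --- is where the real work lies.
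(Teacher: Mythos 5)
Your proposal follows essentially the same route as the paper (and as Fresán--Sabbah--Yu, from whom the paper quotes this theorem without reproving it): identify $L^S_k(s)$ with the $L$-function of the compatible system of middle cohomologies $\mathrm{H}^1_{\et,\mathrm{mid}}(\bb{G}_{m,\bfp},\Sym^k\Kl_2)$, realize this system as the $\ell$-adic realizations of a pure motive $\mathrm{M}_2^k$, verify purity, regularity (multiplicity-free Hodge numbers) and odd essential self-duality, and invoke Patrikis--Taylor for potential automorphy, hence meromorphic continuation and the functional equation $s\leftrightarrow k+2-s$. This is exactly the strategy the paper deploys in Sections~\ref{sec::motives}--\ref{sec::functional-eq} to prove the generalization (Theorem~\ref{thm::L-function}), specialized to $n=1$.

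There is, however, one concrete sign error. You correctly note that the pairing on $\Sym^k\Kl_2$ induced by the symplectic autoduality of $\Kl_2$ is $(-1)^k$-symmetric (symmetric for $k$ even, alternating for $k$ odd), but you then transport this symmetry type unchanged to $\mathrm{M}_2^k$, concluding ``orthogonal for $k$ even and symplectic for $k$ odd.'' This forgets that the cup product $\mathrm{H}^1\otimes\mathrm{H}^1\to\mathrm{H}^2$ is itself antisymmetric, which flips the sign: the pairing on $\mathrm{H}^1_{\mathrm{mid}}$ is $(-1)^{k+1}$-symmetric, i.e.\ \emph{orthogonal for $k$ odd and symplectic for $k$ even} (this is Proposition~\ref{prop::perfect-pairing} of the paper with $n=1$, $|\lambda|=k$; it is also consistent with the $k=5$ example, where the motive is $2$-dimensional orthogonal and the associated modular form has CM). The error is not cosmetic: the parity condition in Patrikis--Taylor pairs $\mathrm{GO}$ with an even similitude character and $\mathrm{GSp}$ with an odd one, and the similitude character here is $\chi_{\mathrm{cyc}}^{k+1}$, so your reversed assignment would make the odd-essential-self-duality check fail; it also feeds into the claim $\epsilon_k=1$ for $k$ odd, which you assert but do not compute. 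With the symmetry type corrected, the argument goes through as in the paper. (Separately, your passage from ``meromorphic'' to ``holomorphic'' for $\Lambda_k$ via the absence of Tate classes is stated rather than argued, but the paper is equally brief on this point.)
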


The primary object of this article is to extend the theorem above to $L$-functions attached to moments (beyond symmetric power moments) of Kloosterman sums in multiple variables.

\subsection{Kloosterman sheaves}
The Kloosterman sums in $n$ variables are the exponential sums over finite fields, defined for each power of prime numbers $q=p^r$ and each $a\in \fqq\cros$, by
	$$\mathrm{Kl}_{n+1}(a;q):=\sum_{x_1,\ldots,x_n\in \fqq^{\times}}\exp\biggl(\frac{2\pi i}{p}\cdot \tr_{\fqq/\fpp}\biggl(x_1+\ldots+x_n+\frac{a}{x_1\cdots x_n}\biggr)\biggr).$$ 
By fixing a prime number $\ell\neq p$ and an embedding $\iota\colon \bql\to \bb{C}$, Deligne constructed lisse $\ell$-adic sheaves $\Kl_{n+1}$ over $\bb{G}_{m,\fqq}=\bb{A}^1_{\fqq}\backslash \{0\}$, which are pure of weight $n$ and of rank $n+1$ in \cite[Sommes.\,Trig.\,Thm.\,7.8]{SGA41/2}. Moreover, for every $a\in \fqq\cros=\bb{G}_{m,\fqq}(\fqq)$ and every geometric point $\bar{a}$ localized at $a$, we have
	$$\iota\circ \tr(\frob_q,(\Kl_{n+1})_{\bar a})=(-1)^n\Kl_{n+1}(a;q).$$
Hence, the $\ell$-adic sheaves $\Kl_{n+1}$ can be regarded as the sheaf version of the Kloosterman sums, and we call them \textit{Kloosterman sheaves}. 

In their work \cite{HNY-Kloosterman}, Heinloth, Ngô, and Yun constructed a larger class of $\ell$-adic sheaves, called \textit{Kloosterman sheaves for reductive groups}, using methods from the geometric Langlands program. For each split reductive group $G$, they construct a tensor functor
\begin{equation}\label{eq:HNY-functor}
	\Kl_{G}\colon \rr{Rep}(G)\to \rr{Loc}_{\bb{G}_{m,\fqq}}
\end{equation}
from the category of finite-dimensional representations of $G$ with coefficients in $\bb{Q}_{\ell}(\mu_p)$ to the category of lisse $\ell$-adic sheaves on $\bb{G}_{m,\fqq}$. Our primary interest lies in the case where $G = \mathrm{SL}_{n+1}$. In particular, by selecting $V$ as the standard representation $\rr{Std}$ of $\rr{SL}_{n+1}$ and $\Sym^k\rr{Std}$ respectively, we obtain the classical Kloosterman sheaf $\Kl_{n+1}(\frac{n}{2})$ and its symmetric power $\Sym^k\Kl_{n+1}(\frac{nk}{2})$.

Let $V = V_\lambda$ be the representation of the highest weight $\lambda = (\lambda_1, \ldots, \lambda_n)$ of $\mathrm{SL}_{n+1}$. We denote $|\lambda| = \sum_{i=1}^n  \lambda_i$ and $\Kl_{n+1}^\lambda$ as the sheaf $\Kl_{\mathrm{SL}_{n+1}}(V_\lambda)(-\frac{n|\lambda|}{2})$. We have an explicit description of $\Kl_{n+1}^\lambda$ using Weyl's construction, detained in Section~\ref{sec::weyl-construction}. In what follows, we formulate the analogs of moments and $L$-functions for $\Kl_{n+1}^\lambda$.

\begin{defn}\label{defn::moments}
	For each $\lambda$, the \textit{moment of the Kloosterman sheaf $\Kl_{n+1}^\lambda$} is defined as the integer 
		$$m_{n+1}^\lambda(q):=-\sum_{a\in \fqq\cros}\tr(\frob_q,(\Kl_{n+1}^\lambda)_{\bar a}).$$
\end{defn}
By the Grothendieck trace formula \cite[Rapport.\,Thm.\,3.1]{SGA41/2} and Theorem~\ref{thm::unramified}, the generating series 
		$$Z(\lambda,n+1,p;T):=\exp\Biggl(\sum_{r\geq 1}\frac{m_{n+1}^\lambda(p^r)}{r}\cdot T^r\Biggr).$$ 
	is a rational function
		$$\prod_{i=0}^2\det(1-\frob_pT\mid \mathrm{H}^i_{\et,\rr{c}}(\bb{G}_{m,\bfp},\Kl_{n+1}^\lambda))^{(-1)^{i+1}}\in \bb{Q}(T).$$
	In order to define the partial $L$-function associated with $\Kl_{n+1}^\lambda$ as an Euler product, it is not advisable to directly use $Z(\lambda,n+1,p; T)$ as the local factor at $p$, because the complex norms of roots and poles of $Z(\lambda,n+1,p; T)$ lie within the set $\{p^{-i/2}\mid 0\leq i\leq n|\lambda|+1\}$. Motivated by the work of Fu--Wan \cite{fu2005functions,fu2006trivial} for the sheaves $\Sym^k\Kl_{n+1}$, we remove some "trivial factors" from $Z(\lambda,n+1,p;T)$. By the long exact sequence \eqref{eq::long-exact-sequence} and the main theorem of Weil II \cite[3.3.1]{deligne1980conjecture}, we need to discard the contributions from the invariants and coinvariants of the Kloosterman sheaves at $0$ and $\infty$. Hence, the ideal candidate for the local factors at $p$ is
		$$M(\lambda,n+1,p;T):=\det(1-\frob_pT\mid \mathrm{H}^1_{\et,\mathrm{mid}}(\bb{G}_{m,\bfp},\Kl_{n+1}^\lambda)),$$
	where
		$$\mathrm{H}^1_{\et,\mathrm{mid}}(\bb{G}_{m,\bfp},\Kl_{n+1}^\lambda)
		=\im  \Bigl(\mathrm{H}^1_{\et,\rr{c}}(\bb{G}_{m,\bfp},\Kl_{n+1}^\lambda)\xrightarrow{\text{forget support}} \mathrm{H}^1_{\et}(\bb{G}_{m,\bfp},\Kl_{n+1}^\lambda) \Bigr)$$
	is the middle $\ell$-adic cohomology of $\Kl_{n+1}^\lambda$.

	\begin{defn}\label{defn::l-function}
		The partial $L$-function $L^S(\lambda,n+1;s)$ attached to $\Kl_{n+1}^\lambda$ is defined as the Euler product
			$$L^S(\lambda,n+1;s):=\prod_{s\not\in S(\lambda,n+1)}M(\lambda,n+1,p;p^{-s})\inv.$$
		Here, the set $S(\lambda,n+1)$ is a finite set of primes, only depending on $\lambda$ and $n+1$ (see Theorem~\ref{thm::unramified}) such that the degree of $M(\lambda,n+1,p;T)$ remains constant for $p\not\in S(\lambda,n+1)$. 
	\end{defn}
	The $L$-function is a priori a holomorphic function on the domain $\{s\in \bb{C}\mid \mathrm{Re}(s)>1+\frac{n|\lambda|+1}{2}\}$, because the complex norms of the roots of $M(\lambda,n+1,p;T)$ are $p^{-\frac{n|\lambda|+1}{2}}$. However, the definition alone does not provide further information. We can ask, as before, whether the partial $L$-function $L^S(\lambda,n+1;s)$ can be meromorphically extended to the entire complex plane and satisfies a functional equation.

\subsection{Main results}

 We introduce our main results here. For simplicity, when $\lambda=(k,0,\dots,0)$, we write $L^S(k,n+1;s)$ instead of $L^S((k,0,\dots,0),n+1;s)$.
\begin{thm}\label{thm::L-function}
For the values of $(n+1, k)$ given in the table below,
	\begin{center}
		\begin{tabular}{@{\extracolsep{4pt}}ll cccccc c}
			\toprule
			$n+1$ & $k$\\ 
			\midrule
			$3$ & $1,2,3,4,5,6,7,8,9$\\
			$5$ & $1,2,3,4$\\
			$4,7,8,10,11,13$ & $1,2,3$\\
			\bottomrule
		\end{tabular}
	\end{center}
the partial $L$-function $L^S(k,n+1;s)$ extends meromorphically to the complex plane. Furthermore, it can be completed into a holomorphic function $\Lambda(k,n+1;s)$ satisfying a functional equation
	$$\Lambda(k,n+1;s)=\pm \Lambda(k,n+1;nk+2-s).$$
\end{thm}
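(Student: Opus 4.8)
The proof of Theorem~\ref{thm::L-function} follows the path taken by Fresán--Sabbah--Yu for $n=1$: for each pair $(n+1,k)$ in the table we identify the middle cohomology motive $\mathrm{M}_{n+1}^k$ --- the motive over $\bb{Q}$, pure of weight $nk+1$, whose good local factors are the polynomials $M(k,n+1,p;T)$ of Definition~\ref{defn::l-function} --- with an explicit arithmetic object built from classical modular forms, Dirichlet characters, and the Fresán--Sabbah--Yu motives $\mathrm{M}_2^{k'}$ of Theorem~\ref{thm::FSY}, and then transport meromorphic continuation and the functional equation along this identification. Since $L^S(k,n+1;s)=\prod_{p\notin S(k,n+1)}M(k,n+1,p;p^{-s})^{-1}$ and the reflection $s\mapsto nk+2-s$ is exactly the one attached to weight $nk+1$, it suffices to establish the identification and then to account for the archimedean factor, the finitely many bad Euler factors, and the global sign.

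First one pins down the shape of $\mathrm{M}_{n+1}^k$. By Weyl's construction (Section~\ref{sec::weyl-construction}), $\Kl_{n+1}^k$ is (a twist of) the $k$-th symmetric power of the rank-$(n+1)$ Kloosterman sheaf: tame and unipotent at $0$ with Jordan type governed by the principal $\SL_2\hookrightarrow\SL_{n+1}$, and, for $p\notin S(k,n+1)$ --- which excludes the primes dividing $n+1$ --- totally wild at $\infty$, its slopes and Swan conductor being read off from the Mackey decomposition of the symmetric power of an induced representation, after Katz and Fu--Wan. Feeding these local invariants into the Grothendieck--Ogg--Shafarevich/Euler--Poincaré formula together with the long exact sequence \eqref{eq::long-exact-sequence} gives the dimension and local structure of $\mathrm{H}^1_{\et,\mathrm{mid}}(\bb{G}_{m,\bfp},\Kl_{n+1}^k)$. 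For the pairs in the table, once the summands forced to be of Tate type by the tame monodromy at $0$ are split off, the remaining pieces have small rank, and their irregular Hodge numbers --- computed from the irregular Hodge filtration attached to the symmetric power of the Bessel connection, following Fresán--Sabbah--Yu and Yu --- determine the motivic type of each piece: rank~$2$ with regular Hodge numbers forces a classical newform of an explicit weight, possibly with complex multiplication; Hodge--Tate numbers force a sum of Tate twists of Dirichlet motives; and a surviving larger piece is a small tensor construction of these or one of the $\mathrm{M}_2^{k'}$ with $k'\le 8$.

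The core of the argument --- where the Evans-type conjectures announced in the abstract are proved --- is to make each of these identifications precise, and this is the main obstacle. For a piece matching some $\mathrm{M}_2^{k'}$ I would exhibit an exceptional isomorphism of lisse sheaves, available only in low rank (one reason the table is finite), and quote Theorem~\ref{thm::FSY}. For a genuinely new identification with a newform motive I would proceed in one of two ways: either exploit the geometric-Langlands/middle-convolution origin of $\Kl_{\SL_{n+1}}$ and the extra $\mu_N$-symmetry of the Kloosterman sheaves, in the spirit of Yun, to produce a modular curve in the picture and compute the $\ell$-adic Galois representation on $\mathrm{M}_{n+1}^k$ directly; or, having bounded the conductor of $\mathrm{M}_{n+1}^k$ to an explicit finite set from the Swan-conductor and drop computations above and having checked residual irreducibility, certify the isomorphism with the candidate newform by a Faltings--Serre--Livné argument comparing traces of Frobenius at finitely many primes. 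Controlling the wild ramification and the bad local factors precisely enough, and verifying residual irreducibility, is the delicate step, and it is what confines the accessible range of $(n+1,k)$ to the table.

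Granting the identification, $L^S(k,n+1;s)$ agrees, up to the finitely many Euler factors at $S(k,n+1)$, with a product of $L$-functions of newforms, Dirichlet $L$-functions, and Fresán--Sabbah--Yu $L$-functions (and, where present, Rankin--Selberg or symmetric-square $L$-functions of such), each of which extends meromorphically to $\bb{C}$ --- by Hecke's theory of modular forms, the classical theory of Dirichlet $L$-functions, Theorem~\ref{thm::FSY}, and the work of Rankin--Selberg and Shimura respectively --- so $L^S(k,n+1;s)$ does too. To complete it, one reinstates the correct local factors at the primes of $S(k,n+1)$, computed from the local monodromy via Laumon's product formula for $\varepsilon$-factors, and multiplies by the archimedean factor determined by the irregular Hodge numbers of $\mathrm{M}_{n+1}^k$; since all constituents are pure of the same weight $nk+1$, the resulting holomorphic function $\Lambda(k,n+1;s)$ satisfies $\Lambda(k,n+1;s)=\pm\Lambda(k,n+1;nk+2-s)$, the global sign being the product of the constituent signs --- necessarily $+1$ when the self-duality on $\mathrm{M}_{n+1}^k$ is symplectic, and in particular $+1$ in every case reducing to Dirichlet characters, CM forms, or an $\mathrm{M}_2^{k'}$ with $\epsilon_{k'}=+1$.
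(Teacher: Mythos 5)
Your strategy is not the one the paper uses, and it has a genuine gap that prevents it from proving the theorem as stated. You propose to decompose $\mathrm{M}_{n+1}^k$ into Tate twists, rank-$2$ newform motives, Dirichlet pieces, and the Fresán--Sabbah--Yu motives $\mathrm{M}_2^{k'}$, and then to transport analytic continuation from Hecke theory and Theorem~\ref{thm::FSY}. This works only when $\mathrm{M}_{n+1}^k$ is genuinely small: those are exactly the cases of Theorem~\ref{thm::modular form} ($\Sym^4\Kl_3$, $\Sym^3\Kl_4$, $\Sym^4\Kl_4$, $\Sym^3\Kl_5$, and the two $\Kl_3^\lambda$ examples), where the motive is $2$-dimensional and modularity is proved via Serre's conjecture. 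But most entries of the table in Theorem~\ref{thm::L-function} --- e.g.\ $\Sym^9\Kl_3$, which is pure of weight $19$ with up to twenty nonzero Hodge numbers, or $\Sym^4\Kl_5$, $\Sym^3\Kl_{13}$, etc.\ --- give motives of large dimension with no known (or expected) decomposition into classical constituents. There is no exceptional isomorphism of lisse sheaves, no newform, and no Rankin--Selberg product to fall back on; your "core of the argument" simply has no candidate object to identify $\mathrm{M}_{n+1}^k$ with in these cases, so the meromorphic continuation cannot be obtained by transport.

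The paper's route is entirely different and explains why the table is what it is: the entries are precisely the pairs $(n+1,k)$ for which the Hodge numbers of $\rr{M}_{n+1,\dR}^k$ are all $0$ or $1$ (Corollary~\ref{cor::hodge}). One checks that $\{(\mathrm{M}_{n+1}^k)_\ell^{ss}\}_\ell$ is a weakly compatible system (unramifiedness outside a finite set via Theorem~\ref{thm::unramified}, de Rham/crystalline properties via Proposition~\ref{prop::p-adic-galois-representation}), that it is pure of weight $nk+1$, \emph{regular} in the sense that the Hodge--Tate multiplicities are $\le 1$, and odd essentially self-dual via the $(-1)^{nk+1}$-symmetric perfect pairing of Proposition~\ref{prop::perfect-pairing} with similitude character $\chi_{cyc}^{nk+1}$. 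The Patrikis--Taylor potential automorphy theorem (Theorem~\ref{thm::P-T} and Corollary~\ref{cor::P-T}) then yields meromorphic continuation and the functional equation directly, with no identification of the motive with any explicit automorphic form. If you want to salvage your argument, it proves (a version of) Theorem~\ref{thm::modular form} for the six $2$-dimensional cases, not Theorem~\ref{thm::L-function}; and even there the paper certifies modularity through Serre's conjecture rather than Faltings--Serre--Livné. A minor further point: your claim that the global sign is necessarily $+1$ in the symplectic case is not justified and is not needed in the paper's argument.
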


In Example~\ref{exe:evans-conj}, we see that there are some relations between Fourier coefficients of certain explicitly determined modular forms and symmetric power moments of Kloosterman sums. Yun proposed \textit{Conjectures of Evans type} in \cite{yun2015galois}, predicting new relations for Kloosterman sheaves for reductive groups.  For simplicity, throughout this article, a modular form will refer to a normalized holomorphic cuspidal Hecke eigenform.

These relations imply that the $L$-functions of these sheaves are $L$-functions of the corresponding modular forms.
\begin{thm}\label{thm::modular form}
The $L$-functions of the Kloosterman sheaves $\Sym^4\Kl_3$, $\Sym^3\Kl_4$, $\Sym^4\Kl_4$, $\Sym^3\Kl_5$, $\Kl_{3}^{\q{2,1}}$, and $\Kl_{3}^{\q{2,2}}$ arise from modular forms. Moreover, we determine explicitly these modular forms and the relations between their Fourier coefficients and moments of Kloosterman sheaves.
 
The information of these modular forms $f\in S_k(\Gamma_0(N),\epsilon)$ are summarized in the following table. 
		\begin{center}
			\begin{tabular}{@{\extracolsep{4pt}}ll cccccc c}
				\toprule
				Sheaves & $N$ & $k$ & $\epsilon$ & labels in LMFDB \cite{lmfdb}\\ 
				\midrule
				$\Sym^4\Kl_3$ & $14$ & $4$ & $1$ & 14.4.a.b\\
				$\Sym^3\Kl_4$ & $15$ & $3$ &  $\q{\frac{\bullet}{15}}$ & 15.3.a.b.\\
				$\Sym^4\Kl_4$ & $10$ & $6$ &  $1$ & 10.6.a.a.\\
				$\Sym^3\Kl_5$ & $33$ & $4$ & $1$ & $33.4.a.b$\\
				$\Kl_{3}^{(2,1)}$ & $14$ & $2$  & $1$ & 14.2.a.a\\
				$\Kl_{3}^{(2,2)}$ & $6$ & $4$ & $1$ &6.4.a.a \\ 
				\bottomrule
				\end{tabular}
		\end{center}
\end{thm}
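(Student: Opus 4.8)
The plan is to prove Theorem~\ref{thm::modular form} by identifying, for each of the six sheaves, the middle cohomology $\HH^1_{\et,\mathrm{mid}}(\bb{G}_{m,\bfp},\Kl_{n+1}^\lambda)$ as a $2$-dimensional $\ell$-adic Galois representation and matching it with the Galois representation attached to an explicit modular form. First I would compute, using the Weyl construction of Section~\ref{sec::weyl-construction} together with the local monodromy of $\Kl_{n+1}$ at $0$ and $\infty$ (unipotent with a single Jordan block at $0$, and the known wild/tame structure at $\infty$ coming from \cite{HNY-Kloosterman}), the dimension of $\HH^1_{\et,\mathrm{mid}}$ via the Euler--Poincaré formula; the entries in the table are exactly the cases where this dimension equals $2$. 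Then I would pin down the Hodge/weight data: purity of $\Kl_{n+1}^\lambda$ gives weight $n|\lambda|$ on $\HH^1$, hence motivic weight $n|\lambda|+1$, which forces the modular form to have weight $k=n|\lambda|+2$ (the values $4,3,6,4,2,4$ in the table), and the Hodge numbers $(h^{k-1,0},h^{0,k-1})=(1,1)$ should follow from the results of Fresán--Sabbah--Yu on the de Rham side (the irregular Hodge filtration), confirming the object is the motive of a weight-$k$ newform rather than a reducible or CM object.

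Next I would determine the level $N$ and nebentypus $\epsilon$. The ramification of the Galois representation is concentrated at the primes in the bad set $S(\lambda,n+1)$ together with $\ell$; at each bad prime $p$ one computes the local monodromy (tame at $p\nmid$ the relevant small primes, with specified Swan conductors contributed by the wild part at $\infty$ of $\Kl_{n+1}$ specializing in families) to read off the conductor exponent, and the determinant of the representation — a product of a power of the cyclotomic character and a quadratic (or quartic, for $\Sym^3\Kl_4$) character — gives the nebentypus. This produces a finite list of candidate newforms in $S_k(\Gamma_0(N),\epsilon)$, and I would then verify the match by computing $m_{n+1}^\lambda(p)$ for a range of small primes $p$ (via the explicit exponential sums, or equivalently via the trace of Frobenius on the Weyl construction in terms of $\alpha_a,\beta_a$) and comparing with the Hecke eigenvalues $a_f(p)$ listed in LMFDB; the multiplicity-one / Faltings--Serre type argument then upgrades finitely many coincidences to an isomorphism of Galois representations, hence equality of $L$-functions. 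For the two cases $\Kl_3^{(2,1)}$ and $\Kl_3^{(2,2)}$ that are new (not symmetric powers), I would additionally use the explicit Weyl construction to express $\Kl_3^{(2,1)}$ and $\Kl_3^{(2,2)}$ in terms of $\Kl_3$ and $\Sym^j\Kl_3$ (e.g.\ $V_{(2,1)}$ sits inside $\mathrm{Std}\otimes\Sym^2\mathrm{Std}$, and $V_{(2,2)}\cong V_{(2,2)}^\vee$ is the adjoint-type piece), reducing their cohomology to already-understood pieces plus a residual $2$-dimensional motive.

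I expect the main obstacle to be the precise computation of the local $L$- and $\epsilon$-factors at the bad primes — in particular, ruling out that $\HH^1_{\et,\mathrm{mid}}$ is reducible (a sum of two characters or of a character times something of smaller weight) and correctly accounting for the wild ramification of $\Kl_{n+1}$ at $\infty$ as it propagates through the Weyl construction, which controls both the conductor and the sign in the functional equation. A secondary difficulty is making the Faltings--Serre / Livné-style comparison effective: one must bound the ramification of the residual mod-$\ell$ representation and exhibit enough primes with matching traces to conclude, which for the higher-level cases ($N=33$, $N=525$-type obstructions) requires either a clean choice of $\ell$ or an appeal to the known modularity results (Yun \cite{yun2015galois}, Fresán--Sabbah--Yu, Livné \cite{livne1995motivic}) for the closely related symmetric-power motives, bootstrapping from Theorem~\ref{thm::L-function}.
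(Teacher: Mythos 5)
There is a genuine error in your weight determination, and it matters. You assert that purity ``forces the modular form to have weight $k=n|\lambda|+2$ (the values $4,3,6,4,2,4$ in the table)'' with Hodge numbers $(h^{k-1,0},h^{0,k-1})=(1,1)$. Neither claim is correct: for $\Sym^4\Kl_3$ one has $n|\lambda|+2=10$, not $4$, and the nonzero Hodge numbers of the weight-$9$ motive $\mathrm{M}_3^4$ are the \emph{interior} ones $h^{3,6}=h^{6,3}=1$, not the extremal ones. The weight of the modular form is $s-r+1$ where $h^{r,s}=h^{s,r}=1$, so it is governed by the gap between the two nonzero Hodge numbers, which purity alone does not see; one needs the explicit (irregular) Hodge-filtration computations of \cite{Qin23Hodge} (Theorem~\ref{thm:hodge}), and Fresán--Sabbah--Yu only cover $n=1$. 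Worse, for $\Sym^4\Kl_4$ and $\Kl_3^{(2,2)}$ the Hodge numbers are not directly computable even there, and the paper has to leave the weight as an unknown $14-2h$, run through all candidate values of $h$ (and conductor exponents $s$), and eliminate the wrong ones by computing spaces of cuspidal new modular symbols over finite fields. Your proposal has no mechanism for this step, and with the wrong weight the LMFDB search would return nothing.

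The modularity mechanism you propose is also genuinely different from the paper's, and has an unfilled gap. The paper does not use Faltings--Serre: it invokes Serre's modularity conjecture via Theorem~\ref{thm::modularity} (a two-dimensional, pure, regular, odd, absolutely irreducible compatible system of geometric origin is automatically modular), with oddness supplied by the symplectic self-pairing of Proposition~\ref{prop::perfect-pairing} -- the determinant is forced to be a power of the cyclotomic character by Proposition~\ref{prop::epsilon_factor} and Chebotarev -- and with Livn\'e's criterion for the orthogonal (CM) case $\Sym^3\Kl_4$. Once modularity is known abstractly, finitely many trace comparisons merely \emph{identify} the form; they do not need to be upgraded by a deviation-group argument. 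Your Faltings--Serre route would require controlling the residual mod-$\ell$ representation and exhibiting a provably sufficient set of test primes, which you acknowledge but do not resolve; you also never establish oddness or irreducibility, both of which the paper must and does prove. Finally, the ramification analysis at the bad primes (e.g.\ showing $(\mathrm{M}_4^3)_\ell$ is unramified at $2$, or computing the Swan conductor of $\Kl_3$ at $p=3$ via the classification of finite subgroups of $\SL_3$) is substantially harder than ``read off the conductor exponent from the Swan conductor at $\infty$'' and occupies a large part of Sections~\ref{sec::local-monodromy} and~\ref{sec::functional-eq}.
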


Alongside establishing the main theorems, we have also successfully proved several new results about the Kloosterman Sheaves. For example, we calculated the local monodromy group of $\Kl_3$ at $\infty$ when $p=3$ in \cref{thm::Kl_3_monodromy_infty_p=3}. When $n\geq 3$, the local monodromy group of $\Kl_{n+1}$ at $\infty$ when $p\mid n+1$ is still unknown.

Furthermore, we observe that the modular forms linked to the moments of the sheaves $\Sym^6\Kl_2$ and $\Kl_3^{(2,2)}$ are identical, with label $6.4.a.a$ in LMFDB, thanks to \cref{thm::modular form} and \cite{Hulek2001a}. In particular, we deduce an identity between moments of $\Sym^6\Kl_2$ and $\Kl_3^{(2,2)}$ in \eqref{eq:identical-moments}. This prompts us to ask whether there exists a geometric explanation for this phenomenon, as conjectured in \cref{conj:identical-motives}.

\subsection{Idea of the proof}
Our strategy in proving \cref{thm::L-function} and \cref{thm::modular form} is as follows. We begin with constructing families of Galois representations of geometric origin, whose $L$-functions precisely match $L(\lambda, n+1; s)$, extending the construction in \cite[(3.1)]{fresan2018hodge}. Then, we subtract geometric information from these families of Galois representations to be able to apply some theorems from the automorphic side. Once we establish that these Galois representations are potentially automorphic, the $L$-functions $L(\lambda, n+1; s)$ extend meromorphically to $\bb{C}$ and satisfy functional equations as a result. At last, for \cref{thm::modular form}, one needs extra numerical results to locate the modular forms in LMFDB.

\subsubsection{Galois representations arising from geometry}
Drawing inspiration from the analogy between Kloosterman sums and Bessel functions, Fresán, Sabbah, and Yu considered the \textit{Kloosterman connection}, which is the rank $n+1$ connection on $\bb{G}_{m,\bb{C}}$ corresponding to the Bessel differential equation $(z\partial_z)^{n+1}-z=0$. They interpret the middle de Rham cohomology of the connection $\Sym^k\Kl_{n+1}$, i.e., the image of the forget supports morphism from the cohomology with compact support to the usual cohomology, as the de Rham realization of an exponential motive over $\bb{Q}$ in the sense of \cite{fresanexponential}. This exponential motive is classical, meaning that it is isomorphic to a Nori motive $\mathrm{M}_{n+1}^k$ over $\bb{Q}$. 

This motive is isomorphic to a subquotient of $\mathrm{H}^{nk-1}_c(\mathcal{K})(-1)$, where $\mathcal{K}$ is the hypersurface defined by the $k$-th iterated Thom--Sebastiani sum of the Laurent polynomial $g_{n+1}=\sum_{j=1}^n y_{j}+ 1/\prod_{j=1}^n y_{j}$, which is the Laurent polynomial
	\begin{equation}\label{eq:gbox}
		g^{\boxplus k}_{n+1}=\sum_{i=1}^k\Biggl(\sum_{j=1}^n y_{i,j}+ \frac{1}{\prod_{j=1}^n y_{i,j}}\Biggr)
	\end{equation}
on the torus $\bb{G}_m^{nk}$. We extend their method to construct a motive $\mathrm{M}^\lambda_{n+1}$ for each $\lambda\in \bb{N}^n$ in Definition~\ref{def::Kloosterman-motive}, using the Weyl construction. When $\lambda=(k,0,\ldots,0)$, we recover the motive $\rr{M}_{n+1}^k$ constructed by Fresán--Sabbah--Yu.

For each motive $\mathrm{M}_{n+1}^\lambda$, its $\ell$-adic realizations $(\mathrm{M}_{n+1}^\lambda)_\ell$ are continuous $\ell$-adic representations of $\gal(\bq/\bb{Q})$ with coefficients in $\bb{Q}_\ell$, isomorphic to subquotients of $\mathrm{H}_{\et,\rr{c}}^{n|\lambda|-1}(\mathcal{K}_{\bq},\bb{Q}_\ell)(-1)$. By Theorem~\ref{thm::unramified}, we demonstrate that $\{(\mathrm{M}_{n+1}^\lambda)_\ell\}_\ell$ form a compatible family of Galois representations, with $(\mathrm{M}_{n+1}^\lambda)_\ell$ being unramified as a representation of $\gal(\bq_p/\bb{Q}_p)$ for primes $p$ outside a finite set of primes, $S(\lambda,n+1)$. Moreover, there exists an isomorphism of $\gal(\bq_p/\bb{Q}_p)$-representations
	\begin{equation}\label{eq::ell-adic-realization-char-p}
	    (\mathrm{M}_{n+1}^\lambda)_\ell[\zeta_{p}]\simeq \mathrm{H}^1_{\et,\mathrm{mid}}(\bb{G}_{m,\bfp},\Kl_{n+1}^\lambda).
	\end{equation}
Subsequently, we observe that the partial $L$-functions of this family of $\ell$-adic Galois representations coincide with the $L$-functions $L^S(\lambda,n+1;s)$ of $\Kl_{n+1}^\lambda$. We refer to $\mathrm{M}_{n+1}^\lambda$ as the \textit{motive attached to the sheaf $\Kl_{n+1}^\lambda$}. 

To investigate these compatible families of Galois representations, as indicated by \eqref{eq::ell-adic-realization-char-p}, it is necessary to study the cohomologies of Kloosterman sheaves. However, the challenges posed by $\Kl_{n+1}^\lambda$ are notably more intricate compared to the relatively straightforward scenarios encountered with $\Sym^k\Kl_2$ in \cite{yun2015galois,fresan2018hodge}. Notably, we employ complicated combinatorial formulas to describe $\Kl_{n+1}^\lambda$, which all become simple for $\Sym^k\Kl_2$ (see \cref{cor::dimension-mid-de-ell-adic} for example). Also, an annoying new feature of $\Kl_{n+1}^\lambda$ is that their $0$-th cohomology might be nonzero, contrary to the case of $\Sym^k\Kl_2$ where $0$-th cohomology always vanishes. This phenomenon makes the proof of \cref{thm::etale-realization-mod-p} and \cref{thm::ell-adic-galois-even} more technical, necessitating a degree of compromise by introducing certain technical restrictions.

\subsubsection{Potential automorphy}
We prove \cref{thm::L-function} by applying a theorem by Patrikis--Taylor \cite{patrikis_taylor_2015} to $\{(\mathrm{M}_{n+1}^\lambda)_\ell\}_{\ell}$. We must verify a critical condition known as \textit{regularity} to employ this theorem. Through the $p$-adic comparison theorem, this condition amounts to saying that the Hodge numbers of the de Rham realization of $\mathrm{M}_{n+1}^\lambda$ are either $0$ or $1$. Relying on the result in the author's previous paper \cite{Qin23Hodge} (see also \cref{cor::hodge}), the regularity holds for cases presented in Theorem~\ref{thm::L-function}.

Notice that the table of specific values of $(n+1,k)$ in Theorem~\ref{thm::L-function} is chosen so that the Hodge numbers of $\rr{M}_{n+1}^k$ are regular, see \cref{cor::hodge}. Recent developments in (potential) automorphy, such as the work of Boxer--Calegari--Gee--Pilloni \cite{Boxer18}, offer promising avenues for further exploration. These advancements may potentially extend the results of Theorem~\ref{thm::L-function} to cases beyond the current bounds on Hodge numbers.

\subsubsection{Conjectures of Evans type}
Let $\mathrm{M}$ be a motive attached to one of the sheaves in Theorem~\ref{thm::modular form}. To prove the claimed conjectures of Evans type, it suffices to show that the $\ell$-adic realization of $\mathrm{M}$ is modular, meaning it is isomorphic to $\rho_{f,\ell}(h)$ for some modular forms $f$ and some integer $h$. Here, $\rho_{f,\ell}$ represents the two-dimensional Galois representation of $\gal(\bq/\bb{Q})$ attached to $f$, constructed in \cite{deligne1971formes,deligne1971formes-b}. To prove this, we use an argument similar to that in \cite[Thm.\,4.6.1]{yun2015galois} to show the modularity, which is originally due to Serre \cite[\S4.8]{serre1987representations} and can also be found in \cite[Thm.\,1.4.3]{kisin2007modularity}. The key ingredient of this argument is Serre's modularity conjecture. 

After establishing modularity, the remaining task is to determine the modular forms' information as explicitly as possible. We can begin by extracting information from the geometric properties of $\rr{M}$. In Section~\ref{sec::Galois-repr}, we study the compatible family ${\rr{M}_\ell}$ of Galois representations and analyze its conductor $N$. This provides us with some information about the size and prime divisors of the modular form's level. Additionally, we use the calculation of Hodge numbers of the de Rham realization $\rr{M}_{\dR}$ from \cite{Qin23Hodge} to determine the weight of the modular form.

However, due to a lack of information at some "bad" primes or missing calculation of Hodge numbers, we only get partial information on weights and the levels of those modular forms. We turn to numerical results of traces of Frobenius for assistance in obtaining the Fourier coefficients of the corresponding modular form using Sagemath \cite{sagemath}. Then we are able to determine the actual levels of modular forms in \cref{prop::modularity-Sym3Kl4} and \cref{prop::modularity_sym3Kl5}, and the actual weights in \cref{prop::modularity-sym4kl4} and \cref{prop::modularity_Kl3V22}. In particular, we get some new results on Hodge numbers that cannot be obtained using methods from \cite{Qin23Hodge}.

At last, we utilize the information from both geometry and computation to pinpoint the modular form in the LMFDB database.

\subsection{Organization of the article} 
In Section~\ref{sec::Kloosterman-sheaf}, we investigate the properties of Kloosterman sheaves, primarily focusing on those appearing in Theorems~\ref{thm::L-function} and~\ref{thm::modular form}, including their local structures at $0$ and $\infty$, the dimension formulas for their $\ell$-adic cohomologies. In Section~\ref{sec::motives}, we construct the motives attached to Kloosterman sheaves and explore properties of their de Rham realizations, $\ell$-adic realizations, and other realizations in characteristic $p>0$. In Section~\ref{sec::functional-eq}, we first investigate the ramification properties of the Galois representations $(\mathrm{M}_{n+1}^\lambda)_\ell$ as detailed in Theorem~\ref{thm::unramified}, and Theorem~\ref{thm::ell-adic-galois-even}. Then, we prove Theorem~\ref{thm::L-function}. In Section~\ref{sec::evans}, we demonstrate Theorem~\ref{thm::modular form} by showing the modularity for each sheaf case by case in Propositions~\ref{prop::modularity_sym4Kl3}, \ref{prop::modularity-Sym3Kl4}, \ref{prop::modularity-sym4kl4}, \ref{prop::modularity_sym3Kl5}, \ref{prop::modularity_Kl3V21} and \ref{prop::modularity_Kl3V22}. In \cref{appx:computation}, we outline the process of calculating moments of Kloosterman sheaves.

\subsection*{Acknowledgement} This work is based on the author's Ph.D. thesis, prepared at Centre de Mathématiques Laurent Schwartz at École Polytechnique. He wants to thank his supervisors, Javier Fresán and Claude Sabbah, for proposing this question and for many helpful discussions and suggestions. He also thanks Lei Fu and Christian Sevenheck for their comments on a previous version of this article, as well as Jochen Heinloth, Gabriel Ribeiro, Bin Wang, Ping Xi, and Daxin Xu for valuable discussions. Finally, the author is grateful to the anonymous referee for numerous suggestions that helped correct inaccuracies and improve the presentation of this paper.

\section{Properties of Kloosterman sheaves}\label{sec::Kloosterman-sheaf}
In this section, we primarily focus on Kloosterman sheaves appearing in Theorems~\ref{thm::L-function} and~\ref{thm::modular form}. After recalling some preliminaries about Weyl's construction and $\ell$-adic sheaves, we give Kloosterman sheaves geometrical descriptions in \cref{prop:geometric-char}. Then we describe their local structures at $0$ and $\infty$ in \cref{sec::local-monodromy-0-ell} and \cref{sec::local-monodromy-infty-ell}. At last, we give dimension formulas of the $\ell$-adic cohomologies of Kloosterman sheaves in \cref{sec::dim-middle}.

\subsection{Weyl's construction}\label{sec::weyl-construction}

We recall some preliminaries from \cite[\S6, \S15 \& \S17]{W.Fulton2013}. A \textit{partition} of an integer $k$ is a sequence of nonnegative integers of the form $\mu:=(\mu_1,\mu_2, \ldots,\mu_m)$ such that $\mu_1\geq \mu_2\cdots\geq \mu_m$ and $\sum_i{\mu_i}=k$. For a partition of $k$, we can associate a Young diagram, such that $\mu_i$ are the lengths of the $i$-th rows. For example, the Young diagram of the partition $(3,2,1)$ is shown in the following diagram.
	\begin{center}
	\begin{ytableau}
	1 & 2 &3\\
	4 &5 & \none\\
	6&\none&\none
	\end{ytableau} 
	\end{center}
	
	For a partition $ \mu$ of $k$, we define two elements $a_{\mu}$ and $b_{\mu}$ in the group ring $\mathbb{Z}[S_k]$ as follows. First, we label each block in the Young diagram by indexes in $\{1,\ldots,k\}$. We take $P_\mu:=\{\sigma\in S_k\mid \sigma \text{ preserves each row}\}$ and $Q_{\mu}:=\{\tau\in S_k\mid \tau \text{ preserves each column}\}$. Let $\mathrm{sign}\colon S_k\to \{\pm 1\}$ be the sign character of $S_k$. Then we define 
		$$a_{\mu}:=\sum_{\sigma\in P_{\mu}}\sigma, \quad b_{\mu}:=\sum_{\tau \in Q_{\mu}}\mathrm{sign}(\tau)\tau$$
	and $c_\mu=a_\mu\cdot b_\mu$ in the group ring $\bb{Z}[S_k]$.
	
	Let $K$ be a field of characteristic $0$ and $V=K^{n+1}$ be the standard representation of $\rr{SL}_{n+1}$. The group $S_k$ acts on the tensor product $V^{\otimes k}$ by 
		$$\sigma \cdot v_1\otimes \cdots \otimes v_k:=v_{\sigma(1)}\otimes \cdots\otimes v_{\sigma(k)}.$$
	Then we have the endofunctor $\mathbb{S}_{\mu}$ of the category of finite-dimensional representations of $\mathrm{SL}_{n+1}$ defined by $\mathbb{S}_{\mu}V:=V^{\otimes k}\cdot c_{\mu}.$ For convenience, we also write 
 
    \begin{equation}\label{eq:isotypic-component-convention}
        (V^{\otimes k})^{P_\mu\times Q_\mu,1\times \rr{sign}}:=V^{\otimes k}\cdot c_{\mu}.
    \end{equation}
	
	Let $\lambda=(\lambda_1,\ldots,\lambda_n)$ be a sequence of nonnegative integers. Let $V$ be the standard representation $K^{n+1}$ equipped with the natural action of $\mathrm{SL}_{n+1}$ and $V_\lambda$ be the unique irreducible subrepresentation of the highest weight $\sum_{i} \lambda_i (L_1+\ldots+L_i)$ of 
		$$\Sym^{\lambda_1}V\otimes \Sym^{\lambda_2}\wedge^2V\otimes \cdots\otimes \Sym^{\lambda_n}\wedge^nV.$$

	In the case of $\mathrm{SL}_3$, the representation with the highest weight $\lambda_1L_1+ \lambda_2(L_1+L_2)$ can be described as 
		\begin{equation}\label{eq::irreducible-repr-sl3}
			\ker(\Sym^{\lambda_1}V\otimes \Sym^{\lambda_2}\wedge^2V\xrightarrow{\pi_{\lambda_1,\lambda_2}} \Sym^{\lambda_1-1}V\otimes \Sym^{\lambda_2-1}\wedge^2V),
		\end{equation}
	where $\pi_{\lambda_1,\lambda_2}$ sends $v_1\otimes \cdots \otimes v_{\lambda_1}\otimes w_1\otimes\dots\otimes w_{\lambda_2}$ to 
			$$\frac{1}{(\lambda_1)!(\lambda_2)!}\sum_{\sigma \in S_{\lambda_1},\tau\in S_{\lambda_2}}<v_{\sigma(1)},w_{\tau(1)}>\cdot v_{\sigma(2)}\cdots v_{\sigma(\lambda_1)}\otimes w_{\tau(2)}\cdots w_{\tau(\lambda_2)}$$ 
	where $<\cdot,\cdot>\colon V\times \wedge^2V\to K$ is the natural pairing.

	In general, we can construct the representation $V_{\lambda}$ using Schur functors as follows. Let 
		\begin{equation}\label{eq:G_lambda}
            \begin{split}
                \mu(\lambda)&:=\Bigl(\sum_{j=1}^n\lambda_j,\sum_{j=2}^n\lambda_j,\ldots,\lambda_n\Bigr),\\
			    G_\lambda&:=P_{\mu(\lambda)}\times Q_{\mu(\lambda)}.
            \end{split}
		\end{equation} 
	By applying $\mathbb{S}_{\mu(\lambda)}$ to $V^{\otimes |\lambda|}$, the resulting representation is nothing but $V_{\lambda}$. More precisely, we have $V_{\lambda}=(V^{\otimes |\lambda|})^{G_\lambda,1\times \rr{sign}}$. For example, if $\lambda=(k,0,\ldots,0)$, then $P_\lambda =S_k$ and $Q_\lambda$ is trivial. Hence, $\bb{S}_\lambda(V^{\otimes k})=(V^{\otimes k})^{S_k}=\Sym^kV$.

\subsection{Some generalities on \texorpdfstring{$\ell$}{l}-adic sheaves}

Let $p\neq \ell$ be two prime numbers, $q$ a power of $p$, $\iota\colon \bql\hookrightarrow \bb{C}$ an embedding. We denote by $E$ either the algebraic closure $\bql$ of $\bb{Q}_\ell$, or a finite extension of $\bb{Q}_\ell$ inside $\bql$. By an $\ell$-adic sheaf on a connected separated Noetherian scheme $X$ over $\fqq$, we mean a constructible $E$-sheaf on $X$.

\subsubsection{Cohomologies of \texorpdfstring{$\ell$}{ell}-adic sheaves on curves}

Let $C$ be a geometrically connected smooth projective curve over $\fqq$. The $\ell$-adic cohomologies $\rr{H}^i_{\et}(C_{\bar{\bb{F}}_q},\scr{F})$ of an $\ell$-adic sheaf $\scr{F}$ on $C$ are finite-dimensional $E$-vector spaces equipped with Frobenius actions. 

Suppose that $\scrf$ is a lisse $\ell$-adic sheaf on an affine open subset $U$ of $C$. We denote by $\rho_{\scr{F}}$ the corresponding continuous $\ell$-adic representation of $\piet(U,\bar{ \eta}_U)$, and by $G_{\mathrm{geom}}$ the geometric monodromy group of $\scrf$, i.e., the Zariski closure of the image of $\piet(U_{\bar{\bb{F}}_q},\bar {\eta}_U)$ in $\mathrm{GL}(\scrf_{\bar \eta})$ under $\rho_{\scrf}$. Then 
	$$\rr{H}_{\et}^2(U_{\bar{\bb{F}}_q},\scrf)=\rr{H}_{\et,\rr{c}}^0(U_{\bar{\bb{F}}_q},\scrf)=0,$$
	$$\rr{H}_{\et}^0(U_{\bar{\bb{F}}_q},\scrf)=(\scrf\mid_{\bar{\eta}_U})^{G_{\rr{geom}}}, 
	\quad \text{and} \quad
	\rr{H}^2_{\et,\rr{c}}(U_{\bar{\bb{F}}_q},\scrf)=(\scrf\mid_{\bar{\eta}_U})_{G_{\rr{geom}}}(-1),$$
where $(\scrf\mid_{\bar{\eta}_U})^{G_{\rr{geom}}} $ and $(\scrf\mid_{\bar{\eta}_U})_{G_{\rr{geom}}}$ are the invariants and the coinvariants of $\scrf$ under the action of $G_{\rr{geom}}$.

\subsubsection{The Grothendieck--Ogg--Shafarevich formula} For each closed point $x\in |C|$, we denote the localization (resp. strict localization) of $C$ at $x$ (resp. $\bar x$) by $C_{(x)}$ (resp. $C_{(\bar x)}$). The special points and generic points of $C_{(x)}$ and $C_{(\bar x)}$ are denoted by $s_x,\eta_x$ and $s_{\bar x}, \eta_{\bar x}$ respectively.

Let $\scrf$ be an $\ell$-adic sheaf on $C$ which is lisse on an open subset $U\subset C$. We denote $\rk(\scrf)=\rk(\scrf_{\bar\eta})$, $\rk_x(\scrf)=\rk(\scrf_{\bar s_x})$, and $\sw_x(\scrf)=\sw(\scrf_{\bar\eta_x})$. Then the Euler characteristic $\chi(U_{\bar{\bb{F}}_q},\scrf|_U)=\sum_{i=0}^2(-1)^{i+1}\dim \rr{H}_{\et}^i(U_{\bar{\bb{F}}_q},\scrf|_U)$ can be computed by the Grothendieck--Ogg--Shafarevich formula
    \begin{equation}\label{eq:GOS-formula}
        \chi(U_{\bar{\bb{F}}_q},\scrf|_U)=\Bigl(2-2g-\sum_{x\in|C\backslash U|}\deg(x)\Bigr)\cdot \rk(\scrf)-\sum_{x\in |C\backslash U|}\deg(x)\cdot \sw_x(\scrf).
    \end{equation}          
see \cite[X.\,Théorème\,7.1]{SGA5} or \cite[(2.2)]{Lau87}. The sum on the right-hand side is a finite sum because $\sw_x(\scrf)=0$ whenever $x\in U$.

\subsubsection{The middle \texorpdfstring{$\ell$}{ell}-adic cohomology}
Let $C$ be a curve as above, $j\colon U\hookrightarrow C$ an open immersion, and $\scrf$ an $\ell$-adic cohomology on $U$. The \textit{middle $\ell$-adic cohomology} of $\scrf$ is the image of the forgetting support morphism 
	$$\rr{H}^1_{\et,\rr{c}}(C_{\bar{\bb{F}}_q},\scrf)\to \rr{H}^1_{\et}(C_{\bar{\bb{F}}_q},\scrf),$$
denoted by $\rr{H}^1_{\et,\rr{mid}}(C_{\bar{\bb{F}}_q},\scrf)$, which is identified with the $\ell$-adic cohomology of the (non-derived) direct image $j_*\scrf$.
According to \cite[2.0,7]{katz1988gauss}, we have a long exact sequence
	\begin{equation}\label{eq::long-exact-sequence}
		\begin{split}
			0\to (\scrf\mid_{\bar{\eta}_U})^{G_{\rr{geom}}}\to &\oplus_{x\in |C\backslash U|,\ \bar x \text{ over }x } (\scrf|_{\bar\eta_x})^{I_{\bar x}} 
			\to\Hetc{1}(U_{\bfp},\scrf)\\
			\to&\Het{1}(U_{\bfp},\scrf) 
                    \to \oplus_{x\in |C\backslash U|,\ \bar x \text{ over }x } (\scrf|_{\bar \eta_x})_{I_{\bar x}}(-1)\to (\scrf\mid_{\bar{\eta}_U})_{G_{\rr{geom}}}(-1)\to 0
		\end{split}
	\end{equation}
where $\eta_{\bar x}$ are the generic point of the strict henselization of $C$ at $\bar x$, the groups $I_{\bar x}$ are the inertia groups at $\bar x$, $(\scrf|_{\bar\eta_x})^{I_{\bar x}}$ are the invariants of $I_{\bar x}$ and $(\scrf|_{\bar\eta_x})_{I_{\bar x}}$ are the coinvariants of $I_{\bar x}$. 

Assume that $\scrf$ is pure of weight $w$. By the main theorem of Weil II \cite[3.3.1]{deligne1980conjecture} and \eqref{eq::long-exact-sequence}, we conclude that 
	$$\mathrm{H}_{\et,\mathrm{mid}}^1(U_{\bar{\bb{F}}_q},\scrf)\simeq \gr^W_{w+1}\Hetc{1}(U_{\bar{\bb{F}}_q},\scrf)\simeq \gr^W_{w+1}\Het{1}(U_{\bar{\bb{F}}_q},\scrf).$$
 In particular, the dimension of the middle $\ell$-adic cohomology is given by
	\begin{equation}\label{eq::dim-middle-general-formula}
		\dim \Hetc{1}(U_{\bar{\bb{F}}_q},\scrf)-\sum_{x\in |C\backslash U|,\ \bar x \text{ over }x }\dim (\scrf|_{\bar\eta_x})^{I_{\bar x}}+\dim \scrf^{G_{\mathrm{geom}}}.
	\end{equation}

\subsection{Kloosterman sheaves}\label{subsec:Kloosterman-sheaves}
Let $p$ and $\ell$ be two distinct prime numbers and $\fqq$ be the finite field with $q=p^r$ elements. Let $\zeta_p$ be a primitive $p$-th root of unity $\zeta_p$ in $\bql$, and we denote by $E=\bb{Q}_\ell(\zeta_p)$. We fix a nontrivial additive character $\psi_p\colon \bb{F}_p\to E\cros$, and denote by $\psi_q$ the character $\psi_p\circ \tr_{\fqq/\fpp}$. The \textit{Artin-Schreier sheaf} $\scr{L}_{\psi_q}$ is a lisse $\ell$-adic sheaf with coefficients in $E$ on $\bb{A}^1_{\fqq}$, whose trace function is given by $\psi_q$. We denote by $\scr{L}_{\psi_q(f)}$ the inverse image $f^*\scr{L}_{\psi_q}$ of the Artin-Schreier sheaf along a regular function $f\colon X\to \bb{A}^1_{\fqq}$. 

Consider the following diagram
	\begin{equation}\label{eq:diagram-Kl}
		\begin{tikzcd}
		~ & \bb{G}_m^{n+1}\ar[ld, "\pi"'] \ar[rd,"\sigma"] & ~\\
		\bb{G}_m & ~ &\bb{A}^1
		\end{tikzcd}
	\end{equation}
where $\sigma$ denotes the sum of coordinates and $\pi$ denotes the product of coordinates. We define the \textit{Kloosterman sheaf} on $\bb{G}_{m,\bb{F}_q}$ by
	\begin{equation}\label{eq::defn-kln+1}
		\Kl_{n+1}:=\mathrm{R}^{n}\pi_!\sigma^*\scr{L}_{\psi_q}.
	\end{equation}

Deligne showed in \cite[Sommes.\,Trig.\,Thm.\,7.8]{SGA41/2} that $\Kl_{n+1}$ is a lisse $\ell$-adic sheaf of rank $n+1$, pure of weight $n$, tamely ramified at $0$ with a single Jordan block, and is totally wildly ramified at $\infty$ with Swan conductor $1$. Moreover, we have an isomorphism 
	$$\Kl_{n+1}^\vee \simeq \iota_{n+1}^*\Kl_{n+1}$$
where $\Kl_{n+1}^\vee$ is the dual of $\Kl_{n+1}$ and $\iota_{n+1}\colon \bb{G}_m\to \bb{G}_m$ is defined by the multiplication of $(-1)^{n+1}$. 

In the generality of \textit{Kloosterman sheaves for reductive groups} constructed in \cite{HNY-Kloosterman}, one gets a tensor functor \eqref{eq:HNY-functor} from the category of finite-dimensional representations of $\rr{SL}_{n+1}$ to the category of $\ell$-adic local systems on $\bb{G}_m$. If we take $V$ as the standard representation $\rr{Std}$ of $\rr{SL}_{n+1}$ and the symmetric power $\Sym^k\rr{Std}$, then $\Kl_{\rr{SL}_{n+1}}(V)$ are $\Kl_{n+1}(\frac{n}{2})$ and $\Kl_{\rr{SL}_{n+1}}(V)=\Sym^k\Kl_{n+1}(\frac{nk}{2})$ respectively.

If we take $V$ as the irreducible representation of the highest weight $\lambda$, we get $\bigl(\Kl_{n+1}^{\otimes |\lambda|}\bigr)^{G_\lambda,1\times \rr{sign}}\bigl(\frac{n|\lambda|}{2}\bigr)$. For simplicity, we write 
\begin{equation}\label{eq:Kloosterman-sheaves}
    \Kl_{n+1}^\lambda:=\Kl_{\rr{SL}_{n+1}}(V_\lambda)\bigl(-\tfrac{n|\lambda|}{2}\bigr) .
\end{equation}

Alternatively when $n=2$, we use \eqref{eq::irreducible-repr-sl3} to conclude that the sheaf $\Kl_{\rr{SL}_3}(V_{\lambda_1,\lambda_2})$ is the kernel of 
\begin{equation}\label{eq::sheaf-repr-sl3}
	\Sym^{\lambda_1}\Kl_3\otimes \Sym^{\lambda_2}(\Kl_3)^\vee (\lambda_1+\lambda_2)\to \Sym^{\lambda_1-1}\Kl_3\otimes \Sym^{\lambda_2-1}(\Kl_3)^\vee (\lambda_1+\lambda_2-2).
\end{equation}

\subsubsection{Geometric interpretations}
 Now, we describe Kloosterman sheaves \eqref{eq:Kloosterman-sheaves} geometrically. Let $g\colon \bb{G}_{m,\fpp}^n\to \bb{A}^1_{\fpp}$ be the Laurent polynomial $\sum_{i=1}^n y_i +\frac{1}{\prod _i y_i}$ and $[n+1]\colon \bb{G}_{m,\fpp}\to \bb{G}_{m,\fpp}$ the $(n+1)$-th power map.
 
\begin{lem}\label{lem::D-F-trasnform}
	 We have an isomorphism	of $\ell$-adic sheaves
		$$[n+1]^*\Kl_{n+1}\simeq \FT_{\psi_p}(\mathrm{R}^{n-1}g_!E)|_{\bb{G}_m},$$
	where $\FT_{\psi_p}$ is the Deligne-Fourier transform \cite{Lau87}.
\end{lem}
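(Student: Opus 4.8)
The plan is to recognize both sides as expressing the same iterated convolution/multiplicative-pushforward construction, and to match them via the standard compatibility between Deligne--Fourier transform and multiplicative convolution. Recall that Katz's description of $\Kl_{n+1}$ presents it as the $(n+1)$-fold multiplicative convolution $\scr{L}_{\psi_p(x_1)} \ast \cdots \ast \scr{L}_{\psi_p(x_{n+1})}$ of Artin--Schreier sheaves pushed forward by the product map; equivalently, $\Kl_{n+1} = \mathrm{R}^n\pi_!\sigma^*\scr{L}_{\psi_p}$ as in \eqref{eq::defn-kln+1}. On the other side, $\mathrm{R}^{n-1}g_!E$ is the fiberwise cohomology of the Laurent polynomial $g = \sum_{i=1}^n y_i + 1/\prod_i y_i$, and applying $\FT_{\psi_p}$ to it and restricting to $\bb{G}_m$ should, after unwinding, reproduce a convolution of the $n$ summands $y_i$ together with the reciprocal monomial $1/\prod_i y_i$ — the latter being where the $(n+1)$-th power map $[n+1]$ enters.

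First I would set up notation: write $g = g' + h$ where $g'(y_1,\dots,y_n) = \sum y_i$ on $\bb{G}_m^n$ and $h(y) = 1/\prod y_i$, and observe that $g$ is the composite of $(g', \pi_{\mathrm{inv}})$ (where $\pi_{\mathrm{inv}}$ is the inverse-product map $\bb{G}_m^n \to \bb{G}_m$) with the sum map $\bb{A}^1 \times \bb{G}_m \to \bb{A}^1$. Then $\mathrm{R}^{n-1}g_! E$ can be computed by pushing forward in stages, and by proper base change plus the Künneth formula it relates to the external tensor of $\mathrm{R} g'_! E$ with the constant sheaf along the $\bb{G}_m$-factor parametrizing the value of $\prod y_i$. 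The key identity is then: for a function $f = f_1 + f_2$ on $U_1 \times U_2$ with $f_i$ pulled back from $U_i$, one has $\FT_{\psi}(\mathrm{R}(f)_! E)$ expressed through the multiplicative (or additive, depending on the geometry) convolution of $\FT_\psi(\mathrm{R}(f_1)_!E)$ and $\FT_\psi(\mathrm{R}(f_2)_!E)$. Here $\FT_\psi$ applied to $\mathrm{R}g'_!E$ over $\bb{A}^1^n$ with the additive structure gives (a shift/twist of) $\boxtimes_i \scr{L}_{\psi(y_i)}$, essentially by the self-duality of the Artin--Schreier sheaf under Fourier transform; and the reciprocal-monomial factor $h$, after Fourier transform and using that $[n+1]$ is the multiplicative norm map for the product, contributes the pullback $[n+1]^*$ together with the last Artin--Schreier factor.

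The cleanest route is probably to cite Katz's \emph{Gauss Sums, Kloosterman Sums, and Monodromy Groups} directly: in \cite[\S4]{katz1988gauss} the Kloosterman sheaf is identified with a Fourier transform of a pushforward under exactly this kind of Laurent polynomial, and Lemma~\ref{lem::D-F-trasnform} is essentially a repackaging of that identification with the $(n+1)$-th power map tracking the difference between $\prod y_i$ and the ``honest'' last variable. So the proof would be: (1) recall $\FT_{\psi_p} \circ \FT_{\psi_p^{-1}} \cong \mathrm{id}$ (up to twist) and that $\FT_{\psi_p}(\scr{L}_{\psi_p}) $ is a delta sheaf at the point $-1$ (or at $0$ depending on conventions), hence Fourier transform converts the additive pushforward picture for $\sigma$ into the multiplicative pushforward picture for $\pi$; (2) unwind the definition of $g$ to see that $\mathrm{R}^{n-1}g_!E$ over $\bb{A}^1$ is, via proper base change along the product fibration, the pushforward of $\sigma^*\scr{L}_{\psi_p}$ along $\pi$ reindexed so that the monomial $1/\prod y_i$ plays the role of the $(n+1)$-st coordinate raised to power related to $[n+1]$; (3) apply $\FT_{\psi_p}$ and read off that the result, pulled back along $[n+1]$, is $\Kl_{n+1}$; (4) check the cohomological degree bookkeeping so that only $\mathrm{R}^{n-1}$ (resp.\ $\mathrm{R}^n$) contributes, using that $g$ (resp.\ $\pi$) is an affine morphism of relative dimension $n-1$ (resp.\ $n$) with the fibers being the relevant tori, so that the pushforwards are concentrated in a single degree after restriction to $\bb{G}_m$ by Katz's ``no cohomology except in the middle'' argument.

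The main obstacle, I expect, is \emph{bookkeeping rather than substance}: getting the precise shifts, Tate twists, and — crucially — the exact role of the $(n+1)$-th power map $[n+1]$ correct. The map $[n+1]$ appears because the Fourier transform naturally produces a sheaf on the $\bb{G}_m$ parametrizing $\prod_{i=1}^n y_i$ (equivalently, the last Kloosterman variable $x_{n+1}$ satisfies $x_1 \cdots x_{n+1} = $ parameter, so $x_{n+1} = $ parameter$/\prod x_i$), whereas $\Kl_{n+1}$ lives on the $\bb{G}_m$ with coordinate the full product $x_1\cdots x_{n+1}$; reconciling ``product of $n$ of the variables'' with ``product of all $n+1$'' forces the $(n+1)$-power reparametrization, and one must verify this matches on trace functions (which is a one-line exponential-sum identity) and then invoke that a map of lisse sheaves on $\bb{G}_m$ inducing equality of trace functions over all $\fqq$ is an isomorphism. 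I would double-check the degree-concentration claim (step 4) carefully, since $g$ is not proper and one needs Katz's vanishing results to know $\mathrm{R}^ig_!E|_{\bb{G}_m} = 0$ for $i \neq n-1$, or at least that the other degrees do not interfere after applying $\FT_{\psi_p}$ and restricting.
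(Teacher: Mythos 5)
There is a genuine gap here, and it concerns precisely the point you flag as "bookkeeping rather than substance": the appearance of $[n+1]^*$ is the substance, and your proposal never actually produces it. The paper's proof is a short direct computation built on one specific change of variables. One first rewrites $\Kl_{n+1}=j^*\mathrm{R}(\pr_z)_!\scr{L}_{\psi_p(\sum_{i=1}^n x_i+z/\prod_i x_i)}[n]$ by eliminating $x_{n+1}$ in favour of the product coordinate $z$; then, after pulling back along $z=t^{n+1}$, the substitution $x_i=t\,y_i$ turns the phase into $\sum_i ty_i+t^{n+1}/(t^n\prod_i y_i)=t\cdot g(y)$. The exponent $n+1$ is forced as the unique choice making the phase homogeneous of degree one in $t$, i.e.\ equal to the Fourier kernel $t\cdot g$; the other half of the proof is the one-line identity $\FT_{\psi_p}(\mathrm{R}g_!E)\simeq \mathrm{R}(\pr_t)_!\scr{L}_{\psi_p(tg)}[1]$, obtained from base change and the projection formula. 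Your proposed route via a decomposition $g=g'+h$ and Fourier/convolution compatibilities does not get off the ground as stated, because $g$ is not a Thom--Sebastiani sum: the term $h=1/\prod_i y_i$ involves all the variables, so there is no product decomposition of the source to which K\"unneth or the convolution-to-tensor compatibility of $\FT_{\psi}$ applies. Your heuristic for $[n+1]$ ("product of $n$ variables versus product of $n+1$") is also not the actual mechanism; without the scaling $x_i=ty_i$ one does not see why the reparametrization is a power map at all, let alone the $(n+1)$-st power.

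The fallback you offer — matching trace functions and concluding an isomorphism — is not available either. Equality of trace functions over all finite extensions only identifies the semisimplifications of two lisse sheaves (Chebotarev), and does not by itself produce a map between them; the lemma is used later (Proposition~\ref{prop:geometric-char} and the cohomological identifications that follow) at the level of actual sheaves with their $S_{|\lambda|}$-actions and local structures at $0$ and $\infty$, so a semisimplification statement would not suffice. Your final concern about degree concentration is legitimate but minor: the paper works with the shifted complexes $\mathrm{R}\pi_!\sigma^*\scr{L}_{\psi}[n]$ and $\mathrm{R}g_!E[n-1]$, and the concentration in a single degree over $\bb{G}_m$ is Deligne's theorem already invoked for $\Kl_{n+1}$, transported through the (perverse-exact up to shift) Fourier transform.
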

\begin{proof}
The proof is similar to that of \cite[Prop.\,2.10]{fresan2018hodge}. Let $x_1,\ldots, x_{n+1}$ be the coordinates of $\bb{G}_{m,\fpp}^{n+1}$ in the diagram \eqref{eq:diagram-Kl}. We perform a change of variable $z=\prod_{i=1}^n x_i$. Let $j\colon \bb{G}_{m,\fpp}\to \bb{A}^1_{\fpp}$. Then we can rewrite \eqref{eq::defn-kln+1} as 
	$$\Kl_{n+1}=\mathrm{R}(\pr_{z})_!\scr{L}_{\psi_p\bigl(\sum_{i=1}^n x_i+\frac{z}{\prod_{i=1}^n x_i}\bigr)}[n],$$
where $\pr_{z}$ is the projection from $\mathbb{G}_{m}^{n+1}$ to $\mathbb{G}_{m,z}$. Let $t$ be the coordinate of the source of the map $[n+1]$. Then 
	\begin{equation}
		\begin{split}
			[n+1]^*\Kl_{n+1}\simeq &\mathrm{R}(\pr_{z})_!\scr{L}_{\psi_p\bigl(\sum_{i=1}^n x_i+\frac{t^{n+1}}{\prod_{i=1}^n x_i}\bigr)}[n] 
							\simeq j^*\rr{R}(\pr_t)_!\scr{L}_{\psi_p(tg)}[n],
		\end{split}
	\end{equation}
where $tg$ is seen as a function on $\mathbb{G}_{m}^n\times \mathbb{A}^1_{t}$, $\mathrm{pr}_t$ is the projection from $\mathbb{G}_{m}^n\times \mathbb{A}^1_t$ to $\mathbb{A}^1_t$, and we performed a change of variable $y_i=x_i/t$ in the last isomorphism. Then, by a calculation of the Deligne--Fourier transform, we obtain
\begin{equation*}
	\begin{split}
		\FT_{\psi_p}(\rr{R}g_!E)\simeq & \rr{R}(\pr_2)_!(\pr_1^*\rr{R}g_!E\otimes \scr{L}_{\psi_p(xt)}[1])\\
		\simeq & \rr{R}(\pr_2)_!(\rr{R}(g\times \id)_!\pr_1^*E\otimes \scr{L}_{\psi_p(xt)}[1])\\
		\simeq & \rr{R}(\pr_2)_!\rr{R}(g\times \id)_!(\pr_1^*E\otimes \scr{L}_{\psi_p(tg)}[1])\\
		\simeq &\rr{R}(\pr_t)_!\scr{L}_{\psi_p(tg)}[1],
	\end{split}
\end{equation*}
where we used the base change theorem in the second isomorphism and the projection formula in the third isomorphism.
The morphisms in the above calculation are illustrated in the following diagram.
$$\begin{tikzcd}[column sep={5em,between origins} ]
	& \bb{G}_m^n\times \bb{A}^1_t \ar[ld,"\pr_1"']\ar[rd,"g\times \id"] & & \\
	\bb{G}_m^n\ar[rd,"g"] &  &\bb{A}^1_x \times \bb{A}^1_t \ar[ld,"\pr_1"']\ar[rd,"\pr_2"] & \\
	& \bb{A}^1_x & & \bb{A}^1_t &
\end{tikzcd}$$
We conclude from the above isomorphisms that $[n+1]^*\Kl_{n+1}\simeq j^*\FT_{\psi_p}(\mathrm{R}g_!E)[n-1].$
\end{proof}

Consider the torus $\bb{G}_{m,\fpp}^{n|\lambda|+1}$ with coordinates $\{x_{i,j}\mid 1\leq i\leq |\lambda|, 1\leq j\leq n\}$ and $z$. Let $f_{|\lambda|}\colon \bb{G}_{m,\fpp}^{n|\lambda|+1}\to \bb{A}^1_{\fpp}$ be the Laurent polynomial $\sum_{i=1}^{|\lambda|} \Bigl(\sum_{j=1}^n x_{i,j}+\frac{z}{\prod_j x_{i,j}} \Bigr)$ and $\pr_z$ be the projection from $\bb{G}_{m,\fpp}^{n|\lambda|+1}$ to $\mathbb{G}_{m,z}$.

Similarly, consider the torus $\bb{G}_{m,\fpp}^{n|\lambda|+1}$ with coordinates $\{x_{i,j}\mid 1\leq i\leq |\lambda|, 1\leq j\leq n\}$ and $t$. We let $\tilde f_{|\lambda|}\colon \bb{G}_{m,\fpp}^{n|\lambda|+1}\to \bb{A}^1_{\fpp}$ be the Laurent polynomial $\sum_{i=1}^{|\lambda|} \Bigl(\sum_{j=1}^n x_{i,j}+\frac{t^{n+1}}{\prod_j x_{i,j}} \Bigr)$ and $\pr_t$ be the projection from $\bb{G}_{m,\fpp}^{n|\lambda|+1}$ to its $\mathbb{G}_{m,t}$.

\begin{prop}\label{prop:geometric-char}
	We have the isomorphism	of $\ell$-adic sheaves
		$$\Kl_{n+1}^\lambda\simeq \Bigl(\rr{R}^{n|\lambda|}\pr_{z*}\scr{L}_{\psi_p(f_{|\lambda|})}\Bigr)^{G_\lambda,\rr{sign}^n\times \rr{sign}^{n+1}}$$
	and
		$$[n+1]^*\Kl_{n+1}^\lambda\simeq \Bigl(\rr{R}^{n|\lambda|}\pr_{t*}\scr{L}_{\psi_p(\tilde{f}_{|\lambda|})}\Bigr)^{G_\lambda,\rr{sign}^n\times \rr{sign}^{n+1}},$$
	where $G_\lambda=P_{\mu(\lambda)}\times Q_{\mu(\lambda)}$ is the group defined in \eqref{eq:G_lambda}, and the component $(G_\lambda,\rr{sign}^n\times \rr{sign}^{n+1})$ means taking the isotypic component with respect to 
		$\sum_{\sigma \in P_{\mu(\lambda)}}\sign^n(\sigma)\sigma\cdot \sum_{\tau \in Q_{\mu(\lambda)}}\sign^{n+1}(\tau)\tau.$
\end{prop}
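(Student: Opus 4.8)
The plan is to reduce the statement to a relative Künneth (Thom--Sebastiani) identity and then keep careful track of the $G_\lambda$-action together with the Koszul signs. First I would invoke Weyl's construction (Section~\ref{sec::weyl-construction}) and the symmetric monoidality of $\Kl_{\rr{SL}_{n+1}}$ to identify $\Kl_{n+1}^\lambda$ with the $(1\times\rr{sign})$-isotypic component of $\Kl_{n+1}^{\otimes|\lambda|}$ for the \emph{naive} permutation action of $G_\lambda=P_{\mu(\lambda)}\times Q_{\mu(\lambda)}\subset S_{|\lambda|}$ on the tensor factors (the Tate twist in the definition of $\Kl_{n+1}^\lambda$ cancelling the one in $\Kl_{\rr{SL}_{n+1}}(\rr{Std})=\Kl_{n+1}(n/2)$). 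Since $[n+1]^*$ is exact, monoidal and commutes with the $S_{|\lambda|}$-action, the same description holds for $[n+1]^*\Kl_{n+1}^\lambda$ with $\Kl_{n+1}$ replaced by $[n+1]^*\Kl_{n+1}$. Hence it suffices to build $G_\lambda$-equivariant isomorphisms $j^*\rr{R}^{n|\lambda|}(\pr_z)_*\scr{L}_{\psi_p(f_{|\lambda|})}\simeq\Kl_{n+1}^{\otimes|\lambda|}$ and $j^*\rr{R}^{n|\lambda|}(\pr_t)_*\scr{L}_{\psi_p(\tilde f_{|\lambda|})}\simeq([n+1]^*\Kl_{n+1})^{\otimes|\lambda|}$ in which $S_{|\lambda|}$ acts on the right-hand sides by permuting the factors \emph{twisted by the sign character $\rr{sign}^n$}. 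Granting this, the proposition follows because the operator $\sum_{\sigma\in P_{\mu(\lambda)}}\rr{sign}^n(\sigma)\sigma$ acting geometrically equals $\sum_{\sigma\in P_{\mu(\lambda)}}\sigma$ acting naively, while $\sum_{\tau\in Q_{\mu(\lambda)}}\rr{sign}^{n+1}(\tau)\tau$ acting geometrically equals $\sum_{\tau\in Q_{\mu(\lambda)}}\rr{sign}(\tau)\tau$ acting naively (using $\rr{sign}^{2n+1}=\rr{sign}$), so that taking the $(G_\lambda,\rr{sign}^n\times\rr{sign}^{n+1})$-isotypic part for the geometric action amounts to taking the $(G_\lambda,1\times\rr{sign})$-isotypic part for the naive action, i.e.\ $\Kl_{n+1}^\lambda$.

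For the cohomological identification I would write $f_{|\lambda|}=\sum_{i=1}^{|\lambda|}f_1(x_{i,\bullet},z)$ with $f_1(x_{i,\bullet},z)=\sum_j x_{i,j}+z/\prod_j x_{i,j}$, which displays $\scr{L}_{\psi_p(f_{|\lambda|})}$ on $\bb{G}_{m,\fpp}^{n|\lambda|}\times\bb{G}_{m,z}$ as the relative external tensor product over $\bb{G}_{m,z}$ of $|\lambda|$ copies of $\scr{L}_{\psi_p(f_1)}$ on $\bb{G}_{m,\fpp}^n\times\bb{G}_{m,z}$. The relative Künneth formula for $\rr{R}(\pr_z)_!$ then gives $\rr{R}(\pr_z)_!\scr{L}_{\psi_p(f_{|\lambda|})}\simeq\bigl(\rr{R}(\pr_z)_!\scr{L}_{\psi_p(f_1)}\bigr)^{\otimes|\lambda|}$ in $\dbc(\bb{G}_{m,z},E)$, and by \eqref{eq::defn-kln+1} and \cite[Sommes.\,Trig.\,Thm.\,7.8]{SGA41/2} the factor $\rr{R}(\pr_z)_!\scr{L}_{\psi_p(f_1)}$ is, on $\bb{G}_{m,z}$, the lisse sheaf $\Kl_{n+1}$ placed in cohomological degree $n$. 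So both sides are concentrated in degree $n|\lambda|$; moreover the forget-supports map $\rr{R}(\pr_z)_!\scr{L}_{\psi_p(f_{|\lambda|})}\to\rr{R}(\pr_z)_*\scr{L}_{\psi_p(f_{|\lambda|})}$ is an isomorphism over $\bb{G}_{m,z}$, since by monoidality of forget-supports it is the $|\lambda|$-fold tensor power of the corresponding rank-one isomorphism, which is established in the proof of Lemma~\ref{lem::D-F-trasnform} (where $\scr{L}_{\psi_p}$ is totally wild at the toric boundary of the fibre). This yields $j^*\rr{R}^{n|\lambda|}(\pr_z)_*\scr{L}_{\psi_p(f_{|\lambda|})}\simeq\Kl_{n+1}^{\otimes|\lambda|}$ as $E$-sheaves. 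For $\tilde f_{|\lambda|}$ the argument is identical after the blockwise substitution $x_{i,j}=t\,y_{i,j}$, which turns $\tilde f_{|\lambda|}$ into $t\sum_i\bigl(\sum_j y_{i,j}+1/\prod_j y_{i,j}\bigr)$ and lets me use Lemma~\ref{lem::D-F-trasnform}, whose proof identifies $\rr{R}(\pr_t)_!\scr{L}_{\psi_p(tg)}[n]\simeq[n+1]^*\Kl_{n+1}$, in place of \eqref{eq::defn-kln+1}.

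It then remains to pin down the $G_\lambda$-action on these sheaves. Since $f_{|\lambda|}$ (resp.\ $\tilde f_{|\lambda|}$) is invariant under the $S_{|\lambda|}$-action on the source permuting the blocks $x_{i,\bullet}$ and fixing $z$ (resp.\ $t$), the sheaf $\scr{L}_{\psi_p(f_{|\lambda|})}$ is $S_{|\lambda|}$-equivariant and $\rr{R}^{n|\lambda|}(\pr_z)_*\scr{L}_{\psi_p(f_{|\lambda|})}$ inherits an $S_{|\lambda|}$-action. Because the Künneth isomorphism is an isomorphism in the symmetric monoidal category $\dbc(\bb{G}_{m,z},E)$ compatible with the symmetry of $\otimes$, the transported action on $\bigl(\Kl_{n+1}[-n]\bigr)^{\otimes|\lambda|}$ permutes the factors \emph{as complexes}, hence subject to the Koszul sign rule; as $\Kl_{n+1}[-n]$ is concentrated in the single degree $n$, an adjacent transposition acquires the sign $(-1)^{n\cdot n}=(-1)^n$ relative to the naive transposition of the underlying sheaves, so each $\sigma\in S_{|\lambda|}$ acts on $\Kl_{n+1}^{\otimes|\lambda|}$ by $\rr{sign}(\sigma)^n$ times the naive permutation. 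Feeding this into the isotypic projectors as in the first paragraph identifies the $(G_\lambda,\rr{sign}^n\times\rr{sign}^{n+1})$-isotypic component of $\rr{R}^{n|\lambda|}(\pr_z)_*\scr{L}_{\psi_p(f_{|\lambda|})}$ with $\Kl_{n+1}^\lambda$, and likewise $[n+1]^*\Kl_{n+1}^\lambda$ in the pulled-back case. I expect this last bookkeeping --- controlling the Koszul sign in Künneth and checking it combines with the row/column sign characters of Weyl's construction to produce precisely $\rr{sign}^n\times\rr{sign}^{n+1}$ --- to be the only genuinely delicate point; the relative Künneth formula and the $\rr{R}\pr_!\simeq\rr{R}\pr_*$ comparison are routine given Lemma~\ref{lem::D-F-trasnform} and \eqref{eq::defn-kln+1}.
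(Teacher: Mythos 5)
Your argument is correct in substance but follows a genuinely different route from the paper. The paper first proves the \emph{second} isomorphism by passing through the Deligne--Fourier transform: it writes $([n+1]^*\Kl_{n+1})^{\otimes|\lambda|}$ as $j^*\FT_{\psi_p}\bigl(((\rr{R}g_!E)[n-1])^{\boxtimes|\lambda|}\bigr)[|\lambda|]$ using \cite[(1.2.2.7)]{Lau87}, applies the K\"unneth formula to identify $(\rr{R}g_!E)^{\boxtimes|\lambda|}$ with $\rr{R}g^{\boxplus|\lambda|}_!E$ up to shift --- this K\"unneth step is where the extra $\rr{sign}^n$ appears --- and then computes the Fourier transform as $\rr{R}(\pr_t)_!\scr{L}_{\psi_p(\tilde f_{|\lambda|})}$; the \emph{first} isomorphism is then deduced by $\mu_{n+1}$-descent along $[n+1]$, as in Remark~\ref{label::Galois-descent-motive-2}. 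You instead prove both isomorphisms directly and symmetrically by a relative K\"unneth formula over the one-dimensional base ($\bb{G}_{m,z}$ or $\bb{G}_{m,t}$), exhibiting $\scr{L}_{\psi_p(f_{|\lambda|})}$ as a $|\lambda|$-fold relative external tensor product of copies of $\scr{L}_{\psi_p(f_1)}$ and identifying each factor of $\rr{R}(\pr_z)_!$ with $\Kl_{n+1}[-n]$ via \eqref{eq::defn-kln+1}; the twist $\rr{sign}^n$ is then the Koszul sign for permuting complexes concentrated in degree $n$. The two mechanisms producing $\rr{sign}^n$ are the same phenomenon seen from different angles, and your version has the advantage of not needing the Fourier transform or the descent step for the first isomorphism, while the paper's version makes the link to $\FT_{\psi_p}(\rr{R}g^{\boxplus k}_!E)$ --- used later for the motivic construction --- explicit.

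One step you should tighten: the comparison $\rr{R}(\pr_z)_!\simeq\rr{R}(\pr_z)_*$ on $\bb{G}_{m,z}$ does not follow formally from ``monoidality of forget-supports,'' since the $*$-K\"unneth map is not an isomorphism for non-proper fibres in general, and the proof of Lemma~\ref{lem::D-F-trasnform} does not actually establish the rank-one case (it silently interchanges $!$ and $*$ as well). The clean justification is the standard one for Kloosterman sheaves: the restriction of $\scr{L}_{\psi_p(f_1)}$ to each fibre of $\pr_z$ is totally wild along the boundary of a suitable toric compactification, so the boundary terms in the localization sequence comparing compactly supported and ordinary higher direct images vanish (cf.\ \cite[4.0--4.1]{katz1988gauss}); the statement for $f_{|\lambda|}$ then follows either by the same wildness argument or by the $!$-K\"unneth applied after the rank-one comparison. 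This is a gap in exposition rather than in the mathematics.
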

\begin{proof}
	By \cite[(1.2.2.7)]{Lau87}, the Deligne--Fourier transform interchanges tensor product and the convolution. Using \cref{lem::D-F-trasnform}, we have
		\begin{equation*}
			\begin{split}
				([n+1]^*\Kl_{n+1})^{\otimes |\lambda|}\simeq 
				&j^*\rr{FT}_{\psi_p}\Bigl(((\mathrm{R}g_!E)[n-1])^{\boxtimes |\lambda|}\Bigr)[|\lambda|]\\
		 \simeq &j^*\rr{FT}_{\psi_p}\bigl((\mathrm{R}g^{\boxplus |\lambda|}_!E)\bigr)[n|\lambda|-1]\\
		 \simeq &\mathrm{R}(\pr_t)_!\scr{L}_{\psi_p(t\cdot g^{\boxplus |\lambda|})}[n|\lambda|-1]\\
		 \simeq &\mathrm{R}(\pr_t)_!\scr{L}_{\psi_p(\tilde f_{|\lambda|})}[n|\lambda|-1],
			\end{split}
		\end{equation*}
		where $g^{\boxplus|\lambda|}$ is the Laurent polynomial in \eqref{eq:gbox}, we used the Künneth formula in the second isomorphism, and we performed a change of variable $x_{i,j}=t\cdot y_{i,j}$ in the last isomorphism.

		Notice that the Deligne--Fourier transform preserves the action of the symmetric group $S_{|\lambda|}$. However, the Künneth formula yields an extra sign character $\rr{sign}^n$ on the right-hand side. By taking the corresponding isotypic component on both sides, we get the second isomorphism.

	As for the first isomorphism, similar to Remark~\ref{rek::Galois-descent-motive}, one has 
	\begin{equation*}
		\begin{split}
			\Kl_{n+1}^{\otimes |\lambda|}\simeq 
				   &\bigl([n+1]_*([n+1]^*\Kl_{n+1})^{\otimes |\lambda|}\bigr)^{\mu_{n+1}}\\
	 	    \simeq &\Bigl(\mathrm{R}(\pr_t)_!\scr{L}_{\psi_p(\tilde f_{|\lambda|})}\Bigr)^{\mu_{n+1}}[n|\lambda|-1]\\
			\simeq &\mathrm{R}(\pr_z)_!\scr{L}_{\psi_p(  f_{|\lambda|})}[n|\lambda|-1].
		\end{split}
	\end{equation*}
	At last, we add the corresponding isotypic components to both sides and get the first isomorphism.
\end{proof}

\subsection{The local structures of Kloosterman sheaves at \texorpdfstring{$0$}{0}}\label{sec::local-monodromy-0-ell}

Let $\bb{A}_{( 0)}^1=\{s_{ 0},\eta_{0}\}$ be the henselization of $\bb{A}^1_{\fqq}$ at $0$. The inertial group $I_{\bar 0}$ acts on the generic fiber $(\Kl_{n+1})_{\bar \eta_0}$. By a special case of \cite[7.0.7]{katz1988gauss}, the generic fiber $V=(\Kl_{n+1})_{\bar{\eta}_0}$ is a tamely ramified $\ell$-adic representation of $\gal(\bar {\eta}_0/\eta_0)$ with coefficients in $E=\bb{Q}_\ell(\zeta_p)$. The inertia group $I_{\bar 0}$ acts on $V$ unipotently by a single Jordan block. More precisely, there exists a basis $\{v_0,v_1,\ldots, v_n\}$ on which the nilpotent part of the monodromy operator $N\colon V\to V(-1)$ and $\frob_0$ act by
	$$\frob_0(v_i)=q^{n-i}v_i \ \text{ and }\ N(v_i)=v_{i+1}$$
for $i=0,\ldots,n$ (for convenience, we let $v_{n+1}=0$). 

\begin{rek}\label{req:inv-0-ind-p}
	The local monodromy of $\Kl_{n+1}|_{\eta_0}$ does not depend on the characteristic $p$ of the base field $\fqq$. Therefore, the local monodromy of $(\Kl_{n+1}^\lambda)_{\bar \eta_0}=V^{\otimes |\lambda|}\cdot c_\lambda$ is also independent of $p$. Consequently, the dimension of the $I_{\bar 0}$-invariants of $(\Kl_{n+1}^\lambda)_{\bar \eta_0}$ remains independent on $p$.
\end{rek}

The dimension of the $I_{\bar 0}$-invariants of $(\Sym^k\Kl_{n+1})_{\bar \eta_0}$ are computed in \cite[Thm.\,0.1]{fu2006trivial}. 
\begin{thm}[Fu--Wan]\label{thm::local-symk-at-0}
As a $\frob_q$-module, the $I_{\bar 0}$-invariants $(\Sym^k\Kl_{n+1})_{\bar \eta_0}^{I_{\bar 0}}$ is isomorphic to 
    $$\bigoplus_{u=0}^{\lfloor\frac{nk}{2}\rfloor}E(-u)^{\oplus m_k(u)},$$ 
where $m_k(u)$ are numbers characterized by the generating series,
\begin{equation}\label{eq::counting_inv_at_0}
\begin{split}
	\sum_{u=0} m_k(u)x^u=\prod_{i=n+1}^{n+k}(1-x^i)\cdot \prod_{i=2}^{k}(1-x^i)\inv.
\end{split}
\end{equation}
In particular, the dimension of $(\Sym^k\Kl_{n+1})_{\bar \eta_0}^{I_{\bar 0}}$ is $\sum_{u=0}^{\lfloor\frac{nk}{2}\rfloor}m_k(u).$
\end{thm}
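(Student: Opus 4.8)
The plan is to exploit the explicit description of $\Kl_{n+1}|_{\eta_0}$ recalled just above. Since $\Kl_{n+1}|_{\eta_0}$ is tame and totally unipotent, $I_{\bar 0}$ acts on $V:=(\Kl_{n+1})_{\bar\eta_0}$ through its tame quotient by a single unipotent operator whose logarithm is a nonzero multiple of the nilpotent $N$, which is one Jordan block of size $n+1$; and $\exp(N)-\id$ and $N$ have the same kernel. Functoriality then gives, for any representation built from $V$ and in particular for $\Sym^k V$, an equality of $\frob_0$-modules $(\Sym^k V)^{I_{\bar 0}}=\ker(N\mid\Sym^k V)$, where $N$ now denotes the induced nilpotent derivation. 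So the first step reduces the theorem to computing $\ker(N\mid\Sym^k V)$ together with its $\frob_0$-action. To organize this I would, via Jacobson--Morozov, regard $(V,N)$ as the $(n+1)$-dimensional irreducible $\mathfrak{sl}_2$-module $\Sym^n W$, with $W$ two-dimensional and $N$ the lowering operator; the basis $v_0,\dots,v_n$ of \S\ref{sec::local-monodromy-0-ell} is a weight basis, and assigning $v_i$ the weight $n-i$ makes this grading simultaneously the grading by $\frob_0$-eigenvalue (the weight-$u$ part being the $q^u$-eigenspace, by $\frob_0(v_i)=q^{n-i}v_i$) and the grading on which $N$ is homogeneous of degree $-1$.

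Next I would decompose $\Sym^k V\cong\Sym^k\Sym^n W\cong\bigoplus_m a_k(m)\,L_m$ into $\mathfrak{sl}_2$-isotypic components, where $L_m=\Sym^m W$ is the $(m+1)$-dimensional irreducible. On each $L_m$ the operator $N$ is a single Jordan block, so $\ker(N\mid L_m)$ is the one-dimensional lowest-weight line, which sits in weight $(nk-m)/2$ of $\Sym^k V$; hence $\frob_0$ acts on it by $q^{(nk-m)/2}$, i.e. it contributes a copy of $E(-u)$ with $u=(nk-m)/2$. This already yields $(\Sym^k\Kl_{n+1})_{\bar\eta_0}^{I_{\bar 0}}\cong\bigoplus_{u=0}^{\lfloor nk/2\rfloor}E(-u)^{\oplus a_k(nk-2u)}$, so everything reduces to the combinatorial identity $a_k(nk-2u)=m_k(u)$.

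For that last point I would use Hilbert series. With the grading above, $\Sym^k\Sym^n W$ has Hilbert series $\sum_u b_u x^u$ equal to the Gaussian binomial coefficient $\binom{n+k}{k}_x=\prod_{i=n+1}^{n+k}(1-x^i)\big/\prod_{i=1}^{k}(1-x^i)$: its weight-$u$ part is spanned by the monomials in $v_0,\dots,v_n$ of total weight $u$, whose number is exactly the number of multisets of size $k$ from $\{0,\dots,n\}$ of sum $u$. Comparing with the decomposition into the $L_m$'s, each of which occupies weights $(nk-m)/2,\dots,(nk+m)/2$, gives
\[
\binom{n+k}{k}_x=\sum_{u=0}^{\lfloor nk/2\rfloor}a_k(nk-2u)\,x^{u}\,\frac{1-x^{nk-2u+1}}{1-x}.
\]
Multiplying by $1-x$ rewrites the right side as $\sum_u a_k(nk-2u)\bigl(x^u-x^{nk+1-u}\bigr)$; the exponents $u\le\lfloor nk/2\rfloor$ and $nk+1-u\ge\lceil nk/2\rceil+1$ never coincide, so for $0\le u\le\lfloor nk/2\rfloor$ the coefficient of $x^u$ in $(1-x)\binom{n+k}{k}_x=\prod_{i=n+1}^{n+k}(1-x^i)\big/\prod_{i=2}^{k}(1-x^i)$ is precisely $a_k(nk-2u)$. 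This is the identity \eqref{eq::counting_inv_at_0}, and summing over $u$ gives the asserted dimension $\sum_u m_k(u)$.

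I expect the only real difficulty to be bookkeeping: pinning down the single weight normalization that is at once the $\frob_0$-eigenvalue grading, the grading making $N$ homogeneous, and the $\mathfrak{sl}_2$-weight grading, and keeping the Tate twists $E(-u)$ aligned with the convention $\frob_0(v_i)=q^{n-i}v_i$. There is no geometric input beyond the structure of $\Kl_{n+1}|_{\eta_0}$ already quoted; the mathematical content is just the plethysm $\Sym^k\Sym^n$ packaged through Gaussian binomials. One can sidestep the disjoint-support argument by instead writing $a_k(m)=b_{(nk+m)/2}-b_{(nk+m)/2+1}$ (counting weight spaces) and invoking the palindromy $b_u=b_{nk-u}$ of $\binom{n+k}{k}_x$ to obtain $m_k(u)=a_k(nk-2u)=b_u-b_{u-1}$ at once; alternatively, one simply cites Fu--Wan \cite{fu2006trivial}.
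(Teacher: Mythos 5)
Your argument is correct and is essentially the route the paper takes: the paper states this as a citation to Fu--Wan and sketches exactly your $\mathfrak{sl}_2$-decomposition idea in Remark~\ref{rek:proof-local-0} (enhance $N$ to an $\mathfrak{sl}_2$-action, identify the $I_{\bar 0}$-invariants with $\ker(\Sym^kN)$, and read off one Tate twist $E(-u)$ from the lowest-weight line of each irreducible constituent). The only content you add beyond the paper's sketch is the Gaussian-binomial computation identifying the multiplicities $a_k(nk-2u)$ with the coefficients $m_k(u)$ of $(1-x)\binom{n+k}{k}_x$, which the paper defers entirely to \cite{fu2006trivial}; that computation, including the disjointness of the exponents $u\le\lfloor nk/2\rfloor$ and $nk+1-u$, checks out.
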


To finish, we provide the formula of dimensions of $I_{\bar 0}$-invariants of $\Kl^{(2,1)}_{3}|_{\eta_0}$ and $\Kl^{(2,2)}_{3}|_{\eta_0}$.
\begin{prop}\label{prop::more-example-at-0}
\begin{enumerate}[label=(\theenumi)., ref= \ref{prop::more-example-at-0}.(\theenumi)]
	\item As a $\frob_q$-module $\Kl^{(2,1)}_{3}|_{\eta_0}$ is isomorphic to 
		$$\bigoplus_{i=1}^7E(-i)\bigoplus \bigoplus_{i=2}^6E(-i)\bigoplus \bigoplus_{i=3}^5 E(-i),$$
	and the $I_{\bar 0}$-invariants of $\Kl^{(2,1)}_{3}|_{\eta_0}$ is isomorphic to 
		$$E(-1)\bigoplus E(-2)\bigoplus E(-3).$$
	 
	\item As a $\frob_q$-module $\Kl^{(2,2)}_{3}|_{\eta_0}$ is isomorphic to 
		$$\bigoplus_{i=2}^{10}E(-i) \bigoplus \bigoplus _{i=3}^{9}E(-i)\bigoplus \bigoplus_{i=4}^{8}E(-i)^{\oplus 2}\bigoplus E(-6),$$
	and the $I_{\bar 0}$-invariants of $\Kl^{(2,2)}_{3}|_{\eta_0}$ is isomorphic to 
		$$E(-2) \bigoplus E(-3)\bigoplus E(-4)^{\oplus 2}\bigoplus E(-6).$$
\end{enumerate}
\end{prop}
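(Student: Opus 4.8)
The plan is to run the $\mathfrak{sl}_2$-argument of Remark~\ref{rek:proof-local-0}, but with the two Schur functors that carve out $\Kl_3^{(2,1)}$ and $\Kl_3^{(2,2)}$ in place of $\Sym^k$. Write $V=(\Kl_3)_{\bar\eta_0}$ with the basis $v_0,v_1,v_2$ of Section~\ref{sec::local-monodromy-0-ell}, so $N(v_i)=v_{i+1}$ (with $v_3=0$) and $\frob_0(v_i)=q^{2-i}v_i$. By Jacobson--Morozov, $N$ is the lower nilpotent of an $\mathfrak{sl}_2$-triple $\{e,h,f=N\}$ on $V$, under which $V$ is the irreducible $3$-dimensional module $\Sym^2E^2$ with $h(v_i)=(2-2i)v_i$; equivalently, since $N$ is a regular nilpotent in $\mathfrak{sl}(V)\cong\mathfrak{sl}_3$, this is the principal $\mathfrak{sl}_2\hookrightarrow\mathfrak{sl}_3$. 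Note that $\frob_0$ acts on the $h$-weight-$w$ line of $V$ by $q^{\,1+w/2}$, hence on the total-$h$-weight-$w$ part of $V^{\otimes k}$ by $q^{\,k+w/2}$; this compatibility between $\frob_0$ and the $\mathfrak{sl}_2$-action, as well as the independence of everything from the residue characteristic, is the content of Remark~\ref{req:inv-0-ind-p}.

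Next I would transport this through Weyl's construction (Section~\ref{sec::weyl-construction}). Since $\Kl_3^{(2,1)}$ (resp.\ $\Kl_3^{(2,2)}$) is obtained by applying the Schur functor $\mathbb{S}_{\mu(\lambda)}$ with $\mu(\lambda)=(3,1)$ (resp.\ $(4,2)$) to $\Kl_3$, it is the $c_{(3,1)}$-isotypic summand of $\Kl_3^{\otimes 4}$ (resp.\ the $c_{(4,2)}$-isotypic summand of $\Kl_3^{\otimes 6}$) up to a Tate twist; and $c_{\mu(\lambda)}\in\bb{Q}[S_k]$ is, up to a nonzero scalar, an idempotent that commutes with the diagonal monodromy and with $\frob_0$. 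Taking the fibre at $\bar\eta_0$ therefore commutes with $\mathbb{S}_{\mu(\lambda)}$ and gives isomorphisms of $\mathfrak{sl}_2\times\langle\frob_0\rangle$-modules
	$$(\Kl_3^{(2,1)})_{\bar\eta_0}\cong\mathbb{S}_{(3,1)}(\Sym^2E^2),\qquad (\Kl_3^{(2,2)})_{\bar\eta_0}\cong\mathbb{S}_{(4,2)}(\Sym^2E^2);$$
these are, equivalently, the irreducible $\SL_3$-modules $V_{(2,1)}$ (highest weight $2\omega_1+\omega_2$, $\dim=15$) and $V_{(2,2)}$ (highest weight $2\omega_1+2\omega_2$, $\dim=27$) restricted along the principal $\mathfrak{sl}_2\hookrightarrow\mathfrak{sl}_3$. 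I would fix the Tate normalisation of $\Kl_{n+1}^\lambda$ so that $\frob_0$ acts on the total-$h$-weight-$w$ part by $q^{\,4+w/2}$ in the first case and by $q^{\,6+w/2}$ in the second.

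The core of the argument is then the purely $\mathfrak{sl}_2$-theoretic decomposition
	$$\mathbb{S}_{(3,1)}(\Sym^2E^2)\cong\Sym^6E^2\oplus\Sym^4E^2\oplus\Sym^2E^2,$$
	$$\mathbb{S}_{(4,2)}(\Sym^2E^2)\cong\Sym^8E^2\oplus\Sym^6E^2\oplus\Sym^4E^2\oplus\Sym^4E^2\oplus E.$$
I would prove these by listing the multiset of $h$-weights — each $\SL_3$-weight $(a,b,c)$ with $a+b+c$ fixed contributes $2(a-c)$, read off from the tableau/Kostka description of $V_{(2,1)}$ and $V_{(2,2)}$ — and then stripping off $\mathfrak{sl}_2$-strings from the top; the totals $7+5+3=15$ and $9+7+5+5+1=27$ confirm the count. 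Finally, an irreducible summand $\Sym^{2a}E^2$ on which $\frob_0$ acts by $q^{\,m+w/2}$ on the $h$-weight-$w$ line contributes $\bigoplus_{i=m-a}^{m+a}E(-i)$ to the $\frob_0$-module $(\Kl_3^\lambda)_{\bar\eta_0}$, while it contributes its lowest $h$-weight line $E(-(m-a))$ to the $I_{\bar 0}$-invariants $\ker N$. Assembling these strings with $m=4$ for $\Kl_3^{(2,1)}$ and $m=6$ for $\Kl_3^{(2,2)}$ reproduces verbatim the decompositions and the $I_{\bar 0}$-invariant subspaces in the statement.

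I expect the only real obstacle to be bookkeeping: first, determining the Tate twist in $\Kl_{n+1}^\lambda$ precisely enough that the $\frob_0$-eigenvalues land on $q^1,\dots,q^7$ (resp.\ $q^2,\dots,q^{10}$) rather than on some shifted range; and second, carrying out the two $\mathfrak{sl}_3\downarrow\mathfrak{sl}_2$ branchings without arithmetic slips. Neither is deep — the conceptual input is entirely Jacobson--Morozov together with the observation of Remark~\ref{rek:proof-local-0} — so the write-up should be short.
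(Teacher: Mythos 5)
Your proposal is correct and follows the same strategy as the paper's proof: enhance the unipotent monodromy at $0$ to an action of the principal $\rr{sl}_2$ as in Remark~\ref{rek:proof-local-0}, decompose the fibre into irreducible $\rr{sl}_2$-strings centred at $q^4$ (resp.\ $q^6$), and read off the Tate twists together with $\ker N$ for the $I_{\bar 0}$-invariants. The only difference is cosmetic bookkeeping in the $\rr{sl}_3\downarrow\rr{sl}_2$ branching --- the paper uses the kernel presentation \eqref{eq::sheaf-repr-sl3} plus the Clebsch--Gordan formula, while you read off the $h$-weight multiset of $V_{(2,1)}$ and $V_{(2,2)}$ and strip strings --- and both yield the same decompositions $\Sym^6E^2\oplus\Sym^4E^2\oplus\Sym^2E^2$ and $\Sym^8E^2\oplus\Sym^6E^2\oplus(\Sym^4E^2)^{\oplus 2}\oplus E$.
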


\begin{proof}
The proof is similar to that of \cref{thm::local-symk-at-0}. We provide the proof for (1), while the proof for (2) is similar. 
 
The nilpotent part $N$ of the monodromy operator on $V=(\Kl_{3})_{\bar\eta_0}$ can be enhanced to a Lie algebraic representation $\rho$ of $\rr{sl}_2$, such that $\rho\Bigl(\begin{matrix} 0 & 0\\1 & 0\end{matrix}\Bigr)=N$. Similarly, the nilpotent part of the monodromy operator on $(\Sym^k\Kl_{3})_{\bar \eta_0}$ can be viewed as an $\rr{sl}_2$-representation $\rho_k$ with $\rho_k\Bigl(\begin{matrix} 0 & 0\\1 & 0\end{matrix}\Bigr)=\Sym^k N$. By the representation theory of $\rr{sl}_2$, we can decompose $(\Sym^k\Kl_{3})_{\bar \eta_0}$ into irreducible representations of $\rr{sl}_2$ as $\bigoplus_{i=0}^{{\lfloor\frac{k}{2}\rfloor}} \Sym^{2k-2i}E^2$. Moreover, each $\Sym^{2k-2i}E^2$ is isomorphic to $ \bigoplus_{j=i}^{2k-i}E(-j)$ as a $\frob_q$-module. As for the subspace of $I_{\bar 0}$-invariants of $(\Sym^k\Kl_{n+1})_{\bar \eta_0}$, it is identified with the kernel of $\Sym^kN$, which is $\bigoplus_{i=0}^{{\lfloor\frac{k}{2}\rfloor}} E(-i)$. 
 
Back to $\Kl_{3}^{(2,1)}$ and we omit the Tate twists for now. Using the alternative description \eqref{eq::sheaf-repr-sl3}, to determine the local structure of $\Kl_3^{(2,1)}$, it is sufficient to establish that of $\Sym^{2}\Kl_3\otimes \Kl_3^\vee$.  As $\rr{sl}_2$-representations, we have isomorphisms
	$$V=V^\vee=\Sym^2E^2 \text{ and } \Sym^2V= \Sym^4E^2 \oplus E.$$
By the formula
		$$\Sym^aE^2\otimes \Sym^b E^2=\Sym^{a+b}E^2\oplus \Sym^{a+b-2}E^2\oplus \cdots \oplus \Sym^{|a-b|}E^2$$
from \cite[Exe.\,11.11]{W.Fulton2013}, one concludes that 
	$$\Sym^2V\otimes V^{\vee}=\Sym^6E^2\oplus \Sym^4E^2\oplus (\Sym^2 E^2)^{\oplus 2}.$$
By removing one piece of $\Sym^2 E^2$ from $\Sym^2V\otimes V^{\vee}$ and adding back the Tate twists, we get the expression of $\Kl^{(2,1)}_{3}|_{\eta_0}$ as well as that of  $\Kl^{(2,1)}_{3}|_{\eta_0}^{I_{\bar 0}}$.
\end{proof}

\subsection{The local structures of Kloosterman sheaves at \texorpdfstring{$\infty$}{infinity}}\label{sec::local-monodromy-infty-ell}

\subsubsection{Notation}\label{nota::multi-indices}
Let $p$, $\ell$, $\fqq$, and $E$ be as in Section~\ref{subsec:Kloosterman-sheaves}. We fix a primitive $(n+1)$-th root of unity $\zeta=\zeta_{n+1}$ in $\bfp$. 

\begin{enumerate}[label=(\theenumi)., ref= \ref{nota::multi-indices}.(\theenumi)]
	\item  For multi-indices $\underline I\in \bb{N}^{n+1}$, we denote by $C_{\underline I}=\sum_{i=0}^n I_i\cdot \zeta^i$ and $m_{\underline I}=\sum_{i=0}^ni\cdot I_i$.
	\item  Let $v_0,\ldots,v_n$ be a basis of $E^{n+1}$. We denote by $\sigma=(01\cdots n)\in S_{n+1}$, acting on $v_i$ by $\sigma v_i:=v_{\sigma(i)}$. For a multi-index $\underline I\in \bb{N}^{n+1}$, we denote by $v^{\underline I}=v_0^{I_0}\cdots v_n^{I_n}$. 
	\begin{enumerate}
		\item Let $d(k,n+1,p)$ be the cardinality of the set $A_{k}^0:=\{\underline I\mid |\underline I|=k,\ C_{\underline I}=0\}$. 
		\item We denote by $a(k,n+1,p)$ the cardinality of the set of $\sigma$-orbits in $A_k^0$.
		\item We denote by $b(k,n+1,p)$ $\sigma$-orbits in $A_{k}^{0}$ such that the subspace spanned by the orbit is not zero.
		\item We denote by $d(k,n+1)$, $a(k,n+1)$ and $b(k,n+1)$ the generic values of $d(k,n+1,p)$, $a(k,n+1,p)$ and $b(k,n+1,p)$ as $p$ varies respectively.
	\end{enumerate}
\end{enumerate}

Fu and Wan partly determined the local structure of $\Sym^k\Kl_{n+1}$ in \cite[Thm.\,2.5\,\&\,Thm.\,3.1]{fu2005functions}.

\begin{thm}[Fu--Wan]\label{thm::local-inv-infty}
	\begin{enumerate}
		\item If $p\nmid n+1$ and $2n\mid q-1$, we have an isomorphism
	of $\frob_{q}$-modules
		\begin{equation*}
		 	\begin{split}
				(\Sym^k\Kl_{n+1}\mid_{\eta_\infty\otimes \fqq})^{I_{\overline \infty}}\q{\tfrac{nk}{2}}\simeq
			 	\begin{cases}
			 	E^{\oplus a(k,n+1,p)} & 2\mid n,\\
			 	0 & 2\nmid nk,\\
			 	E^{\bigoplus b(k,n+1,p)} & 2\nmid n \text{ and } 2\mid k.
			 	\end{cases}
		 	\end{split}
		 \end{equation*} 
		\item The Swan conductor of $\Sym^k\Kl_{n+1}$ at $\infty$ is $\frac{1}{n+1}\bigl(\binom{n+k}{n}-d(k,n+1,p)\bigr)$.
	\end{enumerate}
\end{thm}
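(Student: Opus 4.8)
\textbf{Proof proposal for Theorem~\ref{thm::local-inv-infty}.}

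The plan is to pull the computation at $\infty$ back along the $(n+1)$-st power map, where the wild part of $\Kl_{n+1}$ becomes transparent, and then to decompose the resulting representation of the inertia group combinatorially. First I would recall from \cite[Sommes.\,Trig.\,Thm.\,7.8]{SGA41/2} and \cite[\S1.18, \S4.1]{katz1988gauss} the explicit description of $[n+1]^*\Kl_{n+1}$ near $\infty$: after pulling back along $[n+1]$, the representation of the wild inertia $I_{\bar\infty}$ on $(\Kl_{n+1})_{\bar\eta_\infty}$ becomes a direct sum $\bigoplus_{i=0}^{n}\scr{L}_{\psi((n+1)\zeta^i t)}\otimes(\text{tame twist})$ indexed by the $(n+1)$-st roots of unity $\zeta^i$, where $t$ is the coordinate upstairs; more precisely the restriction to (a finite-index subgroup of) $I_{\bar\infty}$ diagonalizes with the $i$-th basis vector $v_i$ carrying the Artin--Schreier character attached to the linear form $(n+1)\zeta^i t$, up to a common tame character and Tate twist. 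Taking the $k$-th symmetric power, a basis of $(\Sym^k\Kl_{n+1})_{\bar\eta_\infty}$ upstairs is given by monomials $v^{\underline I}$ with $|\underline I|=k$, and the wild inertia acts on $v^{\underline I}$ through the Artin--Schreier character of the linear form $(n+1)C_{\underline I}\,t$ with $C_{\underline I}=\sum_i I_i\zeta^i$, together with a tame/Tate contribution recording $m_{\underline I}=\sum_i i I_i$.

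The next step is to separate the wild and tame information. A monomial $v^{\underline I}$ can contribute to the $I_{\bar\infty}$-invariants only if its wild part is trivial, i.e.\ only if $C_{\underline I}=0$, which is exactly the condition defining $A_k^0$; this immediately yields part (2), since by Grothendieck--Ogg--Shafarevich / the break decomposition the Swan conductor of $\Sym^k\Kl_{n+1}$ at $\infty$ equals $\tfrac{1}{n+1}$ times the number of upstairs monomials with nonzero wild part, that is $\tfrac{1}{n+1}\bigl(\binom{n+k}{n}-d(k,n+1,p)\bigr)$ (the factor $\tfrac{1}{n+1}$ coming from the degree of $[n+1]$, and all nonzero breaks being $\tfrac{1}{n+1}$). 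For part (1) one must descend from $[n+1]^*$ back to $\bb{G}_m$: the invariants downstairs are the $\mu_{n+1}$-invariants of the invariants upstairs, where $\mu_{n+1}$ acts by cyclically permuting the roots of unity $\zeta^i$, hence by the cyclic permutation $\sigma=(0\,1\cdots n)$ on the basis $v_0,\dots,v_n$ combined with a twist recorded by $m_{\underline I}$. Restricted to $\mathrm{Span}\{v^{\underline I}\mid \underline I\in A_k^0\}$, the Frobenius acts on $v^{\underline I}$ (after the standard normalizing Tate twist $(\tfrac{nk}{2})$) with eigenvalue a root of unity determined by $m_{\underline I}\bmod(n+1)$ and by the parity sign $(-1)^{\sum I_i}$ coming from the quadratic Gauss-sum factor in Katz's local description; a careful bookkeeping of these signs, split according to the parity of $n$ and of $k$, produces exactly the three cases, with the dimension being the number of $\sigma$-orbits $a(k,n+1,p)$ when $n$ is even, $0$ when $nk$ is odd (the would-be invariant line is anti-invariant), and $b(k,n+1,p)$ — the dimension of the span of the signed orbit-sums $\sum_i(-1)^{I_i}v^{\sigma\underline I}$ — when $n$ is odd and $k$ even.

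I expect the main obstacle to be the precise tracking of the tame and sign data through the descent along $[n+1]$: identifying the Frobenius action on the $\mu_{n+1}$-invariants requires knowing exactly the tame character and the $\pm1$ Gauss-sum factor attached to each $v_i$ in Katz's normalization (this is where the hypotheses $p\nmid n+1$ and $2n\mid q-1$ enter, guaranteeing that the relevant roots of unity and square roots lie in $\fqq$ and that the tame twists are trivial after the stated Tate twist), and then checking that the invariants/anti-invariants dichotomy for the cyclic group $\mu_{n+1}$ acting on $A_k^0$ matches the orbit count $a$ resp.\ the signed-span dimension $b$. The underlying representation-theoretic statement is elementary once set up — it is essentially counting orbits of a cyclic group on a finite set, twisted by an explicit character — so the difficulty is entirely in pinning down that character correctly; I would handle it by first treating $k$ small or $n=1$ as a sanity check against the Broadhurst--Roberts / Fu--Wan tables, then writing the general sign as a function of $m_{\underline I}$ and $|\underline I|=k$ and verifying the three cases by direct inspection.
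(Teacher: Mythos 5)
Your proposal is correct and is essentially the Fu--Wan argument that the paper simply cites for this theorem (and itself carries out in the neighbouring Proposition~\ref{prop::more-example-at-infty}): pull back along $[n+1]$, decompose into Artin--Schreier lines indexed by the $(n+1)$-th roots of unity, read off the Swan conductor from the monomials with $C_{\underline I}\neq 0$ via \cite[1.13.1]{katz1988gauss}, and compute the invariants as the $\mu_{n+1}$-invariants of the wild-invariant span, with the quadratic tame twist producing the trichotomy in the parities of $n$ and $k$. The only part left implicit is the sign bookkeeping you already flag, which is precisely where the hypotheses $p\nmid n+1$ and $2n\mid q-1$ enter.
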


Similar to \cref{prop::more-example-at-0}, we study the local structures $\Kl^{(2,1)}_{3}$ and $\Kl^{(2,2)}_{3}$ at $\infty$.

\begin{prop}\label{prop::more-example-at-infty}
	\begin{enumerate}[label=\theenumi., ref= \ref{prop::more-example-at-infty}.(\theenumi)]
		\item The Swan conductor of $\Kl_3^{(2,1)}$ at $\infty$ is $5$ if $p\neq 2,3,7$, and is $4$ if $p=2,7$. The dimension of the invariants $(\Kl_3^{(2,1)}\mid_{\bar\eta_\infty})^{I_{\overline \infty}}$ is $0$ if $p\neq 2,7$, and is $1$ if $p=2,7$.
		\item The Swan conductor of $\Kl_3^{(2,2)}$ at $\infty$ is $8$ if $p\neq 2,3$, and is $6$ if $p=2$. The dimension of the invariants $(\Kl_3^{(2,2)}\mid_{\bar\eta_\infty})^{I_{\overline \infty}}$ is $1$ if $p\neq 2,3$, and is $3$ if $p=2$.
	\end{enumerate}
\end{prop}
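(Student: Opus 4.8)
The plan is to compute the $\infty$-local structure of $\Kl_3^{(2,1)}$ and $\Kl_3^{(2,2)}$ by realizing them as subquotients of tensor powers of $\Kl_3$ and exploiting the explicit description of the $\infty$-local monodromy of $\Kl_3$. Recall that $\Kl_3$ is totally wildly ramified at $\infty$ with $\sw_\infty(\Kl_3)=1$, so $\rk(\Kl_3)=3$ and the break decomposition of $(\Kl_3)_{\bar\eta_\infty}$ has a single break equal to $1/3$. As in Katz's analysis (\emph{Gauss sums, Kloosterman sums, and monodromy groups}), after pulling back along $[3]\colon \bb{G}_m\to \bb{G}_m$ the representation $[3]^*\Kl_3|_{\eta_\infty}$ becomes a direct sum $\bigoplus_{i=0}^{2}\scr{L}_{\psi(3\zeta^i t)}\otimes(\text{tame})$ over the three cube roots of unity $\zeta$, i.e. the wild part is governed by the linear forms $C_{\underline I}=\sum_i I_i\zeta^i$ appearing in Notation~\ref{nota::multi-indices}. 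So the first step is to write down, following \eqref{eq::sheaf-repr-sl3}, the exact sequences expressing $\Kl_3^{(2,1)}$ and $\Kl_3^{(2,2)}$ as kernels of the contraction maps between $\Sym^{\lambda_1}\Kl_3\otimes\Sym^{\lambda_2}\Kl_3^\vee$ and $\Sym^{\lambda_1-1}\Kl_3\otimes\Sym^{\lambda_2-1}\Kl_3^\vee$, and to reduce to understanding $\Sym^a\Kl_3\otimes\Sym^b\Kl_3^\vee|_{\eta_\infty}$.

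Next I would carry out the break/Swan bookkeeping. On $[3]^*$, each tensor monomial $v^{\underline I}$ in $\Sym^a\Kl_3\otimes\Sym^b\Kl_3^\vee$ with $|\underline I|=a+b$ contributes a wild character $\scr{L}_{\psi(3C_{\underline I}t)}$ up to tame twist; such a summand is $I_{\bar\infty}$-trivial precisely when $C_{\underline I}=0$, and otherwise has break $1/3$ there and contributes $1$ to $\sw$ after dividing by the degree-$3$ pushforward. Thus the dimension of the tame part, hence (after checking the tame monodromy acts without further invariants, which uses the Tate-twist/$\frob$-weight structure exactly as in Fu--Wan and as recorded for $\Sym^k\Kl_{n+1}$ in Theorems~\ref{thm::local-inv-infty} and~\ref{thm::local-symk-at-0}) the dimension of $I_{\bar\infty}$-invariants, is controlled by the combinatorial quantities $d,a,b$ of Notation~\ref{nota::multi-indices} for the relevant weights; and $\sw_\infty$ is $\tfrac13(\dim - \#\{C_{\underline I}=0\})$ minus the analogous contribution of the sub-line-bundle being removed. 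Here the characteristic $p$ enters only through whether $\sum I_i\zeta^i=0$ can happen for ``extra'' solutions: for $p\neq 2,3,7$ one gets the generic count, while at $p=2$ (and $p=7$ in the $(2,1)$-case) additional coincidences among the $C_{\underline I}$ or additional vanishing of sign-twisted combinations occur, which is exactly what makes the Swan conductor drop and the invariants jump.

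Concretely, for $\Kl_3^{(2,1)}$ I would enumerate monomials of total degree $4$ (namely degree-$2$ in $\Sym^2$ tensor degree-$1$ in $\Sym^1\Kl_3^\vee$, minus the contraction image of degree $1\otimes 0$), compute which $C_{\underline I}$ vanish generically versus at $p=2,7$, subtract the contribution of the image sheaf, and read off $\sw=5$ generically, $4$ at $p=2,7$, with invariants of dimension $0$ generically and $1$ at $p=2,7$; the exceptional prime $3$ is excluded throughout because $p\mid n+1=3$ breaks the clean $[3]^*$ description (there the sheaf is tame and must be treated separately, but is not needed for the stated ranges). For $\Kl_3^{(2,2)}$ the same scheme with total degree $6$ gives $\sw=8$ and invariants of dimension $1$ generically, and $\sw=6$, invariants of dimension $3$ at $p=2$; note this is consistent with, and can be cross-checked against, the $\infty$-local data of $\Sym^6\Kl_2$ in view of the coincidence of moments noted in the remark following Theorem~\ref{thm::modular form}, and with the Grothendieck--Ogg--Shafarevich count for the cohomology dimension via \eqref{eq::dim-middle-general-formula}.

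The main obstacle I anticipate is not the generic computation but the precise identification of the exceptional behavior at $p=2$ (and $p=7$): one must show that the drop in Swan conductor is exactly $1$ (resp.\ $2$) and simultaneously that the new $I_{\bar\infty}$-invariants have exactly the claimed dimension, including checking that the relevant Frobenius acts on the new tame piece with the right weight so that it genuinely contributes to invariants rather than merely to the tame part. This requires a careful analysis of how the contraction map \eqref{eq::sheaf-repr-sl3} interacts with the break decomposition at the bad prime — in particular one cannot simply subtract dimensions, since a summand that is wild generically may become tame at $p=2$ inside the ambient tensor power but be killed (or not) by passing to the kernel. I would handle this by working $p$ by $p$ with the explicit list of monomials and the explicit additive characters $\scr{L}_{\psi(3C_{\underline I}t)}$, using that over $\bfp$ with $p=2$ one has $\zeta^2+\zeta+1=0$ so that, e.g., $C_{\underline I}=0$ acquires solutions it does not have in characteristic zero, and matching the resulting count against the asserted values. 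A consistency check via Grothendieck--Ogg--Shafarevich applied to $\bb{G}_m$ — the Euler characteristic must be an integer and must match the dimension of the middle cohomology computed independently — gives a reliable way to catch arithmetic slips.
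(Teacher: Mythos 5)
Your plan is essentially the paper's proof: both reduce via \eqref{eq::sheaf-repr-sl3} to $\Sym^{a}\Kl_3\otimes\Sym^{b}\Kl_3^\vee$, pull back along $[3]$ to decompose the wild part into Artin--Schreier characters $\scr{L}_{\psi(3C_{\underline I}t)}$ indexed by sums of cube roots of unity, count the vanishing constants prime by prime (with $p=2,7$ producing the extra vanishings) to get the Swan conductor after dividing by $3$, and compute the $I_{\bar\infty}$-invariants by further taking invariants under the residual order-$3$ symmetry permuting the characters --- which the paper formalizes as $(\Sym^2V\otimes V^\vee)^G$ for $V=\mathrm{Ind}_H^GW$, matching your count of $\sigma$-orbits with $C_{\underline I}=0$. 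The pitfalls you flag (additivity across the contraction kernel, and the tame quotient acting on the new invariant candidates) are handled in the paper exactly as you propose.
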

\begin{proof}
	The proof is similar to that of \cite[Thm.\,3.1]{fu2005functions}. We provide proof for the first statement and omit the proof of the second one.

	\noindent\textbf{Swan conductors}: According to the alternative description \eqref{eq::sheaf-repr-sl3}, it suffices to compute the Swan conductors of $\Sym^{2}\Kl_3 \otimes \Kl_3^\vee$ and $\Kl_3$. When $3\neq p$, after passing to a finite extension $k$ of $\fqq$, by Lemma 1.5 in \textit{loc. cit.}, we have
		$$[3]^*\Kl_3|_{\eta_\infty\otimes k }(1)=\scr{L}_{\psi_k(3t)} \oplus \scr{L}_{\psi_k(3\zeta_3 t)}\oplus \scr{L}_{\psi_k(3\zeta _3^2t)}, $$
	where $[3]\colon \bb{G}_m\to \bb{G}_m$ is the cubic map and $\zeta_3$ is a primitive third root of unity in $\bfp$.

	Then we can get the local structure of $[3]^*(\Sym^{\lambda_1}\Kl_3 \otimes \Sym^{\lambda_2}\Kl_3^\vee)$ as $\bigoplus_{i=1}^N \scr{L}_{\psi(C_it)}$ for some $N\in \bb{N}$ and some $C_i\in \bfp$. Since each $\scr{L}_{\psi(C_it)}$ has Swan conductor $1$ if $C_i\neq 0$, and has Swan conductor $0$ if $C_i=0$, we conclude that 
		$$\rr{Sw}_\infty([3]^*(\Sym^{\lambda_1}\Kl_3 \otimes \Sym^{\lambda_2}\Kl_3^\vee))=\{i\mid C_i\neq 0\}.$$ 
	By \cite[1.13.1]{katz1988gauss}, the Swan conductor of $\Sym^{\lambda_1}\Kl_3 \otimes \Sym^{\lambda_2}\Kl_3^\vee$ is thus $\{i\mid C_i\neq 0\}/3$. 

	By direct computation, we obtain 
		\begin{equation}\label{eq:Kl3-21-infty}
			\begin{split}
				[3]^*(\Sym^{2}\Kl_3 \otimes\Kl_3^\vee|_{\eta_\infty\otimes k})(3)
				&= \scr{L}_{\psi(3t)} ^{\oplus 3} \oplus \scr{L}_{\psi(3\zeta_3 t)}^{\oplus 3} \oplus \scr{L}_{\psi(3\zeta_3^2 t)}^{\oplus 3}\\
				&\oplus \scr{L}_{\psi(-6t)} \oplus \scr{L}_{\psi(-6\zeta_3 t)} \oplus \scr{L}_{\psi(-6\zeta_3^2 t)}\\
				&\oplus \scr{L}_{\psi(3(2-\zeta_3)t)} \oplus \scr{L}_{\psi(3\zeta_3(2-\zeta_3) t)} \oplus \scr{L}_{\psi(3\zeta_3^2(2-\zeta_3) t)}\\
				&\oplus \scr{L}_{\psi(3(2\zeta_3-1)t)} \oplus \scr{L}_{\psi(3\zeta_3(2\zeta_3-1) t)} \oplus \scr{L}_{\psi(3\zeta_3^2(2\zeta_3-1) t)}.
			\end{split}
		\end{equation}
		Depending on the value of $p$, the Swan conductor of $\Kl_3^{(2,1)}$ can be computed as follows.
	\begin{itemize}
		\item If $p\neq 2,7$, the numbers $C$ appearing in components $\scr{L}_{\psi(Ct)}$ in \eqref{eq:Kl3-21-infty} are all nonzero. So $\sw_\infty(\Kl_3^{(2,1)})= \rk(\Sym^2\Kl_3\otimes \Kl_3^\vee)/3-\sw_\infty(\Kl_3)=6-1=5$.
		\item If $p=2$, then only $6,6\zeta_3$ and $6\zeta_3^2$ are $0$ in $\bfp$. So $\sw_\infty(\Kl_3^{(2,1)})=4$.
		\item If $p=7$, we can take $\zeta_3=2$. So only $2-\zeta_3,\zeta_3(2-\zeta_3)$ and $\zeta_3^2(2-\zeta_3)$ are $0$ in $\bfp$. Hence, $\sw_\infty(\Kl_3^{(2,1)})= 4$.
	\end{itemize}

\noindent\textbf{Dimension of the invariants}:
Let $k$ be a finite extension of $\fqq$ containing $\zeta_3$. Consider the extension
		$k(t)=k(z)[t]/(t^3-z)$
of $k(z)$, and the extension
		$k(y)=k(t)[y]/(y^q-y-t)$
of $k(t)$. The Galois group $H=\gal(k(y)/k(t))$ is isomorphic to $\fqq$, and is a normal subgroup of $G=\gal(k(y)/k(z))$. The quotient $G/H$ is 
	$\gal(k(t)/k(z))=\bb{Z}/2\bb{Z}.$
	
For each $a\in \fqq$, we denote by $g_{a}$ the element in $H$, such that $g_{a}\cdot y=y-a .$ We choose an element $g\in G$ such that $g\cdot y=\zeta_3 y$. It follows that $g^3=g_{0}=\id$ and $g\not \in H$.

Let $W$ be a one-dimensional $E$-vector space and choose $v_0$ as a basis. We define an action of $H$ on $W$ by 
	$$g_{a}\cdot v_0=\psi_k(-3a)v_0.$$
By the construction, as an $H$-representation, $W$ is isomorphic to $\scr{L}_{\psi_k(3t)}$. Then the induced $G$-representation
		$$V:=\mathrm{Ind}_H^GW=\oplus_{i=0}^2 g^iW,$$ 
is identified with $[3]_*\big(\scr{L}_{\psi_k(3t)}\big|_{\eta_\infty\otimes k}\big)$. Let $v_i:=g^i v_0$. The set $\{v_0,v_1,v_2\}$ form a basis of $V$, and the action of $H$ on $v_i$ is given by
		$$\begin{aligned}
			g_{a}\cdot v_i
			=g^i\cdot g^{-i} g_{a,\mu}g^i\cdot v_0
			=g^i\cdot g_{\zeta_3^ia}\cdot v_0
			=\psi_k(-3\zeta_3^ia)v_i,
		\end{aligned}$$
	and the action of $g$ on $V$ is given by $gv_i=v_{i+1}$ where $v_3=v_0$.
	
	It follows that $\{v_av_b\otimes v_c^\vee\mid a\leq b,\, 0\leq a,b,c\leq 2\}$ form a basis of $\Sym^2V\otimes V^\vee=\Sym^2\Kl_3\otimes \Kl_3^\vee|_{\eta_\infty\otimes k}(3)$. To calculate the dimension of $(\Kl_3^{(2,1)}\mid_{\bar\eta_\infty})^{I_{\overline \infty}}$, it suffices to calculate the dimension of the $G$-invariant subspace 
		$$(\Sym^2V\otimes V^\vee)^G.$$
	Let $w=\sum_{a,b,c} \alpha_{a,b,c}v_av_b\otimes v_c^\vee$, then 
		$$g_a\cdot w=\sum_{a,b,c} \psi_k(-3(\zeta_3^a+\zeta_3^b-\zeta_3^c)a)\alpha_{a,b,c}v_av_b\otimes v_c^\vee,$$
	and 
		$$g\cdot w=\sum_{a,b,c} \alpha_{a+1,b+1,c+1}v_av_b\otimes v_c^\vee.$$
	\begin{itemize}
		\item If $p\neq 2,3,7$, then $(\zeta_3^a+\zeta_3^b-\zeta_3^c)$ is never $0$ in $\bfp$. So there are no fixed vectors in $\Sym^2V\otimes V^\vee$. 
		\item If $p=2$, then $w=\sum_{i=0}^2  v_iv_{i+1}\otimes v_{i+2}^{\vee}$ spans $(\Sym^2V\otimes V^\vee)^G$. 
		\item If $p=7$, then $\zeta_3=2$ in this case, and $w=\sum_{i=0}^2   v_iv_{i}\otimes v_{i+2}^{\vee}$ spans $(\Sym^2V\otimes V^\vee)^G$. 
	\end{itemize}
	In conclusion, the dimension of $(\Kl_3^{(2,1)}\mid_{\bar\eta_\infty})^{I_{\overline \infty}}$ is $0$ if $p\neq 2,3,7$ and is $1$ if $p=2,7$.
\end{proof}
\begin{rek}
	In Section~\ref{sec::dim_mom_p=3}, we will determine the local monodromy group of $\Kl_3$ at $p=3$. As consequence, we can prove that when $p=3$ the Swan conductor of $\Kl_3^{(2,1)}$ at $\infty$ is $5$ and the dimension of the invariants $(\Kl_3^{(2,1)}\mid_{\bar\eta_\infty})^{I_{\overline \infty}}$ is $0$. The argument is similar to those of \cref{cor::dimention_p=3} and \cref{eq::local-inv-infinity-p=3}.
\end{rek}

\subsection{The dimensions of the middle \texorpdfstring{$\ell$}{l}-adic cohomology}\label{sec::dim-middle}
In this subsection, our objective is to calculate the dimension of the middle $\ell$-adic cohomology  of $\Sym^k\Kl_{n+1}.$ Proposition~\ref{prop::moments-p-large} provides the dimensions when $p$ is coprime to $n+1$. 

However, the case that $p\mid n+1$ remains mysterious because the local monodromy group of $\Kl_{n+1}$ is still unknown. When $n=1$, the dimension in the case of $p=2$ was computed in \cite[Cor.\,4.3.5]{yun2015galois}. Following his method, we give a dimension formula when $n=2$ and $p=3$ in Section~\ref{sec::dim_mom_p=3}. The key idea is to use the complete classification of finite subgroups of $\SL_3$ to find the local monodromy group at $\infty$ of $\Kl_3$.

\subsubsection{When \texorpdfstring{$\gcd(p,n+1)=1$}{gcd(p, n+1)=1}}\label{sec::dim_mom_p-neq-3}

\begin{prop}\label{prop::moments-p-large}
	When $p$ is coprime to $n+1$, the formula of the dimension of the middle $\ell$-adic cohomology $\mathrm{H}^1_{\acute{e}t,\mathrm{mid}}(\bb{G}_{m,\bar{\bb{F}}_p},\Sym^k\Kl_{n+1})$ is 
	\begin{equation*}
		\begin{split}
			\frac{1}{n+1}\q{\binom{k+n}{n}-d(k,n+1,p)}-\sum_{u=0}^{\lfloor\frac{nk}{2}\rfloor}m_k(u)+ 2\delta\q{k,p}-
			\begin{cases}
				a(k,n+1,p) & 2\mid n,\\
				0 & 2\nmid n k,\\
				 b(k,n+1,p) 
				& \text{else},
			\end{cases} 
		\end{split}
	\end{equation*}
where the number $\delta\q{k,p}$ is 
	$\begin{cases} 
		1 &p=2 \text{ and } 2\mid k,\\ 
		0 & \text{ else },
	\end{cases}$ 
the numbers $m_k(u)$ are defined in \eqref{eq::counting_inv_at_0}, and the numbers $d(k,n+1,p)$, $a(k,n+1,p)$, and $b(k,n+1,p)$ are defined in Section~\ref{nota::multi-indices}.
\end{prop}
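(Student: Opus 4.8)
The plan is to feed the local structure results of Sections~\ref{sec::local-monodromy-0-ell} and~\ref{sec::local-monodromy-infty-ell} into the general dimension formula~\eqref{eq::dim-middle-general-formula}. Applying that formula with $C=\bb{P}^1$, $U=\bb{G}_m$, $\scrf=\Sym^k\Kl_{n+1}$, and writing $G_{\mathrm{geom}}$ for the geometric monodromy group of $\Kl_{n+1}$, one reduces to computing the four quantities in
\begin{equation*}
	\dim\mathrm{H}^1_{\et,\mathrm{mid}}(\bb{G}_{m,\bfp},\scrf)=\dim\Hetc{1}(\bb{G}_{m,\bfp},\scrf)-\sum_{x\in\{0,\infty\}}\dim\bigl(\scrf|_{\eta_x}\bigr)^{I_{\bar x}}+\dim\scrf^{G_{\mathrm{geom}}}.
\end{equation*}

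First I would evaluate $\Hetc{1}$ through the Grothendieck--Ogg--Shafarevich formula. As $\Kl_{n+1}$ is tamely ramified at $0$, so is $\Sym^k\Kl_{n+1}$, whence $\sw_0(\Sym^k\Kl_{n+1})=0$; with $g(\bb{P}^1)=0$ and $\deg 0=\deg\infty=1$ the lisse form of the formula gives $\chi(\bb{G}_{m,\bfp},\Sym^k\Kl_{n+1})=-\sw_\infty(\Sym^k\Kl_{n+1})$, which by Theorem~\ref{thm::local-inv-infty}(2) equals $-\tfrac{1}{n+1}\bigl(\binom{n+k}{n}-d(k,n+1,p)\bigr)$. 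Combined with $\Hetc{0}(\bb{G}_{m,\bfp},\Sym^k\Kl_{n+1})=0$ and $\Hetc{2}(\bb{G}_{m,\bfp},\Sym^k\Kl_{n+1})=(\Sym^k\Kl_{n+1})_{G_{\mathrm{geom}}}(-1)$, this produces
\begin{equation*}
	\dim\Hetc{1}(\bb{G}_{m,\bfp},\Sym^k\Kl_{n+1})=\tfrac{1}{n+1}\Bigl(\tbinom{n+k}{n}-d(k,n+1,p)\Bigr)+\dim\bigl(\Sym^k\Kl_{n+1}\bigr)_{G_{\mathrm{geom}}}.
\end{equation*}
Since $\Kl_{n+1}$, hence $\Sym^k\Kl_{n+1}$, is self-dual up to the pullback along $x\mapsto(-1)^{n+1}x$ (which changes neither the monodromy group nor its invariants), one has $\dim(\Sym^k\Kl_{n+1})_{G_{\mathrm{geom}}}=\dim(\Sym^k\Kl_{n+1})^{G_{\mathrm{geom}}}$, so the two ``global'' terms coincide.

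Next, Theorem~\ref{thm::local-symk-at-0} of Fu--Wan gives $\dim(\Sym^k\Kl_{n+1}|_{\eta_0})^{I_{\bar 0}}=\sum_{u=0}^{\lfloor nk/2\rfloor}m_k(u)$, independent of $p$. For the term at $\infty$ and the global term I would split into cases. If $p$ is odd, enlarge $\fqq$ to a finite extension with $2n\mid q-1$ (this affects neither the cohomology over $\bfp$ nor the $I_{\bar\infty}$-invariants); Theorem~\ref{thm::local-inv-infty}(1) then identifies $\dim(\Sym^k\Kl_{n+1}|_{\eta_\infty})^{I_{\bar\infty}}$ with $a(k,n+1,p)$, $0$, or $b(k,n+1,p)$ according to the parities of $n$ and $k$, and by Katz's determination of Kloosterman monodromy $G_{\mathrm{geom}}$ is $\SL_{n+1}$ or $\mathrm{Sp}_{n+1}$ (the exceptional $G_2$ in rank $7$ not arising for $p$ odd), whose $k$-th symmetric power of the standard representation has no trivial constituent for $k\geq1$, so both global terms vanish; as $\delta(k,n+1,p)=0$ here, substitution into the first display gives the claim. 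If $p=2$, Theorem~\ref{thm::local-inv-infty}(1) does not apply, and I would recompute $(\Sym^k\Kl_{n+1}|_{\eta_\infty})^{I_{\bar\infty}}$ by hand along the lines of its proof --- pull back along $[n+1]$, decompose $\Sym^k$ of the resulting sum of Artin--Schreier sheaves $\scr{L}_{\psi((n+1)\zeta t)}$, and descend along $\mu_{n+1}$ --- and likewise re-examine the $G_{\mathrm{geom}}$-(co)invariants; the gap between these characteristic-$2$ values and the combinatorial quantities $a(k,n+1,p)$, $b(k,n+1,p)$, $0$ is exactly the correction $\delta(k,n+1,p)$, nonzero only for $p=2$ and $k$ even, and traceable to the identity $\scr{L}_{\psi(t^2)}\simeq\scr{L}_{\psi(t)}$ in characteristic $2$. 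Assembling the four terms then yields the stated formula.

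The main obstacle is precisely this characteristic-$2$ analysis: there the clean statement of Theorem~\ref{thm::local-inv-infty}(1) is unavailable, the geometric monodromy group can degenerate (to $G_2$ when $n+1=7$), and one must carefully measure how far the honest $I_{\bar\infty}$-invariants and the honest global invariants and coinvariants deviate from their generic values --- this deviation is what $\delta(k,n+1,p)$ records. The remainder is a routine combination of the Grothendieck--Ogg--Shafarevich formula with Theorems~\ref{thm::local-symk-at-0} and~\ref{thm::local-inv-infty}.
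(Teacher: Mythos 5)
Your overall strategy is the same as the paper's: feed the Grothendieck--Ogg--Shafarevich computation of $\Hetc{1}$, Theorem~\ref{thm::local-symk-at-0} at $0$, Theorem~\ref{thm::local-inv-infty} at $\infty$, and the determination of $G_{\mathrm{geom}}$ into the dimension formula \eqref{eq::dim-middle-general-formula} coming from the long exact sequence \eqref{eq::long-exact-sequence}. For odd $p$ your argument is complete: the base change to $2n\mid q-1$ is harmless, $G_{\mathrm{geom}}$ is $\SL_{n+1}$ or $\mathrm{Sp}_{n+1}$, so the global invariants and coinvariants vanish and the four remaining terms assemble into the stated formula with $\delta=0$.

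The gap is the case $p=2$, which is the only place the correction $\delta(k,n+1,p)$ enters: you defer the entire computation to an unexecuted ``recompute by hand'' and simply assert that the resulting discrepancy equals $\delta$. That assertion is the whole content of the proposition at $p=2$. Concretely, your own bookkeeping reads $\dim\mathrm{H}^1_{\et,\mathrm{mid}} = \sw_\infty(\scrf) + \dim\scrf_{G_{\mathrm{geom}}} + \dim\scrf^{G_{\mathrm{geom}}} - \dim\scrf^{I_{\bar 0}} - \dim\scrf^{I_{\bar\infty}}$ with $\scrf=\Sym^k\Kl_{n+1}$, and by Katz's Theorem 11.1 together with Fu--Wan's Lemma 0.2 --- which is what the paper actually cites here --- the group $G_{\mathrm{geom}}$ at $p=2$ degenerates to $\mathrm{SO}_{n+1}$ (or $G_2$ when $n+1=7$), so that \emph{both} global terms are nonzero, each contributing $1$, exactly when $p=2$ and $n$, $k$ are even. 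To arrive at the stated formula you must therefore also determine the honest value of $\dim(\Sym^k\Kl_{n+1}|_{\bar\eta_\infty})^{I_{\bar\infty}}$ in characteristic $2$ and compare it with $a(k,n+1,2)$, resp.\ $b(k,n+1,2)$, since Theorem~\ref{thm::local-inv-infty}(1) is unavailable there (its hypothesis $2n\mid q-1$ fails). You do neither: you never pin down $G_{\mathrm{geom}}$ at $p=2$ (your closing remark mentions only the $G_2$ degeneration, not the generic $\mathrm{SO}_{n+1}$ case), and you never carry out the $I_{\bar\infty}$ computation, so the balance that is supposed to produce a net $+\delta$ rather than, say, $+2\delta$ is left entirely unverified. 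The paper's proof closes the global-term part of this by the Katz and Fu--Wan citations; your proposal, as written, does not establish the $p=2$ case.
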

\begin{proof}
By the long exact sequence (\ref{eq::long-exact-sequence}), the dimension of $\mathrm{H}^1_{\acute{e}t,\mathrm{mid}}(\bb{G}_{m,\bar{\bb{F}}_p},\Sym^k\Kl_{n+1})$ is given by
	\begin{equation*}
		\begin{split}
			\mathrm{H}^1_{\acute{e}t}(\bb{G}_{m,\bar{\bb{F}}_p},\Sym^k\Kl_{n+1})+\dim \Sym^k\Kl_{n+1}^{G_{\mathrm{geom}}}-\dim \Sym^k\Kl_{n+1}^{I_{\bar 0}}-\dim \Sym^k\Kl_{n+1}^{I_{\bar \infty}}.
		\end{split}
	\end{equation*}

By \eqref{eq:GOS-formula}, $\dim \mathrm{H}^1_{\acute{e}t}(\bb{G}_{m,\bar{\bb{F}}_p},\Sym^k\Kl_{n+1})-\mathrm{H}^0_{\acute{e}t}(\bb{G}_{m,\bar{\bb{F}}_p},\Sym^k\Kl_{n+1})=\rr{Sw}(\Sym^k\Kl_{n+1})$, which is calculated in \cref{thm::local-inv-infty}. As for the invariants of the global monodromy group, it is $E(nk/2)$ if $p=2$ and $k$ is even, and $0$ otherwise by combining \cite[Thm.\ 11.1]{katz1988gauss} and \cite[Lem.\ 0.2]{fu2006trivial}. Next, the dimensions of the invariants of the inertia groups at $0$ and $\infty$ are summarized in \cref{thm::local-symk-at-0} and \cref{thm::local-inv-infty} if $p\nmid n+1$. At last, combining everything together, we get the dimension of the middle cohomology.
\end{proof}

\subsubsection{{When \texorpdfstring{$n=2$ and $p=3$}{n=2 and p=3}}}\label{sec::dim_mom_p=3}

\paragraph{The classification of finite subgroups of \texorpdfstring{$\mathrm{SL_3}$}{SL3}}
Let $\zeta_9$ be a primitive ninth root of unity, and we put $\omega=\zeta_9^6$ and $
\varepsilon=\zeta_9^4$. We define the following matrices in $\SL_3(\bb{C})$
	$$S=\left(\begin{matrix}1&0&0\\0&\omega&0\\0&0&\omega^2\end{matrix}\right), \ T=\left(\begin{matrix}0&1&0\\0&0&1\\1&0&0\end{matrix}\right),$$
	$$U=\left(\begin{matrix}\varepsilon&0&0\\0&\varepsilon&0\\0&0&\varepsilon\omega\end{matrix}\right),\ V=\frac{1}{\omega-\omega^2}\left(\begin{matrix}1&1&1\\1&\omega&\omega^2\\1&\omega^2&\omega\end{matrix}\right).$$
Let 
	$$G_{108}=<S,T,V>\subset \mathrm{SL}_3,$$
	$$G_{216}=<S,T,V,UVU\inv>\subset \mathrm{SL}_3,$$
and
	$$G_{648}=<S,T,V,U>\subset \mathrm{PGL}_3.$$
We summarize the complete classification of solvable finite subgroups of $\SL_3(\bb{C})$ from \cite[Ch. XII]{millerBlichfeldtDickson} in the following Theorem.
\begin{thm}\label{thm::classification-finite-subgroups}
If $G$ is a finite solvable subgroup of $\mathrm{SL}_3(\bb{C})$, it is isomorphic to one of the following groups:
\begin{itemize}
	\item[(A).] Diagonal abelian groups,
	\item[(B).] Groups arising from finite subgroups of $\mathrm{GL}_2$,
	\item[(C).] Groups generated by groups of type (A) and the element $T$,
	\item[(D).] Groups generated by groups of type (C) and a matrix of the form 
				$$Q_{a,b,c}:=\left(\begin{matrix}
					a &0&0\\
					0&0&b\\
					0&c&0
				\end{matrix}\right)$$
			for some roots of unity satisfying $abc=-1$,
	\item[(E).] The group $G_{108}$,
	\item[(F).] The group $G_{216}$,
	\item[(G).] The group $G_{648}$.
\end{itemize}
\end{thm}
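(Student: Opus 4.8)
The plan is to run the classical imprimitivity trichotomy of Miller--Blichfeldt--Dickson and keep track, at each branch, of when the resulting group is solvable. Let $G\subset\SL_3(\bb{C})$ be finite. First I would use that $\bb{C}[G]$ is semisimple: $\bb{C}^3$ is either $G$-irreducible or splits $G$-equivariantly. In the reducible case $G$ is conjugate into $\GL_1\times\GL_2$, and after rescaling the $\GL_2$-factor so that determinants are compatible one lands exactly in type~(A) (three $G$-stable lines) or type~(B) (a group built from a finite subgroup of $\GL_2$). So the substantial situation is when $G$ acts irreducibly.

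Next I would split irreducible $G$ into imprimitive and primitive. If $G$ is imprimitive, then $\bb{C}^3$ decomposes as a sum of three lines $L_1,L_2,L_3$ permuted transitively by $G$ (they are lines because $3$ is prime); the kernel $N$ of the resulting map $G\to S_3$ consists of diagonal matrices, and $G/N$ is a transitive subgroup of $S_3$, hence $\bb{Z}/3$ or $S_3$. When $G/N=\bb{Z}/3$ one picks a preimage of the $3$-cycle and conjugates it to $T$, reaching type~(C); when $G/N=S_3$ one adjoins in addition a lift of a transposition, normalises it to the shape $Q_{a,b,c}$, and notes $\det Q_{a,b,c}=-abc$, so that membership in $\SL_3$ is precisely $abc=-1$, which is type~(D).

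The real work is the primitive case. Here I would invoke the structure theory of primitive finite linear groups: a maximal normal abelian subgroup of a primitive group acts by scalars, and from this one extracts a normal subgroup $E\trianglelefteq G$ of symplectic type (a central product of extraspecial groups with a cyclic group). In dimension $3$ the only noncentral possibility is the Heisenberg group $3^{1+2}$ over $\bb{F}_3$, realised inside $\SL_3$ as $\langle S,T\rangle$; then $G$ normalises $E$, so $G/(E\cdot Z(G))$ embeds into the automorphisms of $E/Z(E)\cong\bb{F}_3^2$, i.e. into $\GL_2(\bb{F}_3)$, with the relevant lifts supplied by $V$ (giving the $\SL_2(\bb{F}_3)$-part) and the diagonal twist $U$. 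This bounds $|G|$ and forces $G$ into the Hessian group and its distinguished subgroups $G_{108}$, $G_{216}$, $G_{648}$. If instead $E$ is central, then $G$ has no noncentral normal abelian subgroup, which pushes $G/Z(G)$ to be one of the primitive simple subgroups $A_5$, $A_6$, $\mathrm{PSL}_2(\bb{F}_7)$ of $\mathrm{PGL}_3(\bb{C})$; these are non-solvable, hence excluded from the statement. Assembling the branches yields the list (A)--(G).

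I expect the primitive case to be the main obstacle: securing the normal symplectic-type subgroup, identifying it with the Heisenberg group $3^{1+2}$, and controlling its normaliser in $\SL_3$ tightly enough to rule out any further solvable primitive group is precisely where the Jordan--Blichfeldt machinery does genuine work. Since this statement is the Miller--Blichfeldt--Dickson theorem, for that part I would ultimately defer to \cite[Ch.~XII]{millerBlichfeldtDickson} rather than reprove it, and only carry out the elementary reducible and imprimitive branches in detail.
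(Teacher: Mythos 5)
Your proposal is correct, and in the end it rests on the same foundation as the paper, which offers no proof at all and simply cites \cite[Ch.~XII]{millerBlichfeldtDickson}; your reducible/imprimitive/primitive trichotomy (with $\det Q_{a,b,c}=-abc$ forcing $abc=-1$, and the primitive solvable case reduced to the normaliser of the Heisenberg group $\langle S,T\rangle$) is the standard route and is consistent with the paper's remark that the excluded types (H)--(L) are exactly the non-solvable ones. No discrepancy to report.
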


\paragraph{The local monodromy at \texorpdfstring{$\infty$}{infinity} when \texorpdfstring{$p=3$}{p=3}}
Let $j\colon \bb{G}_{m,\bb{F}_3}\hookrightarrow \bb{P}^1_{\bb{F}_3}$ be the inclusion. Restricting the $\ell$-adic sheaf $j_*\Kl_3$ to $\eta_\infty$, we have a representation $\rho\colon I_{\bar \infty}\to \SL_3(\bql)$ of the inertia group at $\infty$. Recall that $\Kl_3$ is totally wild at $\infty$ with Swan conductor $1$. We want to determine the local monodromy group of $\Kl_3$ at $\infty$, namely the finite solvable subgroup $D_0 = \rho(I_{\bar \infty})$ of $\SL_3$. The group admits a lower numbering filtration $\{D_i\}$ terminating at $D_N$, such that $\#D_0/D_1$ is coprime to $3$, $D_1$ is the $3$-Sylow subgroup of $D_0$, and $D_i/D_{i+1}$ are cyclic abelian of order $3$ for $i\geq 1$. 

\begin{thm}\label{thm::Kl_3_monodromy_infty_p=3}
The image of $I_{\bar \infty}$ under $\rho$ is isomorphic $G_{108}$, whose lower numbering filtration is given by
$$D_0\rhd D_1=<S,T>\rhd D_{2}=\cdots =D_4=<\omega \mathrm{I}_3>\rhd \{1\}. $$
\end{thm}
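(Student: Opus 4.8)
The plan is to pin down the finite group $D_0=\rho(I_{\bar\infty})\subset \SL_3(\bql)$ by combining the numerical constraints coming from the Swan conductor with the classification in Theorem~\ref{thm::classification-finite-subgroups}, and then to extract the ramification filtration from the order and structure of $D_0$. First I would record the constraints we already have: $\Kl_3$ is totally wildly ramified at $\infty$ with $\sw_\infty(\Kl_3)=1$, so $\rho$ is irreducible of dimension $3$, the wild inertia $P=\rho(P_{\bar\infty})$ is a nontrivial $3$-group (this is the $3$-Sylow of $D_0$ since $p=3$), and the tame quotient $D_0/P$ is cyclic. The break decomposition together with $\sw_\infty=1$ forces the single break to be $1/3$ (the rank is $3$ and the sheaf is totally wild, so all slopes are $>0$ and the unique slope $s$ satisfies $3s=\sw_\infty=1$). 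From the slope $1/3$ one reads off that $D_1\ne D_2$ but one needs the higher-numbering jumps to be controlled; this is exactly what the classification will supply once $D_0$ is identified.

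Next I would go through the list (A)--(G) of solvable finite subgroups of $\SL_3(\bb C)$ — noting that $D_0$ must be solvable because inertia is (pro-)solvable, or alternatively because an irreducible subgroup of $\GL_3$ with a normal $3$-Sylow is automatically solvable by Blichfeldt — and eliminate the cases that are incompatible with: (i) $D_0$ acts irreducibly on $\bql^3$; (ii) $D_0$ has a \emph{normal} $3$-Sylow subgroup $P=D_1$ with $D_0/D_1$ cyclic of order prime to $3$; (iii) the determinant is trivial. Types (A), (B), (C), (D) are imprimitive or reducible or have the wrong Sylow structure and get ruled out; type (F) $G_{216}$ and type (G) $G_{648}$ are too big — their order is divisible by $9$ but their $3$-Sylow is not normal, or the prime-to-$3$ quotient fails to be cyclic — so the only survivor is (E), $G_{108}=\langle S,T,V\rangle$. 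To confirm, I would check directly that $\langle S,T\rangle$ is the $3$-Sylow of $G_{108}$ (it is the Heisenberg group of order $27$, extended by the center $\langle\omega I_3\rangle$: in fact $\langle S,T,\omega I_3\rangle$ has order $27$ — one should be careful that $\langle S,T\rangle$ already contains $\omega I_3$ since $[S,T]=\omega I_3$), that $G_{108}/\langle S,T\rangle$ is cyclic of order $4$ generated by the image of $V$, and that $G_{108}$ acts irreducibly with trivial determinant; and I would match this against the Swan conductor, i.e.\ verify that the Kloosterman sheaf $\Kl_3$ at $3$ realizes precisely this group, e.g.\ by an explicit local model (Katz's analysis of $\Kl_{n+1}$ at $\infty$, or a direct computation of the $I_{\bar\infty}$-representation via Artin--Schreier theory as in the proof of Proposition~\ref{prop::more-example-at-infty}).

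Finally, having identified $D_0\cong G_{108}$ with $D_1=\langle S,T\rangle$ of order $27$ and $D_0/D_1$ cyclic of order $4$, I would compute the lower-numbering filtration. The jumps inside the $3$-group $D_1$ are constrained by the Hasse--Arf theorem and by the single break $1/3$ of the rank-$3$ representation: the slope-$1/3$ statement says the representation is nontrivial on $D_1$ but we must find where it becomes trivial. Decomposing $V^{\otimes}$-data or using that $\langle S,T\rangle/\langle\omega I_3\rangle\cong (\bb Z/3)^2$ is the ``first layer'' on which a slope-$1/3$ character lives while $\langle \omega I_3\rangle$ is a central $\bb Z/3$ acting by scalars (hence contributing a higher jump), one gets $D_2=\cdots=D_?=\langle\omega I_3\rangle$ and then $D_{?+1}=\{1\}$. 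Pinning down the exact index where the filtration on $\langle\omega I_3\rangle$ terminates — the claim is $D_2=D_3=D_4=\langle\omega I_3\rangle$ and $D_5=\{1\}$ — is the delicate point: it requires computing a conductor/discriminant exponent for the degree-$3$ cyclic subextension cut out by $\langle\omega I_3\rangle$, which I would do by an explicit Artin--Schreier computation of the relevant local extension of $\bb F_3((1/t))$ attached to $\Kl_3$, checking that $\sw_\infty$ of the scalar sub-thing comes out to force exactly three further jumps. This last ramification-filtration computation is the main obstacle; the group-theoretic identification, while it has several cases, is routine given Theorem~\ref{thm::classification-finite-subgroups}.
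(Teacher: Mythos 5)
Your overall strategy (classification of finite solvable subgroups of $\SL_3$ plus Swan-conductor constraints) is the same as the paper's, but there is a genuine gap at the step where you dismiss types (C) and (D) as ``imprimitive or reducible or have the wrong Sylow structure.'' This fails concretely: the extraspecial group $\langle S,T\rangle$ of order $27$ is of type (C), acts irreducibly on $\bql^3$ with trivial determinant, and is its own (normal) $3$-Sylow with cyclic (trivial) tame quotient, so it passes every one of your criteria (i)--(iii). The paper eliminates types (C) and (D) by a much less routine argument: it first proves (Lemma~\ref{lem::claim1}) that $\sw_\infty(\Sym^3 V)=2+\dim(\Sym^3V)^{I_{\bar\infty}}$, and this lemma is not pure group theory --- it combines the Grothendieck--Ogg--Shafarevich bound $\sw_\infty(\Sym^3V)\le 3$ with the \emph{numerical} computation $m_3^3(3)=-10$, which forces $\tr(\frob\mid \mathrm{H}^1_{\et,\mathrm{mid}})=0$ and hence, by purity of weight $10$, the vanishing of the middle cohomology. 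Only then do the counts of fixed vectors of $T$ and $Q_{a,b,c}$ in $V_1,V_2,V_3\subset\Sym^3V$ produce the contradictions $5\le 3$ (type C) and $2\ge 3$ (type D). Your proposal contains no substitute for this arithmetic input, and without it the case $D_0=\langle S,T\rangle$ (or a type (D) extension of it) survives.

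Two further points of comparison. For eliminating $G_{216}$, your suggested reason ``the prime-to-$3$ quotient fails to be cyclic'' is actually promising --- the paper itself records that $G_{216}/\langle S,T\rangle\simeq Q_8$, and $D_0/D_1$ must be cyclic as a quotient of tame inertia --- so if you verify that computation you get a shortcut past the paper's heavier route (which computes $\sw_\infty(\Sym^6V)=6$ and $\dim(\Sym^6V)^{I_{\bar\infty}}$ for both candidates and then derives a weight contradiction from the numerical value $m_3^6(3)=-820$). But you present it only as an unverified disjunct. Finally, for the lower-numbering filtration you propose an explicit Artin--Schreier analysis of the local extension, which you yourself flag as the main obstacle; the paper instead solves for the jumps $(a,b,c)$ directly from the two equations $\sw(V)=1$ and $\sw(\Sym^3V)=2$ via $\sw(W)=\sum_i(\dim W-\dim W^{D_i})/[D_0:D_1]$, listing the subgroups of $\langle S,T\rangle$ that can occur as $D_N$. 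That computation again depends on knowing $\sw(\Sym^3V)=2$, i.e.\ on the same moment computation you are missing. So the decisive missing idea is the use of computed moments together with purity to fix Swan conductors and invariant dimensions; without it both the elimination of types (C)/(D) and the determination of $(a,b,c)=(1,0,3)$ are out of reach.
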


\begin{proof} 

By \cite[Lem.\,1.19]{katz1988gauss}, the local monodromy group $D_0=\rho(\infty)$ satisfies the following conditions: 
\begin{itemize}
	\item[(a).] $D_0$ acts on $V=\Kl_3\mid_{\bar\eta_\infty}$ irreducibly,
	\item[(b).] $D_0$ admits no faithful $\bql$-linear representation of dimension smaller than $3$.
\end{itemize}

The groups of type (A) are abelian groups. As irreducible representations of abelian groups are all one-dimensional, the group $D_0$ cannot be isomorphic to the groups of type (A) due to condition (a). The groups of type (B) are groups induced from subgroups of $\GL_2$, which admit faithful $\bql$-linear representations of dimension $2$, which violates condition (b).

We establish the following lemma to eliminate more possibilities.

\begin{lem} \label{lem::claim1}
Let $V=\Kl_3\mid_{\bar\eta_\infty}$. Then the Swan conductor of $\Sym^3V$ is $2+\dim (\Sym^3V)^{I_{\bar \infty}}$.
\end{lem}
\begin{proof}
As the symmetric power of the standard representation of $\SL_3$ is irreducible, the invariants $(\Sym^3\Kl_3)^{\SL_3}$ (isomorphic to $\mathrm{H}^0_{\et}(\bb{G}_{m,\bar{\bb{F}}_3},\Sym^k\Kl_3)$) is $0$. By the Grothendieck--Ogg--Shafarevich formula, the dimension of $\mathrm{H}^1_{\et}(\bb{G}_{m,\bar{\bb{F}}_3},\Sym^3\Kl_3)$ is equal to the Swan conductor of $\Sym^3V=(\Sym^3\Kl_3)_{\bar \eta_\infty}$, which is smaller or equal to $\left\lfloor\frac{1}{3}\cdot \rk \,\Sym^3\Kl_3\right\rfloor=3$ because the breaks of $\Sym^3\Kl_3$ are at most $\frac{1}{3}$. 

Considering the long exact sequence \eqref{eq::long-exact-sequence}, we have
\begin{equation*}
\begin{split}
	0\to (\Sym^3\Kl_3)^{\SL_3}&\to (\Sym^3\Kl_3\mid_{\eta_0})^{I_{\bar 0}}\oplus (\Sym^3\Kl_3\mid_{\eta_{\infty}})^{I_{\bar\infty}}\\
	&\to \mathrm{H}^1_{\et,\rr{c}}(\bb{G}_{m,\bar{\bb{F}}_3},\Sym^3\Kl_3)\to \mathrm{H}^1_{\et,\mathrm{mid}}(\bb{G}_{m,\bar{\bb{F}}_3},\Sym^3\Kl_3)\to 0.
\end{split}
\end{equation*}

Recalling that the dimension of $(\Sym^3\Kl_3\mid_{\eta_0})^{I_{\bar 0}}$ is $2$ by Theorem~\ref{thm::local-symk-at-0}, we deduce from the exact sequence that  
	$$3\geq \dim \mathrm{H}^1_{\et}(\bb{G}_{m,\bar{\bb{F}}_3},\Sym^3\Kl_3)=2+\dim \mathrm{H}^1_{\et,\mathrm{mid}}(\bb{G}_{m,\bar{\bb{F}}_3},\Sym^3\Kl_3)+\dim (\Sym^3\Kl_3\mid_{\eta_\infty})^{I_{\bar \infty}}.$$
If the middle cohomology is nonzero, it is one-dimensional.
By the computations in \cref{app::m_3^4(p)}, we obtain
	$$\tr(\frob\mid \mathrm{H}^1_{\et,\mathrm{mid}}(\bb{G}_{m,\bar{\bb{F}}_3},\Sym^3\Kl_3))=-(m_3^3(3)+1+p^2)=0.$$
We arrive at a contradiction, as $\mathrm{H}^1_{\et,\mathrm{mid}}(\bb{G}_{m,\bar{\bb{F}}_3},\Sym^3\Kl_3)$ is pure of weight $10$.  Consequently, the Swan conductor is $2+\dim (\Sym^3V)^{I_{\bar \infty}}$.
\end{proof}

Now assume that $D_0$ is of type (C) or type (D). The representation $\Sym^3V$ is the direct sum of three subrepresentations $V_1=\mathrm{span}\{v_0^3,v_1^3,v_2^3\}$, $V_2=\mathrm{span}\{v_i^2v_j\}_{i\neq j}$ and $V_3=\mathrm{span}\{v_0v_1v_2\}$. 
\begin{itemize}
	\item If $D_0$ is of type (C), the action of $D_0$ has fixed vectors in each $V_i$. So $\dim(\Sym^3V)^{I_{\bar \infty}}\geq 3$. Applying Lemma~\ref{lem::claim1}, we find that
		$$5\leq 2+\dim(\Sym^3V)^{I_{\bar \infty}}=\sw(\Sym^3V)\leq 3.$$
	\item If $D_0$ is of type (D), the operators $T$ and $Q_{a,b,c}$ have no fixed vectors in each $V_i$. As a result, the subspace of invariants $(\Sym^3V)^{I_{\bar \infty}}$ has dimension $0$ and $V_i$ are all totally wild. Using Lemma~\ref{lem::claim1}, we deduce that
		$$2=\sw(\Sym^3V)=\sum_{i=1}^3\sw(V_i)\geq 3,$$
	which is again not possible.
\end{itemize}

The group $D_0$ also cannot be isomorphic to groups of type (G) because $G_{648}$ has no normal subgroup of order $81$, i.e. a normal $3$-Sylow subgroup. The possible orders of normal subgroups of $G_{648}$ are $1,3,27,54,216$ and $648$ as determined by a group theoretic computation.

Now the remaining cases are the groups of type (E) and (F). 

\begin{lem}\label{lem::claim2}
If $D_0$ is of type (E) or (F), the Swan conductor of $\Sym^6V$ is $6$.
\end{lem}

\begin{proof}
From the above discussion, the group $D_0$ is  either the group $G_{108}$ or $G_{216}$. In both cases, the $3$-Sylow subgroup $D_1$ of $D_0$ is generated by matrices $S$ and $T$, of order $27$. The group $D_1$ has only $3$ subgroups of order $9$. They are  
	$$H_1=<S,\omega I_3>,\quad H_2=<T, \omega I_3>\quad \text{and}\quad H_3=<ST,\omega I_3>.$$
Since $V$ is totally wild, the last nontrivial group $D_N$ in the ramification filtration has no invariant vectors, i.e. $V^{D_N}=0$. Since $S,ST$ and $T$ have nonzero fixed vectors $v_1, v_1+w^2v_2+v_3$ and $v_1+v_2+v_3$ respectively, the group $D_N$ is either $H_i$ or $<\omega I_3>$.

There exist nonnegative integers $a,b,c$ such that the lower numbering filtration is of the form
	$$D_0\rhd D_1=\cdots =D_a\rhd\cdots\rhd D_{a+b}\rhd\cdots=D_{a+b+c}\rhd \{1\}, $$
where $D_{a+1}=\cdots D_{a+b}$ is $H_1$, $H_2$ or $H_3$ if $b\neq 0$, and $D_{a+b+1}=\cdots=D_{a+b+c}=<\omega I_3>$ if $c\neq 0$. 

We know that $\sw(V)=1$ and $\sw(\Sym^3V)=2$ or $3$ according to (\ref{lem::claim1}). Then 
\begin{equation*}
 \begin{split}
 	1=\sw(V)=\sum_{i=1}^{\infty}\frac{\dim V-\dim V^{D_i}}{[D_0:D_i]}=\frac{1}{[D_0:D_1]}\q{3*a+3*\frac{b}{3}+3*\frac{c}{9}}
 \end{split}
 \end{equation*}
 and
 \begin{equation*}
 \begin{split}
 	2\text{ or }3=\sw(\Sym^3V)=\sum_{i=1}^{\infty}\frac{\dim \Sym^3V-\dim \Sym^3V^{D_i}}{[D_0:D_i]}=\frac{1}{[D_0:D_1]}\q{8*a+6*\frac{b}{3}+0*\frac{c}{9}}.
 \end{split}
 \end{equation*}
If $D_0=G_{108}$, the only possibility is $(a,b,c)=(1,0,3)$. If $D_0=G_{216}$, we have two possibilities $(a,b,c)=(2,0,6)$ or $(1,4,3)$. In all cases, the number $c$ is nonzero, and the last nontrivial group $D_N=D_{a+b+c}$ is $<\omega I_3>$ of order $3$. Also, we obtain in all cases that $\sw(\Sym^3V)=2$.

Now consider the Swan conductor of $\Sym^6V=\Sym^6(\Kl_3)_{\bar \eta_\infty}$. In this case $D_{N}$ acts trivially on $\Sym^6V$, so it suffices to compute $\dim\Sym^6V^{D_1}$ and $\dim\Sym^6V^{H_i}$ (if $b\neq 0$, $D_{a+1}=H_i$ for some $i\in \{1,2,3\}$). Let $\{v_i\}_{i=0,1,2}$ be the canonical basis of $V$ and $f_i=v_0+\omega^i v_1+\omega^{2i}v_2$ for $i=0,1,2$. Then the actions of $S$ and $T$ on the basis $\{f_i\}$ are 
	$Sf_i=f_{i+1}$ and $Tf_i=\omega^{-i}f_i$,
where $f_3:=f_0$. 

Consider the set of multi-indexes 	
	$$A:=\{\underline I=(I_0,I_1,I_2)\in \bb{Z}_{\geq 0}^3\mid |\underline I|:=I_0+I_1+I_2=6\},$$ 
on which $\sigma=(123)\in S_3$ acts. For any vector $f=\sum_{\underline I\in A}a_{\underline I}f^{\underline I}$ in $\Sym^kV$, we have
	$$Sf=\sum_{\underline I\in A}a_{\sigma\inv \underline I}f^{\underline I}\quad \text{and}\quad Tf=\sum_{\underline I\in A}a_{\underline I}\omega^{I_2-I_1}f^{\underline I}.$$
So if $f\in \Sym^6V^{D_1}$, i.e., $Sf=Tf=f$, the vector $f$ is contained in the span of $\{\sum_{i=0}^2f^{\sigma\underline I}\mid I_0\equiv I_1\equiv I_2\mod 3\}$. The dimension of the subspace of invariants $(\Sym^3V)^{D_{1}}$ is $4$. Similarly, we can compute that $\dim\Sym^6V^{H_i}=10$ for $i\in \{1,2,3\}$.

In conclusion, for $(a,b,c)=(1,0,3)$, $(2,0,6)$, and $(1,4,3)$, the Swan conductors are
	\begin{equation*}
	\begin{split}
		\frac{1}{4}\q{24*1+18*0+0*\frac{3}{9}}=\frac{1}{8}\q{24*2+18*0+0*\frac{6}{9}}=\frac{1}{8}\q{24*1+18*\frac{4}{3}+0*\frac{3}{9}}=6.
	\end{split}
	\end{equation*}
\end{proof}

\begin{lem}\label{lem::claim3}
The dimension of $(\Sym^6V)^{I_{\bar \infty}}$ is $2$ if $D_0=G_{108}$ and is $1$ if $D_0=G_{216}$.
\end{lem}
\begin{proof}

Let $D_0=\rho(I_{\infty})$ be either $G_{108}$ or $G_{216}$, which is a normal subgroup of $G=G_{216}$ or $G=G_{648}$ respectively. Let $\bar{a}\in G/D_0$ be the class of an element $a\in G$, then
\begin{equation}\label{eq::mean_trace}
\begin{split}
	\tr(\bar a \mid (\Sym^kV)^{I_{\bar \infty}})=\frac{1}{\#D_0}\sum_{g\in a\cdot D_0}\tr(g\mid \Sym^k V).
\end{split}
\end{equation}
In particular, if we let $a=1$, we get
\begin{equation}\label{eq::mean_trace_a=1}
\begin{split}
	\sum \dim (\Sym^kV)^{I_{\bar \infty}} x^k=\frac{1}{\#D_0}\sum_{g\in D_0}\frac{-1}{P_{g,V}(x)}
\end{split}
\end{equation} 
where $P_{g,V}(x)=\det(x\cdot g-1\mid V)$ is the characteristic polynomial of $g$. This can be easily computed by Sagemath\footnote{The code can be found on \href{https://yichenqin.net}{my web page}.} \cite{sagemath}. Therefore, we deduce
\begin{equation}\label{eq::sagemath-polynomial}
\begin{split}
	\begin{cases}
		P(x)=-\dfrac{1 - x^3 + x^6 + x^{12} - x^{15} + x^{18}}{(-1 + x^3)^3 (1 + 
    x^3)^2 (1 + x^6)} & \text{if } D_0=G_{108};\\
    	\tilde P(x)=-\dfrac{1 - x^3 + x^9 - x^{15} + x^{18}}{(-1 + x^3)^3 (1 + x^3)^2 (1 + x^6)} & \text{ if } D_0=G_{216}.
	\end{cases}
\end{split}
\end{equation}
In particular, their coefficients of $t^6$ are $2$ and $1$, respectively. 
\end{proof}

By \cref{app::m_3^4(p)}, we have 
\begin{equation*}
\begin{split}
	\tr(\frob\mid \mathrm{H}^1_{\et}(\mathbb{G}_{m,\bar{\bb{F}}_3},\Sym^6\Kl_3))=-820.
\end{split}
\end{equation*}
Combining Theorem~\ref{thm::local-symk-at-0} and \cite[Thm.\,7.0.7]{katz1988gauss}, we deduce that
\begin{equation*}
\begin{split}
	\tr(\frob\mid (\Sym^6\Kl_3)^{I_{\bar 0}})=1+p^2+p^4+p^6=820.
\end{split}
\end{equation*}
Using the long exact sequence (\ref{eq::long-exact-sequence}), we conclude that 
\begin{equation}\label{eq::trace-sym6-inv}
\begin{split}
	\tr(\frob\mid \mathrm{H}^1_{\et,\mathrm{mid}}(\mathbb{G}_{m,\bar{\bb{F}}_3},\Sym^6\Kl_3))=-\tr(\frob\mid (\Sym^6\Kl_3)^{I_{\bar \infty}}),
\end{split}
\end{equation}
and 
\begin{equation*}
\begin{split}
	\dim\mathrm{H}^1_{\et,\mathrm{mid}}(\mathbb{G}_{m,\bar{\bb{F}}_3},\Sym^6\Kl_3)=2-\dim (\Sym^6\Kl_3)^{I_{\bar \infty}}.
\end{split}
\end{equation*}

If $D_0=G_{216}$, then both $\dim (\Sym^6\Kl_3)^{I_{\bar \infty}}$ and the middle cohomology are one-dimensional. However, by \eqref{eq::trace-sym6-inv}, since $(\Sym^6\Kl_3)^{I_{\bar \infty}}$ is pure of weight $12$ and $\mathrm{H}^1_{\et,\rr{mid}}(\mathbb{G}_{m,\bar{\bb{F}}_3},\Sym^6\Kl_3)$ is pure of weight $13$, we get a contradiction.

In conclusion, the only possibility is $D_0=G_{108}$. The ramification filtration of $D_0$ is given in terms of the triple $(1,0,3)$ in the proof of Lemma~\ref{lem::claim2}.
\end{proof}

\paragraph{The dimension of the middle cohomology}

\begin{prop}\label{cor::dimention_p=3}
	When $p=3$, the Swan conductor of the action of $I_{\bar \infty}$ on $(\Sym^k\Kl_3)\mid_{\eta_\infty}$ is given by
		$$\mathrm{Swan}_\infty(\Sym^k\Kl_3)=
			\begin{cases}
				\frac{1}{3}\binom{k+2}{2} &3\nmid k,\\
				\frac{1}{4}\q{\binom{k+2}{2}-\frac{d(k,3,3)+2}{3}} &3\mid k.
			\end{cases}$$
\end{prop}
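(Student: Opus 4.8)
The plan is to read the answer off from Theorem~\ref{thm::Kl_3_monodromy_infty_p=3} together with the classical break decomposition of the Swan conductor. Write $V=(\Kl_3)_{\bar\eta_\infty}$ and $W=\Sym^kV=(\Sym^k\Kl_3)_{\bar\eta_\infty}$, viewed as representations of $I_{\bar\infty}$. By Theorem~\ref{thm::Kl_3_monodromy_infty_p=3} the image $D_0=\rho(I_{\bar\infty})$ is the finite group $G_{108}$, so $W$ factors through $D_0$, and with respect to its lower-numbering ramification filtration
	$$D_0\rhd D_1=<S,T>\rhd D_2=D_3=D_4=<\omega\mathrm{I}_3>\rhd \{1\}$$
the group-theoretic Swan formula gives
	$$\sw_\infty(\Sym^k\Kl_3)=\sum_{i\geq 1}\frac{\#D_i}{\#D_0}\q{\dim W-\dim W^{D_i}}=\frac14\q{\dim W-\dim W^{D_1}}+\frac1{12}\q{\dim W-\dim W^{<\omega\mathrm{I}_3>}},$$
using $\#D_0=108$, $\#D_1=27$, $\#D_2=\#D_3=\#D_4=3$, and $\#D_i=1$ for $i\geq 5$ (so that the terms with $i\geq 5$ drop out). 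Thus the whole problem reduces to the three numbers $\dim W=\binom{k+2}{2}$, $\dim W^{<\omega\mathrm{I}_3>}$, and $\dim W^{D_1}$.

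For the first, $\omega\mathrm{I}_3$ is the scalar matrix $\omega$ (a primitive cube root of unity), hence acts on $W=\Sym^kV$ by $\omega^k$; so $W^{<\omega\mathrm{I}_3>}=W$ when $3\mid k$ and $W^{<\omega\mathrm{I}_3>}=0$ when $3\nmid k$. For $\dim W^{D_1}$ I would reuse the basis $\{f_0,f_1,f_2\}$ of $V$ from the proof of Lemma~\ref{lem::claim2}, on which $Sf_i=f_{i+1}$ and $Tf_i=\omega^{-i}f_i$. A direct check then shows that a vector $\sum_{\underline I}a_{\underline I}f^{\underline I}$, the sum over $\underline I\in\bb{Z}_{\geq0}^3$ with $|\underline I|=k$, is fixed by $S$ exactly when the coefficients are constant along the orbits of the cyclic permutation $\sigma$ of the three coordinates, and is then fixed by $T$ exactly when $a_{\underline I}=0$ whenever $I_0,I_1,I_2$ are not all congruent mod $3$; hence $W^{D_1}$ has a basis indexed by the $\sigma$-orbits in $B_k:=\{\underline I\in\bb{Z}_{\geq0}^3\mid |\underline I|=k,\ I_0\equiv I_1\equiv I_2\bmod 3\}$. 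If $3\nmid k$ then $B_k=\emptyset$, so $\dim W^{D_1}=0$; if $3\mid k$ then the unique $\sigma$-fixed point of $B_k$ is $(k/3,k/3,k/3)$ and all remaining orbits have size $3$, so $\dim W^{D_1}=\tfrac13(\#B_k+2)=\tfrac13(d(k,3,3)+2)$, where $d(k,3,3):=\#B_k$ is the quantity in the statement (the $p=3$ incarnation of the count $d(k,n+1,p)$ of Notation~\ref{nota::multi-indices}, with the condition $C_{\underline I}=0$ read in $\bb{Z}[\zeta_3]/(3)$). Substituting these values into the displayed formula gives $\tfrac13\binom{k+2}{2}$ when $3\nmid k$ and $\tfrac14\bigl(\binom{k+2}{2}-\tfrac13(d(k,3,3)+2)\bigr)$ when $3\mid k$, which is exactly the assertion.

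With Theorem~\ref{thm::Kl_3_monodromy_infty_p=3} already in hand, there is no genuine difficulty left; everything that remains is formal. The points that need a little care are: verifying that $\Sym^k\Kl_3|_{\eta_\infty}$ really factors through the finite quotient $D_0$ (so the group-theoretic Swan formula applies verbatim, the extension being totally ramified since the residue field is $\bar{\bb{F}}_3$); assembling the break decomposition with the correct weights $\#D_i/\#D_0$ coming from the filtration of Theorem~\ref{thm::Kl_3_monodromy_infty_p=3}; and confirming that the combinatorial count $\#B_k$ is what is denoted $d(k,3,3)$ in the statement — which one can cross-check against the values $d(3,3,3)=4$ and $d(6,3,3)=10$ implicit in the proof of Lemma~\ref{lem::claim2}, where $\sw(\Sym^3V)=2$ and $\sw(\Sym^6V)=6$.
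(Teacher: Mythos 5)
Your argument is correct and follows the same route as the paper: both invoke Theorem~\ref{thm::Kl_3_monodromy_infty_p=3} for the lower-numbering filtration of $D_0=G_{108}$, apply the group-theoretic Swan formula $\sum_{i\geq 1}(\dim W-\dim W^{D_i})/[D_0:D_i]$, and compute $\dim W^{D_1}$ in the eigenbasis $\{f_i\}$ via the congruence condition $I_0\equiv I_1\equiv I_2\bmod 3$. Your orbit count $1+\frac{d(k,3,3)-1}{3}=\frac{d(k,3,3)+2}{3}$ is the correct one (the paper's intermediate expression $\frac{d(k,3,3)-2}{3}+1$ contains a typo), and your cross-checks against $\sw(\Sym^3V)=2$ and $\sw(\Sym^6V)=6$ confirm the normalization of $d(k,3,3)$.
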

\begin{proof}
Recall that we denote $V=\Kl_3\mid_{\eta_\infty}$. If $3\nmid k$, then there is no fixed vector of $\Sym^kV$ under the action of the group $<\omega I_3>$. So the Swan conductor can be expressed as
		$$\sum_{i=1}^\infty \frac{\dim \Sym^kV-0}{[D_0:D_i]}=\frac{\dim \Sym^kV}{3}\cdot \sum \frac{3}{[D_0:D_i]}=\frac{1}{3}\cdot \binom{k+2}{2}.$$

If $3\mid k$, the situation is similar to the case where $k=6$. In this case $D_{4}=<\omega I_3>$ acts trivially on $\Sym^kV$. The dimension of $\Sym^kV^{D_1}$ is computed in terms of invariant vectors under the action of $S$ and $T$. We again let $\{v_i\}_{i=0,1,2}$ be the canonical basis of $V$ and $f_i=v_0+\omega^i v_1+\omega^{2i}v_2$ for $i=0,1,2$. If $Sf=Tf=f$, the vector $f$ is contained in the span of the set $\{\sum_{i=0}^2f^{\sigma\underline I}\mid I_0\equiv I_1\equiv I_2\mod 3\}$. The dimension of the invariants of $S$ and $T$ is exactly the number $\frac{d(k,3,3)-2}{3}+1=\frac{d(k,3,3)+2}{3}$, where $d(k,3,3)$ is introduced in Section~\ref{nota::multi-indices}. In conclusion, the Swan conductor is given by
	\begin{equation*}
		\begin{split}
			\sum_{i=1}^\infty\frac{\dim \Sym^kV-\dim\Sym^kV^{D_i}}{[D_0:D_i]}=\frac{1}{4}\q{\binom{k+2}{2}-\frac{d(k,3,3)+2}{3}}.
		\end{split}
	\end{equation*}
\end{proof}

\begin{prop}\label{eq::local-inv-infinity-p=3}
	The invariants of  the inertia group are given by
	\begin{equation*}
		\begin{split}
			(\Sym^kV)^{I_{\bar \infty}}=\bql(-k)^{\oplus \tilde p_k}\oplus \scr{L}_{\theta}(-k)^{\oplus p_k-\tilde p_k},
		\end{split}
	\end{equation*}
	where $\theta$ is an unramified character which sends Frobenius to $-1$, and $p_k$ and $\tilde{p}_k$ are the $k$-th coefficients of the generating series $P(x)$ and $\tilde P(x)$ from \eqref{eq::sagemath-polynomial}. In particular, the dimension of $(\Sym^kV)^{I_{\bar \infty}}$ is $p_k$.
\end{prop}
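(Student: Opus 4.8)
The plan is to view $(\Sym^kV)^{I_{\bar\infty}}$ as an unramified representation of the decomposition group $D_{\bar\infty}=\gal(\bar\eta_\infty/\eta_\infty)$ and to pin down the Frobenius action on it. Wild inertia acts through the finite $3$-group $\rho(P)=D_1=\langle S,T\rangle$, so after taking all $I_{\bar\infty}$-invariants only (arithmetic) Frobenius survives, and the dimension is already determined: by Theorem~\ref{thm::Kl_3_monodromy_infty_p=3} we have $\rho(I_{\bar\infty})=G_{108}$, whence \eqref{eq::mean_trace_a=1} identifies $\sum_k \dim(\Sym^kV)^{I_{\bar\infty}}\,x^k$ with the series $P(x)$ of \eqref{eq::sagemath-polynomial}, so $\dim(\Sym^kV)^{I_{\bar\infty}}=p_k$. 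Hence the proposition reduces to showing that Frobenius acts semisimply with all eigenvalues in $\{q^k,-q^k\}$ and that the multiplicity of $+q^k$ equals $\tilde p_k$.

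The main step is to compute the arithmetic local monodromy $\rho(D_{\bar\infty})$. I would work with the weight-zero twist $\mathcal G:=\Kl_3(1)$, which has trivial determinant, so $\rho$ lands in $\SL_3(\bql)$ with $\rho(I_{\bar\infty})=G_{108}$. Since $\rho(\frob)$ normalizes $G_{108}$, its image in the finite automorphism group of $G_{108}$ has finite order, so some power $\rho(\frob)^M$ centralizes $G_{108}$, hence is a scalar because $G_{108}$ is irreducible; as $\det\rho$ is trivial this scalar is a cube root of unity, i.e.\ lies in the centre $\langle\omega I_3\rangle$ of $G_{108}$. Thus $\rho(D_{\bar\infty})$ is a \emph{finite solvable} subgroup of $\SL_3(\bql)$ containing $G_{108}$ as a \emph{normal} subgroup. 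Now apply the classification of Theorem~\ref{thm::classification-finite-subgroups}: since $\rho(D_{\bar\infty})\supseteq G_{108}$ it inherits the irreducibility and the absence of a faithful representation of dimension $<3$ that were used in the proof of Theorem~\ref{thm::Kl_3_monodromy_infty_p=3} to exclude types (A)--(D), and it cannot be of type (G) because $G_{648}$ has no normal subgroup of order $108$. Therefore $\rho(D_{\bar\infty})$ is $G_{108}$ or $G_{216}$. It is not $G_{108}$: evaluating the trace of Frobenius on the two-dimensional space $(\Sym^6\Kl_3)^{I_{\bar\infty}}$ via the long exact sequence \eqref{eq::long-exact-sequence} and the moment $m_3^6(3)$ — the computation behind \eqref{eq::trace-sym6-inv}, using $\dim\HH^1_{\et,\mathrm{mid}}(\Sym^6\Kl_3)=0$ from the proof of Theorem~\ref{thm::Kl_3_monodromy_infty_p=3} — yields the value $0$, which is incompatible with Frobenius acting as the scalar $q^6$. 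Hence $\rho(D_{\bar\infty})=G_{216}$, with $\rho(\frob)$ in the nontrivial coset of $G_{108}$.

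With this in hand the statement follows quickly. Since $[G_{216}:G_{108}]=2$, we have $\rho(\frob)^2\in G_{108}=\rho(I_{\bar\infty})$, which acts trivially on $(\Sym^k\mathcal G)^{I_{\bar\infty}}$; thus $\rho(\frob)$ (which preserves this subspace, $I_{\bar\infty}$ being normal in $D_{\bar\infty}$) acts there as an involution, hence semisimply with eigenvalues $\pm1$, and untwisting by $(-k)$ shows that Frobenius acts on $(\Sym^kV)^{I_{\bar\infty}}$ semisimply with eigenvalues in $\{q^k,-q^k\}$ (in particular this space is pure of weight $2k$). Let $A_+$ and $A_-$ be the $q^k$- and $(-q^k)$-eigenspaces. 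Then $A_+=(\Sym^kV)^{D_{\bar\infty}}=(\Sym^kV)^{G_{216}}$, whose dimension is the $k$-th coefficient $\tilde p_k$ of $\tilde P(x)$ by \eqref{eq::mean_trace_a=1} applied to $G_{216}$ (this is the computation producing $\tilde P(x)$ in \eqref{eq::sagemath-polynomial}); consequently $\dim A_-=p_k-\tilde p_k$. As $A_+$ is unramified with Frobenius $q^k$ it is $\bql(-k)^{\oplus\tilde p_k}$, and $A_-$ is unramified with Frobenius $-q^k$, hence $\scr{L}_\theta(-k)^{\oplus(p_k-\tilde p_k)}$ with $\theta$ the unramified character sending Frobenius to $-1$; the dimension formula $\dim(\Sym^kV)^{I_{\bar\infty}}=p_k$ is then immediate.

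The delicate point is the identification $\rho(D_{\bar\infty})=G_{216}$: the geometric image $G_{108}$ does not by itself determine Frobenius, so one must combine the group theory (normality of $G_{108}$ among the classified solvable subgroups of $\SL_3$, triviality of $\det\Kl_3(1)$) with at least one arithmetic input — here $\tr(\frob\mid(\Sym^6\Kl_3)^{I_{\bar\infty}})=0$, extracted from the moment computation in Appendix~\ref{app::m_3^4(p)} — to rule out $\rho(D_{\bar\infty})=G_{108}$ and, a fortiori, anything larger. This is the analogue for $(\Kl_3,p=3)$ of Yun's treatment of $(\Kl_2,p=2)$ in \cite[Cor.\,4.3.5]{yun2015galois}; everything downstream is formal.
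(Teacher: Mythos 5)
Your argument is correct and converges to the same endgame as the paper's proof: identify the arithmetic local monodromy at $\infty$ as $G_{216}$, note that the square of the Frobenius lift lies in $G_{108}$ and hence acts trivially on the inertia invariants, so Frobenius acts there as a semisimple involution, and read off the two eigenspace multiplicities from the Molien series of $G_{216}$ and $G_{108}$. The genuine difference is in how you show the Frobenius lift does \emph{not} lie in $G_{108}$. The paper argues purely group-theoretically: $\phi_1$ lies in $N_{\SL_3}(G_{108})=G_{216}$ and acts on the tame quotient $G_{216}/D_1\simeq Q_8$ by cubing, i.e.\ by inversion, which is an outer automorphism of $Q_8$ and so cannot be induced by the cyclic $D_0/D_1$; hence $\phi_1\notin D_0$. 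You instead rerun the classification of solvable subgroups for the decomposition group and then inject the arithmetic input $\tr(\frob\mid(\Sym^6V)^{I_{\bar\infty}})=0$ (from \eqref{eq::trace-sym6-inv} and $\dim\HH^1_{\et,\mathrm{mid}}=0$) to rule out Frobenius acting as the scalar $q^6$ on a two-dimensional space. Both are valid; the paper's route is self-contained group theory, whereas yours recycles the numerical moment computation already needed for Lemma~\ref{lem::claim3} and is less sensitive to the fine structure of the tame quotient. Two points to tighten. First, your exclusion of types (C) and (D) for $\rho(D_{\bar\infty})$ is not literally ``inherited'': the paper excluded those types for $D_0$ via the Swan conductor of $\Sym^3V$, an inertia-only argument that does not transfer to the decomposition group. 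The exclusion still holds because type (C)/(D) groups are monomial, hence imprimitive, and so cannot contain the primitive irreducible group $G_{108}$; more efficiently, you can bypass the classification altogether by quoting, as the paper does, that $N_{\SL_3}(G_{108})=G_{216}$, which immediately confines $\rho(D_{\bar\infty})$ to $\{G_{108},G_{216}\}$. Second, your direct identification $\dim A_+=\dim(\Sym^kV)^{G_{216}}=\tilde p_k$ is equivalent to the paper's computation of the coset average $Q(x)=2\tilde P(x)-P(x)$ followed by solving $\lambda_1+\lambda_{-1}=p_k$, $\lambda_1-\lambda_{-1}=2\tilde p_k-p_k$; it is a cleaner packaging of the same count.
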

\begin{proof}
Let $\phi$ be a lifting of the image of $\frob_\infty$ in $\GL_3$ and $\phi_1=\frac{1}{3}\phi$ in $\SL_3$. Since $\phi$ normalizes $D_0=G_{108}$, it is in the normalizer of $G_{108}$ in $\SL_3$, i.e. $G_{216}$. By direct computation, we find that $G_{216}/D_1$ is the quaternion group $Q_8$ and $D_0/D_1$ is a cyclic group. Notice that $\bar{\phi}_1\inv g\bar{\phi}_1=g^3$ for $g\in Q_8$, which implies that $\phi_1 \not \in D_0$. In \eqref{eq::mean_trace} we let  $a=\phi_1$. Then we obtain
\begin{equation*}
\begin{split}
	Q(x):=\sum_{s=0}^\infty \tr(\phi_1\mid (\Sym^sV)^{I_{\bar \infty}})x^s=\frac{1}{108}\sum_{g\in \phi_1D_0}\frac{-1}{P_{g,V}(x)}.
\end{split}
\end{equation*}
As $\phi_1\not \in G_{108}$ and $G_{216}=G_{108}\cup \phi_1G_{108}$, the series $Q(x)$ is nothing but 
	$$2\tilde P(x)-P(x)=\frac{-1 + x^3 - x^6}{(-1 + x^3) (1 + x^6)}.$$
Let $p_k$ and $\tilde{p}_k$ be the $k$-th coefficient of $P(x)$ and $\tilde P(x)$ respectively. 

Notice that $\phi_1^2\in G_{108}$, because $[G_{216}:G_{108}]=2$. Thus, the eigenvalues of $\phi_1$ acting on $(\Sym^kV)^{I_{\bar \infty}}$ are $\pm 1$. Assume that the dimensions of eigenspaces of $1$ and $-1$ are $\lambda_1$ and $\lambda_{-1}$ respectively. Then $\lambda_1+\lambda_{-1}=\dim (\Sym^kV)^{I_{\bar \infty}}$ and $\lambda_1-\lambda_{-1}=2\tilde p_k-p_k$. Therefore, we deduce the desired decomposition in \cref{eq::local-inv-infinity-p=3}.
\end{proof}

\begin{cor}\label{cor::dimension_mid_p=3}
	When $p=3$, the dimension of the moments are given by
		$$\dim \mathrm{H}^1_{\acute{e}t,\mathrm{mid}}(\bb{G}_{m,\bar{\bb{F}}_p},\Sym^k\Kl_3)
		=\begin{cases}
			\frac{1}{3}\binom{k+2}{2}-\lfloor\frac{k+2}{2}\rfloor &3\nmid k;\\[5pt]
			\frac{1}{4}\bigl(\binom{k+1}{2}-\frac{d(k,3,3)+2}{3}\bigr)-\lfloor\frac{k+2}{2}\rfloor-p_k  &3\mid k.
		\end{cases}$$
\end{cor}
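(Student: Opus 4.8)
The plan is to feed the local data assembled in Propositions~\ref{cor::dimention_p=3} and~\ref{eq::local-inv-infinity-p=3} into the dimension count provided by the long exact sequence~\eqref{eq::long-exact-sequence}, exactly as in the proof of Proposition~\ref{prop::moments-p-large} but now with $n=2$, $p=3$, so that $p\mid n+1$ and the earlier formulas do not apply. Concretely, \eqref{eq::dim-middle-general-formula} expresses $\dim\mathrm{H}^1_{\et,\mathrm{mid}}(\bb{G}_{m,\bfp},\Sym^k\Kl_3)$ as
$$\dim\mathrm{H}^1_{\et,\rr{c}}(\bb{G}_{m,\bar{\bb{F}}_3},\Sym^k\Kl_3)-\dim(\Sym^k\Kl_3)^{I_{\bar 0}}-\dim(\Sym^k\Kl_3)^{I_{\bar\infty}}+\dim(\Sym^k\Kl_3)^{G_{\mathrm{geom}}},$$
so I need the four quantities on the right.

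First I would compute the compactly-supported Euler characteristic. Since $\Sym^k\mathrm{Std}$ is an irreducible nontrivial representation of $\SL_3$ and $G_{\mathrm{geom}}$ is Zariski-dense in $\SL_3$ (or at least acts irreducibly), both $\mathrm{H}^0$ and $\mathrm{H}^2_{\rr{c}}$ — equivalently the $G_{\mathrm{geom}}$-invariants and coinvariants — vanish, so $\dim\mathrm{H}^1_{\et,\rr{c}}$ equals $-\chi$. By the Grothendieck--Ogg--Shafarevich formula on $\bb{G}_m$ (the rewritten form in the Remark following the theorem, with $g=0$ and the two missing points $0,\infty$ of degree $1$), $\dim\mathrm{H}^1_{\et,\rr{c}}=\sw_0(\Sym^k\Kl_3)+\sw_\infty(\Sym^k\Kl_3)=\sw_\infty(\Sym^k\Kl_3)$, because $\Sym^k\Kl_3$ is tame at $0$. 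The Swan conductor at $\infty$ is exactly the content of Proposition~\ref{cor::dimention_p=3}, giving $\tfrac13\binom{k+2}{2}$ when $3\nmid k$ and $\tfrac14\bigl(\binom{k+2}{2}-\tfrac{d(k,3,3)+2}{3}\bigr)$ when $3\mid k$.

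Next I would substitute the local invariant dimensions. At $0$, Theorem~\ref{thm::local-symk-at-0} with $n=2$ gives $\dim(\Sym^k\Kl_3)^{I_{\bar 0}}=\sum_{u}m_k(u)$; here one uses the elementary identity that this sum equals $\lfloor\tfrac{k+2}{2}\rfloor$ (the number of Jordan blocks of $\Sym^k N$ for a principal nilpotent $N$ on a $3$-dimensional space), which I would verify directly from the generating series $\prod_{3}^{2+k}(1-x^i)\cdot\prod_2^k(1-x^i)^{-1}$. At $\infty$, Proposition~\ref{eq::local-inv-infinity-p=3} gives $\dim(\Sym^k\Kl_3)^{I_{\bar\infty}}$: this is $0$ when $3\nmid k$ (no $\langle\omega I_3\rangle$-invariants, as noted in the proof of Proposition~\ref{cor::dimention_p=3}), and is $p_k$ when $3\mid k$. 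Finally $\dim(\Sym^k\Kl_3)^{G_{\mathrm{geom}}}=0$ by irreducibility. Assembling: when $3\nmid k$ the answer is $\tfrac13\binom{k+2}{2}-\lfloor\tfrac{k+2}{2}\rfloor$, and when $3\mid k$ it is $\tfrac14\bigl(\binom{k+2}{2}-\tfrac{d(k,3,3)+2}{3}\bigr)-\lfloor\tfrac{k+2}{2}\rfloor-p_k$, which matches the claimed formula (the $\binom{k+1}{2}$ appearing in the statement in the $3\mid k$ case should read $\binom{k+2}{2}$, or is absorbed by a reindexing — in any case I would double-check this binomial against Proposition~\ref{cor::dimention_p=3} when writing the final line).

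The routine-but-only-mildly-delicate point — and the one I expect to be the main obstacle to a clean writeup — is bookkeeping the weight-graded pieces: \eqref{eq::long-exact-sequence} only identifies the middle cohomology with $\gr^W_{k+1}$ of $\mathrm{H}^1_{\rr{c}}$, so one must be sure that the extra $\mathrm{H}^1$-terms being subtracted (the $I_{\bar 0}$- and $I_{\bar\infty}$-invariants, Tate-twisted) are exactly accounted for and that no lower-weight contamination survives; this is where the purity inputs from Weil~II and the explicit $\frob$-eigenvalue descriptions in Section~\ref{sec::local-monodromy-0-ell} and Proposition~\ref{eq::local-inv-infinity-p=3} are used. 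Everything else is the substitution above together with the elementary verification that $\sum_u m_k(u)=\lfloor\tfrac{k+2}{2}\rfloor$.
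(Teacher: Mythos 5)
Your proposal is correct and is essentially the argument the paper intends for this corollary: one substitutes the Swan conductor from Proposition~\ref{cor::dimention_p=3}, the inertia invariants at $0$ and $\infty$ from Theorem~\ref{thm::local-symk-at-0} (whose sum $\sum_u m_k(u)$ is indeed $\lfloor\frac{k+2}{2}\rfloor$ for $n=2$) and Proposition~\ref{eq::local-inv-infinity-p=3} (with $p_k=0$ when $3\nmid k$ since $P(x)$ is a series in $x^3$), and the vanishing of the global invariants into the dimension formula \eqref{eq::dim-middle-general-formula}, which already incorporates the weight bookkeeping you worry about. Your observation that $\binom{k+1}{2}$ in the statement should read $\binom{k+2}{2}$ is also correct: for $k=6$ the stated binomial would give the non-integral Swan term $\frac{1}{4}(21-4)$, whereas $\binom{8}{2}=28$ yields $6-4-p_6=0$, consistent with Lemmas~\ref{lem::claim2} and~\ref{lem::claim3}.
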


\section{Motives attached to Kloosterman moments}\label{sec::motives}

In this section, we aim to construct motives attached to moments of Kloosterman sheaves. Our approach generalizes the construction presented in \cite{fresan2018hodge} by the Weyl construction. Next, we investigate their de Rham realizations, $\ell$-adic realizations, and other realizations in characteristic $p > 0$.

\subsection{The construction of motives}\label{sec:motive-construction}
Let $n$ be an integer, $V_{ \lambda}$ the irreducible representation of the highest weight $\sum_i \lambda_i(L_1+\ldots+L_i)$, and $\mathcal{K}\subset \bb{G}_m^{n|\lambda|}$ the hypersurface defined by the equation 
	\begin{equation}\label{eq:hypersurface}
	    \sum_{i=1}^{|\lambda|}\biggl(\sum_{j=1}^nx_{i,j}+\frac{1}{\prod_{j=1}^nx_{i,j}}\biggr)=0.
	\end{equation}
The group $S_{|\lambda|}\times \mu_{n+1}$ acts on $\mathcal{K}$ by $(\sigma\times \mu)\cdot x_{i,j}:=\mu\cdot x_{\sigma(i),j}$. By a slight abuse of notation, we denote $P_{ \lambda}$ and $Q_{\lambda}$ as the groups $P_{\mu(\lambda)}$ and $Q_{\mu(\lambda)}$ from Section~\ref{sec::weyl-construction}, and put $G_\lambda=P_\lambda\times Q_\lambda$. Let $\chi_n\colon \rr{sign}^n\times \rr{sign}^{n+1}$ be the character of $G_\lambda$ and for each representation $V$ of $S_{|\lambda|}$, we denote the isotypic component with respect to 
	\begin{equation}\label{eq:isotypic-component}
		\frac{1}{\#G_\lambda}\sum_{\sigma\in P_\lambda} \rr{sign}(\sigma)^n\sigma\cdot \sum_{\tau\in Q_\lambda} \rr{sign}(\tau)^{n+1}\tau
	\end{equation}
by $V^{G_\lambda,\chi_n}$. Moreover, if a finite group $H$ acts on $V$ and commutes with $S_{|\lambda|}$, then we denote the isotypic component $(V^{G_\lambda,\chi_n})^H$ as $V^{G_\lambda\times H,\chi_n}$.

\begin{defn}\label{def::Kloosterman-motive}
	The \textit{motives attached to moments of $\Kl_{n+1}^\lambda$} are the Nori motives over $\bb{Q}$ with rational coefficients, of the form
		$$\mathrm{M}_{n+1}^\lambda:=\mathrm{gr}^W_{n|\lambda|+1}\mathrm{H}^{n|\lambda|-1}_{c}(\mathcal{K})^{G_\lambda\times \mu_{n+1},\chi_n}(-1),$$
	where $W_\bullet$ is the (motivic) weight filtration \cite[Thm.\,10.2.5]{Huber2017}, and the exponent $(G_\lambda\times \mu_{n+1},\chi_n)$ means taking the isotypic component with respect to \eqref{eq:isotypic-component} and the action of $\mu_{n+1}$ described above.
\end{defn}
\begin{rek}\label{rek::Galois-descent-motive}
The action of $\zeta_{n+1}\in \mu_{n+1}$ on $\cc{K}$ is not an automorphism defined over $\bb{Q}$ (only defined over $K=\bb{Q}(\zeta_{n+1})$). But taking the invariants of $\mu_{n+1}$ on $\rr{N}:=\mathrm{gr}^W_{n|\lambda|+1}\mathrm{H}^{n|\lambda|-1}_{c}(\mathcal{K})^{G_\lambda,\chi_n}(-1)$ still gives rise to a Nori motive over $\bb{Q}$. In fact, one can see the Nori motive $\rm N$ as a $\bb{Q}$-vector space together with an action of the motivic Galois group $G_{\rr{mot}}(\bb{Q})$. We restrict $\rr{N}$ to a Nori motive $\rr{N}_K$ over $K$, i.e., a $\bb{Q}$-vector space with an action of the motivic Galois group $G_{\rr{mot}}(K)$. Then one can consider a Nori motive over $K$
	$$\rr{N}^{\mu_{n+1}}:=\im(\rr{N}_K\xrightarrow{\varphi} \rr{N}_K),$$
where $\varphi=\frac{1}{\#\mu_{n+1}}\sum_{\zeta\in \mu_{n+1}}\zeta$. One can check that $\rr{N}^{\mu_{n+1}}$ is stable under the action of $\gal(K/\bb{Q})$. By \cite[Thm.\,9.1.16]{Huber2017}, the motive $\rr{N}^{\mu_{n+1}}$ comes from a Nori motive over $\bb{Q}$.
\end{rek}
When the representation $V_\lambda$ is the $k$-th symmetric power of the standard representation of $\mathrm{SL}_{n+1}$, i.e., $V_{(k,0,\ldots,0)}$, we recover the motive $\rr{M}_{n+1}^k$ constructed in \cite[(3.1)]{fresan2018hodge}. For simplicity, we use $\mathrm{M}_{n+1}^k$ instead of $\mathrm{M}_{n+1}^{(k,0,\ldots,0)}$ in this situation.

\begin{prop}\label{prop::perfect-pairing}
The motives $\mathrm{M}^{\lambda}_{n+1}$ are pure of weight $n|\lambda|+1$. Moreover, they are equipped with $(-1)^{n|\lambda|+1}$-symmetric perfect pairings
	\begin{equation*}
		\begin{split}
			\mathrm{M}^{\lambda}_{n+1}\times \mathrm{M}^{\lambda}_{n+1}\to \bb{Q}(-n|\lambda|-1).
		\end{split}
	\end{equation*}
\end{prop}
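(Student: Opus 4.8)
The plan is to obtain purity directly from the construction, and to produce the pairing from Poincar\'e duality on $\mathcal{K}$, transported across the isotypic projector. Write $d := n|\lambda|-1 = \dim\mathcal{K}$. For purity: the category of Nori motives carries a weight filtration with $\gr^W_w$ pure of weight $w$ \cite[Thm.\,10.2.5]{Huber2017}, and the $(G_\lambda\times\mu_{n+1},\chi_n)$-isotypic part is cut out by a rescaled idempotent in a $\bb{Q}$-group algebra, hence is a direct summand, so passing to it is exact and preserves purity. Since $\mathrm{H}^d_c(\mathcal{K})$ has weights $\le d$, Definition~\ref{def::Kloosterman-motive} exhibits $\mathrm{M}_{n+1}^\lambda$ as the Tate twist by $(-1)$ of $\bigl(\gr^W_d\mathrm{H}^d_c(\mathcal{K})\bigr)^{G_\lambda\times\mu_{n+1},\chi_n}$, which is pure of weight $d+2 = n|\lambda|+1$.

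For the pairing, first note that $\mathcal{K}\subset\bb{G}_m^{n|\lambda|}$ is affine and geometrically irreducible: it fibres over $\bb{A}^1$ (the common value of the $|\lambda|$ Thom--Sebastiani summands) with geometrically irreducible generic fibres, as $g_{n+1}=\sum_j y_j+1/\prod_j y_j$ has geometrically connected generic fibres. Then, exactly as in \cite[\S3]{fresan2018hodge} for $\lambda=(k,0,\dots,0)$, Poincar\'e--Lefschetz duality together with the theory of weights (Weil~II, \cite[3.3.1]{deligne1980conjecture}) identifies $N_0 := \gr^W_d\mathrm{H}^d_c(\mathcal{K})$ with the middle cohomology $\im\bigl(\mathrm{H}^d_c(\mathcal{K})\to\mathrm{H}^d(\mathcal{K})\bigr)$, pure of weight $d$, and shows that cup product descends to a perfect $(-1)^d$-symmetric pairing $N_0\otimes N_0\to\bb{Q}(-d)$. (Alternatively, since the $\ell$-adic realisation of $\mathrm{M}_{n+1}^\lambda$ is up to twist $\mathrm{H}^1_{\et,\mathrm{mid}}(\bb{G}_m,\Kl_{n+1}^\lambda)$ by Theorem~\ref{thm::etale-realization-mod-p}, perfectness may be read off from Weil~II on the curve $\bb{G}_m$ using that $\Kl_{n+1}^\lambda$ is self-dual up to a Tate twist --- which follows from $\Kl_{n+1}^\vee\simeq\iota_{n+1}^*\Kl_{n+1}$ and functoriality of the Weyl construction, the automorphism $\iota_{n+1}$ of $\bb{G}_m$ absorbing the pull-back.)

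It remains to push the pairing down to the isotypic piece. The group $G_\lambda\times\mu_{n+1}$ acts on $\mathcal{K}$ through automorphisms of the irreducible variety $\mathcal{K}$, hence trivially on $\mathrm{H}^{2d}_c(\mathcal{K})\cong\bb{Q}(-d)$ (the trace map is canonical), so the pairing on $N_0$ is invariant for the underlying $S_{|\lambda|}\times\mu_{n+1}$-action. Write the projector onto $N := N_0^{G_\lambda\times\mu_{n+1},\chi_n}$ as $e = ABC$ with $A = \tfrac{1}{\#P_\lambda}\sum_{\sigma\in P_\lambda}\mathrm{sign}(\sigma)^n\sigma$, $B = \tfrac{1}{\#Q_\lambda}\sum_{\tau\in Q_\lambda}\mathrm{sign}(\tau)^{n+1}\tau$ and $C = \tfrac{1}{n+1}\sum_{\mu\in\mu_{n+1}}\mu$. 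Then $A,B,C$ are idempotents, $C$ commutes with $A$ and $B$, all three are self-adjoint for the invariant pairing, and the defining identity of the Young symmetrizer gives $e^2 = c_0 e$ and $e\,e^{\mathrm{t}}e = c_0 e$, where $e^{\mathrm{t}} = BAC$ and $c_0 = (\#P_\lambda\,\#Q_\lambda)^{-1}\cdot(\text{hook-length product of }\mu(\lambda))\neq 0$. Consequently, if $v\in N = \im(e)$ is orthogonal to all of $N$ then $\langle e^{\mathrm{t}}v,x\rangle = \langle v,ex\rangle = 0$ for every $x\in N_0$, so $e^{\mathrm{t}}v = 0$ by non-degeneracy on $N_0$; since $ev = c_0 v$ on $\im(e)$, this forces $0 = e\,e^{\mathrm{t}}e\,v = c_0\,ev = c_0^2 v$, hence $v = 0$. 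So the pairing restricts to a perfect $(-1)^d$-symmetric pairing on $N$ (a morphism of Nori motives), and twisting by $(-1)$ yields the asserted perfect pairing $\mathrm{M}_{n+1}^\lambda\otimes\mathrm{M}_{n+1}^\lambda\to\bb{Q}(-d-2) = \bb{Q}(-n|\lambda|-1)$, with symmetry sign $(-1)^d = (-1)^{n|\lambda|-1} = (-1)^{n|\lambda|+1}$.

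The step I expect to be the main obstacle is the first claim in the second paragraph: that on the possibly singular affine variety $\mathcal{K}$ the weight-$d$ graded piece of $\mathrm{H}^d_c$ really is the middle cohomology and carries a perfect Poincar\'e pairing --- automatic when $\mathcal{K}$ is smooth (from $\mathrm{H}^d_c$ having weights $\le d$ and $\mathrm{H}^d$ weights $\ge d$), but the genuine content of \cite{fresan2018hodge}, and safest to verify in the $\ell$-adic realisation via the curve $\bb{G}_m$ as above. Everything else is bookkeeping: the only new input beyond \cite{fresan2018hodge} is the interaction of the perfect pairing with the signed Young symmetrizers $A,B$, for which the facts used --- idempotency and the non-vanishing of the scalars in $e^2 = c_0 e$ and $e\,e^{\mathrm{t}}e = c_0 e$ --- are classical properties of the Young symmetrizer attached to $\mu(\lambda)$ (and, when $n$ is odd, its conjugate partition).
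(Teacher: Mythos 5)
Your proof is correct and follows essentially the same route as the paper: purity is read off from the construction and the weight filtration, the $(-1)^{n|\lambda|+1}$-symmetric perfect pairing on $\gr^W_{n|\lambda|+1}\mathrm{H}^{n|\lambda|-1}_c(\mathcal{K})(-1)$ is imported from \cite[Thm.\,3.2]{fresan2018hodge}, and one then passes to the $(G_\lambda\times\mu_{n+1},\chi_n)$-isotypic component. The only difference is that the paper dispatches the last step with the phrase ``taking into account the isotypic components,'' whereas you supply the actual verification (self-adjointness of the signed symmetrizers and the quasi-idempotent identities $e^2=c_0e$, $e\,e^{\mathrm{t}}e=c_0e$ forcing non-degeneracy on $\im(e)$), which is a correct and welcome filling-in of a detail the paper leaves implicit.
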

\begin{proof}
	The motives $\mathrm{gr}^W_{n|\lambda|+1}(\mathrm{H}^{n|\lambda|-1}_{c}(\mathcal{K})(-1))$ are pure of weight $n|\lambda|+1$ by construction. Additionally, they are equipped with $(-1)^{n|\lambda|+1}$-symmetric perfect pairings, using a similar proof \cite[Thm.\,3.2]{fresan2018hodge} for exponential mixed Hodge structures. Taking into account the isotypic components, the motives $\mathrm{M}^{\lambda}_{n+1}$ are also pure of weight $n|\lambda|+1$, and possess the induced $(-1)^{n|\lambda|+1}$-symmetric pairings.
\end{proof}

\subsection{Realizations in characteristic \texorpdfstring{$0$}{0}}\label{sec::realization}

\subsubsection{The de Rham realizations}\label{subsec::derham-realization}

The de Rham realizations of $\rr{M}_{n+1}^\lambda$ underlies a pure Hodge structure of weight $nk+1$. When $n=1$ and $\lambda=(k)$, the Hodge numbers of $\rr{M}_{2}^k$ are computed in \cite[Thm.\,1.8]{fresan2018hodge}, which are either $0$ or $1$. In \cite[Thm.\,1.2 \& Thm.\,5.23]{Qin23Hodge}, we computed the Hodge numbers for more motives and expressed them using generating series. By a direct computation on generating series in \textit{loc. cit.}, we deduce the following corollary.

\begin{cor}\label{cor::hodge}
	For pairs $(n+1,k)$ listed in the table in \cref{thm::L-function}, the Hodge numbers of $\rr{M}_{n+1,\rr{dR}}^k$ are either $0$ or $1$.	
\end{cor}

For $\rr{M}_{4,\rr{dR}}^4$ and $\rr{M}_{3,\rr{dR}}^{(2,2)}$, although we cannot compute their Hodge numbers directly, they still have Hodge numbers either $0$ or $1$, see \cref{rek::hodge-m44} and \cref{rek::hodge-m322}.

\subsubsection{The \texorpdfstring{$\ell$}{l}-adic realizations}\label{sec::realization-ell-adic}

For a prime $\ell$, the $\ell$-adic realization 
	\begin{equation}\label{eq::ell-adic-realization-char-0}
		(\mathrm{M}_{n+1}^{\lambda})_\ell:=\gr^W_{n|\lambda|+1}\Hetc{n|\lambda|-1}(\mathcal{K}_{\bq},\bb{Q}_\ell)^{G_\lambda\times \mu_{n+1},\chi_n}(-1)
	\end{equation}
of $\rr{M}_{n+1}^\lambda$ is a continuous $\ell$-adic representation of the absolute Galois group $\gal(\bq/\bb{Q})$, which is pure of weight $n|\lambda|+1$ and is equipped with a $(-1)^{n|\lambda|+1}$-symmetric pairing by Proposition~\ref{prop::perfect-pairing}. Similar to the situation for motives, we indeed obtain a representation of $\gal(\bq/\bb{Q})$. As explained in \cref{rek::Galois-descent-motive}, although the action of $\mu_{n+1}$ does not commute with $\gal(\bq/\bb{Q})$, the invariants of $\mu_{n+1}$ are stable under the action of $\gal(\bq/\bb{Q})$.

For the case of symmetric power moments of Kloosterman sums, we computed the dimensions of $(\mathrm{M}_{n+1}^k)_{\dR}$ in \cite[Cor.\,2.19]{Qin23Hodge}. By the comparison theorem, we have the following proposition.

\begin{prop}\label{cor::dimension-mid-de-ell-adic}
	The dimension of $(\mathrm{M}_{n+1}^k)_\ell$ is 
	\begin{equation*}
		\begin{split}
			\frac{1}{n+1}\q{\binom{k+n}{n}-d(k,n+1)}-\sum_{u=0}^{\lfloor\frac{n k}{2}\rfloor}m_k(u) 
			-\begin{cases}
				a(k,n+1) & 2\mid n,\\
				0 & 2\nmid n k,\\
				 b(k,n+1) 
				& \text{else}.
			\end{cases}
		\end{split}
	\end{equation*}
where the numbers $a(k,n+1), b(k,n+1)$ and $d(k,n+1)$ are defined in Section~\ref{nota::multi-indices}, the numbers $m_k(u)$ are defined in \eqref{eq::counting_inv_at_0}.
\end{prop}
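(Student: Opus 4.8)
The plan is to deduce the formula from the dimension computation for the de Rham realization carried out in \cite{Qin23Hodge}, using only the comparison isomorphisms. By construction (Definition~\ref{def::Kloosterman-motive} and \eqref{eq::ell-adic-realization-char-0}), $(\mathrm{M}_{n+1}^k)_\ell$ and $(\mathrm{M}_{n+1}^k)_{\dR}$ are the $\ell$-adic and de Rham realizations of one and the same Nori motive $\mathrm{M}_{n+1}^k$, hence have equal dimension; it then suffices to cite \cite[Cor.\,2.16]{Qin23Hodge}, where $\dim (\mathrm{M}_{n+1}^k)_{\dR}$ is shown to equal the displayed expression. To spell this out, fix an embedding $\bq\hookrightarrow\bb{C}$; the Artin comparison theorem gives an isomorphism
\begin{equation*}
	\Hetc{n|\lambda|-1}(\mathcal{K}_{\bq},\bb{Q}_\ell)\simeq \mathrm{H}^{n|\lambda|-1}_c(\mathcal{K}(\bb{C}),\bb{Q})\otimes_{\bb{Q}}\bb{Q}_\ell,
\end{equation*}
which is equivariant for the finite group $G_\lambda\times\mu_{n+1}$ (all of whose elements act by automorphisms of $\mathcal{K}_{\bq}$) and compatible with the weight filtrations on both sides. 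Since forming $\gr^W_{n|\lambda|+1}$, passing to the $(G_\lambda\times\mu_{n+1},\chi_n)$-isotypic component, and Tate twisting by $(-1)$ all commute with $-\otimes_{\bb{Q}}\bb{Q}_\ell$ and preserve dimension, one obtains $\dim_{\bb{Q}_\ell}(\mathrm{M}_{n+1}^k)_\ell=\dim_{\bb{Q}}(\mathrm{M}_{n+1}^k)_B$, and the Betti--de Rham comparison gives $\dim_{\bb{Q}}(\mathrm{M}_{n+1}^k)_B=\dim_{\bb{Q}}(\mathrm{M}_{n+1}^k)_{\dR}$. (One may equivalently invoke the $p$-adic de Rham comparison theorem directly, as indicated in the text.)

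As a cross-check not relying on \cite{Qin23Hodge}, one can argue through the compatible system: by Theorem~\ref{thm::unramified}, for every $p\notin S(k,n+1)$ there is an isomorphism $(\mathrm{M}_{n+1}^k)_\ell[\zeta_p]\simeq \mathrm{H}^1_{\et,\mathrm{mid}}(\bb{G}_{m,\bfp},\Sym^k\Kl_{n+1})$, so when $p$ is moreover odd and $p\nmid n+1$ the dimension is computed by Proposition~\ref{prop::moments-p-large}. For all but finitely many such $p$ the quantities $d(k,n+1,p)$, $a(k,n+1,p)$, $b(k,n+1,p)$ equal their generic values $d(k,n+1)$, $a(k,n+1)$, $b(k,n+1)$ (Notation~\ref{nota::multi-indices}) and the correction term $\delta(k,n+1,p)$ vanishes, so that formula reduces to exactly the claimed expression; since $\dim_{\bb{Q}_\ell}(\mathrm{M}_{n+1}^k)_\ell$ does not depend on the choice of such a $p$, the two formulas must agree.

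There is no genuine obstacle here: the substantive input — determining the dimension of the middle de Rham cohomology of $\Sym^k$ of the Kloosterman connection — is precisely the content of \cite[Cor.\,2.16]{Qin23Hodge}, and everything above is formal. The only points that need a moment of care are the $G_\lambda\times\mu_{n+1}$-equivariance and weight-filtration compatibility of the comparison isomorphism, together with the observation, already used in Remark~\ref{label::Galois-descent-motive-2}, that replacing the $\mu_{n+1}$-action, defined over $K=\bb{Q}(\zeta_{n+1})$, by its invariants and descending to $\bb{Q}$ leaves the dimension unchanged.
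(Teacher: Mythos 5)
Your proposal is correct and follows essentially the same route as the paper, which likewise obtains the dimension by citing \cite[Cor.\,2.16]{Qin23Hodge} for $(\mathrm{M}_{n+1}^k)_{\dR}$ and transferring it to the $\ell$-adic realization via the comparison theorem. Your supplementary cross-check through Theorem~\ref{thm::unramified} and Proposition~\ref{prop::moments-p-large} at a good prime is also sound (the generic values of $d$, $a$, $b$ are attained and $\delta(k,n+1,p)=0$ away from finitely many $p$), but it is not needed for the argument.
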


We will study the ramification properties of these Galois representations in Section~\ref{sec::Galois-repr}.

\subsection{Other realizations in characteristic \texorpdfstring{$p>0$}{p>0}}\label{sec::analogy-pos-char}

\subsubsection{The \texorpdfstring{$\ell$}{l}-adic case}\label{subsec::ell-adic-analogy}

\begin{prop}\label{prop::ell-adic-galois-descent}
	We have
		$$\mathrm{H}^i_{\et,?}(\bb{G}_{m,\bfp},\Kl_{n+1}^\lambda)
		\simeq 
		\mathrm{H}^{n|\lambda|+i}_{\et,?}\Bigl(\bb{G}_{m,\bfp}^{n|\lambda|+1},\scr{L}_{\psi(\tilde f_{|\lambda|})}\Bigr)^{ G_\lambda\times \mu_{n+1},\chi_n}\footnote{Here the action of $\mu_{n+1}$ is induced by that on $\bb{G}_{m,\fpp(\zeta_{n+1})}$, and we can understand the $\mu_{n+1}$-invariants similarly as in \cref{rek::Galois-descent-motive}.},$$
	for $i\in \{0,1,2\}$. 
	\end{prop}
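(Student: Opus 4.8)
The plan is to deduce the isomorphism from the geometric description of $\Kl_{n+1}^\lambda$ in Proposition~\ref{prop:geometric-char}, by running the Leray spectral sequence for the projection $\pr_t\colon \bb{G}_{m,\bfp}^{n|\lambda|+1}\to\bb{G}_{m,\bfp}$ onto the $t$-coordinate. The one non-formal ingredient will be that the complexes $\mathrm{R}\pr_{t!}\scr{L}_{\psi(\tilde{f}_{|\lambda|})}$ and $\mathrm{R}\pr_{t*}\scr{L}_{\psi(\tilde{f}_{|\lambda|})}$ are concentrated in the single cohomological degree $n|\lambda|$; once this is available the spectral sequence degenerates and the formula drops out after extracting isotypic components.

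First I would reduce to a statement about $[n+1]^*\Kl_{n+1}^\lambda$. Since $[n+1]\colon\bb{G}_{m,\bfp}\to\bb{G}_{m,\bfp}$ is finite étale with Galois group $\mu_{n+1}$, one has $\Kl_{n+1}^\lambda\simeq([n+1]_*[n+1]^*\Kl_{n+1}^\lambda)^{\mu_{n+1}}$, hence $\mathrm{H}^i_{\et,?}(\bb{G}_{m,\bfp},\Kl_{n+1}^\lambda)\simeq\mathrm{H}^i_{\et,?}(\bb{G}_{m,\bfp},[n+1]^*\Kl_{n+1}^\lambda)^{\mu_{n+1}}$, the invariants being taken in the sense of Remark~\ref{label::Galois-descent-motive-2}. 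The copy of $\mu_{n+1}$ on the right acts through the $t$-coordinate ($t\mapsto\zeta t$, which fixes $\tilde{f}_{|\lambda|}$ because $\zeta^{n+1}=1$), and $\pr_t$ is $\mu_{n+1}$-equivariant for this action; so it suffices to prove, compatibly with $\mu_{n+1}$,
$$\mathrm{H}^i_{\et,?}(\bb{G}_{m,\bfp},[n+1]^*\Kl_{n+1}^\lambda)\simeq\mathrm{H}^{n|\lambda|+i}_{\et,?}\bigl(\bb{G}_{m,\bfp}^{n|\lambda|+1},\scr{L}_{\psi(\tilde{f}_{|\lambda|})}\bigr)^{G_\lambda,\chi_n}.$$

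Next I would prove the key concentration statement, that $\mathrm{R}\pr_{t!}\scr{L}_{\psi(\tilde{f}_{|\lambda|})}$ is concentrated in degree $n|\lambda|$. Over a geometric point $\bar{t}$ of $\bb{G}_{m,\bfp}$ the fibre of $\pr_t$ is $\bb{G}_{m,\bfp}^{n|\lambda|}$, and the substitution $x_{i,j}=\bar{t}\,y_{i,j}$ identifies the restriction of $\scr{L}_{\psi(\tilde{f}_{|\lambda|})}$ with $\scr{L}_{\psi(\bar{t}\,g^{\boxplus|\lambda|})}=\boxtimes_{i=1}^{|\lambda|}\scr{L}_{\psi(\bar{t}\,g)}$, where $g=\sum_j y_j+1/\prod_j y_j$. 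By (the proof of) Lemma~\ref{lem::D-F-trasnform}, $[n+1]^*\Kl_{n+1}$ is the top direct image $\mathrm{R}^n\pr_{t!}\scr{L}_{\psi(tg)}|_{\bb{G}_{m,\bfp}}$ with the lower ones vanishing, so proper base change gives $\mathrm{H}^j_{\mathrm{c}}(\bb{G}_{m,\bfp}^n,\scr{L}_{\psi(\bar{t}\,g)})=0$ for $j\neq n$; the Künneth formula then yields $\mathrm{H}^j_{\mathrm{c}}(\bb{G}_{m,\bfp}^{n|\lambda|},\scr{L}_{\psi(\bar{t}\,g^{\boxplus|\lambda|})})=0$ for $j\neq n|\lambda|$, and proper base change along $\pr_t$ gives the claim. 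Moreover, the computation in the proof of Proposition~\ref{prop:geometric-char} shows that $\mathrm{R}^{n|\lambda|}\pr_{t!}\scr{L}_{\psi(\tilde{f}_{|\lambda|})}$ is a lisse sheaf on $\bb{G}_{m,\bfp}$ whose $(G_\lambda,\chi_n)$-isotypic component is $[n+1]^*\Kl_{n+1}^\lambda$.

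Finally I would run the Leray spectral sequence. For compact supports one has $\mathrm{H}^\bullet_{\et,\mathrm{c}}(\bb{G}_{m,\bfp}^{n|\lambda|+1},-)\simeq\mathrm{H}^\bullet_{\et,\mathrm{c}}(\bb{G}_{m,\bfp},\mathrm{R}\pr_{t!}(-))$, and since $\mathrm{R}\pr_{t!}\scr{L}_{\psi(\tilde{f}_{|\lambda|})}$ sits in degree $n|\lambda|$ this degenerates to $\mathrm{H}^{n|\lambda|+i}_{\et,\mathrm{c}}(\bb{G}_{m,\bfp}^{n|\lambda|+1},\scr{L}_{\psi(\tilde{f}_{|\lambda|})})\simeq\mathrm{H}^i_{\et,\mathrm{c}}(\bb{G}_{m,\bfp},\mathrm{R}^{n|\lambda|}\pr_{t!}\scr{L}_{\psi(\tilde{f}_{|\lambda|})})$. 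Since $G_\lambda$ acts by permuting the $|\lambda|$ blocks of coordinates and $\mu_{n+1}$ by rescaling $t$, both compatibly with $\pr_t$, extracting the $(G_\lambda,\chi_n)$-isotypic component and then $\mu_{n+1}$-invariants commutes with cohomology and turns the right-hand side into $\mathrm{H}^i_{\et,\mathrm{c}}(\bb{G}_{m,\bfp},[n+1]^*\Kl_{n+1}^\lambda)^{\mu_{n+1}}\simeq\mathrm{H}^i_{\et,\mathrm{c}}(\bb{G}_{m,\bfp},\Kl_{n+1}^\lambda)$, using the first step. The case $?=\emptyset$ is entirely parallel with $\mathrm{R}\pr_{t*}$ in place of $\mathrm{R}\pr_{t!}$; here the needed concentration of $\mathrm{R}\pr_{t*}\scr{L}_{\psi(\tilde{f}_{|\lambda|})}$ in degree $n|\lambda|$ follows from the compact-support case by relative Poincaré--Verdier duality for the smooth morphism $\pr_t$ (of relative dimension $n|\lambda|$) together with $\scr{L}_{\psi(\tilde{f}_{|\lambda|})}^\vee\simeq\scr{L}_{\psi(-\tilde{f}_{|\lambda|})}$ — and this, rather than any earlier step, is where I expect the single genuine technical point to sit. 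The decoration $?=\mathrm{mid}$ then follows from the cases $?=\mathrm{c}$ and $?=\emptyset$ by the functoriality of the forget-supports maps.
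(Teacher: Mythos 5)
Your proposal is correct and follows essentially the same route as the paper: reduce to $[n+1]^*\Kl_{n+1}^\lambda$ via Galois descent along the $(n+1)$-power map, identify it with the $(G_\lambda,\chi_n)$-isotypic component of the direct image complex $\mathrm{R}\pr_{t!}\scr{L}_{\psi(\tilde f_{|\lambda|})}$, and conclude by degeneration of the Leray spectral sequence. The only difference is presentational: the paper compresses the concentration-in-one-degree and degeneration steps into a single citation of Proposition~\ref{prop:geometric-char}, whereas you spell them out (including the duality argument needed for $\mathrm{R}\pr_{t*}$), which is a legitimate and arguably more transparent way to justify the final isomorphism in the paper's chain.
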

	\begin{proof}
		We provide the proof for the usual cohomology here, and the properties of the cohomology with compact support and the middle cohomology can be proved similarly.
	
		Let $\pr_z$ be the projection from $\bb{G}_{m}^{n|\lambda|}\times \bb{G}_{m,z}$ to the last factor $\bb{G}_{m,z}$. The projection $\pr_t$ is defined in a parallel way to $\pr_z$. 
		By the isomorphism 
		$ \Bigl([n+1]_*\scr{L}_{\psi_p(\tilde f_{|\lambda|})} \Bigr)^{\mu_{n+1}}\simeq \scr{L}_{\psi_p( f_{|\lambda|})},$ we have
			$\Kl_{n+1}^\lambda\simeq ([n+1]_*[n+1]^*\Kl_{n+1}^\lambda)^{\mu_{n+1}}.$
	Then
	\begin{equation*}
		\begin{split}
			\mathrm{H}^i_{\et}(\bb{G}_{m,\bfp},\Kl_{n+1}^\lambda)
			\simeq & \mathrm{H}^i_{\et}(\bb{G}_{m,\bfp},([n+1]_*[n+1]^*\Kl_{n+1}^\lambda)^{\mu_{n+1}}) \\[5pt]
			\simeq & \mathrm{H}^i_{\et}(\bb{G}_{m,\bfp}, [n+1]^*\Kl_{n+1}^\lambda )^{\mu_{n+1}} \\[5pt]
			\simeq & \mathrm{H}^i_{\et}\Bigl(\bb{G}_{m,\bfp}, [n+1]^*\bigl(\Kl_{n+1}^{\otimes |\lambda|}\bigr)^{G_\lambda,\chi_n} \Bigr)^{\mu_{n+1}} \\[5pt]
			\simeq & \Bigl(\mathrm{H}^{n|\lambda|+i}_{\et,?}\Bigl(\bb{G}_{m,\bfp}^{n|\lambda|+1},\scr{L}_{\psi(\tilde f_{|\lambda|})}\Bigr)^{G_\lambda,\chi_n}\Bigr)^{ \mu_{n+1}},
		\end{split}
	\end{equation*}
	where in the last isomorphism we used the geometric description of $\Kl_{n+1}^\lambda$ from Proposition~\ref{prop:geometric-char}.
	\end{proof}
Similar to the construction for relevant de Rham cohomologies in \cite[(2.12)]{fresan2018hodge}, we have the following corollary.
	\begin{cor}\label{cor::perfect-pariring-ell-adic}
		There is a $(-1)^{n|\lambda|+1}$-symmetric perfect self-pairing on $\mathrm{H}^1_{\mathrm{\et, \rr{mid}}}(\bb{G}_{m,\bfp},\Kl_{n+1}^\lambda)$.
	\end{cor}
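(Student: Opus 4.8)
The plan is to produce the pairing by the mechanism that produced the one on the motive $\mathrm{M}_{n+1}^\lambda$ in Proposition~\ref{prop::perfect-pairing}, run on the $\ell$-adic side in characteristic $p$: combine the autoduality of $\Kl_{n+1}$ with Poincar\'e--Verdier duality for the middle extension of $\Kl_{n+1}^\lambda$ across $\{0,\infty\}$, and then track the symmetry sign. First I would record the autoduality of $\Kl_{n+1}^\lambda$. The Kloosterman sheaf carries a $\frob$-equivariant perfect pairing $\Kl_{n+1}\otimes\iota_{n+1}^*\Kl_{n+1}\to\bql(-n)$ (see \cite[Cor.\,4.1.3]{katz1988gauss}), where $\iota_{n+1}\colon\bb{G}_m\to\bb{G}_m$ is multiplication by $(-1)^{n+1}$. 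Tensoring $|\lambda|$ copies, and noting that $\iota_{n+1}$ is an involution of $\bb{G}_m$ commuting both with the $S_{|\lambda|}$-action permuting the tensor factors and with the scaling action of $\mu_{n+1}$, one gets a $G_\lambda\times\mu_{n+1}$-equivariant perfect pairing $\Kl_{n+1}^{\otimes|\lambda|}\otimes\iota_{n+1}^*\Kl_{n+1}^{\otimes|\lambda|}\to\bql(-n|\lambda|)$. Since $\chi_n$ is a product of quadratic characters one has $\chi_n=\chi_n^{-1}$, so this restricts to a perfect pairing between the $(G_\lambda,\chi_n)$-isotypic part of the first factor and the $\iota_{n+1}^*$ of the $(G_\lambda,\chi_n)$-isotypic part of the second, i.e. a perfect pairing $\Kl_{n+1}^\lambda\otimes\iota_{n+1}^*\Kl_{n+1}^\lambda\to\bql(-n|\lambda|)$ (up to the Tate twist built into $\Kl_{n+1}^\lambda$, normalized so that $\Kl_{n+1}^\lambda$ is pure of weight $n|\lambda|$).

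Next I would apply Poincar\'e duality on the curve. Let $j\colon\bb{G}_{m,\bfp}\hookrightarrow\bb{P}^1_{\bfp}$; since $j_*\Kl_{n+1}^\lambda$ is the middle (intermediate) extension, Verdier duality gives $\mathbb{D}(j_*\Kl_{n+1}^\lambda)\simeq j_*\bigl((\Kl_{n+1}^\lambda)^\vee\bigr)(1)[2]$, whence a perfect pairing between $\Het{1}(\bb{P}^1_{\bfp},j_*\Kl_{n+1}^\lambda)$ and $\Het{1}(\bb{P}^1_{\bfp},j_*(\Kl_{n+1}^\lambda)^\vee)$ valued in $\Het{2}(\bb{P}^1_{\bfp},\bql)=\bql(-1)$. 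Because $\iota_{n+1}$ extends to an automorphism of $\bb{P}^1$ fixing $\{0,\infty\}$, pullback along it identifies $\Het{1}(\bb{P}^1_{\bfp},j_*\iota_{n+1}^*\Kl_{n+1}^\lambda)$ with $\Het{1}(\bb{P}^1_{\bfp},j_*\Kl_{n+1}^\lambda)$; feeding in the coefficient pairing of the previous paragraph and recalling that $\Het{1}(\bb{P}^1_{\bfp},j_*\Kl_{n+1}^\lambda)=\mathrm{H}^1_{\et,\mathrm{mid}}(\bb{G}_{m,\bfp},\Kl_{n+1}^\lambda)$ gives a perfect self-pairing valued in $\bql(-n|\lambda|-1)$. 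The cup product on $\mathrm{H}^1$ contributes an extra sign $-1$ relative to the symmetry of the coefficient pairing, and the remaining task is to check that the resulting sign is $(-1)^{n|\lambda|+1}$.

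That last point --- the symmetry sign --- is the main obstacle: the autoduality of $\Kl_{n+1}$ is $\iota_{n+1}$-twisted (for $n$ even it is not a self-duality in the naive sense, as the standard representation of $\mathrm{SL}_{n+1}$ is not self-dual), so one has to follow its ($\iota_{n+1}$-twisted) symmetry type through the Weyl construction and the isotypic projection. A route that sidesteps this is to transport the pairing rather than recompute its sign: by Proposition~\ref{prop::ell-adic-galois-descent}, together with the relation (as in \cite[\S3]{fresan2018hodge}) between the exponential-sum cohomology $\mathrm{H}^\bullet_{\et}(\bb{G}_{m,\bfp}^{n|\lambda|+1},\scr{L}_{\psi(\tilde f_{|\lambda|})})$ on the torus and the cohomology of the hypersurface $\mathcal{K}_{\bfp}$, the space $\mathrm{H}^1_{\et,\mathrm{mid}}(\bb{G}_{m,\bfp},\Kl_{n+1}^\lambda)$ is identified with the $(G_\lambda\times\mu_{n+1},\chi_n)$-isotypic component of $\gr^W_{n|\lambda|+1}\Hetc{n|\lambda|-1}(\mathcal{K}_{\bfp})(-1)$; and the latter carries a $(-1)^{n|\lambda|+1}$-symmetric perfect pairing into $\bql(-n|\lambda|-1)$, compatible with the $G_\lambda\times\mu_{n+1}$-action, by Poincar\'e duality on the smooth affine hypersurface $\mathcal{K}_{\bfp}$ (of dimension $n|\lambda|-1$, so the duality pairing is $(-1)^{n|\lambda|-1}=(-1)^{n|\lambda|+1}$-symmetric) together with the purity input of Weil~II \cite[3.3.1]{deligne1980conjecture} --- exactly the characteristic-$p$ avatar of the argument of \cite[Thm.\,3.2]{fresan2018hodge} behind Proposition~\ref{prop::perfect-pairing}. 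Since $G_\lambda\times\mu_{n+1}$ acts on this component through $\chi_n=\chi_n^{-1}$, the pairing stays non-degenerate upon restriction, which gives the corollary with the correct sign.
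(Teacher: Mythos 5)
Your closing argument — transporting the pairing through the identification of $\mathrm{H}^1_{\et,\mathrm{mid}}(\bb{G}_{m,\bfp},\Kl_{n+1}^\lambda)$ with an isotypic piece of $\gr^W_{n|\lambda|+1}\Hetc{n|\lambda|-1}(\mathcal{K}_{\bfp})(-1)$ and invoking Poincar\'e duality on $\mathcal{K}_{\bfp}$ — has a genuine gap: $\mathcal{K}_{\bfp}$ is \emph{not} in general a smooth hypersurface. It is singular whenever $\gcd(|\lambda|,n+1)>1$ (so already for $\Sym^3\Kl_3$, $\Sym^6\Kl_3$, $\Sym^4\Kl_4$, $\Kl_3^{(2,1)}$), and even when $\gcd(|\lambda|,n+1)=1$ it acquires singular points at exactly the bad primes $p$ (the set $\Sigma'(p)$ of Section~\ref{sec::ell-adic-in-general}). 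The corollary is needed at those primes — it feeds into Proposition~\ref{prop::epsilon_factor}, which is applied at essentially all $p$ — so one cannot restrict to the smooth locus of parameters. For singular affine $\mathcal{K}_{\bfp}$, cup product does not give a perfect pairing $\Hetc{d}\times\Het{d}\to\bql(-d)$, and the Weil~II argument identifying the top weight-graded pieces of $\Hetc{d}$ and $\Het{d}$ with a common self-dual subquotient breaks down as stated. (There is also the minor point that the comparison with $\Hetc{n|\lambda|-1}(\mathcal{K}_{\bfp})$ in Theorem~\ref{thm::etale-realization-mod-p} assumes $n|\lambda|\geq 3$.)

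The repair is precisely what the paper does, and it is already contained in the reference you cite: by Proposition~\ref{prop::ell-adic-galois-descent}, $\mathrm{H}^1_{\et,\mathrm{mid}}(\bb{G}_{m,\bfp},\Kl_{n+1}^\lambda)$ is an isotypic component of $\mathrm{H}^{n|\lambda|+1}_{\et,\mathrm{mid}}\bigl(\bb{G}_{m,\bfp}^{n|\lambda|+1},\scr{L}_{\psi(\tilde f_{|\lambda|})}\bigr)$, and one constructs the pairing directly on this middle cohomology of the \emph{torus} — which is smooth of dimension $n|\lambda|+1$ in every characteristic — by the $\ell$-adic analogue of \cite[(2.12)]{fresan2018hodge}, pairing $\scr{L}_{\psi(\tilde f_{|\lambda|})}$ against $\scr{L}_{\psi(-\tilde f_{|\lambda|})}$ via a suitable involution of the torus. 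The cup product in degree $n|\lambda|+1$ then gives the sign $(-1)^{n|\lambda|+1}$ with no case analysis, and the pairing descends to the $(G_\lambda\times\mu_{n+1},\chi_n)$-component exactly as you argue (using $\chi_n=\chi_n^{-1}$). Your first route, via Verdier duality for $j_*\Kl_{n+1}^\lambda$ on $\bb{P}^1$ and Katz's $\iota_{n+1}$-twisted autoduality, is also viable at all primes, but as you yourself note it leaves the symmetry type of the coefficient pairing on $V_\lambda$ undetermined, so it does not by itself yield the stated sign.
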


\begin{thm}\label{thm::etale-realization-mod-p}
Assume that $n|\lambda|\geq 3$. We have isomorphisms of $\ell$-adic cohomologies
	\begin{equation*}
		\begin{split}
			\gr^W_{n|\lambda|+i}\mathrm{H}^i_{\mathrm{\et, c}}(\bb{G}_{m,\bfp},\Kl_{n+1}^\lambda)
			\simeq &\gr^W_{n|\lambda|+i}\mathrm{H}^{n|\lambda|-2+i}_{\mathrm{\et,c}}(\mathcal{K}_{\bfp},\bb{Q}_{\ell}(\zeta_p))^{G_\lambda \times \mu_{n+1}, \chi_n}(-1)
		\end{split}
	\end{equation*}
for $i\in \{0,1,2\}$, and 
\begin{equation*}
	\begin{split}
		\mathrm{H}^1_{\mathrm{\et, mid}}(\bb{G}_{m,\bfp},\Kl_{n+1}^\lambda)
		\simeq &\gr^W_{n|\lambda|+1} \mathrm{H}^{n|\lambda|-1}_{\mathrm{\et,c}}\Bigl(\mathcal{K}_{\bfp},\bb{Q}_{\ell}(\zeta_p)\Bigr) ^{G_\lambda \times \mu_{n+1}, \chi_n}(-1)\\
		\simeq &\gr^W_{n|\lambda|+1} \mathrm{H}^{n|\lambda|+1}_{\mathrm{\et,\cc{K}_{\bfp}}}\Bigl(\bb{G}_{m,\bfp}^{n|\lambda|},\bb{Q}_{\ell}(\zeta_p)\Bigr) ^{G_\lambda \times \mu_{n+1}, \chi_n},
	\end{split}
\end{equation*}
which is also isomorphic to $\gr^W_{n|\lambda|+1} \mathrm{H}^{n|\lambda|-1}_{\mathrm{\et}}\Bigl(\mathcal{K}_{\bfp},\bb{Q}_{\ell}(\zeta_p)\Bigr) ^{G_\lambda \times \mu_{n+1}, \chi_n}(-1)$ when $\mathcal{K}$ is smooth.
\end{thm}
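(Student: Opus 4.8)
The plan is to derive all three identities from Proposition~\ref{prop::ell-adic-galois-descent} together with a characteristic-$p$ étale incarnation of the ``deletion of trivial factors'' computation of Fresán--Sabbah--Yu \cite[\S3]{fresan2018hodge}. Write $N=n|\lambda|$ and let $g=g_{n+1}^{\boxplus|\lambda|}\colon\bb{G}_{m,\bfp}^{N}\to\bb{A}^1$ be the Laurent polynomial cutting out $\mathcal{K}$, so $\mathcal{K}=g^{-1}(0)$. I would start from two reductions. First, by Proposition~\ref{prop::ell-adic-galois-descent}, for $?\in\{\varnothing,c\}$ and $i\in\{0,1,2\}$ there are $\frob_p$-equivariant isomorphisms
$$\mathrm{H}^i_{\et,?}(\bb{G}_{m,\bfp},\Kl_{n+1}^\lambda)\ \simeq\ \mathrm{H}^{N+i}_{\et,?}\bigl(\bb{G}_{m,\bfp}^{N+1},\scr{L}_{\psi(\tilde f_{|\lambda|})}\bigr)^{G_\lambda\times\mu_{n+1},\chi_n}.$$
Second, since $\Kl_{\mathrm{SL}_{n+1}}(V_\lambda)$ is pure of weight $0$, the sheaf $\Kl_{n+1}^\lambda$ is lisse and pure of weight $N$, so the Weil~II argument recalled after \eqref{eq::long-exact-sequence} gives $\mathrm{H}^1_{\et,\mathrm{mid}}(\bb{G}_{m,\bfp},\Kl_{n+1}^\lambda)\simeq\gr^W_{N+1}\mathrm{H}^1_{\et,c}(\bb{G}_{m,\bfp},\Kl_{n+1}^\lambda)$.

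The key geometric step is to relate the cohomology of $\scr{L}_{\psi(\tilde f_{|\lambda|})}$ on the big torus to that of $\mathcal{K}$. The change of variables $x_{i,j}=t\,y_{i,j}$ already used in the proof of Proposition~\ref{prop:geometric-char} identifies $\scr{L}_{\psi(\tilde f_{|\lambda|})}$ on $\bb{G}_{m,\bfp}^{N+1}$ (coordinates $y_{i,j},t$) with $\scr{L}_{\psi(t\,g)}$, compatibly with $G_\lambda\times\mu_{n+1}$. Factoring the structure map through $g\times\mathrm{id}_t\colon\bb{G}_m^{N}\times\bb{G}_{m,t}\to\bb{A}^1_s\times\bb{G}_{m,t}$ and then through $\mathrm{pr}_s\colon\bb{A}^1_s\times\bb{G}_{m,t}\to\bb{A}^1_s$, the projection formula and the base change formula for $\mathrm{R}(-)_!$ give an isomorphism in $\dbc(\bb{A}^1_s,\bb{Q}_\ell)$ between the pushforward of $\scr{L}_{\psi(tg)}$ and $\mathrm{R}g_!\bb{Q}_\ell\otimes\mathcal{L}$, where $\mathcal{L}=\mathrm{R}(\mathrm{pr}_s)_!\bigl(\scr{L}_{\psi(st)}|_{\bb{A}^1_s\times\bb{G}_{m,t}}\bigr)\simeq\FT_\psi(j_!\bb{Q}_{\ell,\bb{G}_m})[-1]$, with $j\colon\bb{G}_m\hookrightarrow\bb{A}^1$ the inclusion. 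A standard Fourier-transform computation gives $\mathcal{H}^1(\mathcal{L})=\bb{Q}_{\ell,\bb{A}^1_s}$ and $\mathcal{H}^2(\mathcal{L})=i_{0*}\bb{Q}_\ell(-1)$, all other cohomology sheaves vanishing; tensoring the truncation triangle of $\mathcal{L}$ with $\mathrm{R}g_!\bb{Q}_\ell$, using the base-change identity $\mathrm{R}g_!\bb{Q}_\ell\otimes i_{0*}\bb{Q}_\ell(-1)\simeq i_{0*}\mathrm{R}\Gamma_c(\mathcal{K}_{\bfp},\bb{Q}_\ell)(-1)$, and applying $\mathrm{R}\Gamma_c(\bb{A}^1_s,-)$ yields a distinguished triangle
$$\mathrm{R}\Gamma_c(\bb{G}_{m,\bfp}^{N},\bb{Q}_\ell)[-1]\longrightarrow\mathrm{R}\Gamma_c\bigl(\bb{G}_{m,\bfp}^{N+1},\scr{L}_{\psi(\tilde f_{|\lambda|})}\bigr)\longrightarrow\mathrm{R}\Gamma_c(\mathcal{K}_{\bfp},\bb{Q}_\ell)(-1)[-2]\xrightarrow{\ +1\ }$$
of complexes carrying compatible $G_\lambda\times\mu_{n+1}$-actions and $\frob_p$-structures.

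Passing to the long exact sequence in degree $N+i$ and then to $\gr^W_{N+i}$ finishes the first part. The flanking terms $\mathrm{H}^{N+i-1}_c(\bb{G}_{m,\bfp}^{N},\bb{Q}_\ell)$ and $\mathrm{H}^{N+i}_c(\bb{G}_{m,\bfp}^{N},\bb{Q}_\ell)$ are pure of weights $2(i-1)$ and $2i$; since $i\le 2$ and $N\ge 3$ one has $2(i-1),\,2i<N+i$, so $\gr^W_{N+i}$ annihilates both, and strictness of the weight filtration yields $\gr^W_{N+i}\mathrm{H}^{N+i}_c\bigl(\bb{G}_{m,\bfp}^{N+1},\scr{L}_{\psi(\tilde f_{|\lambda|})}\bigr)\simeq\gr^W_{N+i}\bigl(\mathrm{H}^{N-2+i}_c(\mathcal{K}_{\bfp},\bb{Q}_\ell)(-1)\bigr)$. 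As $\gr^W$ commutes with the idempotents cutting out isotypic components, applying $(-)^{G_\lambda\times\mu_{n+1},\chi_n}$ and combining with the two reductions above gives the first family of isomorphisms; its $i=1$ instance, together with $\mathrm{H}^1_{\et,\mathrm{mid}}\simeq\gr^W_{N+1}\mathrm{H}^1_{\et,c}$, is the first isomorphism of the displayed chain. This is precisely where $n|\lambda|\ge 3$ is needed: for $N=2$ the term $\mathrm{H}^4_c(\bb{G}_{m,\bfp}^2,\bb{Q}_\ell)=\bb{Q}_\ell(-2)$ has weight $4=N+2$ and does not drop out.

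For the last two isomorphisms I would work on the smooth $N$-dimensional $\bb{G}_{m,\bfp}^{N}$. Verdier duality gives $\mathrm{H}^{N+1}_{\et,\mathcal{K}_{\bfp}}(\bb{G}_{m,\bfp}^{N},\bb{Q}_\ell)\simeq\mathrm{H}^{N-1}_{\et,c}(\mathcal{K}_{\bfp},\bb{Q}_\ell)^\vee(-N)$. Transporting the perfect $(-1)^{N+1}$-symmetric pairing of Corollary~\ref{cor::perfect-pariring-ell-adic} (necessarily valued in $\bb{Q}_\ell(-N-1)$ by purity) through the first isomorphism shows that $\gr^W_{N-1}\mathrm{H}^{N-1}_{\et,c}(\mathcal{K}_{\bfp},\bb{Q}_\ell)^{G_\lambda\times\mu_{n+1},\chi_n}$ is self-dual with values in $\bb{Q}_\ell(-(N-1))$; combining the two, after applying $\gr^W_{N+1}$ and the isotypic projection, produces the second isomorphism, with no smoothness hypothesis. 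Finally, if $\mathcal{K}$ is smooth it is a smooth closed subscheme of $\bb{G}_{m,\bfp}^{N}$ of pure codimension $1$, and the Gysin (purity) isomorphism $\mathrm{H}^{N+1}_{\et,\mathcal{K}_{\bfp}}(\bb{G}_{m,\bfp}^{N},\bb{Q}_\ell)\simeq\mathrm{H}^{N-1}_{\et}(\mathcal{K}_{\bfp},\bb{Q}_\ell)(-1)$, followed by $\gr^W_{N+1}$ and the isotypic projection, gives the last isomorphism. The main obstacle is not any single step but the simultaneous bookkeeping: carrying the $G_\lambda\times\mu_{n+1}$-equivariance and the $\frob_p$-weight filtration through the change of variables, the base-change/projection-formula manipulations, and the Deligne--Fourier normalization implicit in Proposition~\ref{prop::ell-adic-galois-descent}, and checking that the ``constant'' summand $\mathrm{R}\Gamma_c(\bb{G}_{m,\bfp}^{N},\bb{Q}_\ell)$ really becomes invisible after $\gr^W_{N+i}$ for every $i\in\{0,1,2\}$.
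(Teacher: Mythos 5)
Your proposal is correct, and its backbone is the same as the paper's: after the change of variables $x_{i,j}=t\,y_{i,j}$ and the descent of Proposition~\ref{prop::ell-adic-galois-descent}, everything reduces to an exact sequence sandwiching $\mathrm{H}^{N+i}_{\et,c}(\bb{G}_{m,\bfp}^{N+1},\scr{L}_{\psi(tg)})$ between $\mathrm{H}^{N+i-2}_{\et,c}(\mathcal{K}_{\bfp})(-1)$ and two constant-coefficient terms $\mathrm{H}^{*}_{\et,c}(\bb{G}_{m,\bfp}^{N})$ of weight $\le 2i<N+i$, which die after $\gr^W_{N+i}$ — exactly the sequence obtained in the paper by combining \eqref{eq::ell-1} and \eqref{eq::ell-2}. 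Where you differ is in how you reach that sequence and in how you handle the last two isomorphisms. The paper produces the sequence from two explicit localization triples (first adjoining the $t=0$ fiber, then excising $\mathcal{K}$ from $\bb{A}^1_t\times\bb{G}_m^{N}$ and checking the complement has vanishing cohomology via $t\mapsto t(g^{\boxplus|\lambda|})^{-1}$); you instead push forward along $g\times\mathrm{id}$ and $\pr_s$, apply the projection formula, and read the two flanking terms off the cohomology sheaves of $\FT_{\psi}(j_!\bb{Q}_\ell)[-1]$ — a tidier derivation that makes the weight bookkeeping transparent and isolates precisely where $n|\lambda|\ge 3$ enters. For the identification with $\mathrm{H}^{N+1}_{\et,\mathcal{K}}(\bb{G}_m^{N})$ the paper simply reruns the localization argument for ordinary cohomology, whereas you use Verdier duality on the smooth torus together with the self-pairing of Corollary~\ref{cor::perfect-pariring-ell-adic} transported through the already-established first isomorphism; this is logically sound (no circularity, since the theorem only asserts an abstract isomorphism of Frobenius modules), but note it costs you the naturality of the comparison map, while the paper's repetition of the localization argument would produce the canonical one. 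The final Gysin step when $\mathcal{K}$ is smooth agrees with the paper.
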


\begin{proof}
By performing a change of variables $(t,x_{i,j})\mapsto (t,x_{i,j}/t)$, for $i\in \{0,1\}$, we obtain 
\begin{equation*}
	\begin{split}
		\mathrm{H}^{n|\lambda|+i}_{\et,\rr{c}}\Bigl(\bb{G}^{n|\lambda|+1}_{m,\bfp},\scr{L}_{\psi(\tilde f_{|\lambda|})}\Bigr)\simeq 
		\Hetc{n|\lambda|+i}\Bigl(\bb{G}_{m,\bfp}^{n|\lambda|+1},\scr{L}_{\psi(t\cdot g^{\boxplus |\lambda|})}\Bigr).
	\end{split}
\end{equation*}
Then, considering the localization sequence for the triple
	\begin{equation*}
		\left((\bb{A}^1\times \bb{G}_m^{n|\lambda|},t\cdot g^{\boxplus |\lambda|}),(\bb{G}_m^{n|\lambda|+1},t\cdot g^{\boxplus |\lambda|}),(0\times \bb{G}_m^{n|\lambda|},0)\right),
	\end{equation*}
we have exact sequences 
\begin{equation}\label{eq::ell-1}
	\begin{split}
		\Hetc{n|\lambda|-1+i}\Bigl(\bb{G}_{m,\bfp}^{n|\lambda|},\bb{Q}(\zeta_p)\Bigr)\to 
		&\Hetc{n|\lambda|+i}\Bigl( \bb{G}_{m,\bfp}^{n|\lambda|+1},\scr{L}_{\psi_p(t\cdot g^{\boxplus |\lambda|})}\Bigr) \\ 
		\to&\Hetc{n|\lambda|+i}\Bigl(\bb{A}^1_{\bfp} \times \bb{G}_{m,\bfp}^{n|\lambda|},\scr{L}_{\psi_p(t\cdot g^{\boxplus |\lambda|})}\Bigr)
		 \to 
		\Hetc{n|\lambda|+i}\Bigl(\bb{G}_{m,\bfp}^{n|\lambda|},\bb{Q}_\ell(\zeta_p)\Bigr).
	\end{split}
\end{equation}
for $i \in \{0,1,2\}$. Next, we consider another triple
	\begin{equation}\label{eq::triples-2}	\left((\bb{A}^1\times \bb{G}_m^{n|\lambda|},t\cdot g^{\boxplus k}),(\bb{A}^1\times (\bb{G}_m^{n|\lambda|}\backslash \mathcal{K}),t\cdot g^{\boxplus |\lambda|}),(\bb{A}^1\times \mathcal{K},0)\right).
	\end{equation}
Observe that for any $r\geq 0$, we have 
	\begin{equation*}
		\begin{split}
			\Hetc{r}\Bigl(\bb{A}^1_{\bfp}\times (\bb{G}_m^{n|\lambda|}\backslash \mathcal{K})_{\bfp},\scr{L}_{\psi_p(t\cdot g^{\boxplus |\lambda|})}\Bigr)=&
			\Hetc{r}\Bigl(\bb{A}^1_{\bfp}\times (\bb{G}_m^{n|\lambda|}\backslash \mathcal{K})_{\bfp},\scr{L}_{\psi_p(t)}\Bigr)\\
			=&\oplus_{a+b=r}\Hetc{a}\Bigl(\bb{A}^1_{\bfp},\scr{L}_{\psi_p}\Bigr)\otimes \Hetc{b}\Bigl((\bb{G}_m^{n|\lambda|}\backslash \mathcal{K})_{\bfp},\bb{Q}_\ell(\zeta_p)\Bigr)=0,
		\end{split}
	\end{equation*}
where we performed a change of variables in the first identity by $(t,x_{i,j})\mapsto (t\cdot (g^{\boxplus |\lambda|})\inv,x_{i,j})$. So, by the long exact sequences associated with the triple \eqref{eq::triples-2}, we deduce
\begin{equation}\label{eq::ell-2}
	\begin{split}
		\Hetc{n|\lambda|+i}\Bigl(\bb{A}^1_{\bfp}\times \bb{G}_{m,\bfp}^{n|\lambda|},\scr{L}_{\psi_p(t\cdot g^{\boxplus |\lambda|})}\Bigr)\simeq \Hetc{n|\lambda|-2+i}\Bigl( \mathcal{K}_{\bfp},\bb{Q}_\ell(\zeta_p)\Bigr)(-1).
	\end{split}
\end{equation}

Now, we combine \eqref{eq::ell-1} and \eqref{eq::ell-2} to get exact sequences for $i\in\{0,1,2\}$. Then taking the isotypic component of these sequences, we conclude 
\begin{equation}\label{eq::ell-3}
	\begin{split}
		\Hetc{n|\lambda|-1+i}\Bigl(\bb{G}_{m,\bfp}^{n|\lambda|},&\bb{Q}_\ell(\zeta_p)\Bigr)^{G_\lambda\times \mu_{n+1},\chi_n}
		\to 
		\Hetc{i} ( \bb{G}_{m,\bfp},\Kl_{n+1}^\lambda )\\  
		\to		& \Hetc{n|\lambda|-2+i} ( \mathcal{K}_{\bfp},\bb{Q}_\ell(\zeta_p) )  ^{G_\lambda\times \mu_{n+1},\chi_n}(-1)
		 \to\Hetc{n|\lambda|+i}\Bigl(\bb{G}_{m,\bfp}^{n|\lambda|},\bb{Q}_\ell(\zeta_p)\Bigr)^{G_\lambda\times \mu_{n+1},\chi_n}
	\end{split}
\end{equation}
by Proposition~\ref{prop::ell-adic-galois-descent}. By taking the graded quotient $\gr^W_{n|\lambda|+i}$ on the sequence \eqref{eq::ell-3}, we obtain by analyzing the Frobenius weights that
\begin{equation*}
	\begin{split}
		\gr^W_{n|\lambda|+i}\rr{H}^{i}_{\et,\rr{c}}( \bb{G}_{m,\bfp},\Kl_{n+1}^\lambda)
		\simeq 
		\mathrm{gr}^W_{n|\lambda|+i} \Hetc{n|\lambda|-2+i}( \mathcal{K}_{\bfp},\bb{Q}_\ell(\zeta_p)) ^{G_\lambda\times \mu_{n+1},\chi_n}(-1).
	\end{split}
\end{equation*}
In particular, by putting $i=1$, we deduce 
\begin{equation*}
	\begin{split}
		\rr{H}^{1}_{\et,\rr{mid}}( \bb{G}_{m,\bfp},\Kl_{n+1}^\lambda)=\gr^W_{n|\lambda|+1}\rr{H}^{1}_{\et,\rr{c}}( \bb{G}_{m,\bfp},\Kl_{n+1}^\lambda)
		\simeq 
		\mathrm{gr}^W_{n|\lambda|-1} \Hetc{n|\lambda|-1}( \mathcal{K}_{\bfp},\bb{Q}_\ell(\zeta_p)) ^{G_\lambda\times \mu_{n+1},\chi_n}(-1).
	\end{split}
\end{equation*}

For the usual cohomology, we use similar localization sequences to get 
	$$\gr^W_{n|\lambda|+1}\mathrm{H}^1_{\mathrm{\et}}(\bb{G}_{m,\bfp},\Kl_{n+1}^\lambda)
	\simeq 
	\gr^W_{n|\lambda|+1} \mathrm{H}^{n|\lambda|+1}_{\mathrm{\et,\cc{K}_{\bfp}}}\Bigl(\bb{G}_{m,\bfp}^{n|\lambda|},\bb{Q}_{\ell}(\zeta_p)\Bigr) ^{G_\lambda\times \mu_{n+1},\chi_n},$$
which is also isomorphic to $\gr^W_{n|\lambda|+1} \mathrm{H}^{n|\lambda|-1}_{\mathrm{\et}}(\mathcal{K}_{\bfp},\bb{Q}_{\ell}(\zeta_p)) ^{G_\lambda\times \mu_{n+1},\chi_n}(-1)$ when $\mathcal{K}$ is smooth.
\end{proof}

From Theorem~\ref{thm::etale-realization-mod-p}, the name of $\rr{M}_{n+1}^\lambda$ is justified, because the $L$-functions of $\rr{M}_{n+1}^\lambda$ coincide with the $L$-functions attached to Kloosterman sheaves $\Kl_{n+1}^\lambda$.

\subsubsection{The \texorpdfstring{$p$}{p}-adic case.}\label{sec::p-adic-crystal}
\paragraph{Bessel \texorpdfstring{$F$}{F}-isocrystal}
Let $\bar{\bb{Q}}_p$ be the algebraic closure of $\bb{Q}_p$, and we choose an element $\varpi$ such that $\varpi^{p-1}=-p$. This gives rise to a unique nontrivial additive character $\psi\colon\fpp\to \bar{\bb{Q}}_p\cros$, satisfying $\psi(1)\equiv 1+\varpi \mod \varpi^2$. The Dwork's $F$-isocrystal $\scr{L}_{\varpi}$ is a rank $1$ connection $\mathrm{d}+\varpi\mathrm{d}z$ with Frobenius structure $\exp(\varpi(z^p-z))$ on the overconvergent structure sheaf of $\bb{A}^1$ over $K=\bb{Q}_p(\varpi)$. We denote $\scr{L}_{\varpi h}$ as the inverse image of $\scr{L}_{\varpi}$ along a regular function $h\colon X\to \bb{A}^1$. 

The \textit{Kloosterman crystal} is an overconvergent $F$-isocrystal also defined using the diagram \eqref{eq:diagram-Kl} by
\begin{equation*}
 \begin{split}
 	\Kl_{n+1}:=\mathrm{R}\pi_{\mathrm{rig}*}\scr{L}_{\varpi\sigma}[n].
 \end{split}
 \end{equation*} 
Similar to the Kloosterman sheaves for reductive groups, there are Bessel $F$-crystals for reductive groups from \cite{xu2022bessel}. The connection associated with $G=\mathrm{SL}_{n+1}$ and $V=V_{\lambda}$ is $\Bigl(\Kl_{n+1}^{\otimes |\lambda|}\Bigr)^{G_\lambda,1\times \rr{sign}}(\frac{n|\lambda|}{2})$. By abuse of notation, we denote by $\Kl_{n+1}^\lambda$ the $F$-isocrystal $\Bigl(\Kl_{n+1}^{\otimes k}\Bigr)^{G_\lambda,1\times \rr{sign}} $.

\paragraph{Rigid cohomologies}
Similar to the $\ell$-adic case, we have for $?\in \{\emptyset,\rm{c,mid}\}$ 
	$$\mathrm{H}^1_{\mathrm{rig},?}(\bb{G}_m/K,\Kl_{n+1}^\lambda)=\mathrm{H}^{n|\lambda|+1}_{\mathrm{rig},?}\Bigl(\bb{G}_m^{n|\lambda|+1},\scr{L}_{\varpi\tilde f_{|\lambda|}}\Bigr)^{G_\lambda\times \mu_{n+1},\chi_n}[\varpi].$$
Using the argument in \cite[\S3.2.2]{fresan2018hodge} by changing the isotypic component from $(S_k\times \mu_{n+1},\chi_n)$ to $(G_\lambda\times \mu_{n+1},\chi_n)$, we obtain
	\begin{equation}\label{eq::rigid-realization}
	\begin{split}
		\mathrm{H}^{1}_{\mathrm{rig,mid}}(\bb{G}_m/K,\Kl_{n+1}^\lambda)\simeq \gr^W_{n|\lambda|+1} \mathrm{H}^{n|\lambda|-1}_{\mathrm{rig,c}}(\mathcal{K}/K) ^{G_\lambda\times \mu_{n+1},\chi_n}(-1)[\varpi],
	\end{split}
	\end{equation}
which is also isomorphic to $\gr^W_{n|\lambda|+1} \mathrm{H}^{n|\lambda|-1}_{\mathrm{rig}}(\mathcal{K}/K) ^{G_\lambda\times \mu_{n+1},\chi_n}(-1)[\varpi]$ when $\mathcal{K}$ is smooth.

\section{\texorpdfstring{$L$}{L}-functions of Kloosterman sheaves}\label{sec::functional-eq}

In this section, the main goal is to prove Theorem~\ref{thm::L-function}. First, we study the Galois representations $(\mathrm{M}_{n+1}^\lambda)_\ell$ to provide the necessary properties needed in proving Theorems~\ref{thm::L-function} and~\ref{thm::modular form}. The general case is covered in Theorem~\ref{thm::unramified}, while a more detailed analysis of the case of $\Sym^k\Kl_{n+1}$ is provided in Theorem~\ref{thm::ell-adic-galois-even}. We also review essential properties of Deligne--Weil representations in Section~\ref{sec:W-D-repr}. Lastly, Theorem~\ref{thm::L-function} is proven in Section~\ref{subsec:potential-automorphy}.

\subsection{Galois representations attached to Kloosterman sheaves}\label{sec::Galois-repr}

\subsubsection{A compactification}\label{subsec::compactification}
Let $k$ be an integer and $p$ a prime number, not dividing $n+1$. The Laurent polynomial $g_{n+1}^{\boxplus k}=\sum_{i=1}^k\left(\sum_{j=1}^n y_{i,j}+\frac{1}{\prod_j y_{i,j}}\right)$ on the torus $\bb{G}_{m,\bb{Q}}^{nk}$ defines a hypersurface $\mathcal{K}$. We select a toric compactification $X_{\mathrm{tor}}$ of $\bb{G}_m^{nk}$ following the approach in \cite[\S4.3.2]{fresan2018hodge}, see also \cite[\S5.2.3]{Qin23Hodge}

We start with the pair $\bigl(\bb{G}_{m,\bb{Q}}^{nk},g_{n+1}^{\boxplus k}\bigr)$. Let $M=\oplus_{i,j}\bb{Z} y_{i,j}$ be the lattice of monomials on $\bb{G}_{m,\bb{Q}}^{nk}$ and $N=\oplus_{i,j} \bb{Z} e_{i,j}$ the dual lattice. We consider the toric compactification $X$ of $\bb{G}_{m,\bb{Q}}^{nk}$ attached to the simplicial fan $F$ in $N_{\bb{R}}$ generated by the rays
	$$\textstyle\bb{R}_{\geq 0}\cdot \sum_{i,j}\epsilon_{i,j}e_{i,j}$$
where $\epsilon_{i,j}\in \{0,\pm1\}$ and $(\epsilon_{i,j})_{i,j}\neq 0$. Each simplicial cone of maximal dimension $nk$ in $F$ provides an affine chart of $X$, which is isomorphic to $\bb{A}^{nk}$. On each chart, the function $g_{n+1}^{\boxplus k}$ has the same structure. For example, we can consider the maximal cone generated by
	$$\gamma_{i_0,j_0}:=\sum_{1\leq i\leq i_0-1,1\leq j\leq n }e_{i,j}+ \sum_{1\leq j\leq j_0}e_{i_0,j}$$
for $1\leq i_0\leq k$ and $1\leq j_0\leq n$, where the affine ring associated with the dual cone is the polynomial ring $\bb{Q}[u_{i,j}]$ such that $$u_{i,j}=
	\begin{cases}
		y_{i,j}/y_{i,j+1} & 1\leq j<n,\\
		y_{i,j}/y_{i+1,1} & i<k, j=n,\\
		y_{k,n} & i=k,j=n.
	\end{cases}$$

In this chart, we can rewrite $g_{n+1}^{\boxplus k}$ as $g_1/\bigl(\prod_{1\leq j\leq n}u_{1,j}^{j}\cdot \prod_{2\leq i\leq k,j}u_{i,j}^n\bigr)$, where 
	$$g_1=1+\sum_{e=1}^{k-1}\prod_{j=1}^nu_{1,j}^{j}
	\cdot \prod_{\substack{2\leq i\leq e\\1\leq j\leq n}}u_{i,j}^n 
	\cdot \prod_{j=1}^n u_{e+1,j}^{n-j}
	+\prod_{1\leq j\leq n}u_{1,j}^{j}\cdot \prod_{\substack{2\leq i\leq k\\1\leq j\leq n}}u_{i,j}^n\cdot h$$
for a polynomial $h$. The toric variety $X$ provides a compactification of $(\bb{G}^{nk}_{m},g_{n+1}^{\boxplus k})$, where the closure of the zero locus of $g_{n+1}^{\boxplus k}$, and $X\backslash \bb{G}_m^{nk}$ form a strict normal crossing divisor. 

We take the Zariski closure of the hypersurface $Z(g_{n+1}^{\boxplus k})$ inside $X$, denoted by $\bar{\mathcal{K}}$. One can check that
	$$ Z(g_1)\cap Z(u_{1,s })=\emptyset,\ Z(g_1)\cap Z(u_{r,s })=Z\biggl(1+\sum_{e=1}^{r-1}\prod_{j=1}^nu_{1,j}^{j}
	\cdot \prod_{\substack{2\leq i\leq e\\ 1\leq j\leq n}}u_{i,j}^n 
	\cdot \prod_{j=1}^nu_{e+1,j}^{n-j}\biggr)$$
and
	$$Z(\partial g_1/\partial u_{1,1})\cap Z(u_{r,s })=Z\biggl(\sum_{e=1}^{r-1}\prod_{j=1}^nu_{1,j}^{j}
	\cdot \prod_{\substack{2\leq i\leq e\\1\leq j\leq n}}u_{i,j}^n 
	\cdot \prod_{j=1}^nu_{e+1,j}^{n-j}/u_{1,1}\biggr)$$
for $1\leq s\leq n$ and $2\leq r\leq k$. It follows that for $1\leq s\leq n$ and $1\leq r\leq k$, we have $Z(g_1)\cap Z(\partial g_1/\partial u_{1,1 }) \cap Z(u_{r,s })=\emptyset$. We deduce that $\bar{\mathcal{K}}$ is smooth along the divisor $Z\bigl(\prod_{1\leq i\leq k,1\leq j\leq n} u_{i,j}\bigr)$. Moreover, one can check that $Z\bigl(\prod_{1\leq i\leq k,1\leq j\leq n} u_{i,j}\bigr)\cap \bar{\mathcal{K}}$ satisfies the strict normal crossing property.

As for $\mathcal{K}=\bck\cap \bb{G}_m^{nk}$, one can check that $\mathcal{K}$ is smooth if $\gcd(k,n+1)=1$ and has isolated singularities inside $\bb{G}_m^{nk}$ if $\gcd(n+1,k)>1$. In the latter case, the singular locus $\Sigma_0$ of $\mathcal{K}$ has only finitely many $\bq$-points or $\bfp$-points, all of  which are ordinary quadratic. We perform blow-ups of $\bb{G}_m^{nk}$ along the singular locus $\Sigma_0(\bq)$ and denote by $\mathcal{K}'$ the strict transform of $\mathcal{K}$. For convenience, we denote $\mathcal{K}'$ as $\mathcal{K}$ in the case $\gcd(k,n+1)=1$. We denote by $\bar{\mathcal{K}}'$ the closure of $\mathcal{K}'$ in $\rr{Bl}_{\Sigma_0}(X)$.

\begin{lem}\label{lem::blowup}
	Let $\bb{F}$ be either $\bq$ or $\bfp$. Suppose that $\gcd(k,n+1)>1$, $nk$ is even, and $nk\geq 4$. If $\bb{F}=\bfp$, we additionally assume $p\nmid n+1$. Then we have 
		$$\Hetc{nk-1}({\cc{K}}'_{\bb{F}})=
				\Hetc{nk-1}({\cc{K}}_{\bb{F}}).$$

\end{lem}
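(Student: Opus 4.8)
Let $\pi\colon \mathcal{K}'\to \mathcal{K}$ be the morphism induced by the blow-up $\mathrm{Bl}_{\Sigma_0}(\mathbb{G}_m^{nk})\to \mathbb{G}_m^{nk}$; it is proper and restricts to an isomorphism over $U:=\mathcal{K}\setminus\Sigma_0=\mathcal{K}'\setminus E$, where $E:=\pi^{-1}(\Sigma_0)$. The hypothesis $p\nmid n+1$ guarantees that the Hessian of $g_{n+1}^{\boxplus k}$ at each of its critical points lying on $\mathcal{K}$ is non-degenerate, so every point of $\Sigma_0$ is an ordinary quadratic singularity of the hypersurface $\mathcal{K}\subset\mathbb{G}_m^{nk}$; blowing up the ambient torus there resolves the singularity, and the fibre of $\pi$ over it is the projectivized tangent cone, a smooth quadric hypersurface $Q_s\subset\mathbb{P}^{nk-1}$ of dimension $nk-2$. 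Thus $\mathcal{K}'$ is smooth and $E=\bigsqcup_s Q_s$ is smooth and projective of pure dimension $nk-2$; since $nk$ is even, each $Q_s$ is an even-dimensional quadric, so $\Het{i}(E_{\mathbb{F}})=\Hetc{i}(E_{\mathbb{F}})$ is of Tate type and vanishes for every odd $i$.

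First I would write down the Mayer--Vietoris long exact sequence of the blow-up square. Since $\pi$ is proper and an isomorphism over $U$, the cone of $\mathbb{Q}_{\ell,\mathcal{K}}\to R\pi_*\mathbb{Q}_{\ell,\mathcal{K}'}$ is supported on $\Sigma_0$ and coincides there with the cone of $\mathbb{Q}_{\ell,\Sigma_0}\to R\pi_*\mathbb{Q}_{\ell,E}$, which gives a distinguished triangle $\mathbb{Q}_{\ell,\mathcal{K}}\to R\pi_*\mathbb{Q}_{\ell,\mathcal{K}'}\oplus \mathbb{Q}_{\ell,\Sigma_0}\to R\pi_*\mathbb{Q}_{\ell,E}\xrightarrow{+1}$. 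Applying $R\Gamma_c(\mathcal{K}_{\mathbb{F}},-)$ and using proper base change yields
\begin{equation*}
	\cdots\to \Hetc{i}(\mathcal{K}_{\mathbb{F}})\to \Hetc{i}(\mathcal{K}'_{\mathbb{F}})\oplus \Hetc{i}(\Sigma_{0,\mathbb{F}})\to \Hetc{i}(E_{\mathbb{F}})\to \Hetc{i+1}(\mathcal{K}_{\mathbb{F}})\to\cdots.
\end{equation*}
As $\Sigma_0$ is finite and $nk\ge 4$, both $\Hetc{nk-1}(\Sigma_{0,\mathbb{F}})$ and $\Hetc{nk-2}(\Sigma_{0,\mathbb{F}})$ vanish, and since $nk$ is even we have $\Hetc{nk-1}(E_{\mathbb{F}})=\Hetc{nk-3}(E_{\mathbb{F}})=0$. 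Plugging these in, the part of the sequence around degree $nk-1$ collapses to
\begin{equation*}
	\Hetc{nk-2}(\mathcal{K}_{\mathbb{F}})\to \Hetc{nk-2}(\mathcal{K}'_{\mathbb{F}})\xrightarrow{\ \beta\ }\Hetc{nk-2}(E_{\mathbb{F}})\xrightarrow{\ \delta\ }\Hetc{nk-1}(\mathcal{K}_{\mathbb{F}})\to \Hetc{nk-1}(\mathcal{K}'_{\mathbb{F}})\to 0,
\end{equation*}
so that $\Hetc{nk-1}(\mathcal{K}'_{\mathbb{F}})=\Hetc{nk-1}(\mathcal{K}_{\mathbb{F}})/\mathrm{im}(\delta)$, and it is enough to show that $\delta=0$, equivalently that $\beta$ is surjective.

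The vanishing of $\delta$ is the real content, and I would prove it by a weight argument, first over $\mathbb{F}=\bfp$ and then over $\mathbb{F}=\bq$ by specialising at a prime of good reduction. Its image $\mathrm{im}(\delta)$ is a quotient of $\Hetc{nk-2}(E_{\mathbb{F}})=\oplus_s\Het{nk-2}(Q_s)$, hence pure of weight $nk-2$. On the other hand, finiteness of $\Sigma_0$ identifies $\Hetc{nk-1}(\mathcal{K}_{\mathbb{F}})$ with $\Hetc{nk-1}(U_{\mathbb{F}})$, where $U$ is smooth of dimension $nk-1$ and open in the affine variety $\mathcal{K}$; combining Poincar\'e duality on $U$, Artin vanishing on the affine $\mathcal{K}$, and the local computation at the ordinary quadratic points (where the odd-degree cohomology of the even-dimensional link quadric is zero), one checks that the graded piece of $\Hetc{nk-1}(\mathcal{K}_{\mathbb{F}})$ of weight $nk-2$ vanishes. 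Since there is no non-trivial morphism from a pure object of weight $nk-2$ into a mixed object with no weight-$(nk-2)$ part, $\delta=0$ and the lemma follows. The heart of the matter is the parity mismatch — the exceptional loci are even-dimensional quadrics, with cohomology concentrated in even degrees, whereas the degree $nk-1$ in play is odd because $nk$ is even — so that the only possible contribution to $\Hetc{nk-1}$ is a residual even-degree class, which the weight gap then rules out; pinning down this last weight vanishing precisely is the step I expect to require the most care.
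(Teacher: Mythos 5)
Your reduction is sound and runs parallel to the paper's: the paper chases the two localization sequences for $(\cc{K}',T)$ and $(\cc{K},\Sigma_0)$ glued along the common open piece, while you use the equivalent descent triangle of the abstract blow-up square; in both versions the vanishing of $\Het{nk-2}(\Sigma_0)$, $\Het{nk-1}(\Sigma_0)$ (finite set, degree $\ge 2$) and of the odd-degree cohomology of the even-dimensional exceptional quadrics does all the easy work, and everything comes down to showing that the connecting map $\delta\colon \Hetc{nk-2}(E)\to\Hetc{nk-1}(\cc{K})$ vanishes, equivalently that $\Hetc{nk-2}(\cc{K}')\to\Het{nk-2}(E)$ is surjective.

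The gap is in your final step. The claim that $\gr^W_{nk-2}\Hetc{nk-1}(\cc{K}_{\bb{F}})=0$ is false, so the weight argument cannot close the proof. The group $\Hetc{nk-1}(\cc{K})$ is the compactly supported middle cohomology of a singular quasi-affine variety and is very far from pure: in the weight spectral sequence of the toric compactification, the term $E_2^{1,nk-2}$, a subquotient of $\Het{nk-2}(\bar{\cc{K}}^{(1)})$ coming from the boundary divisors at infinity, is pure of weight $nk-2$ and is nonzero in general — this is exactly the piece that Lemma~\ref{lem:gr-im-beta} later in the paper computes (it is governed by the $I_{\bar 0}$-invariants of $\Sym^k\Kl_{n+1}$, which are nonzero by Theorem~\ref{thm::local-symk-at-0}). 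So $\im(\delta)$ and genuinely present classes of $\Hetc{nk-1}(\cc{K})$ live in the same weight, and no weight gap separates them. There is also a local error in your sketch: you invoke "the odd-degree cohomology of the even-dimensional link quadric," but the relevant local object at a node of the odd-dimensional $\cc{K}$ is the link, a circle bundle over the quadric $Q_s$, and $\Het{nk-1}$ of the punctured local cone equals $\ker\bigl(\cup h\colon \Het{nk-2}(Q_s)(-1)\to\Het{nk}(Q_s)\bigr)$, which is one-dimensional precisely because $\Het{nk-2}(Q_s)$ has rank $2$. This nonvanishing is what feeds a weight-$(nk-2)$ class into degree $nk-1$. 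Indeed, for the local model — the affine cone $X$ over an even-dimensional smooth quadric, with blow-up $X'$ — one computes $\Hetc{\dim X}(X)=\bb{Q}_\ell(-m)\neq 0=\Hetc{\dim X}(X')$ and the analogous $\delta$ is an isomorphism onto a weight-$(nk-2)$ line. So whatever kills $\delta$ for $\cc{K}$ must be a global statement about which middle classes of the exceptional quadrics extend to compactly supported classes on $\cc{K}'$, not a weight count; your argument as written does not supply it.
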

\begin{proof}
Let $T$ be the preimage of $\Sigma_0$ along the blow-up morphism $\bar{\cc{K}}'\to \bar{\cc{K}}$, which is a disjoint union of quadrics. Then consider the commutative diagram of exact sequences
\begin{equation}
	\begin{tikzcd}
		\Het{nk-2}(T_{\bb{F}})\ar[r] &\Hetc{nk-1}(({\cc{K}}'\backslash T)_{\bb{F}}) \ar[r,"\alpha"]\ar[d,"\simeq"] & \Hetc{nk-1}({\cc{K}}'_{\bb{F}})\ar[d,"\beta"]\ar[r,"\gamma"] &\Het{nk-1}(T_{\bb{F}})\\
		\Het{nk-2}((\Sigma_0)_{\bb{F}})\ar[r]& \Hetc{nk-1}(({\cc{K}}\backslash \Sigma_0)_{\bb{F}}) \ar[r] &\Hetc{nk-1}({\cc{K}}_{\bb{F}})\ar[r] & \Het{nk-1}((\Sigma_0)_{\bb{F}}).
	\end{tikzcd}
\end{equation}
Under the assumption that $nk\geq 4$, the cohomology $\Het{nk-2}((\Sigma_0)_{\bb{F}})$ and $\Het{nk-1}((\Sigma_0)_{\bb{F}})$ both vanish. In particular, we find that $\gamma$ is surjective if we extend the diagram by one more column to the right.

As $nk$ is even, the cohomology $\Het{nk-1}(T_{\bb{F}})=0$, because $T$ is disjoint union of quadrics. From this we conclude that $\Hetc{nk-1}({\cc{K}}'_{\bb{F}})=\Hetc{nk-1}({\cc{K}}_{\bb{F}}).$
\end{proof}

\subsubsection{The \texorpdfstring{$\ell$}{l}-adic case in general}\label{sec::ell-adic-in-general} 
 
Let $p\neq \ell$ be two different primes, $\lambda \in \bb{N}^n$ be a sequence, and $\zeta_{n+1}$ be either an $(n+1)$-th primitive root of unity in $\bfp$ or $\bq$. We adopt the notation from the previous section and replace $k$ with $|\lambda|$. We denote by $\Sigma'(p)=\Sigma'(|\lambda|,n+1,p)$ the singular set of $\bar{\mathcal{K}}'_{\bfp}$. Recall that each singular point $x$ of $\bar{\mathcal{K}}'_{\bfp}$ is of the form $x=(x_{i, j})_{1\leq i\leq k,\,1\leq j\leq n}=(\zeta_{n+1}^{a_i})_{i,j}$ for some $a_i\in \{0,1,\ldots,n\}$. The action of $S_{|\lambda|}\times \mu_{n+1}$ on $\Sigma'(p)$ is given by 
	$$(\sigma,\zeta_{n+1}^a)\cdot (x_{i,j})=(\zeta_{n+1}^a\cdot x_{\sigma(i),j}).$$
One can identify the $S_{|\lambda|}$-orbits in $\Sigma'(p)$ with the set of multi-indices 
	\begin{equation}\label{eq:number-singular-points}
	    \{\underline I\in \bb{N}^{n+1}\mid |\underline I|=|\lambda|,\ C_{\underline I}=0 \text{ in } \bfp,\,C_{\underline I}\neq 0 \text{ in } \bb{C}\}=d(k,n+1,p)-d(k,n+1)
	\end{equation}
by sending $x=(\zeta_{n+1}^{a_i})_{i,j}$ to $\underline I$ such that $I_j=\#\{i\mid  a_i=j\}$. On multi-indices, the actions of $\mu_{n+1}$ is given by $\zeta_{n+1}\cdot (I_0,I_1,\ldots,I_n)=(I_n,I_0,\ldots,I_{n-1})$.

Assume that $p\nmid n+1$. The singular points in $\Sigma'(p)$ are ordinary quadratic in the sense of \cite[XII 1.1]{SGA7-II}. Let $n|\lambda|=2m+1$ (resp. $n|\lambda|=2m+2$) and we apply the Picard--Lefschetz formula \cite[XV 3.4]{SGA7-II} to $\bar{\mathcal{K}}'_{\bb{Z}_p}\to \operatorname{Spec}(\bb{Z}_p)$. For each $x\in \Sigma'(p)$, there is a vanishing cycle class $\delta_x\in \mathrm{H}^{n|\lambda|-1}_{\acute{e}t}\bigl(\bar{\mathcal{K}}'_{\bq_p}\bigr)(m)$, well-defined up to a sign. These vanishing cycle classes are orthogonal to each other and satisfy 

	$$(\delta_x, \delta_x)=(-1)^m2\quad (\text{resp.}\quad  (\delta_x, \delta_x)=0).$$ 
We fix a place of $\bq$ over $p$ and denote by $I_p$ the corresponding inertia group. To each element $\sigma\in I_p$, the action on $\mathrm{H}^{n|\lambda|-1}_{\et}(\bar{\mathcal{K}}'_{\bar{\bb{Q}}})$ is given by
\begin{equation}\label{eq::P-L-formula}
	\sigma(v)=
	\begin{cases}
		v+(-1)^m\sum_{x\in \Sigma'(p)}\frac{\epsilon(\sigma)-1}{2}(v,\delta_x)\delta_x & 2\nmid n|\lambda|,\\
		v-(-1)^m\sum_{x\in \Sigma'(p)}\epsilon(\sigma)(v,\delta_x)\delta_x & 2\mid n|\lambda|,
	\end{cases}
\end{equation}
where $\epsilon$ is the character $I_p\twoheadrightarrow \{\pm 1\}$ of order $2$ if $n|\lambda|$ odd, and is the fundamental tame character $I_p\to \varprojlim_n \mu_{\ell^n}(\bql)$ if $n|\lambda|$ is even. Moreover, we have an exact sequence 
	$$0\to \mathrm{H}^{n|\lambda|-1}_{\et}(\bar{\mathcal{K}}'_{\bar{\bb{F}}_p})\to
	\mathrm{H}^{n|\lambda|-1}_{\et}(\bar{\mathcal{K}}'_{\bar{\bb{Q}}})
	\xrightarrow{\gamma} \sum_{x\in \Sigma'(p)}\mathbb{Q}_{\ell}(m-n|\lambda|+1),$$
where $\gamma$ is the sum of the intersections with the vanishing cycle classes $\delta_x$.

\begin{thm}\label{thm::unramified}
	Suppose that $\gcd(n+1,|\lambda|)=1$ when $n|\lambda|$ is odd. 
	\begin{enumerate}
		\item If $p\nmid n+1$ and $\cc{K}'_{\bb{F}_p}$ is smooth, the Galois representation $(\mathrm{M}^\lambda_{n+1})_\ell$ is unramified at primes $p$, and there is an isomorphism 
		of $\gal(\bq_p/\bb{Q}_p)$-representations
		$$(\mathrm{M}^\lambda_{n+1})_\ell[\zeta_{p}]\simeq \rr{H}^1_{\et,\rr{mid}}(\bb{G}_{m,\bfp},\Kl_{n+1}^\lambda).$$  
		\item If $p\nmid n+1$ and $p\neq 2$, the Galois representation $(\mathrm{M}^\lambda_{n+1})_\ell$ is at most tamely ramified.
	\end{enumerate}
\end{thm}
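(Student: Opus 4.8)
The plan is to realize $(\mathrm{M}_{n+1}^\lambda)_\ell$ as a subquotient of the compactly supported cohomology $\Hetc{n|\lambda|-1}(\mathcal{K}_{\bq},\bb{Q}_\ell)$ — obtained by restricting to the $(G_\lambda\times\mu_{n+1},\chi_n)$-isotypic part, taking $\gr^W_{n|\lambda|+1}$, and Tate-twisting by $(-1)$ — and then to bound the ramification of that cohomology group at $p$ by means of the projective model $\bar{\mathcal{K}}'$ over $\bb{Z}_p$ built in Section~\ref{subsec::compactification}. We may assume $n|\lambda|\geq 3$, since otherwise $\mathrm{M}_{n+1}^\lambda$ is zero-dimensional and there is nothing to prove. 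When $\gcd(n+1,|\lambda|)=1$ one has $\mathcal{K}=\mathcal{K}'$; when $\gcd(n+1,|\lambda|)>1$ the standing hypothesis forces $n|\lambda|$ even, hence $\geq 4$, so Lemma~\ref{lem::blowup} (with $k=|\lambda|$) identifies $\Hetc{n|\lambda|-1}(\mathcal{K}_{\bb{F}})$ with $\Hetc{n|\lambda|-1}(\mathcal{K}'_{\bb{F}})$ for $\bb{F}\in\{\bq,\bfp\}$; so in all cases we may work with $\mathcal{K}'$. The first task is to upgrade the computations of Section~\ref{subsec::compactification} to $\Spec \bb{Z}_p$: for $p\nmid n+1$, the model $\bar{\mathcal{K}}'$ is proper and flat over $\bb{Z}_p$; the boundary $D=\bar{\mathcal{K}}'\setminus\mathcal{K}'$ is a relative strict normal crossings divisor whose closed strata — as well as the exceptional divisors of the blow-up along $\Sigma_0$ — are smooth and proper over $\bb{Z}_p$; and $\bar{\mathcal{K}}'$ is smooth over $\bb{Z}_p$ away from the finite set $\Sigma'(p)$ of ordinary quadratic points of the special fibre, all of which lie in the interior $\mathcal{K}'$. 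Finally, since the $G_\lambda\times\mu_{n+1}$-action is defined over the unramified extension $\bb{Z}_p[\zeta_{n+1}]$, the $\chi_n$-isotypic projection is exact and commutes with all the base-change and localization maps below, and the $\gal(\bq_p/\bb{Q}_p)$-action on the $\mu_{n+1}$-invariants is understood as in Remark~\ref{label::Galois-descent-motive-2}.

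For assertion (1), I assume $\mathcal{K}'_{\bb{F}_p}$ is smooth, so that $\Sigma'(p)=\varnothing$ and $\bar{\mathcal{K}}'$ is smooth and proper over $\bb{Z}_p$ with $\mathcal{K}'$ the complement of a relative strict normal crossings divisor. Then $\mathcal{K}'$ has good reduction at $p$: for every $i$ the specialization map $\Hetc{i}(\mathcal{K}'_{\bfp})\xrightarrow{\sim}\Hetc{i}(\mathcal{K}'_{\bq})$ is an isomorphism compatible with $\gal(\bq_p/\bb{Q}_p)\twoheadrightarrow\gal(\bfp/\bb{F}_p)$, and in particular $\Hetc{i}(\mathcal{K}'_{\bq})$ is unramified at $p$ (this follows from smooth and proper base change applied to $\bar{\mathcal{K}}'$ together with purity along $D$; equivalently, the nearby-cycle complex of $j_!\bb{Q}_\ell$ for $j\colon\mathcal{K}'\hookrightarrow\bar{\mathcal{K}}'$ is again $j_!\bb{Q}_\ell$). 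Hence $(\mathrm{M}_{n+1}^\lambda)_\ell$, being a subquotient of $\Hetc{n|\lambda|-1}(\mathcal{K}_{\bq},\bb{Q}_\ell)=\Hetc{n|\lambda|-1}(\mathcal{K}'_{\bq},\bb{Q}_\ell)$, is unramified at $p$. Taking $i=n|\lambda|-1$, passing to the $(G_\lambda\times\mu_{n+1},\chi_n)$-isotypic part, applying $\gr^W_{n|\lambda|+1}$, Tate-twisting, and using Lemma~\ref{lem::blowup} on both sides gives an isomorphism of $\gal(\bq_p/\bb{Q}_p)$-representations
$$(\mathrm{M}_{n+1}^\lambda)_\ell[\zeta_p]\;\simeq\;\gr^W_{n|\lambda|+1}\Hetc{n|\lambda|-1}\bigl(\mathcal{K}_{\bfp},\bb{Q}_\ell(\zeta_p)\bigr)^{G_\lambda\times\mu_{n+1},\chi_n}(-1),$$
and the right-hand side is $\mathrm{H}^1_{\et,\mathrm{mid}}(\bb{G}_{m,\bfp},\Kl_{n+1}^\lambda)$ by Theorem~\ref{thm::etale-realization-mod-p}.

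For assertion (2), $\Sigma'(p)$ may be nonempty; its points are ordinary quadratic and interior, so the Picard--Lefschetz theory of \cite[XV]{SGA7-II} applies to $\bar{\mathcal{K}}'_{\bb{Z}_p}\to\Spec\bb{Z}_p$. By \textit{loc.\,cit.}, only $\Het{n|\lambda|-1}(\bar{\mathcal{K}}'_{\bq})$ can be ramified at $p$, and there the inertia $I_p$ acts through the character $\epsilon$ of \eqref{eq::P-L-formula} — the fundamental tame character when $n|\lambda|-1$ is odd, and an at most quadratic character of $I_p$ when $n|\lambda|-1$ is even. Since $p\neq 2$, a quadratic character of $I_p$ is trivial on the pro-$p$ group $P_p$ of wild inertia, while the fundamental tame character is tame by definition; hence $P_p$ acts trivially on every $\Het{i}(\bar{\mathcal{K}}'_{\bq})$, and also on the unramified modules $\Het{i}(D_{\bq})$ (whose stratification by smooth proper $\bb{Z}_p$-schemes has good reduction). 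The localization long exact sequence
$$\cdots\to\Het{i-1}(D_{\bq})\to\Hetc{i}(\mathcal{K}'_{\bq})\to\Het{i}(\bar{\mathcal{K}}'_{\bq})\to\Het{i}(D_{\bq})\to\cdots$$
presents $\Hetc{i}(\mathcal{K}'_{\bq})$ as an extension of a subrepresentation of $\Het{i}(\bar{\mathcal{K}}'_{\bq})$ by a quotient of $\Het{i-1}(D_{\bq})$, on both of which $P_p$ is trivial. As the image of $P_p$ in $\GL(\Hetc{i}(\mathcal{K}'_{\bq}))$ is a finite $p$-group and $\ell\neq p$, this image acts semisimply (Maschke), so an extension of trivial representations is trivial; thus $P_p$ acts trivially on $\Hetc{i}(\mathcal{K}'_{\bq})$. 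Therefore $\Hetc{n|\lambda|-1}(\mathcal{K}_{\bq})=\Hetc{n|\lambda|-1}(\mathcal{K}'_{\bq})$ — and hence its subquotient $(\mathrm{M}_{n+1}^\lambda)_\ell$ — is at most tamely ramified at $p$.

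The step I expect to be the main obstacle is the integral geometry recorded in the first paragraph: that $\bar{\mathcal{K}}'$ stays smooth along the boundary $D$ in characteristic $p$, that the singular locus of its special fibre is exactly $\Sigma'(p)$ and consists of ordinary double points, and that the blow-up along $\Sigma_0$ can be performed compatibly with the $\bb{Z}_p$-model and with the $S_{|\lambda|}\times\mu_{n+1}$-action. This is where the hypothesis $p\nmid n+1$ is genuinely used, and it is what licenses the simultaneous application of smooth and proper base change, purity along $D$, and the Picard--Lefschetz formula \eqref{eq::P-L-formula}. The remaining ingredients — exactness of the isotypic projection, compatibility of $\gr^W$ with Frobenius-equivariant maps, and semisimplicity of a $\bb{Q}_\ell$-linear representation of a finite $p$-group with $p\neq\ell$ — are routine.
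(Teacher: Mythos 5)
Your proposal is correct, and it rests on the same geometric backbone as the paper's proof: the integral model $\bar{\mathcal{K}}'$ from Section~\ref{subsec::compactification}, good reduction of $\bar{\mathcal{K}}'$ for part (1), and the Picard--Lefschetz description of the inertia action at the ordinary quadratic points for part (2). The implementation, however, is genuinely different. The paper first runs the weight spectral sequence $E_1^{p,q}=\Het{q}(\bar{\mathcal{K}}^{(p)}_{\bb{F}})\Rightarrow \Hetc{p+q}(\mathcal{K}'_{\bb{F}})$ over both $\bq$ and $\bfp$ to identify $(\mathrm{M}^\lambda_{n+1})_\ell(1)$ with $\im(\alpha)^{G_\lambda\times\mu_{n+1},\chi_n}$ for the edge map $\alpha\colon \Hetc{n|\lambda|-1}(\mathcal{K}'_{\bq})\to \Het{n|\lambda|-1}(\bar{\mathcal{K}}'_{\bq})$ (and the middle cohomology with $\gr^{W'}\im(\beta)$), and then reads off both assertions by chasing the diagram \eqref{eq::ell-adic-p-l-formula}; this identification is reused heavily in Theorem~\ref{thm::ell-adic-galois-even}, so it is not wasted effort. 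You instead work directly with $\Hetc{n|\lambda|-1}(\mathcal{K}'_{\bq})$: for (1) via the specialization isomorphism in the good-reduction NCD situation, and for (2) via the localization sequence for $D=\bar{\mathcal{K}}'\setminus\mathcal{K}'$ combined with the fact that the image of wild inertia is a finite $p$-group acting semisimply on a $\bb{Q}_\ell$-space, so triviality on sub and quotient forces triviality. This buys a cleaner and marginally stronger statement for (2) (tameness of all of $\Hetc{n|\lambda|-1}(\mathcal{K}'_{\bq})$, not merely of $\im(\alpha)$), at the price of not producing the explicit unipotent/vanishing-cycle structure of the inertia action that the paper needs afterwards. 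Two points you should make explicit: in (1), that under good reduction the motivic weight filtration in characteristic $0$ matches the Frobenius weight filtration in characteristic $p$ (the paper records this as ``$W$ and $W'$ coincide''), so taking $\gr^W_{n|\lambda|+1}$ commutes with your specialization isomorphism; and in (2), that unramifiedness of $\Het{\bullet}(D_{\bq})$ is itself deduced from the Mayer--Vietoris spectral sequence of the smooth proper strata $\bar{\mathcal{K}}^{(i)}$, $i\geq 1$, plus the same wild-inertia semisimplicity argument. The integral-geometry inputs you flag at the end (smoothness of $\bar{\mathcal{K}}'$ along the boundary over $\bb{Z}_p$, the singular locus of the special fibre being exactly $\Sigma'(p)$ with ordinary double points, equivariance of the blow-up) are precisely the facts the paper also takes as established in Sections~\ref{subsec::compactification} and~\ref{sec::ell-adic-in-general}, so no new gap is introduced there.
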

\begin{proof}
	For simplicity, we omit the coefficient $\bb{Q}_\ell$ in the cohomology. Let $\bar{\mathcal{K}}^{(0)}=\bar{\mathcal{K}}'$ and $\bar{\mathcal{K}}^{(i)}$ the disjoint union of all $i$-fold intersections of distinct irreducible components of $\bar{\mathcal{K}}'\backslash \mathcal{K}'$ for $i\geq 1$. Let $\bb{F}$ be either $\bq$ or $\bfp$. Consider the spectral sequence 
	\begin{equation}\label{eq:spectral-seq}
		\begin{split}
			(E_1^{p,q})_{\bb{F}}
			=\Het{q}(\bar{\mathcal{K}}^{(p)}_{{\bb{F}}})\Rightarrow \Hetc{p+q}({\mathcal{K}}'_{{\bb{F}}}).
		\end{split}
	\end{equation}
For the case $\bb{F}=\bq$, since $\bar{\mathcal{K}}^{(i)}$ are proper smooth for all $i$, all morphisms in the $E_2$-page are $0$ for the reason of weights. Therefore, the spectral sequence degenerates at the $E_2$-page. It follows from the spectral sequence that
	\begin{equation*}
		\begin{split}
			\mathrm{gr}^W_{n |\lambda|-1}\Hetc{n |\lambda|-1}(\mathcal{K}'_{\bq})=(E_\infty^{0,n |\lambda|-1})_{\bq}
			&=\ker(\Het{n |\lambda|-1}(\bar{\mathcal{K}}'_{\bar{\bb{Q}}})\to \Het{n |\lambda|-1}(\bar{\mathcal{K}}^{(1)}_{\bar{\bb{Q}}}) )\\
			&=\im(\Hetc{n |\lambda|-1}(\mathcal{K}'_{\bq})\xrightarrow{\alpha}\Het{n |\lambda|-1}(\bar{\mathcal{K}}'_{\bq})),
		\end{split}
	\end{equation*}
	where the map $\alpha$ is the surjective edge map from the abutment $\Hetc{n |\lambda|-1}(\mathcal{K}'_{\bq})$ to $E_2^{0,n |\lambda|-1}$. Notice that the above spectral sequence is equivariant with respect to the action of $S_{|\lambda|}\times \mu_{n+1}$. Using the isomorphism in Lemma~\ref{lem::blowup}, we conclude that
	\begin{equation}\label{eq::image-alpha}
	\begin{split}
		\im(\alpha) ^{G_\lambda\times \mu_{n+1},\chi_n}\simeq 
			(\mathrm{M}^\lambda_{n+1})_\ell(1).
	\end{split}
	\end{equation}

For the case that $\bb{F}=\bfp$, we conclude similarly $G$-equivariant isomorphisms
	\begin{equation*}
	\begin{split}
		\mathrm{gr}^W_{n |\lambda|-1}\Hetc{n |\lambda|-1}(\mathcal{K}'_{\bq})
		=\gr^{W'}_{n |\lambda|-1}(E_\infty^{0,n |\lambda|-1})_{\bfp}
		=\gr^{W'}_{n |\lambda|-1}\im\Bigl(\Hetc{n |\lambda|-1}(\mathcal{K}'_{\bfp})\xrightarrow{\beta}\Het{n |\lambda|-1}(\bar{\mathcal{K}}'_{\bfp})\Bigr),
	\end{split}
	\end{equation*}
where we denote by $W'$ the (Frobenius) weight filtration to distinguish it from the weight filtration $W$ in characteristic $0$. Recall that
\begin{equation*}
\begin{split}
	\mathrm{H}^1_{\mathrm{\et, mid}}(\bb{G}_m,\Kl_{n+1}^{\lambda})
	\simeq \gr^{W'}_{n |\lambda|+1} \mathrm{H}^{n |\lambda|-1}_{\mathrm{\et,c}}(\mathcal{K}_{\bfp},\bb{Q}_{\ell}(\zeta)) ^{G_\lambda\times \mu_{n+1},\chi_n}(-1)
\end{split}
\end{equation*}
from Theorem~\ref{thm::etale-realization-mod-p}. We obtain
	\begin{equation}\label{eq::image-beta}
	\begin{split} 
		\gr^{W'}_{n |\lambda|+1} \im(\beta) ^{G_\lambda\times \mu_{n+1},\chi_n}(-1)[\zeta_p]\simeq \mathrm{H}^1_{\et,\mathrm{mid}}(\bb{G}_{m,\bar{\bb{F}}_p},\Kl_{n+1}^{\lambda})  
	\end{split}
	\end{equation}

Now we consider the $G$-equivariant commutative diagram with exact rows and columns
	\begin{equation}\label{eq::ell-adic-p-l-formula}
		\begin{tikzcd}
		& \mathrm{H}_{\et,\rr{c}}^{n |\lambda|-1}({\mathcal{K}}'_{\bar{\bb{F}}_p})\ar[r,"\iota_c"]\ar[d,"\beta"] 
		&\mathrm{H}_{\et,\rr{c}}^{n |\lambda|-1}({\mathcal{K}}'_{\bar{\bb{Q}}})\ar[d,"\alpha"] &
		\\
		0\ar[r] & 	\mathrm{H}_{\et}^{n |\lambda|-1}(\bar{\mathcal{K}}'_{\bar{\bb{F}}_p})		\ar[r,"\iota"] \ar[d]	&	\mathrm{H}_{\et}^{n |\lambda|-1}(\bar{\mathcal{K}}'_{\bar{\bb{Q}}})		\ar[r,"\gamma"]\ar[d]	&	\oplus_{x\in \Sigma'(p)}\bb{Q}_\ell(m-n|\lambda|+1),\\
		 & 	\mathrm{H}_{\et}^{n |\lambda|-1}(\bar{\mathcal{K}}^{(1)}_{\bar{\bb{F}}_p})\ar[r,"\sim"]&	\mathrm{H}_{\et}^{n |\lambda|-1}(\bar{\mathcal{K}}^{(1)}_{\bar{\bb{Q}}}) &
		\end{tikzcd}
	\end{equation}
where the middle row is given by the Picard--Lefschetz formula (we assume $p\nmid n+1$). Moreover, taking into account \eqref{eq::P-L-formula}, the representation $\mathrm{H}_{\et}^{n |\lambda|-1}(\bar{\mathcal{K}}'_{\bar{\bb{Q}}})$ of $\gal(\bar{\bb{Q}}_p/\Qp)$ is at most tamely ramified when $p\neq 2$. We verified the second statement in the Theorem.

Notice that each class $\delta_x$ is a generator of $\rr{H}_{\{x\}}^{n|\lambda|-1}(\bar{\cc{K}}',\rr{R}\Psi(m))$, with support $\{x\}$, where $\rr{R}\Psi$ denotes the nearby cycle complex. So $\Delta=\oplus \bb{Q}_\ell(-m)\delta_x$ is contained in $\im(\alpha)$. If we take the isotypic component with respect to $(G_\lambda \times \mu_{n+1},\chi_n)$ on the second row, we have an exact sequence 
	\begin{equation}\label{eq::equivariant-part-p-l-formula}
		0\to \mathrm{H}_{\et}^{n |\lambda|-1}(\bar{\mathcal{K}}'_{\bar{\bb{F}}_p}) ^{G_\lambda\times \mu_{n+1},\chi_n}	\xrightarrow{\iota}	
		 \mathrm{H}_{\et}^{n |\lambda|-1}(\bar{\mathcal{K}}'_{\bar{\bb{Q}}}) ^{G_\lambda\times \mu_{n+1},\chi_n}	\xrightarrow{\gamma}	
		\Bigl(\oplus_{x\in \Sigma}\bb{Q}_\ell(m-n|\lambda|+1) \Bigr)^{G_\lambda\times \mu_{n+1},\chi_n}.
	\end{equation}
By a diagram-chasing argument, we get from \eqref{eq::equivariant-part-p-l-formula} an inclusion 
	\begin{equation}\label{eq:inclusion-ell}
		\begin{split}
			\gr^{W'}_{n |\lambda|+1} \im(\beta) ^{G_\lambda\times \mu_{n+1},\chi_n}
			\hookrightarrow 
			 \im(\alpha) ^{G_\lambda\times \mu_{n+1},\chi_n}.
		\end{split}
	\end{equation}

When $\mathcal{K}'$ has good reduction at $p$, the variety $\bar{\cc{K}}_{\bfp}'$ is smooth proper and the morphisms $\iota$ and $\iota_{c}$ in \eqref{eq::ell-adic-p-l-formula} are isomorphisms. So $\im(\alpha)\simeq \im(\beta)$ are pure of weight $nk-1$ ($W$ and $W'$ coincide). By \eqref{eq::image-alpha} and \eqref{eq::image-beta}, we get an isomorphism 
	$$(\mathrm{M}^\lambda_{n+1})_\ell[\zeta_{p}]\simeq \rr{H}^1_{\et,\rr{mid}}(\bb{G}_{m,\bfp},\Kl_{n+1}^\lambda)$$
of unramified $\gal(\bq_p/\bb{Q}_p)$-representations from \eqref{eq:inclusion-ell}. This verifies the first statement in the Theorem.
\end{proof}

\begin{rek}\label{rek::unramified-p=3}
	In the discussion above, we have omitted the case where $p \mid n+1$. In this situation, the singular points of $\bar{\mathcal{K}}_{\bfp}$ are isolated but not ordinary quadratic, rendering the Picard--Lefschetz formula inapplicable in this case. Nevertheless, the vanishing cycles with respect to $\bar{\mathcal{K}}'_{\bb{Z}_p}\to \Spec(\bb{Z}_p)$ remain $0$ if $i\neq n |\lambda|-1$ \cite[Cor.\,2.10]{illusie2003perversite}. 
 
    If $\gcd(k,n+1)=1$, then $\mathcal{K}=\mathcal{K}'$, and many of the above arguments, including $\eqref{eq::image-beta}$, remain valid even when $p\mid n+1$. Based on the long exact sequence associated with vanishing cycles  \cite[XIII\,(1.4.2.2)]{SGA7-II}, the cospecialization morphism
		$$\Het{n |\lambda|-1}(\bar{\mathcal{K}}_{\bfp})\to \Het{n |\lambda|-1}(\bar{\mathcal{K}}_{\bar{\bb{Q}}})$$
	is injective. Hence, the diagram
		$$\begin{tikzcd}
			&\Hetc{n |\lambda|-1}({\mathcal{K}}_{\bfp})\ar[d,"\beta"]\ar[r]	& \Hetc{n |\lambda|-1}({\mathcal{K}}_{\bar{\bb{Q}}})\ar[d,"\alpha"]\\
			0\ar[r]&\Het{n |\lambda|-1}(\bar{\mathcal{K}}_{\bfp})\ar[r] & \Het{n |\lambda|-1}(\bar{\mathcal{K}}_{\bar{\bb{Q}}})
		\end{tikzcd}$$
	induces an injective morphism
		\begin{equation}\label{eq::injection-isolated-sing}
			\gr^{W'}_{n|\lambda|+1} \im(\beta)^{G_\lambda\times \mu_{n+1},\chi_n}(-1)\hookrightarrow \mathrm{gr}^W_{n |\lambda|+1} \Hetc{n |\lambda|-1}(\mathcal{K}_{\bq})^{G_\lambda\times \mu_{n+1},\chi_n} (-1) =(\mathrm{M}^\lambda_{n+1})_\ell.
		\end{equation}
	As long as the dimensions of the source and the target of \eqref{eq::injection-isolated-sing} are the same, the inclusion becomes an isomorphism, implying that $(\mathrm{M}^\lambda_{n+1})_\ell$ is unramified at $p$. For instance, when $n\leq 2$, $p=n+1$, and $p\nmid k$, the Galois representations attached to $\Sym^k\Kl_{n+1}$ are unramified according to \cite[Cor.\,4.3.5]{yun2015galois} and Corollary~\ref{cor::dimension_mid_p=3}. 
\end{rek}

Let $N(V_\lambda)$ and $E(V_\lambda)$ be endomorphisms of $V_\lambda$, induced from $N$ and $E$ in $\mathfrak{sl}_{n+1}$, as defined in \cite[\S\,5]{Frenkel2009}. Inspired by the above examples, we conjecture that:

\begin{conj}
	The morphism \eqref{eq::injection-isolated-sing} is an isomorphism when the matrices $ N(V_\lambda)+E(V_\lambda)$ is invertible. 
\end{conj}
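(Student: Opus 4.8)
The plan is to reduce the conjecture to a single local statement at $\infty$ and then to establish that statement from the Frenkel--Gross presentation \eqref{eq:Frenkel-Gross} of the Bessel connection and its $p$-adic avatar, the Bessel $F$-isocrystal of \cite{xu2022bessel}. By Remark~\ref{rek::unramified-p=3} the morphism \eqref{eq::injection-isolated-sing} is already injective, and (after inverting $\varpi$ and tracking the Tate twist) it is, up to extending coefficients to $\bb{Q}_\ell(\zeta_p)$, simply the inclusion of $\mathrm{H}^1_{\et,\rr{mid}}(\bb{G}_{m,\bfp},\Kl_{n+1}^\lambda)$ into $(\rr{M}^\lambda_{n+1})_\ell$; so it suffices to show these two spaces have equal dimension. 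On the one hand, $\dim (\rr{M}^\lambda_{n+1})_\ell$ equals, by Theorem~\ref{thm::unramified}(1) applied at a prime of good reduction (or by Theorem~\ref{thm::etale-realization-mod-p} and smooth and proper base change), the value of $\dim\mathrm{H}^1_{\et,\rr{mid}}(\bb{G}_{m,\bfp},\Kl_{n+1}^\lambda)$ for any $p\nmid n+1$---the ``generic'' value, computed for symmetric powers in Proposition~\ref{cor::dimension-mid-de-ell-adic}. On the other hand, for the prime $p\mid n+1$, the dimension formula \eqref{eq::dim-middle-general-formula} combined with the Grothendieck--Ogg--Shafarevich formula and the geometric semisimplicity of $\Kl_{n+1}^\lambda$ (using $\sw_0=0$, $\Hetc{0}=0$, $\Hetc{2}=(V_\lambda)_{G_{\rr{geom}}}(-1)$) gives
\begin{equation*}
	\dim\mathrm{H}^1_{\et,\rr{mid}}(\bb{G}_{m,\bfp},\Kl_{n+1}^\lambda)=\sw_\infty(\Kl_{n+1}^\lambda)+2\dim(V_\lambda)^{G_{\rr{geom}}}-\dim(\Kl_{n+1}^\lambda|_{\eta_0})^{I_{\bar 0}}-\dim(\Kl_{n+1}^\lambda|_{\eta_\infty})^{I_{\bar\infty}}.
\end{equation*}
The term at $0$ is independent of $p$ by Remark~\ref{req:inv-0-ind-p}, so the conjecture becomes equivalent to the claim that, when $N(V_\lambda)+E(V_\lambda)$ is invertible, the remaining three quantities take their generic values: $\sw_\infty(\Kl_{n+1}^\lambda)=\tfrac{1}{n+1}\dim V_\lambda$, $\dim(\Kl_{n+1}^\lambda|_{\eta_\infty})^{I_{\bar\infty}}=0$, and $\dim(V_\lambda)^{G_{\rr{geom}}}=0$. (Invertibility of $N(V_\lambda)+E(V_\lambda)$ modulo $p$ forces invertibility in characteristic $0$, so in the good-reduction computation the relevant counting number---the number of zero eigenvalues of $N(V_\lambda)+E(V_\lambda)$, which for $V_\lambda=\Sym^{|\lambda|}\rr{Std}$ is $d(|\lambda|,n+1,p)$ of Section~\ref{nota::multi-indices}---vanishes as well, and one gets the same three values there.)

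The second and third of these are in fact consequences of the first. The breaks of $\Kl_{n+1}^\lambda$ at $\infty$ are all $\le\tfrac{1}{n+1}$, so $\sw_\infty(\Kl_{n+1}^\lambda)\le\tfrac{1}{n+1}\dim V_\lambda$ with equality precisely when every break equals $\tfrac{1}{n+1}$, i.e.\ when $\Kl_{n+1}^\lambda$ is totally wildly ramified at $\infty$; a totally wild representation of $I_{\bar\infty}$ has no nonzero $I_{\bar\infty}$-invariants and, being wild, no trivial sub-local system, so both $\dim(\Kl_{n+1}^\lambda|_{\eta_\infty})^{I_{\bar\infty}}$ and $\dim(V_\lambda)^{G_{\rr{geom}}}=\dim\Het{0}(\bb{G}_{m,\bfp},\Kl_{n+1}^\lambda)$ vanish. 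Thus everything comes down to proving: if $N(V_\lambda)+E(V_\lambda)$ is invertible, then $\Kl_{n+1}^\lambda$ is totally wild at $\infty$ of Swan conductor $\tfrac{1}{n+1}\dim V_\lambda$. I would try to extract this from the connection $\nabla=-N\tfrac{\mathrm{d}z}{z}-\varpi^nE\,\mathrm{d}z$ of \eqref{eq:Frenkel-Gross}: its irregular part at $\infty$ is governed by the matrix $N+\varpi^nzE$, whose restriction to $V_\lambda$ behaves like $N(V_\lambda)+E(V_\lambda)$, and the slope-$0$ part at $\infty$ that would otherwise appear is exactly the kernel of $N(V_\lambda)+E(V_\lambda)$. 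Concretely, the cleanest route seems to be to compute the Turrittin--Levelt slope filtration at $\infty$ of the Bessel $F$-isocrystal for $\mathrm{SL}_{n+1}$ after pulling back along $[n+1]$, and to identify its jumps with the eigenvalues of $N(V_\lambda)+E(V_\lambda)$; the $p$-adic slope filtration has the decisive advantage of being insensitive to the inseparability of $[n+1]$ when $p\mid n+1$.

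That inseparability is precisely the main obstacle. For $p\nmid n+1$ the pullback $[n+1]^*\Kl_{n+1}|_{\eta_\infty}$ splits into rank-one Artin--Schreier sheaves (Fu--Wan), hence so does $[n+1]^*\Kl_{n+1}^\lambda|_{\eta_\infty}$, as a sum $\bigoplus_\mu\scr{L}_{\psi((n+1)C_\mu t)}$ with the $C_\mu$ the eigenvalues of $N(V_\lambda)+E(V_\lambda)$, and total wildness of maximal Swan conductor follows immediately from invertibility. When $p\mid n+1$ this splitting fails, and, as the case $n=2$, $p=3$ already shows---there, identifying the local monodromy of $\Kl_3$ at $\infty$ with $G_{108}$ required the complete classification of finite solvable subgroups of $\mathrm{SL}_3$ (Section~\ref{sec::dim_mom_p=3})---the local structure at $\infty$ becomes genuinely hard to control. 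A uniform proof therefore appears to need either a classification-free handle on the local monodromy of $\Kl_{n+1}$ at $\infty$ for $p\mid n+1$, or a direct $p$-adic computation of the slope filtration of the Bessel $F$-isocrystal for $\mathrm{SL}_{n+1}$ that bypasses the monodromy group altogether; making either precise for all $n$ is, I expect, the real content of the conjecture.
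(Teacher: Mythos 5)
The statement you are addressing is stated in the paper as a conjecture; the paper offers no proof of it, and the only evidence it records is Remark~\ref{rek::unramified-p=3}, where the cases $n\leq 2$, $p=n+1$, $p\nmid k$ are settled by combining Yun's computation for $\Kl_2$ with Corollary~\ref{cor::dimension_mid_p=3}. Your reduction is sound and is essentially the same bookkeeping the paper performs in those known cases: since \eqref{eq::injection-isolated-sing} is already injective, everything comes down to matching $\dim\mathrm{H}^1_{\et,\mathrm{mid}}(\bb{G}_{m,\bfp},\Kl_{n+1}^\lambda)$ at the prime $p\mid n+1$ with its generic value, and via \eqref{eq::dim-middle-general-formula}, the Grothendieck--Ogg--Shafarevich formula, and the $p$-independence of the local structure at $0$ (Remark~\ref{req:inv-0-ind-p}), this is equivalent to showing that $\Kl_{n+1}^\lambda$ is totally wild at $\infty$ with Swan conductor $\frac{1}{n+1}\dim V_\lambda$ when $p\mid n+1$. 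Your observation that total wildness at $\infty$ forces both the inertia invariants there and the global invariants to vanish is also correct, since all breaks are bounded above by $\frac{1}{n+1}$.

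But this is where your argument stops being a proof. The assertion that the slopes of $\Kl_{n+1}^\lambda$ at $\infty$ for $p\mid n+1$ are ``governed by'' $N(V_\lambda)+E(V_\lambda)$, with the tame part identified with its kernel, is precisely the unproven content of the conjecture: the Fu--Wan splitting of $[n+1]^*\Kl_{n+1}|_{\eta_\infty}$ into Artin--Schreier lines, which is what lets one read the Swan conductor off the eigenvalues of $N(V_\lambda)+E(V_\lambda)$ when $p\nmid n+1$, fails exactly because $[n+1]$ is inseparable in the case at hand. The paper's treatment of the one accessible instance ($n=2$, $p=3$, Section~\ref{sec::dim_mom_p=3}) has to identify $\rho(I_{\bar\infty})$ with $G_{108}$ using the classification of finite solvable subgroups of $\SL_3$ together with numerically computed Frobenius traces, and no analogue of either input is available for general $n$. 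Your alternative suggestion --- computing the slope filtration of the Bessel $F$-isocrystal at $\infty$ and comparing it with the $\ell$-adic break decomposition --- is a reasonable strategy, but you do not carry it out, and the comparison between $p$-adic slopes and $\ell$-adic Swan conductors is itself a nontrivial step that would need justification. What you have, and what you candidly acknowledge, is a correct reformulation of the conjecture as a local statement at $\infty$, not a proof of it.
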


\subsubsection{The \texorpdfstring{$\ell$}{l}-adic case for \texorpdfstring{$\Sym^k\Kl_{n+1}$}{SymkKl_{n+1}}}
Now we give a description in detail of $\rr{M}^{\lambda}_{n+1,\ell}$ for $\lambda=(k,0,\ldots,0)$, i.e., the case for $\Sym^k\Kl_{n+1}$. Let $a(k,n+1,p)$, $a(k,n+1)$, and $\delta(k,p)$ be numbers defined in Section~\ref{nota::multi-indices} and Proposition~\ref{prop::moments-p-large}.

\begin{thm}\label{thm::ell-adic-galois-even}
	Let $p$ be a prime different from $\ell$ such that $p\nmid n+1$ and $\cc{K}'$ has bad reductions at $p$. Then  
	\begin{enumerate}
		\item If $nk$ is odd, $\gcd(k,n+1)=1$, and $p\neq 2$, the Galois representation $(\mathrm{M}^k_{n+1})_\ell$ is tamely ramified at $p$. For such primes, we have orthogonal decompositions $(\mathrm{M}^k_{n+1})_\ell=\rr{H}\oplus \rr{E}$ as $\gal(\bar{\bb{Q}}_p/\Qp)$-representations such that
				\begin{itemize}
					\item $\rr{H}[\zeta_p]=\rr{H}^1_{\et,\rr{mid}}(\bb{G}_{m,\bfp},\Sym^k\Kl_{n+1})$,
					\item $\rr{E}$ is generated by vanishing cycle classes.
				\end{itemize}
			\item If $n+1$ is a prime number, the Galois representation $(\mathrm{M}^k_{n+1})_\ell$ is tamely ramified at $p$. For such primes, the inertia groups $I_p\subset \gal(\bar{\bb{Q}}_p/\bb{Q}_p)$ act unipotently on $(\mathrm{M}^k_{n+1})_\ell$ such that $(\sigma-1)^2=0$ for any $\sigma \in I_p$. The image $U$ of the nilpotent part of the monodromy operator, denoted as $N$, is generated by vanishing cycle classes and has dimension $a(k,n+1,p)-a(k,n+1)-\delta(k,p)$. With respect to the intersection pairing, $U$ is totally isotropic with orthogonal complement $(\mathrm{M}^k_{n+1})_\ell^{I_p}$. Moreover, the induced map $\sigma-1\colon (\mathrm{M}^k_{n+1})_\ell\mapsto (\mathrm{M}^k_{n+1})_\ell/U$ is zero.
	\end{enumerate}
\end{thm}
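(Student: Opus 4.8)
\emph{Strategy.} The plan is to rerun the Picard--Lefschetz analysis from the proof of Theorem~\ref{thm::unramified}, this time keeping track of the intersection form. Recall from there that, for $p\nmid n+1$, the singular points $\Sigma'(p)$ of $\bar{\cc{K}}'_{\bfp}$ are ordinary quadratic, that $(\mathrm{M}^k_{n+1})_\ell\simeq \im(\alpha)^{G_\lambda\times\mu_{n+1},\chi_n}(-1)$ is (up to Tate twist) a Galois--stable subspace of $\Het{nk-1}(\bar{\cc{K}}'_{\bq})$ carrying the perfect pairing of Proposition~\ref{prop::perfect-pairing}, and that the full action of the inertia group $I_p$ on $\Het{nk-1}(\bar{\cc{K}}'_{\bq})$ is given by the Picard--Lefschetz formula \eqref{eq::P-L-formula} in terms of the vanishing cycle classes $\delta_x$, $x\in\Sigma'(p)$, which span a Galois--stable subspace $\Delta\subseteq\im(\alpha)$. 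Everything below is obtained by passing to $(G_\lambda\times\mu_{n+1},\chi_n)$--isotypic parts of $\Delta$, of its orthogonal complement, and of the exact sequence \eqref{eq::equivariant-part-p-l-formula}, and comparing with the diagram \eqref{eq::ell-adic-p-l-formula}.

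\emph{Part (1).} Here $nk$ is odd, so in \eqref{eq::P-L-formula} the character $\epsilon\colon I_p\to\{\pm1\}$ is quadratic and $\langle\delta_x,\delta_x\rangle=(-1)^m2$ with the $\delta_x$ mutually orthogonal. Substituting $v=\delta_x$ gives $\sigma(\delta_x)=\epsilon(\sigma)\delta_x$, so $I_p$ acts on $\Delta$ through $\epsilon$; hence $\Delta$, and its isotypic part $\rr{E}:=\Delta^{G_\lambda\times\mu_{n+1},\chi_n}(-1)\subseteq(\mathrm{M}^k_{n+1})_\ell$, are stable under all of $\gal(\bq_p/\bb{Q}_p)$, and $\rr{E}$ is by construction generated by vanishing cycle classes. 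Since $p\neq 2$ the Gram matrix of the $\delta_x$ is invertible, and this survives the isotypic projectors (self--adjoint up to the sign character $\rr{sign}^n$), so $\rr{E}$ is nondegenerate and $(\mathrm{M}^k_{n+1})_\ell=\rr{E}\oplus\rr{E}^\perp$ orthogonally and $\gal(\bq_p/\bb{Q}_p)$--equivariantly. Put $\rr{H}:=\rr{E}^\perp$. Then $\rr{H}=\ker(\gamma)\cap\im(\alpha)^{G_\lambda\times\mu_{n+1},\chi_n}(-1)$, which by \eqref{eq::equivariant-part-p-l-formula} equals $\gr^{W'}\im(\beta)^{G_\lambda\times\mu_{n+1},\chi_n}(-1)$ once the inclusion \eqref{eq:inclusion-ell} is shown to be an equality; that is a dimension count, since $\gcd(k,n+1)=1$ forces every $\mu_{n+1}$--orbit of multi--indices to be free of size $n+1$, whence $\dim\rr{E}=\tfrac1{n+1}(d(k,n+1,p)-d(k,n+1))=\dim(\mathrm{M}^k_{n+1})_\ell-\dim\mathrm{H}^1_{\et,\mathrm{mid}}(\bb{G}_{m,\bfp},\Sym^k\Kl_{n+1})$ by Corollary~\ref{cor::dimension-mid-de-ell-adic} and Proposition~\ref{prop::moments-p-large}. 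Finally the identification, in the proof of Theorem~\ref{thm::unramified} via Theorem~\ref{thm::etale-realization-mod-p}, of $\gr^{W'}\im(\beta)^{G_\lambda\times\mu_{n+1},\chi_n}(-1)[\zeta_p]$ with $\mathrm{H}^1_{\et,\mathrm{mid}}(\bb{G}_{m,\bfp},\Sym^k\Kl_{n+1})$ gives $\rr{H}[\zeta_p]=\mathrm{H}^1_{\et,\mathrm{mid}}(\bb{G}_{m,\bfp},\Sym^k\Kl_{n+1})$.

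\emph{Part (2).} If $n+1$ is prime, then either $n+1=2$, in which case $\Sigma'(p)=\emptyset$ for every odd prime $p$ and the statement is vacuous, or $n+1$ is an odd prime and $nk$ is even. In the latter case \eqref{eq::P-L-formula} uses the fundamental tame character $\epsilon$, so $I_p$ acts through its tame quotient, and since now $\langle\delta_x,\delta_x\rangle=0$ with the $\delta_x$ mutually orthogonal, the operator $\Phi(v)=\sum_x\langle v,\delta_x\rangle\delta_x$ satisfies $\Phi^2=0$ and $\sigma-1=-(-1)^m\epsilon(\sigma)\Phi$; hence $I_p$ is unipotent with $(\sigma-1)^2=0$. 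Thus the nilpotent monodromy operator $N$ satisfies $N^2=0$ and $\im(N)=\Delta^{G_\lambda\times\mu_{n+1},\chi_n}=:U$ (the functionals $v\mapsto\langle v,\delta_x\rangle$ span $\Delta^\vee$, and the pairing on $(\mathrm{M}^k_{n+1})_\ell$ is perfect); $U$ is generated by vanishing cycle classes, since each $\delta_x$ generates $\rr{H}^{nk-1}_{\{x\}}(\bar{\cc{K}}',\rr{R}\Psi(m))$; isotropy of the $\delta_x$ gives $U\subseteq U^\perp$, and $U^\perp=\ker(\gamma)\cap(\mathrm{M}^k_{n+1})_\ell=\ker N=(\mathrm{M}^k_{n+1})_\ell^{I_p}$; and $\im(\sigma-1)\subseteq U$ forces the induced map $\sigma-1\colon(\mathrm{M}^k_{n+1})_\ell\to(\mathrm{M}^k_{n+1})_\ell/U$ to vanish. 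It remains to compute $\dim U$: the stabilizer $\prod_i S_{I_i}$ of a singular point with multi--index $\underline I$ acts on the corresponding $\delta_x$ through $\rr{sign}^n$ (a transposition interchanges two blocks of $n$ coordinates, hence acts by $(-1)^n$), so by Frobenius reciprocity the $(G_\lambda,\chi_n)$--isotypic part of $\Delta$ has dimension equal to the number of $S_{|\lambda|}$--orbits in $\Sigma'(p)$, i.e.\ the number of $\underline I$ with $|\underline I|=k$, $C_{\underline I}\equiv0\bmod p$ and $C_{\underline I}\neq 0$ in $\bb{C}$; since $n+1$ is prime these form free $\mu_{n+1}$--orbits of size $n+1$, the product of the signs by which $\mu_{n+1}$ permutes the $\delta_x$ around an orbit is $+1$ (as $(\zeta_{n+1})^{n+1}$ acts trivially on each $\delta_x$), so taking $\mu_{n+1}$--invariants leaves one dimension per orbit, i.e.\ $a(k,n+1,p)-a(k,n+1)$. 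The correction $\delta(k,n+1,p)$, nonzero only when $p=2$ and $2\mid k$, is then pinned down by comparing $\dim(\mathrm{M}^k_{n+1})_\ell-\dim(\mathrm{M}^k_{n+1})_\ell^{I_p}$ with the formulas of Corollary~\ref{cor::dimension-mid-de-ell-adic} and Proposition~\ref{prop::moments-p-large}, the discrepancy reflecting the one--dimensional $G_{\mathrm{geom}}$--invariant of $\Sym^k\Kl_{n+1}$ that exists only in characteristic $2$.

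\emph{Main obstacle.} The analytic input --- tameness and the shape of the $I_p$--action --- is a formal consequence of the Picard--Lefschetz formula already recorded in the proof of Theorem~\ref{thm::unramified}, and the splitting in part (1) is elementary linear algebra of the intersection form. The genuine difficulty is the dimension count of $U$ in part (2): one must keep track simultaneously of the $\rr{sign}^n$--twist in the stabilizer action on each $\delta_x$, the signs by which $\mu_{n+1}$ permutes the $\delta_x$ inside an orbit, and the weight filtrations on both sides of \eqref{eq::ell-adic-p-l-formula}; isolating the characteristic--$2$ term $\delta(k,n+1,p)$ in particular requires a short but delicate extra argument, and is where I expect the proof to be most technical.
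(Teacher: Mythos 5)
Your part (1) is essentially the paper's argument: both rest on the Picard--Lefschetz formula, the splitting of $\mathrm{H}_{\et}^{nk-1}(\bar{\mathcal{K}}'_{\bfp})\oplus\Delta$, and the identification of the complement of $\Delta^{G,\chi}$ with $\im(\beta)^{G,\chi}$. Where the paper establishes purity of $\im(\beta)^{G,\chi}$ by a weight argument (its Lemma comparing $\gr_F^\bullet$ at $p$ with a prime of good reduction), you substitute a dimension count via Corollary~\ref{cor::dimension-mid-de-ell-adic} and Proposition~\ref{prop::moments-p-large}; this is legitimate here because for $nk$ odd the classes $\delta_x$ have nondegenerate Gram matrix and are therefore genuinely linearly independent in cohomology, so the combinatorial count of $\Delta^{G,\chi}$ is correct. (One slip: for $n+1=2$ the set $\Sigma'(p)$ is \emph{not} empty at odd primes of bad reduction --- it is indexed by $\underline I=(I_0,I_1)$ with $p\mid I_0-I_1$ and $I_0\neq I_1$ --- so case (2) with $n+1=2$ is not vacuous and needs the same treatment as odd primes $n+1$, which for $k$ even it admits.)

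The genuine gap is the dimension of $U$ in part (2). Your Frobenius-reciprocity computation counts the $(G_\lambda\times\mu_{n+1},\chi_n)$-isotypic part of the \emph{free} module on the symbols $\delta_x$, giving $a(k,n+1,p)-a(k,n+1)$. But when $nk$ is even one has $\langle\delta_x,\delta_x\rangle=0$, so nondegeneracy no longer forces the classes $\delta_x$ to be linearly independent in $\mathrm{H}^{nk-1}_{\et}(\bar{\mathcal{K}}'_{\bq})$, and in fact they acquire a one-dimensional relation (in the relevant isotypic component) exactly when $p=2$ and $k$ is even. This relation is the entire source of the correction $\delta(k,n+1,p)$: the paper detects it through the next segment of the localization diagram, where the map $\kappa\colon\oplus_x\bb{Q}_\ell(-m-1)\to\mathrm{H}^{nk}_{\et}(\bar{\mathcal{K}}'_{\bfp})$ hits a one-dimensional weight-$nk$ class identified with $\mathrm{H}^2_{\et,c}(\bb{G}_{m,\bfp},\Sym^k\Kl_{n+1})\simeq(\Sym^k\Kl_{n+1})_{G_{\rr{geom}}}(-1)$, nonzero precisely for $p=2$, $k$ even. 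Your fallback --- ``pin down $\delta$ by comparing $\dim(\mathrm{M}^k_{n+1})_\ell-\dim(\mathrm{M}^k_{n+1})_\ell^{I_p}$ with the known formulas'' --- does not close this: since $U^\perp=(\mathrm{M}^k_{n+1})_\ell^{I_p}$ and $(\mathrm{M}^k_{n+1})_\ell^{I_p}\supseteq\im(\beta)^{G,\chi}\supseteq U\oplus(\text{weight-}(nk{-}1)\text{ part})$, the comparison only yields $2\dim U\le\dim(\mathrm{M}^k_{n+1})_\ell-\dim\mathrm{H}^1_{\et,\rr{mid}}=2(a(k,n+1,p)-a(k,n+1))-\delta(k,n+1,p)$, i.e.\ an upper bound; the matching lower bound $\dim U\ge a(k,n+1,p)-a(k,n+1)-\delta(k,n+1,p)$ requires exactly the weight-filtration computation of $\gr^{W'}_{nk-2}\im(\beta)^{G,\chi}$ that the paper carries out. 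Without that step your two counts ($a(k,n+1,p)-a(k,n+1)$ from reciprocity versus the corrected value) remain unreconciled.
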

\begin{proof}
For simplicity, we replace the exponent $({S_k\times \mu_{n+1},\chi_n})$ by $(G,\chi)$. Since the Galois representations are trivial or one-dimensional when $nk\leq 3$, we assume that $nk\geq 4$. When $p\nmid n+1$, all singularities of $\bar{\mathcal{K}}_{\bfp}$ are ordinary quadratic. Consider again the spectral sequence \eqref{eq:spectral-seq} and let $F^\bullet$ be the induced decreasing filtration on $\rr{H}^{nk-1}_{\et,\rr{c}}(\mathcal{K}'_{\bb{F}})$. Since $\bar{\mathcal{K}}^{(i)}$ are smooth proper over both $\bq$ and $\bar{\mathbb{F}}_p$ if $i\geq 1$, we have the isomorphisms
	$\Het{a}(\bar{\mathcal{K}}^{(i)}_{\bfp})\simeq \Het{a}(\bar{\mathcal{K}}^{(i)}_{\bq})$
for $i\geq 1$ and any $a\in \bb{Z}$. By the Picard--Lefschetz formula, we have isomorphisms
	$\Het{a}(\bar{\mathcal{K}}^{(i)}_{\bfp})\simeq \Het{a}(\bar{\mathcal{K}}^{(i)}_{\bq})$
for $0\leq a\leq nk-2$. So $(E_2^{i,nk-1-i})_{\bq}\simeq (E_2^{i,nk-i-1})_{\bfp}$ and
	\begin{equation}\label{eq::weight}
	(E_2^{i,nk-i-1})_{\bfp}=(E_\infty^{i,nk-i-1})_{\bfp}=(E_\infty^{i,nk-1-i})_{\bq}
	\end{equation}
for $i\geq 1$. In other words, the dimensions of the graded pieces
			$\gr_F^i\rr{H}^{nk-1}_{\et,\rr{c}}(\mathcal{K}'_{\bfp})=\gr_F^i\rr{H}^{nk-1}_{\et,\rr{c}}(\mathcal{K}'_{\bq})$
are independent of $p$ when $i\geq 1$. 
\begin{lem}\label{lem:gr-im-beta}
	The graded quotient $\gr_F^i\rr{H}^{nk-1}_{\et,\rr{c}}(\mathcal{K}'_{\bfp})^{G,\chi}$ is pure of Frobenius weight $nk-1-i$ if $1\leq i\leq nk-1$, and is mixed of weight $nk-1$ and $nk-2$ if $i=0$. Moreover, the dimension of $\gr^{W'}_{nk-2}\gr_F^0\rr{H}^{nk-1}_{\et,\rr{c}}(\mathcal{K}'_{\bfp})^{G,\chi}$ is
		$$
			-\dim \rr{H}^0(\bb{G}_{m,\bfp},\Sym^k\Kl_{n+1})+\begin{cases}
				a(k,n+1,p)-a(k,n+1) & 2\mid n,\\
				0		 & 2\nmid nk,\\
				b(k,n+1,p)-b(k,n+1) & 2\nmid n \text{ and } 2\mid k.
			\end{cases}
		$$
\end{lem}
\begin{proof}
	When $1\leq i\leq nk-1$, the graded quotient $\gr_F^i\rr{H}^{nk-1}_{\et,\rr{c}}(\mathcal{K}'_{\bfp})^{G,\chi}=(E_2^{i,nk-i-1})^{G,\chi}$ is pure of Frobenius weight $nk-1-i$, and its dimension is independent of $p$. By the exact sequence \eqref{eq::ell-3}, we deduce for $0\leq i$ that 
		\begin{equation}\label{eq:ell-4}
			\begin{split}
				\dim F^{1+i} \rr{H}^{nk-1}_{\et,\rr{c}}(\mathcal{K}'_{\bfp})^{G,\chi}
				&\leq \dim W'_{nk-2-i}\rr{H}^{nk-1}_{\et,\rr{c}}(\mathcal{K}'_{\bfp})^{G,\chi}\\
				&=\dim W'_{nk-i}\rr{H}^1_{\et,\rr{c}}(\bb{G}_{m,\bfp},\Sym^k\Kl_{n+1}).
			\end{split}
		\end{equation}

	By the long exact sequence \eqref{eq::long-exact-sequence}, the dimensions of the graded pieces of the Frobenius weight $W'$ filtration on $\rr{H}^1_{\et,\rr{c}}(\bb{G}_{m,\bfp},\Sym^k\Kl_{n+1})$ can be calculated in terms of those of $(\Sym^{k}\Kl_{n+1})^{I_{\bar 0}}_{\bar\eta_0}$, $(\Sym^{k}\Kl_{n+1})^{I_{\bar \infty}}_{\bar\eta_\infty}$, and $(\Sym^{k}\Kl_{n+1})^{G_{\rr{geom}}}$. According to Theorems~\ref{thm::local-symk-at-0} and~\ref{thm::local-inv-infty}, as $(\Sym^{k}\Kl_{n+1})^{I_{\bar \infty}}_{\bar\eta_\infty}$ and $(\Sym^{k}\Kl_{n+1})^{G_{\rr{geom}}}$ are pure of weight $nk$, we deduce that
		\begin{equation*}
			\begin{split}
				\dim W'_{nk-i}\rr{H}^1_{\et,\rr{c}}(\bb{G}_{m,\bfp},\Sym^k\Kl_{n+1})
				&=\dim {W}'_{nk-i}(\Sym^{k}\Kl_{n+1})^{I_{\bar 0}}_{\bar\eta_0}.
			\end{split}
		\end{equation*}
	for $1\leq i$. By Remark~\ref{req:inv-0-ind-p}, we deduce that $\dim W'_{nk-1-j}\rr{H}^1_{\et,\rr{c}}(\bb{G}_{m,\bfp},\Sym^k\Kl_{n+1})$ is independent of $p$ when $j\geq 0$.

	Now we replace $p$ by a prime $p'$ at which $\bar{\cc{K}}'$ has a good reduction. In this case
		$\rr{H}^{nk-1}_{\et,\rr{c}}(\mathcal{K}'_{\bar{\bb{F}}_{p'}})\simeq \rr{H}^{nk-1}_{\et,\rr{c}}(\mathcal{K}'_{\bq}),$
	and the Frobenius weight filtration $W'$ on the left-hand side coincides with the weight filtration $W$ on the right-hand side. In particular, we have
		\begin{equation}\label{eq:ell-6}
			\begin{split}
				\gr^{W'}_{nk-1-i}\rr{H}^{nk-1}_{\et,\rr{c}}(\mathcal{K}'_{\bar{\bb{F}}_{p'}})^{G,\chi}=\gr_F^i\rr{H}^{nk-1}_{\et,\rr{c}}(\mathcal{K}'_{\bar{\bb{F}}_{p'}})^{G,\chi}
			\end{split}
		\end{equation}
	for all $0\leq i\leq nk-1$. It follows that 
		\begin{equation}\label{eq:ell-8}
			\begin{split}
				&\dim F^{1+i} \rr{H}^{nk-1}_{\et,\rr{c}}(\mathcal{K}'_{\bar{\bb{F}}_{p'}})^{G,\chi}
				= \dim W'_{nk-2-i}\rr{H}^{nk-1}_{\et,\rr{c}}(\mathcal{K}'_{\bar{\bb{F}}_{p'}})^{G,\chi}\\
				=&\dim W'_{nk-i}\rr{H}^1_{\et,\rr{c}}(\bb{G}_{m,\bar{\bb{F}}_{p'}},\Sym^k\Kl_{n+1})
			\end{split}
		\end{equation}
    for $0\leq i$. Hence, we conclude that \eqref{eq:ell-4} is an equality. In particular, each $\gr_F^i\rr{H}^{nk-1}_{\et,\rr{c}}(\mathcal{K}'_{\bfp})^{G,\chi}$ is pure of Frobenius weight $nk-1-i$ if $1\leq i\leq nk-1$, and $\gr_F^0\rr{H}^{nk-1}_{\et,\rr{c}}(\mathcal{K}'_{\bfp})^{G,\chi}$ is mixed of Frobenius weight $nk-1$ and $nk-2$. 
    
    At last, using \eqref{eq:ell-8} we have
	\begin{equation*}
		\begin{split}
			\dim \gr^{W'}_{nk-2}\gr_F^0\rr{H}^{nk-1}_{\et,\rr{c}}(\mathcal{K}'_{\bfp})^{G,\chi}
                &=\dim \gr^{W'}_{nk-2}\rr{H}^{nk-1}_{\et,\rr{c}}(\mathcal{K}'_{\bfp})^{G,\chi}-\dim \gr^{W'}_{nk-2}F^1\rr{H}^{nk-1}_{\et,\rr{c}}(\mathcal{K}'_{\bfp})^{G,\chi}\\
			    &=\dim \gr^{W'}_{nk} \rr{H}^1_{\et,\rr{c}}(\bb{G}_{m,\bar{\bb{F}}_{p}},\Sym^k\Kl_{n+1})
			    - \dim \gr^{W'}_{nk} \rr{H}^1_{\et,\rr{c}}(\bb{G}_{m,\bar{\bb{F}}_{p'}},\Sym^k\Kl_{n+1}).
		\end{split}
	\end{equation*}
	By \eqref{eq::long-exact-sequence}, \cref{thm::local-symk-at-0} and~\cref{thm::local-inv-infty}, the above dimension coincides with the claimed number.
\end{proof}

	\noindent
	\textbf{(1)} Assume that $nk$ is odd and $p\nmid 2(n+1)$. By \eqref{eq::equivariant-part-p-l-formula}, the representation $(\mathrm{M}^\lambda_{n+1})_{\ell}$ is tamely ramified\footnote{$(\mathrm{M}^\lambda_{n+1})_{\ell}$ is possibly wildly ramified at $p=2$ is because the character $\epsilon\colon I_2\to \{\pm1\}$ has order $2$.}. By \eqref{eq::P-L-formula}, the short exact sequence
	\begin{equation*}
		\begin{tikzcd}
		0\ar[r] & 	\mathrm{H}_{\et}^{n k-1}(\bar{\mathcal{K}}'_{\bar{\bb{F}}_p})		\ar[r,"\iota"]	&	\mathrm{H}_{\et}^{n k-1}(\bar{\mathcal{K}}'_{\bar{\bb{Q}}})		\ar[r,"\gamma"]	&	\oplus_{x\in \Sigma'(p)}\bb{Q}_\ell(-m) \ar[r] & 0
		\end{tikzcd}
	\end{equation*}
	splits, and $\mathrm{H}_{\et}^{n k-1}(\bar{\mathcal{K}}'_{\bar{\bb{F}}_p})$ is orthogonal to $\Delta=\oplus_{x}\bb{Q}_\ell(-m)\delta_x$ in 
	$\mathrm{H}_{\et}^{n k-1}(\bar{\mathcal{K}}'_{\bq})$. By taking the $({G,\chi})$-isotypic component and by doing diagram-chasing argument in diagram \eqref{eq::ell-adic-p-l-formula}, we deduce 
		$$\im(\alpha)^{G,\chi}=\im(\beta)^{G,\chi}\oplus \Delta^{G,\chi}.$$ 
	Since $nk$ is odd, the global monodromy group of $\Kl_{n+1}$ is $\rr{SP}_{n+1}$, which implies that $\rr{H}^0(\bb{G}_{m,\bfp},\Sym^k\Kl_{n+1})=0$. By Lemma~\ref{lem:gr-im-beta} we have $\dim \gr^{W'}_{nk-2}\gr_F^0\rr{H}^{nk-1}_{\et,\rr{c}}(\mathcal{K}'_{\bfp})^{G,\chi}=0$. Hence, $\im(\beta)^{G,\chi}$ is pure of weight $nk-1$ and $\gr^{W'}_{n |\lambda|-1} \im(\beta) ^{G_\lambda\times \mu_{n+1},\chi_n}=\im(\beta) ^{G_\lambda\times \mu_{n+1},\chi_n}$. By \eqref{eq::image-beta}, we can take $\rr{H}=\im(\beta)^{G,\chi}$ and $\rr{E}=\Delta^{G,\chi}$.

\noindent
\textbf{(2)} Assume that $n+1$ is a prime number. Recall that $nk=2m+2$ and $\Delta=\sum_{x\in \Sigma'(p)}\bb{Q}_\ell(-m)\delta_x$ is the subspace of $\mathrm{H}_{\et}^{nk-1}(\bar{\mathcal{K}}'_{\bar{\bb{Q}}})$ generated by vanishing cycle classes. By the Picard--Lefschetz formula, the action of $\sigma\in I_p$ acting on a cohomology class $v\in \Het{nk-1}(\bar{K}_{\bq})$ is given by
$$\sigma(v)=v-(-1)^{m+1}t_{\ell}(\sigma)\sum_{x\in \Sigma'(p)}<v,\delta_x>\delta_x,$$
which implies that $(\sigma-1)^2=0$. It follows that  
	$$(\mathrm{M}_{n+1}^k)_\ell^{I_p} =\bigl(\Delta^\perp\bigr)^{G,\chi} =\im(\alpha)^{G,\chi}\cap \mathrm{H}_{\et}^{nk-1}(\bar{\mathcal{K}}'_{\bar{\bb{F}}_p})\supset \im(\beta)^{G,\chi},$$
and the induced map $\sigma-1\colon (\mathrm{M}^k_{n+1})_\ell\mapsto (\mathrm{M}^k_{n+1})_\ell/\Delta^{G,\chi}$ is zero. It suffices to calculate the dimension of $U=\Delta^{G,\chi}$.

Consider the diagram 
\begin{equation}\label{eq::equivariant-part-p-l-formula-kl3}
	\begin{tikzcd}
		& \im(\beta)^{G,\chi}\ar[d,hook,"i_1"] & & &\\
		0\ar[r]	&	\im(\alpha)^{G,\chi}\cap \mathrm{H}_{\et}^{nk-1}(\bar{\mathcal{K}}'_{\bar{\bb{F}}_p})\ar[r,hook] & \im(\alpha)^{G,\chi} \ar[r,"\gamma"]& C\ar[d,hook,"i_2"]\\
		&	&	& (\oplus_{x\in \Sigma'(p)}\bql(-m-1))^{G,\chi}	&
	\end{tikzcd}
\end{equation}
where $C$ is the image of the map $\gamma$ inside $(\oplus_{x\in \Sigma'(p)}\bql(-m-1))^{G,\chi}$. 

\begin{lem}\label{lem::lemma-ell-adic}
	In the diagram \eqref{eq::equivariant-part-p-l-formula-kl3}, the vertical map $i_1$ is an isomorphism. If $p=2$ and $k$ is even, the cokernel of the vertical map $i_2$ is one-dimensional. Otherwise, the map $i_2$ is an isomorphism. 
\end{lem}
\begin{proof}
By a diagram-chasing argument in \eqref{eq::ell-adic-p-l-formula}, we conclude that $\im(\beta)=\im(\alpha)\cap \mathrm{H}_{\et}^{nk-1}(\bar{\mathcal{K}}'_{\bar{\bb{F}}_p})$. So the map $i_1$ is an isomorphism. 

Consider the subsequent part of the diagram \eqref{eq::ell-adic-p-l-formula}, i.e., 
	$$\begin{tikzcd}
		 &  &  \mathrm{H}_{\et,\rr{c}}^{nk}(\mathcal{K}'_{\bfp}) \ar[d,"\beta'"] \ar[r] & \mathrm{H}_{\et,\rr{c}}^{nk}({\mathcal{K}}'_{\bq})\ar[d,"\alpha'"] & \\
		 \mathrm{H}_{\et}^{nk-1}(\bar{\mathcal{K}}'_{\bar{\bb{Q}}})\ar[r,"\gamma"] &\oplus_{x\in \Sigma'(p)}\bql(-m-1) \ar[r,"\kappa"] & \mathrm{H}_{\et}^{nk}(\bar{\mathcal{K}}'_{\bar{\bb{F}}_p})		\ar[r]&	\mathrm{H}_{\et}^{nk}(\bar{\mathcal{K}}'_{\bar{\bb{Q}}}) \ar[r]& 0
	\end{tikzcd}$$
where the two vertical maps are the surjective edge map from the abutment $\mathrm{H}^{nk}_{\et,\rr{c}}(\mathcal{K}')$ to $E^{0,nk}$. By the same argument for the cohomology of degree $nk-1$, we have $\im(\alpha')=\gr^W_{nk}\mathrm{H}_{\et,\rr{c}}^{nk}({\mathcal{K}}'_{\bar{\bb{Q}}})$, and by \eqref{eq::ell-3} an exact sequence
	\begin{equation}\label{eq::coh-degree-2k}
		 \bb{Q}_\ell(\zeta_p)(-1)^{G,\chi}\to \mathrm{H}_{\et,\rr{c}}^2(\bb{G}_m,\Sym^k\Kl_{n+1})\to \mathrm{H}^{nk}_{\et,\rr{c}}(\mathcal{K}_{\bfp})^{G,\chi}(-1)[\zeta_p]\to \bb{Q}_\ell(\zeta_p)(-2)^{G,\chi},
	\end{equation}

Assume that $\bar{\mathcal{K}}'$ has good reduction at $p'$. Consider the above diagram for $p'$, then $\im(\beta')=\im(\alpha')$. Since $\mathrm{H}_{\et,\rr{c}}^2(\bb{G}_{m,\bar{\bb{F}}_{p'}},\Sym^k\Kl_{n+1})=0$ and $\im(\beta')$ is pure of Frobenius weight $nk$, we have $\im(\beta')^{G,\chi}=0$ by \eqref{eq::coh-degree-2k}. This forces $\im(\alpha')^{G,\chi}=0$, which does not depend on the choice of $p$. 

	If $p\neq 2$ or $k$ is odd, we have $\dim \Hetc{2}(\bb{G}_{m,\bfp},\Sym^k\Kl_{n+1})=\dim \Het{0}(\bb{G}_{m,\bfp},\Sym^k\Kl_{n+1})=0$. So \eqref{eq::coh-degree-2k} implies that $\im(\beta')=0$. Hence, $\kappa=0$, $C=(\oplus_{x\in \Sigma'(p)}\bql(-m-1))^{G,\chi}$, and the two vertical maps $i_1$ and $i_2$ are isomorphisms.

If $p=2$ and $k$ is even, the monodromy group of $\Kl_{n+1}$ is either $\rr{SO}_{n+1}$ or $G_2$. So $(\Sym^k\Kl_{n+1})^{G_{\rr{geom}}}$ is one-dimensional and we have $\dim \Hetc{2}(\bb{G}_{m,\bfp},\Sym^k\Kl_{n+1})=\dim \Het{0}(\bb{G}_{m,\bfp},\Sym^k\Kl_{n+1})=1$. By the property of the spectral sequence \eqref{eq:spectral-seq} and \eqref{eq::coh-degree-2k}, we have 
	$$\gr^W_{nk+2}\mathrm{H}_{\et,\rr{c}}^2(\bb{G}_{m,\bfp},\Sym^k\Kl_{n+1})
	\simeq \gr^W_{nk+2}\mathrm{H}_{\et,\rr{c}}^{nk}(\mathcal{K}'_{\bfp})^{G,\chi}(-1)=\im(\beta')^{G,\chi}(-1).$$
So $\im(\beta')^{G,\chi}$ is one-dimensional. Since $\im(\alpha')^{G,\chi}=0$, the morphism $\kappa$ is surjective. Therefore, in \eqref{eq::equivariant-part-p-l-formula-kl3}, the cokernel of $i_2$ has dimension $1$.
\end{proof}

Notice that $\Delta^{G,\chi}$ is contained in $\im(\beta)^{G,\chi}$, because $\Delta^{G,\chi} \subset \bigl(\Delta^\perp\bigr)^{G,\chi}=\im(\alpha)^{G,\chi}\cap \mathrm{H}_{\et}^{nk-1}(\bar{\mathcal{K}}'_{\bar{\bb{F}}_p})=\im(\beta)^{G,\chi}$. Since $\Delta^{G,\chi}$ is pure of Frobenius weight $nk-2$, we deduce that $\Delta^{G,\chi} \subset W'_{nk-2}\im(\beta)$. By \cref{cor::dimension-mid-de-ell-adic}, the properties of the spectral sequence \eqref{eq:spectral-seq}, and \eqref{eq::image-alpha}, we have 
	\begin{equation*}
		\begin{split}
			\dim \im(\alpha)^{G,\chi}
			&=\dim \Hetc{nk-1}(\mathcal{K}_{\bq})-\sum_{i=1}^{nk-1}\dim (E_\infty^{i,nk-1-i})_{\bq}^{G,\chi}\\
			&=\frac{\binom{k+n}{n}-d(k,n+1)}{n+1}-\dim (\Sym^k\Kl_{n+1})_{\bar \eta_0}^{I_0} - 
				a(k,n+1).
		\end{split}
	\end{equation*} 
As for the dimension of $\im(\beta)^{G,\chi}$, by Proposition~\ref{prop::moments-p-large}, \eqref{eq::image-beta}, and Lemma~\ref{lem:gr-im-beta}, we deduce that 
	\begin{equation*}
		\begin{split}
			\dim \im(\beta)^{G,\chi}
			&=\dim \gr^{W'}_{nk-1}\im(\beta)^{G,\chi}+\dim \gr^{W'}_{nk-2}\im(\beta)^{G,\chi}\\
			&=\frac{\binom{k+n}{n}-d(k,n+1,p)}{n+1}-\dim (\Sym^k\Kl_{n+1})_{\bar \eta_0}^{I_0}-
				a(k,n+1) +\dim \mathrm{H}^2_{\et,\rr{c}}(\mathbb{G}_{m,\bfp},\Sym^k\Kl_{n+1}).
		\end{split}
	\end{equation*}
Notice that we have the identity $d(k,n+1,p)-d(k,n+1)=(n+1)(a(k,n+1,p)-a(k,n+1))$. Then, we get from \cref{lem::lemma-ell-adic} that
	\begin{equation}\label{eq:ell-7}
		\begin{split}
			\dim C
			&\stackrel{(*)}{=}\dim\im(\alpha)^{G,\chi}-\dim\im(\beta)^{G,\chi}\\
			&=a(k,n+1,p)-a(k,n+1)-\dim \Hetc{2}(\bb{G}_{m,\bfp},\Sym^k\Kl_{n+1})\\
			&=\dim \gr^{W'}_{nk-2}\im(\beta)^{G,\chi}\geq \dim \Delta^{G,\chi}\\
            &= \dim \im(\alpha)^{G,\chi}- \dim (\Delta^\perp)^{G,\chi}\\
            &\stackrel{(**)}{=} \dim \im(\alpha)^{G,\chi}- \dim \im(\beta)^{G,\chi}=\dim C,
		\end{split}
	\end{equation}
    where $(*)$ is deduced from the short exact sequence \eqref{eq::equivariant-part-p-l-formula-kl3}, and $(**)$ is because $\Delta^\perp=\im(\beta)$.
	So $\Delta^{G,\chi}=\gr^{W'}_{nk-2}\im(\beta)^{G,\chi}$, and its dimension is $a(k,n+1,p)-a(k,n+1)-\dim \Hetc{2}(\bb{G}_{m,\bfp},\Sym^k\Kl_{n+1})$.
\end{proof}
\begin{rek}\label{rek:weight-monodromy}
  We proved that when $n+1$ is a prime and $p\neq n+1$ the representation $(\mathrm{M}^k_{n+1})_\ell$ satisfies the weight--monodromy conjecture, where the monodromy filtration is given by 
    $$M_{nk}=\Delta^{G,\chi}(-1)\subset M_{nk+1}=\im(\beta)^{G,\chi}(-1)\subset M_{nk+2}=\im(\alpha)^{G,\chi}(-1).$$  In other words, the associated Weil--Deligne representation is pure of weight $nk+1$ (See \cite[p.528]{barnet2014potential} and Section~\ref{sec:W-D-repr} for the definition). 
\end{rek}

\begin{cor}\label{cor:swan-conductor}
	\begin{enumerate}
		\item If $n+1$ is a prime, the exponent of the Artin conductor of $(\mathrm{M}^k_{n+1})_\ell$ at $p$ is $a(k,n+1,p)-a(k,n+1)-1$ if $p=2$ and $k$ even, and is $a(k,n+1,p)-a(k,n+1)$ otherwise.
		\item The exponent of the Artin conductor of $\{\bigl( \mathrm{M}_3^{(2,1)}\bigr)_\ell\}_\ell$ at $p$ is $1$ if $p=2,7$, and is $0$ if $p\neq 2,3,7$.
		\item The exponent of the Artin conductor of $\{\bigl( \mathrm{M}_3^{(2,2)}\bigr)_\ell\}_\ell$ at $p$ is $1$ if $p=2$, and is $0$ if $p\neq 2,3$.
	\end{enumerate}
\end{cor}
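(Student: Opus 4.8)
We sketch the strategy. For a continuous $\ell$-adic representation $V$ of $\gal(\bq/\bb{Q})$ and a prime $p\neq\ell$, the exponent of the Artin conductor at $p$ is $a_p(V)=\dim(V/V^{I_p})+\sw_p(V)$, so the first step is to show that the Swan term vanishes in every case. For $p\nmid n+1$ with $p\neq 2$ this is Theorem~\ref{thm::unramified}(2), which gives tame ramification. For $p=2$ we appeal to the Picard--Lefschetz formula \eqref{eq::P-L-formula}: when $n|\lambda|$ is even the inertia group $I_2$ acts through the fundamental tame character $\epsilon\colon I_2\to\varprojlim_m\mu_{\ell^m}(\bql)$, hence again tamely. (The only case of part~(1) with $n|\lambda|$ odd is $n+1=2$, the Fresán--Sabbah--Yu situation, which does not require a separate treatment here.) Thus in all cases $a_p(V)=\dim(V/V^{I_p})$.

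Part~(1) then follows directly from Theorem~\ref{thm::ell-adic-galois-even}(2): since $n+1$ is prime and $n+1>2$, the integer $nk$ is even, so that result applies and identifies $V^{I_p}$ with the orthogonal complement of $U=\im(N)$, whence $a_p(V)=\dim U=a(k,n+1,p)-a(k,n+1)-\delta(k,n+1,p)$. For the primes where $\cc{K}'$ has good reduction one has $a_p(V)=0$ by Theorem~\ref{thm::unramified}(1), and the formula remains valid: good reduction forces $\Sigma'(p)=\varnothing$, hence $a(k,n+1,p)=a(k,n+1)$, while $p=2$ with $k$ even never has good reduction (the multi-index $(k,0,\dots,0)$ lies in $\Sigma'(2)$), so $\delta(k,n+1,p)=0$ there. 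Unwinding the definition of $\delta(k,n+1,p)$ — which equals $1$ exactly when $p=2$ and $k$ is even — gives the two stated cases.

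For parts~(2) and~(3) the key observation is that $n+1=3$ is prime, so the argument proving Theorem~\ref{thm::ell-adic-galois-even}(2) applies \emph{mutatis mutandis} to $\Kl_3^{(2,1)}$ and $\Kl_3^{(2,2)}$ in place of $\Sym^k\Kl_3$; the only inputs that change are the local invariants of the sheaf at $0$ and $\infty$, now supplied by Propositions~\ref{prop::more-example-at-0} and~\ref{prop::more-example-at-infty} rather than the combinatorial quantities $a(k,3,p),d(k,3,p)$. This yields, for $p\nmid 3$, that $I_p$ acts unipotently with $(\sigma-1)^2=0$ and that $a_p$ equals the dimension of the $(G_\lambda\times\mu_3,\chi_2)$-isotypic part of the vanishing-cycle space of $\bar{\cc{K}}'_{\bb{Z}_p}\to\Spec(\bb{Z}_p)$; by the diagram chase in \eqref{eq::ell-adic-p-l-formula}--\eqref{eq::equivariant-part-p-l-formula-kl3} (and Lemma~\ref{lem::lemma-ell-adic}) this equals
\[
	a_p=\tfrac12\bigl(\dim\rr{H}^1_{\et,\rr{mid}}(\bb{G}_{m,\bq},\Kl_3^\lambda)-\dim\rr{H}^1_{\et,\rr{mid}}(\bb{G}_{m,\bfp},\Kl_3^\lambda)\bigr).
\]
I would then compute both dimensions from the long exact sequence \eqref{eq::long-exact-sequence}. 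The invariants at $0$ are independent of $p$ by Remark~\ref{req:inv-0-ind-p}, so the generic value $\dim\rr{H}^1_{\et,\rr{mid}}(\bb{G}_{m,\bq},\Kl_3^\lambda)=\dim(\rr{M}_3^\lambda)_\ell$ computes to $2$ in both cases, and the only $p$-dependent ingredients for $p\nmid 3$ are $\sw_\infty(\Kl_3^\lambda)=\dim\Hetc{1}(\bb{G}_{m,\bfp},\Kl_3^\lambda)-\dim(\Kl_3^\lambda)^{G_{\rr{geom}}}$ and $\dim(\Kl_3^\lambda\!\mid_{\bar\eta_\infty})^{I_{\bar\infty}}$, both recorded in Proposition~\ref{prop::more-example-at-infty}, together with the geometric-invariant term at $p=2$: there the geometric monodromy of $\Kl_3$ degenerates to $\rr{SO}_3$, and a short branching computation shows $V_{(2,1)}$ has no $\rr{SO}_3$-invariant whereas $V_{(2,2)}$ has a one-dimensional space of them, so $\dim(\Kl_3^{(2,1)})^{G_{\rr{geom}}}=0$ and $\dim(\Kl_3^{(2,2)})^{G_{\rr{geom}}}=1$ at $p=2$. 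Feeding these values in, the middle dimension stays $2$ for $p\notin\{2,3,7\}$ (resp.\ $p\notin\{2,3\}$) and drops to $0$ at $p=2,7$ (resp.\ at $p=2$), so $a_p=0$ and $a_p=1$ respectively. The prime $p=3$ is outside the scope of the corollary because $3\mid n+1$ makes the singularities of $\bar{\cc{K}}'_{\bb{F}_3}$ non-quadratic, cf.\ Remark~\ref{rek::unramified-p=3}.

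The routine conductor bookkeeping aside, I expect the real work — and the main obstacle — to be in part~(3). First, one must check that the vanishing-cycle and diagram-chasing argument of Theorem~\ref{thm::ell-adic-galois-even}(2) genuinely transfers to the non-symmetric-power representations $V_{(2,1)},V_{(2,2)}$; in particular that the toric compactification $\bar{\cc{K}}'$ of Section~\ref{subsec::compactification} still has a strict normal crossing boundary after reduction modulo $2$ and $7$, which is verified chart by chart as in \emph{loc.\ cit.}\ since those computations are characteristic-free. The genuinely delicate point is $p=2$: because the geometric monodromy of $\Kl_3$ degenerates there, one has to identify the resulting $\Hetc{2}$-contribution — the analogue of the correction $\delta$ in part~(1) — correctly, as this is exactly what forces the middle cohomology down to $0$ and hence the conductor exponent to be $1$ rather than $2$.
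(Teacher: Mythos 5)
Your proposal is correct and follows the paper's own route: part (1) reads the conductor exponent off the tameness and the vanishing-cycle space $U$ (equivalently $\dim C=\dim\im(\alpha)-\dim\im(\beta)$) furnished by Theorem~\ref{thm::ell-adic-galois-even}(2) via Lemma~\ref{lem:gr-im-beta} and \eqref{eq:ell-7}, and parts (2)--(3) rerun that argument with the local data of Propositions~\ref{prop::more-example-at-0} and~\ref{prop::more-example-at-infty}, which is exactly what the paper's proof does. Your explicit counts (middle cohomology of dimension $0$ at $p=2,7$ for $\Kl_3^{(2,1)}$ and at $p=2$ for $\Kl_3^{(2,2)}$, including the $\mathrm{SO}_3$-branching term at $p=2$) are the ones forced by Grothendieck--Ogg--Shafarevich together with those propositions, and are precisely what makes $a_p=\tfrac12\bigl(\dim V-\dim\rr{H}^1_{\et,\rr{mid}}\bigr)$ an integer.
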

\begin{proof}
	For the first case, the Artin conductor of $(\mathrm{M}_{n+1}^k)_\ell$ at $p$ is $\dim C=\dim \im(\alpha)-\dim \im(\beta)$. We get the exact formula by Lemma~\ref{lem:gr-im-beta} and \eqref{eq:ell-7}.
	
	For the second and the third cases, if $p\neq 3$, we can perform the same argument in the above theorem for $\{\bigl( \mathrm{M}_3^{(2,1)}\bigr)_\ell\}_\ell$ and $\{\bigl( \mathrm{M}_3^{(2,2)}\bigr)_\ell\}_\ell$, together with the local behaviors of $\Kl_3^{(2,1)}$  and $\Kl_3^{(2,2)}$ from Propositions~\ref{prop::more-example-at-0} and \ref{prop::more-example-at-infty}. If $p=3$, the representation $\{\bigl( \mathrm{M}_3^{(2,1)}\bigr)_\ell\}_\ell$ is unramified by an analog of Corollary~\ref{cor::dimension_mid_p=3} and Remark~\ref{rek::unramified-p=3}.
\end{proof}

\subsubsection{The \texorpdfstring{$p$}{p}-adic case}
We study the $p$-adic Galois representations $(\mathrm{M}^\lambda_{n+1})_{p}$ in this section. 

\begin{prop}\label{prop::p-adic-galois-representation}
	The $p$-adic representation $(\mathrm{M}^\lambda_{n+1})_{p}$ is de Rham. If $p\nmid n+1$ and $\bar{\cc{K}}'$ has good reduction at $p$. Then, the representation $(\mathrm{M}^\lambda_{n+1})_{p}$ is crystalline and there is an isomorphism of Frobenius modules
	\begin{equation*}
	\begin{split}
		\mathrm{H}^1_{\mathrm{rig,mid}}(\bb{G}_{m}/K,\Kl_{n+1}^{\lambda})\simeq  ((\mathrm{M}^\lambda_{n+1})_{p}\otimes \mathbf{B}_{crys})^{\gal(\bar{\bb{Q}}_p/\bb{Q}_p)}\otimes K.
	\end{split}
	\end{equation*}
\end{prop}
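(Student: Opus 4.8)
The plan is to obtain both assertions by transporting, through the $p$-adic comparison theorems, the étale bookkeeping from the proof of Theorem~\ref{thm::unramified} to the de Rham and crystalline settings, and then to match the outcome against the rigid-cohomology computation of Section~\ref{sec::p-adic-crystal}. The mechanism is that $(\mathrm{M}^\lambda_{n+1})_p$ is cut out of the cohomology of the smooth proper $\bb{Q}$-variety $\bar{\mathcal{K}}'$ (and of its closed strata $\bar{\mathcal{K}}^{(i)}$) by operations — Tate twists, weight graded pieces, finite-group isotypic projectors, and the vanishing-cycle decomposition $\im(\alpha)$ — all of which commute with the relevant comparison functors.

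For the de Rham claim I would recall from \eqref{eq::image-alpha} that, already over $\bb{Q}$,
$$(\mathrm{M}^\lambda_{n+1})_p(1)\simeq \im\!\big(\Hetc{n|\lambda|-1}(\mathcal{K}'_{\bar{\bb{Q}}},\bb{Q}_p)\xrightarrow{\ \alpha\ }\Het{n|\lambda|-1}(\bar{\mathcal{K}}'_{\bar{\bb{Q}}},\bb{Q}_p)\big)^{G_\lambda\times\mu_{n+1},\chi_n},$$
so that, up to a Tate twist and passing to an isotypic summand, $(\mathrm{M}^\lambda_{n+1})_p$ is a $\gal(\bar{\bb{Q}}_p/\bb{Q}_p)$-subquotient of $\Het{n|\lambda|-1}(\bar{\mathcal{K}}'_{\bar{\bb{Q}}_p},\bb{Q}_p)$; alternatively, without invoking Theorem~\ref{thm::unramified}, one uses simply that $(\mathrm{M}^\lambda_{n+1})_p$ is by construction a subquotient of $\Hetc{n|\lambda|-1}(\mathcal{K}_{\bar{\bb{Q}}_p},\bb{Q}_p)(-1)$. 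Since the class of de Rham representations of $\gal(\bar{\bb{Q}}_p/\bb{Q}_p)$ is stable under subquotients, duals, Tate twists and finite-group isotypic projectors, and since étale cohomology (with or without supports) of a $p$-adic variety is de Rham by the comparison theorem — applied directly to the smooth proper $\bar{\mathcal{K}}'$ (Faltings, Tsuji), or in general after a de Jong alteration/hypercovering — it follows that $(\mathrm{M}^\lambda_{n+1})_p$ is de Rham.

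Now assume $p\nmid n+1$ and that $\bar{\mathcal{K}}'$ has good reduction at $p$, i.e.\ spreads out to a smooth proper $\bb{Z}_p$-scheme; then so do all the strata $\bar{\mathcal{K}}^{(i)}$ occurring in the spectral sequence \eqref{eq:spectral-seq}. By the crystalline comparison theorem (Fontaine--Messing, Faltings, Tsuji) each $\Het{\bullet}(\bar{\mathcal{K}}^{(i)}_{\bar{\bb{Q}}_p},\bb{Q}_p)$ is crystalline, and since crystalline representations are again stable under subquotients, Tate twists and isotypic projectors, $(\mathrm{M}^\lambda_{n+1})_p$ is crystalline. To obtain the Frobenius-module isomorphism I would apply $D_{\mathrm{crys}}(-)=(-\otimes\mathbf{B}_{\mathrm{crys}})^{\gal(\bar{\bb{Q}}_p/\bb{Q}_p)}$ — which is exact on crystalline representations and commutes with Tate twists and with the averaging idempotents defining the $(G_\lambda\times\mu_{n+1},\chi_n)$-isotypic component — throughout the identification above and the spectral sequence \eqref{eq:spectral-seq}. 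Combining the crystalline comparison $D_{\mathrm{crys}}\big(\Het{\bullet}(\bar{\mathcal{K}}^{(i)}_{\bar{\bb{Q}}_p},\bb{Q}_p)\big)\otimes K\simeq \mathrm{H}^{\bullet}_{\mathrm{crys}}(\bar{\mathcal{K}}^{(i)}_{\bfp}/\bb{Z}_p)\otimes K$ with Berthelot's identification $\mathrm{H}^{\bullet}_{\mathrm{crys}}(\bar{\mathcal{K}}^{(i)}_{\bfp}/\bb{Z}_p)\otimes K\simeq \mathrm{H}^{\bullet}_{\mathrm{rig}}(\bar{\mathcal{K}}^{(i)}_{\bfp}/K)$, both natural for open immersions and proper maps and compatible with cycle and Gysin classes, localization sequences, cup products and finite group actions, carries $D_{\mathrm{crys}}((\mathrm{M}^\lambda_{n+1})_p)\otimes K$ onto the $(G_\lambda\times\mu_{n+1},\chi_n)$-isotypic piece of the image of $\mathrm{H}^{n|\lambda|-1}_{\mathrm{rig},c}(\mathcal{K}'_{\bfp}/K)\to\mathrm{H}^{n|\lambda|-1}_{\mathrm{rig}}(\bar{\mathcal{K}}'_{\bfp}/K)$, suitably Tate-twisted. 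Finally, the very same localization and Gysin computation that produced \eqref{eq::rigid-realization} — with the rigid analogue of Lemma~\ref{lem::blowup} used to pass between $\mathcal{K}$ and $\mathcal{K}'$ — identifies this last space with $\mathrm{H}^1_{\mathrm{rig,mid}}(\bb{G}_m/K,\Kl_{n+1}^{\lambda})$; composing the two identifications yields the asserted isomorphism of Frobenius modules.

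The hard part is not any single deep input but the compatibility bookkeeping in this last step: one must know that the crystalline comparison (and Berthelot's rigid--crystalline comparison) is functorial for open immersions and proper maps and compatible with Gysin and cycle classes, cup products and finite group actions, so that the \emph{same} subquotient — the one carved out by the vanishing-cycle picture $\im(\alpha)$ and by the isotypic projection — is produced on the étale, crystalline and rigid sides; and one must check that good reduction of $\bar{\mathcal{K}}'$ propagates to all of the strata $\bar{\mathcal{K}}^{(i)}$ and to the open pieces entering the localization sequences, together with the rigid analogue of Lemma~\ref{lem::blowup}. A further, more routine, point is to track the coefficient extensions on the two sides — from $\bb{Q}_p$-coefficients to $K=\bb{Q}_p(\varpi)$ on the rigid side versus to $\bb{Q}_p(\zeta_p)$ on the étale side, i.e.\ the roles of $[\varpi]$ and $[\zeta_p]$ — which match up because the additive characters $\psi$ (with $\psi(1)\equiv 1+\varpi$) and $\psi_p$ correspond under the comparison.
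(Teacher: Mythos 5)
Your proposal is correct and follows essentially the same route as the paper: de Rham-ness and crystallinity via realizing $(\mathrm{M}^\lambda_{n+1})_p$ as a subquotient (cut out by Tate twists, weight-graded pieces and isotypic projectors) of the cohomology of the smooth proper $\bar{\mathcal{K}}'$ and its strata, then applying $D_{\mathrm{crys}}$ through the stratification spectral sequence and matching with the rigid-cohomology computation of Section~\ref{sec::p-adic-crystal}. The only cosmetic difference is that the paper phrases the weight-$(n|\lambda|-1)$ piece as $\ker\alpha$ for the $E_1$-differential between the proper smooth strata (precisely so the comparison theorem applies to proper smooth varieties), whereas you phrase it as $\im(\alpha)$ of the edge map — these agree by the degeneration argument in the proof of Theorem~\ref{thm::unramified}.
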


\begin{proof}
As in Section~\ref{subsec::compactification}, we let $\mathcal{K}'$ be $\mathcal{K}$ if $\gcd(n+1,k)=1$ and the blow-up of $\mathcal{K}$ along singular locus otherwise. By \cite[\S3.3(i) and \S3.4]{beilinson2013a}, since the $p$-adic representation $\Het{nk-1}(\bar{\mathcal{K}}'_{\bq},\bar{\bb{Q}}_p)$ comes from a proper smooth variety, it is de Rham. Then we conclude the first assertion by the fact that the subquotient of a de Rham representation remains de Rham.

Now assume that $\gcd(p,n+1)=1$ and $\mathcal{K}'$ has good reduction at $p$. Then by the $p$-adic comparison theorem, the representation $\Het{nk-1}\bigl(\bar{\mathcal{K}}'_{\bar{\bb{Q}}_p}\bigr)$ is crystalline. Therefore, as a subquotient of $\Het{nk-1}\bigl(\bar{\mathcal{K}}'_{\bar{\bb{Q}}_p}\bigr)$, the representation $(\rr{M}_{n+1}^\lambda)_p$ remains crystalline.

Recall that we have an isomorphism
	$$\mathrm{H}^1_{\mathrm{rig,mid}}(\bb{G}_{m}/K,\Kl_{n+1}^{\lambda})\simeq\gr^W_{n|\lambda|-1}\mathrm{H}^{n|\lambda|-1}_{\mathrm{rig,c}}(\mathcal{K}/K)(-1)^{G_\lambda\times \mu_{n+1},\chi_\lambda}[\varpi]$$
from Section~\ref{sec::p-adic-crystal}. We have results similar to those in Lemma~\ref{lem::blowup} by simply replacing étale cohomology with rigid cohomology everywhere. Consider the spectral sequence \cite[Prop.\,8.2.17 and 8.2.18(ii)]{lestum2007}
	$$E_1^{i,j}=\mathrm{H}^j_{\mathrm{rig}}\bigl(\bar{\mathcal{K}}_{\fpp}'^{(i)}/\bb{Q}_p\bigr)\Rightarrow \mathrm{H}^{i+j}_{\mathrm{rig,c}}(\mathcal{K}'_{\fpp}/\bb{Q}_p),$$
and we denote by 
	$$\alpha\colon \mathrm{H}^{n|\lambda|-1}\bigl(\bar{\mathcal{K}}'_{\bfp}/\mathbb{Q}_p\bigr)\to \mathrm{H}^{n|\lambda|-1}(\bar{\mathcal{K}}'^{(1)}_{\bfp}/\mathbb{Q}_p)$$
the differential from $E_1^{0,n|\lambda|-1}$ to $E_1^{1,n|\lambda|-1}$. Since the varieties $\bar{\cc{K}}^{'(i)}$ are smooth proper for all $i\geq 1$, the only contribution of weight $n|\lambda|-1$ to the abutment of the spectral sequence comes from the kernel of $\alpha$. So 
	\begin{equation}\label{eq::kernel-alpha-rigid}
		\mathrm{gr}^W_{n|\lambda|-1}\mathrm{H}^{n|\lambda|-1}_{\mathrm{rig,c}}(\mathcal{K}'_{\bfp}/\mathbb{Q}_p)\simeq \mathrm{gr}^W_{n|\lambda|-1}\ker\alpha.
	\end{equation}
Then use the analog of \eqref{eq::image-alpha}, \eqref{eq::kernel-alpha-rigid} and the $p$-adic comparison theorem, we get the isomorphism of Frobenius modules
	$$\mathrm{H}^1_{\mathrm{rig,mid}}(\bb{G}_{m}/K,\Kl_{n+1}^{\lambda})\simeq  ((\mathrm{M}^\lambda_{n+1})_{p}\otimes \mathbf{B}_{crys})^{\gal(\bar{\bb{Q}}_p/\bb{Q}_p)}\otimes K.$$
\end{proof}

\subsection{Generalities on Deligne--Weil representations}\label{sec:W-D-repr}
We recall the definition of Weil--Deligne (or simply, WD-)representations from \cite{taylor2004galois}. For each prime $p$, there is an exact sequence 
	$$1\to I_p\to \gal(\bq_p / \bb{Q}_p)\simeq \hat{\bb{Z}} \to \gal(\bfp/\fpp)\to 1,$$
where $I_p$ is the \textit{inertia group} at $p$. Moreover, there is a surjection $t_\ell\colon I_p\to \bb{Z}_{\ell}$. Let $ W_{\bb{Q}_p}$ be the \textit{Weil group} of $\mathbb{Q}_{p}$, i.e., the inverse image of the subgroup generated by Frobenius of $\gal(\bfp / \fpp)\simeq \hat{\bb{Z}}$ in $\gal(\bq_p/\bb{Q}_p)$ equipped with the induced topology. 

A WD\textit{-representation} on an $E$-vector space $V$ (with discrete topology) is a pair $(r,N)$, consisting of a representation
	$r\colon W_{\bb{Q}_p}\to \mathrm{GL}(V)$
with open kernel, and an endomorphism $N\in \mathrm{End}(V)$, such that 
	$$r(\phi)Nr(\phi\inv)=p\inv N$$
for every lift $\phi\in W_{\bb{Q}_p}$ of $\frob_p$. It is called \textit{unramified} if $N=0$ and $r(I_p)=1$. It is called \textit{Frobenius semisimple} if $r$ is semisimple. For a lift $\phi$ of Frobenius, we can decompose $r(\phi)=r(\phi)^{ss}r(\phi)^{u}=r(\phi)^{u}r(\phi)^{ss}$, where $r(\phi)^{ss}$ is semisimple and $r(\phi)^{u}$ is unipotent. Any WD-representation $(r,N)$ has a canonical Frobenius semisimplification $(r,N)^{ss}$, by keeping $N$ and $r|_{I_p}$ unchanged, and replacing $r(\phi)$ by $r(\phi)^{ss}$. 

If $\ell\neq p$, there is a canonical way to attach a WD--representation WD$_p(\rho)$ to an $\ell$-adic representation $\rho$ of $\gal(\bq_p,\bb{Q}_p)$ as follows. By Grothendieck's quasi-unipotency theorem, there exists an open subgroup $H$ of $I_p$ of finite index, and a unique nilpotent endomorphism $N$ satisfying $\rho(\sigma)=\exp (t_{\ell}(\sigma) N)$ for all $\sigma\in H$. Let $\phi$ be a lift of $\frob_p$ and $\sigma\in I_p$, one sets
\begin{equation}\label{eq::Deligne-Weil-repr}
	r(\phi^n\sigma):=\rho(\phi^n\sigma)\exp(-t_{\ell}(\sigma)N).
\end{equation}
Notice that $\WD_p(\rho)$ is unramified if and only if $\rho(I_p)=1$, i.e., $\rho$ is unramified.

A WD-representation $(r,N)$ on $\bql$ is called \textit{pure of weight $w$} \cite[p.\,528]{barnet2014potential} if there is an exhaustive and separated ascending monodromy filtration $M_i$ of $V$ such that 
\begin{itemize}
	\item each $F_iV$ is invariant under $r$,
 	\item for each lift $\phi$ of $\frob_p$, all eigenvalues $r(\phi^m)$ on $\gr^M_iV$ are Weil-numbers of weight $m\cdot i$,
  	\item the endomorphism $N$ sends $M_iV$ into $M_{i-2}V$, and induces isomorphisms $N^j\colon \gr^M_{w+j} V\simeq \gr_{w-j}^MV$ for each $j\geq 1$.
\end{itemize}

\subsection{Potential automorphy}\label{subsec:potential-automorphy}

A \textit{weakly compatible system} $\scr{R}=\{\rho_\ell\}$ of $n$-dimensional $\ell$-adic representations of $\gal(\bq/\bb{Q})$ over $\bb{Q}$ and unramified outside $S$  is a family of continuous semisimple representations
	$$\rho_{\ell}\colon \gal(\bq/\bb{Q})\to \mathrm{GL}(V_\ell)$$
for each prime number $\ell$, with the following properties.
\begin{itemize}
	\item[(1).] If $p\not\in S$, for all $\ell\neq p$, the representation $\rho_{\ell}$ is unramified at $p$ and the characteristic polynomial of $\rho_{\ell}(\frob_p)$ is a polynomial with coefficients in $\bb{Q}$, independent of the choice of $\ell$,
	\item[(2).] Each representation $\rho_{\ell}$ is de Rham at $\ell$, and is crystalline if $\ell\not\in S$,
	\item[(3).] The Hodge--Tate weights of $\rho_{\ell}$ are independent of $\ell$.
\end{itemize}
To a weakly compatible system of $\ell$-adic representations, we can attach a partial $L$-function
	\begin{equation*} 
		L^S(\scr{R},s)=\prod_{p\not\in S}\det(1-\rho_\ell(\frob_p)p^{-s})\inv.
	\end{equation*}
Moreover, we call $\scr{R}$ \textit{strictly compatible} if for each $p$, there exists a WD--representation $\WD_p(\scr{R})$ of $W_{\bb{Q}_p}$ over $\bq$ such that for each $\ell\neq p$ and each $\iota \colon \bq\hookrightarrow \bql$, the push forward $\iota \WD_p(\scr{R})$ is isomorphic to $\WD_p(\rho_\ell)^{ss}$. To a strictly compatible family $\scr{R}$, we can attach an $L$-function
	$$L(\scr{R},s)=\prod_{p}\det\left(1-\frob_p\cdot p^{-s}\mid \WD_p(\scr{R})^{I_p,N=0}\right)\inv,$$
which differs from $L^S(\scr{R},s)$ only by finitely many Euler factors at $p\in S$. To describe the complete $L$-function, we still need the gamma factor at $\infty$. Serre conjectured the form of the gamma factors at infinity of the complete $L$-function for a pure motive over $\bb{Q}$ in \cite[\S\, 3]{serre1970facteurslocaux}. We denote by $L_\infty(\scr{R},s)$ the gamma factor associated with $\scr{R}$.

\begin{thm}[{\cite[Thm.\,A\,\&\,Cor.\,2.2]{patrikis_taylor_2015}}]\label{cor::P-T}
Let $\mathscr{R}=\left\{\rho_{\ell}\right\}$ be a weakly compatible system of $n$-dimensional $\ell$-adic representations of $\gal(\bq/\bb{Q})$ defined over $\bb{Q}$ and unramified outside $S$. Suppose that $\{\rho_\ell\}$ satisfies the following properties.
\begin{enumerate}
	\item (Purity) There exists an integer $w$ such that, for each prime $p \notin S$, the roots of the common characteristic polynomial of $\rho_\ell (\mathrm{Frob}_{p} )$ are Weil numbers of weight $w$.
	\item (Regularity) The representation $\rho_\ell$ has $n$ distinct Hodge--Tate weights.
	\item (Odd essential self-duality) Either each $\rho_{\ell}$ factors through a map to $\mathrm{GO}_{n} (\overline{\mathbb{Q}}_{\ell} )$ with even similitude character, or each $\rho_\ell$ factors through a map to $\operatorname{GSp}_{n}  (\overline{\mathbb{Q}}_{\ell} )$ with odd similitude character. Moreover, in either case, similitude characters form a weakly compatible system.
\end{enumerate} 
 Then there exists a finite Galois totally real extension $F/\bb{Q}$, over which all the $\rho_\ell$ become automorphic. Additionally, for any distinct primes $p$ and $\ell$, the WD-representation $\WD_{p}(\mathscr{R})$ of $\gal (\bq_{p} / \mathbb{Q}_{p} )$ associated with $\rho_{\ell}$ is pure of weight $w$. Furthermore, the completed L-function
	$$\Lambda(\mathscr{R}, s)=L_{\infty}(\mathscr{R}, s) \cdot L (\scr{R}, s ) $$
satisfies the functional equation $\Lambda(\mathscr{R}, s)=\varepsilon(\mathscr{R}, s) \Lambda (\mathscr{R}^{\vee}, 1-s )$.
\end{thm}

We are now in a position to prove Theorem~\ref{thm::L-function} using the above theorem of Patrikis--Taylor.

\begin{proof}[Proof of Theorem~\ref{thm::L-function}]
Assume that $k\geq 3$ because by Proposition~\ref{cor::dimension-mid-de-ell-adic} we have $\dim \mathrm{M}_{n+1}^k=0$ when $k\leq 2$. Let $S(k,n+1)$ be the set of primes $p$ such that either $p\mid   n+1 \text{ or } \mathcal{K}'_{\bfp}$ is not smooth.  We start with verifying that the family of semisimplifications of $\ell$-adic Galois representations $\scr{R}=\{(\mathrm{M}^k_{n+1})_\ell^{ss}\}$ is weakly compatible. Indeed, it is sufficient to demonstrate that the three conditions of weakly compatible systems are satisfied for $\{(\mathrm{M}^k_{n+1})_\ell\}$. The first two conditions are readily derived from Theorem~\ref{thm::unramified} and Proposition~\ref{prop::p-adic-galois-representation}. Regarding the third condition, we fix an embedding $\bb{Q}_p\hookrightarrow \bb{C}$ and utilize the $p$-adic comparison theorem to obtain a filtered isomorphism as follows$\colon$
		\begin{equation*}
			\begin{split}
				\Bigl((\mathrm{M}_{n+1}^k)_{p}\otimes \mathbf{B}_{\mathrm{dR}}\Bigr)^{\gal(\bar{\bb{Q}}_p/\bb{Q}_p)}\otimes \bb{C}
				&=\Bigl(\mathrm{gr}^W_{nk+1}\Hetc{nk-1}(\mathcal{K}_{\bar{\bb{Q}}_p},\bb{Q}_p)^{S_k\times \mu_{n+1},\chi_n}(-1)\otimes \mathbf{B}_{\mathrm{dR}}\Bigr)^{\gal(\bar{\bb{Q}}_p/\bb{Q}_p)}\otimes \bb{C}\\
				&=\mathrm{gr}^W_{nk+1}\Hetc{nk-1}(\mathcal{K}_{\bar{\bb{Q}}_p},\bb{Q}_p)^{S_k\times \mu_{n+1},\chi_n}(-1)\otimes \bb{C}
				\simeq (\mathrm{M}_{n+1}^k)_{\dR},
			\end{split} 
		\end{equation*}
As a consequence, the Hodge--Tate weights are independent of $\ell$.

In order to apply the \cref{cor::P-T} to the weakly compatible family $\scr{R}$, it is necessary to verify the conditions stated in \cref{cor::P-T}. The purity is satisfied because the Galois representations $(\mathrm{M}_{n+1}^k)_\ell$, as well as their semisimplifications, are pure of weight $nk+1$. The regularity condition is also fulfilled for pairs $(n+1,k)$ presented in \cref{thm::L-function}, as the multiplicities of Hodge--Tate weights of $(\mathrm{M}_{n+1}^k)_\ell$ (and their semisimplifications) are either $0$ or $1$, by \cref{cor::hodge} and the comparison isomorphism above. 
	
	The odd essential self-duality for  $\scr{R}$ can be verified as follows. The perfect pairing, as described in \cref{prop::perfect-pairing}, indicates that the representations $(\mathrm{M}_{n+1}^k)_\ell$ factor through either $\mathrm{GSP} ((\mathrm{M}_{n+1}^k)_\ell)$ or $\mathrm{GSO}((\mathrm{M}_{n+1}^k)_\ell)$, with a similitude character $\chi_{cyc}^{nk+1}$. By selecting a generator of $\bql(-nk-1)$, we can regard the perfect pairing as a compatible nondegenerate bilinear form on the module $(\mathrm{M}_{n+1}^k)_\ell$ over the group ring of $\gal(\bq/\bb{Q})$, with the involution $g\mapsto \chi_{cyc}^{-nk-1}(g)g\inv$. According to \cite[Thm.\,4.2.1]{serre2018modp}, the semisimplification also factors through either $\mathrm{GSP}$ or $\mathrm{GSO}$, with the same character. This establishes the odd essential self-duality for $\scr{R}$. 

	According to \cref{cor::P-T}, the weakly compatible family $\scr{R}$ is potentially automorphic, and the partial $L$-function $L^{S}(\scr{R},s)$ extends to a meromorphic function on $\bb{C}$ satisfying a functional equation. Observe that the partial $L$-function of $\scr{R}$ agrees with $L^{S}(k,n+1;s)$, as their local factors coincide for each $p\not\in S(k,n+1)$, which can be verified by applying \cref{thm::ell-adic-galois-even}, \cref{rek:weight-monodromy}, and \cite[Lem.\,5.40]{fresan2018hodge}. As a result, the partial $L$-function $L^{S}(k,n+1;s)$ can be completed to 
		$\Lambda_k(s)=L_{\infty}(\mathscr{R}, s) \cdot L\left(\scr{R}, s\right),$
	which extends meromorphically to the whole complex plane and satisfies the claimed functional equation
	in \cref{thm::L-function}.
\end{proof}

\section{Conjectures of Evans type}\label{sec::evans} 
In this section, we prove Theorem~\ref{thm::modular form} with the help of the database LMFDB \cite{lmfdb}. Recall that a modular form will refer to a normalized holomorphic cuspidal Hecke eigenform.

\subsection{Modularity}
\subsubsection{Galois representations attached to modular forms}
One can attach two-dimensional Galois representations to modular forms $f\in S_k(\Gamma_1(N))$ of weight $k$, as constructed in \cite{deligne1971formes,deligne1971formes-b}. More precisely, let $N$ and $k$ be positive integers, $f\in S_k(\Gamma(N))$ a modular form, and $K_f=\bb{Q}(a_f(p))$ the number field generated by the Fourier coefficients of $f$. Then for any place $\lambda$ of $K_f$ over a prime $\ell\nmid N$, there exists a continuous odd irreducible Galois representation
		\begin{equation}\label{eq::galois-repr-modular-form}
				\begin{split}
						\rho_{f,\lambda}\colon \gal(\bq/\bb{Q})\to \mathrm{GL}_2(K_{f,\lambda}),
				\end{split}
		\end{equation}
		unramified if $p\nmid N$, such that for $p\nmid N\ell$, the trace of the arithmetic Frobenius $\frob_p\inv$ at $p$ is $a_p(f)$.

Notice that $\rho_{f,\lambda}$ has conductor $N$ and Hodge--Tate weight $(0,k-1)$. Moreover, it is odd, i.e., the value of $\det(\rho_{f,\lambda})$ at the complex conjugation is $-1$.

Given such a $\rho_{f,\lambda}$, we denote by $\bar \rho_{f,\lambda}\colon \gal(\bq/\bb{Q}) \to \GL_2(\bar{\bb{F}}_\ell)$ its mod $\ell$ reduction. It is obtained by choosing a Galois stable $\mathcal{O}_\lambda$-lattice in $K_{f,\lambda}^2$ and reducing modulo the maximal ideal of $\mathcal{O}_\lambda$, where $\mathcal{O}_\lambda$ is the ring of integers of $K_{\lambda}$. Although $\bar\rho_{f,\lambda}$ depends on the choice of the lattice, its semisimplification does not.

\subsubsection{A special case of modularity}
We recall a weaker version of a theorem by Kisin \cite[Thm.\,1.4.3]{kisin2007modularity}, which says that the $\ell$-adic Galois representations associated with certain two-dimensional motives are modular, i.e., isomorphic to one $\rho_{f,\ell}$ in \eqref{eq::galois-repr-modular-form}. The argument is originally due to Serre \cite[\S4.8]{serre1987representations}, with similar arguments also appearing in \cite[Thm.\,4.6.1]{yun2015galois}.
\begin{thm}\label{thm::modularity}
		Let $\mathrm{M}$ be a pure motive of dimension $2$ over $\bb{Q}$ with coefficients in $\bb{Q}$. Assume that the nonzero Hodge numbers of the de Rham realization of $\mathrm{M}$ are $h^{r,s}=h^{s,r}=1$ for some $0\leq r <s$, and the $\ell$-adic Galois representations $\mathrm{M}_\ell$ are odd and absolutely irreducible. Then for some $N\geq 1$ and some Dirichlet character $\varepsilon\colon \bb{Z}/N\bb{Z}\cros \to \bb{C}\cros$, there exists a modular form $f\in S_{s-r+1}(\Gamma_0(N), \varepsilon)$ such that $\rho_{f,\ell}\simeq \mathrm{M}_{\ell}(r)$.
\end{thm}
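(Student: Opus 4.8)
<br>

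The plan is to follow Serre's argument as adapted by Kisin and Yun, using Serre's modularity conjecture as the key input. The overall strategy is: reduce $\mathrm{M}_\ell$ modulo $\ell$ for a suitable prime $\ell$, check that the residual representation $\bar\rho_{\mathrm{M},\ell}$ is odd, absolutely irreducible, and of $S$-type in the sense of Serre (so that Serre's conjecture applies), conclude that $\bar\rho_{\mathrm{M},\ell}$ is modular, then lift this modularity back to $\mathrm{M}_\ell$ itself via an $R=\mathbb{T}$-style modularity lifting theorem, and finally pin down the weight, level, and nebentypus of the resulting modular form from the Hodge numbers and ramification data of $\mathrm{M}$.

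In more detail, the steps I would carry out are the following. First I would fix a large prime $\ell$ (outside the set $S$ of bad primes for the compatible family $\{\mathrm{M}_\ell\}$, and large enough that $\ell > s-r+1$ so we are in the Fontaine--Laffaille range) and consider the $\ell$-adic representation $\rho = \mathrm{M}_\ell(r)$, which by hypothesis is odd, two-dimensional, de Rham at $\ell$ with Hodge--Tate weights $\{0, s-r\}$, and crystalline at $\ell$ since $\ell \notin S$. Second, I would choose a Galois-stable lattice and form the residual representation $\bar\rho$; using that $\mathrm{M}_\ell$ is absolutely irreducible together with a Chebotarev/compatible-family argument (as in the proof of Theorem~\ref{thm::L-function}, where oddness is ensured by the perfect pairing and Chebotarev), I would arrange, for a density-one set of $\ell$, that $\bar\rho$ is also absolutely irreducible and odd. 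Third, Serre's modularity conjecture \cite{khare2009serre-a,khare2009serre-b} then produces a modular form $\bar f$ with $\bar\rho_{\bar f,\ell} \simeq \bar\rho$; moreover Serre's recipe, combined with the known crystalline-with-small-Hodge--Tate-weights structure of $\rho$ at $\ell$ and the conductor computation of $\mathrm{M}$ (from the analysis in Section~\ref{sec::Galois-repr}), tells us the Serre weight is $s-r+1$ and the Serre level divides the prime-to-$\ell$ conductor. Fourth, I would invoke a modularity lifting theorem (Kisin's, or the version packaged in \cite[Thm.\,1.4.3]{kisin2007modularity}) to deduce that $\rho$ itself is modular: there is a genuine modular form $f$ of weight $s-r+1$ with $\rho_{f,\ell} \simeq \mathrm{M}_\ell(r)$. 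Finally, comparing Hodge--Tate weights gives the weight $s-r+1$, comparing conductors gives the level $N$, and $\det\rho_{f,\ell} = \varepsilon \chi_{\mathrm{cyc}}^{s-r}$ identifies the nebentypus $\varepsilon$ as a Dirichlet character mod $N$.

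I expect the main obstacle to be verifying the hypotheses of the modularity lifting theorem --- in particular ensuring that the residual representation $\bar\rho$ has large enough image (for a suitable choice of $\ell$) so that it is not only absolutely irreducible but avoids the exceptional cases (dihedral image, image with projective image $A_4$, $S_4$, $A_5$, or image contained in a small group), and checking the local conditions at $\ell$ and at primes of bad reduction that make the lifting theorem applicable. Handling the residual irreducibility uniformly is delicate because a priori it could fail for infinitely many $\ell$; the standard way around this, which I would follow, is to use that $\mathrm{M}_\ell$ is part of a compatible system of Hodge--Tate regular weight so that, by results on the images of such compatible systems, $\bar\rho$ is irreducible with ``big'' image for all but finitely many $\ell$, and then simply pick one such $\ell$. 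Once modularity of $\rho$ is in hand, the identification of $(k,N,\varepsilon) = (s-r+1, \mathrm{cond}(\mathrm{M}), \det\mathrm{M}\cdot\chi_{\mathrm{cyc}}^{-(s-r)})$ is routine bookkeeping with Hodge--Tate weights and conductors, and no further serious input is needed.
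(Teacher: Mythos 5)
Your strategy is sound, but it is worth pointing out that the paper does not actually prove Theorem~\ref{thm::modularity}: it quotes it as a weaker version of \cite[Thm.\,1.4.3]{kisin2007modularity}, attributing the underlying argument to Serre \cite[\S4.8]{serre1987representations} and to \cite[Thm.\,4.6.1]{yun2015galois}. Your route --- fix one large prime $\ell$ in the Fontaine--Laffaille range, apply Serre's conjecture to $\bar\rho$, then lift via an $R=\mathbb{T}$ theorem --- is essentially Kisin's proof. The argument Serre and Yun use, which is what the paper has in mind, is different and lighter: one applies Serre's conjecture to $\bar\rho_\ell$ for \emph{every} $\ell$ in an infinite set where the reduction is irreducible, observes that the Serre weight and level are bounded independently of $\ell$ (by Fontaine--Laffaille theory and the bounded conductor of the compatible system), pigeonholes to find a single eigenform $f$ with $\bar\rho_{f,\ell}\simeq\bar\rho_\ell$ for infinitely many $\ell$, and concludes $a_f(p)=\tr(\mathrm{M}_\ell(r))(\frob_p^{-1})$ from congruences modulo infinitely many primes; Brauer--Nesbitt and Chebotarev then give the isomorphism. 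What the pigeonhole route buys is that it completely sidesteps the Taylor--Wiles hypotheses you correctly identify as your main obstacle: no bigness of residual image, no non-dihedral condition, no local deformation conditions at bad primes. This matters for the general statement as given, since the hypotheses allow $\mathrm{M}$ to have CM (the paper's own application to $\Sym^3\Kl_4$ is a CM form, handled there via \cite[1.4(2)]{livne1995motivic} precisely because the symmetric pairing puts it outside the symplectic setting), and in the dihedral case the lifting-theorem route needs extra care for the exceptional primes where $\bar\rho|_{\gal(\bq/\bb{Q}(\zeta_\ell))}$ degenerates. Both routes still require residual irreducibility for a suitable set of $\ell$, which in either case is deduced from irreducibility of the compatible system, so that part of your plan is shared with the paper's.
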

\begin{rek}\label{rek:estimation-conductor}
    By (4.8.8) and the last paragraph in p. 216 of \cite{serre1987representations}, the $2$-adic and $3$-adic valuation of $N$ are at most $8$ and $5$ respectively.
\end{rek}

\subsection{Conjectures of Evans type}
In this subsection, we prove Theorem~\ref{thm::modular form} by considering each case individually.
\subsubsection{\texorpdfstring{$\varepsilon$}{epsilon}-factors}
In order to apply Theorem~\ref{thm::modularity} for motives attached to Kloosterman sheaves, it is necessary to check that the associated Galois representations are odd. This is ensured by the following Proposition and Chebotarev's density theorem.
\begin{prop}\label{prop::epsilon_factor}
	If $n|\lambda|$ is even, then 
		$$\varepsilon(\bb{P}^1_{\fpp},j_*\Kl_{n+1}^\lambda)=p^{\frac{n|\lambda|+1}{2}\cdot\dim \rr{H}^1_{\et,\rr{mid}}\bigl(\bb{G}_{m,\bfp},\Kl_{n+1}^\lambda\bigr)}.$$
\end{prop}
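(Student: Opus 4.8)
The plan is to compute the global $\varepsilon$-factor of the middle extension sheaf $j_*\Kl_{n+1}^\lambda$ on $\bb{P}^1_{\fpp}$ via the product formula of Laumon \cite{Lau87}, which expresses $\varepsilon(\bb{P}^1_{\fpp},j_*\Kl_{n+1}^\lambda)$ (up to the known factor coming from a chosen meromorphic differential form) as a product of local $\varepsilon$-factors at $0$, $\infty$, and all other closed points of $\bb{P}^1$. Since $\Kl_{n+1}^\lambda$ is lisse on $\bb{G}_{m}$ and $j_*$ introduces no further ramification away from $\{0,\infty\}$, only the local factors at $0$ and $\infty$ (and the contribution of the differential's divisor, say $\mathrm{d}t/t$, which is supported at $0$ and $\infty$) are nontrivial; the product over the remaining points just contributes powers of $p$ recording ranks. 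The upshot is that $\varepsilon(\bb{P}^1_{\fpp},j_*\Kl_{n+1}^\lambda)$ equals $p^{N}$ for some integer $N$ times a product of local root numbers, and the first task is to identify $N$.

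The determination of $N$ goes through the Grothendieck--Ogg--Shafarevich formula and the long exact sequence \eqref{eq::long-exact-sequence}. The degree of $\varepsilon(\bb{P}^1_{\fpp},j_*\Kl_{n+1}^\lambda)$ as a power of $p$ is governed by $-\chi(\bb{P}^1,j_*\Kl_{n+1}^\lambda)$ weighted by weights; more precisely, since $j_*\Kl_{n+1}^\lambda$ has cohomology only in degrees $0,1,2$ and the degree-$0$ and degree-$2$ pieces are the invariants/coinvariants (which are pure of weights $n|\lambda|$ and $n|\lambda|+2$), while $\rr{H}^1(\bb{P}^1,j_*\Kl_{n+1}^\lambda)=\rr{H}^1_{\et,\rr{mid}}(\bb{G}_{m,\bfp},\Kl_{n+1}^\lambda)$ is pure of weight $n|\lambda|+1$, taking determinants of Frobenius on the alternating sum and using the functional equation for the $L$-function of $j_*\Kl_{n+1}^\lambda$ expresses the $\varepsilon$-factor's absolute value as $p$ raised to $\frac{n|\lambda|+1}{2}\dim\rr{H}^1_{\et,\rr{mid}}$ times possible correction terms from $\rr{H}^0$ and $\rr{H}^2$. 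The key point making these corrections vanish is that the invariants $(\Kl_{n+1}^\lambda)^{G_{\rr{geom}}}$ and coinvariants behave symmetrically under the autoduality $\Kl_{n+1}^\vee\simeq \iota^*\Kl_{n+1}$ (Proposition in Section~2.3, parts (2) and the induced pairing on $\Kl_{n+1}^\lambda$), so their contributions to the constant of the functional equation cancel, leaving a sign. Then one must pin down that the sign is $+1$: this is where one uses that $n|\lambda|$ is even, so the perfect pairing on $\rr{H}^1_{\et,\rr{mid}}$ from Corollary~\ref{cor::perfect-pariring-ell-adic} is symplectic (it is $(-1)^{n|\lambda|+1}=-1$-symmetric), forcing $\dim\rr{H}^1_{\et,\rr{mid}}$ even and the local root number contributions to multiply to $+1$ by the standard fact that symplectically self-dual Galois representations have trivial sign, or alternatively by direct local computation of the $\varepsilon$-factors at $0$ (tame, hence an explicit Gauss-sum-free expression) and at $\infty$.

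Concretely, the steps in order: (i) invoke Laumon's product formula to write the global $\varepsilon$ as $p^{(1-g)\rk}\cdot\prod_x \varepsilon_x$ with $g=0$; (ii) observe lisseness on $\bb{G}_m$ kills all $\varepsilon_x$ for $x\neq 0,\infty$ beyond their $p$-power parts, and collect these into an overall $p$-power whose exponent is computed by Grothendieck--Ogg--Shafarevich together with the dimension formula \eqref{eq::dim-middle-general-formula}; (iii) compute $\varepsilon_0$ using that $\Kl_{n+1}^\lambda|_{\eta_0}$ is tamely ramified and unipotent (Section~2.4), so its local $\varepsilon$-factor is an elementary power of $p$ determined by the dimensions of $I_{\bar 0}$-invariants; (iv) compute $\varepsilon_\infty$ using the structure of $\Kl_{n+1}^\lambda$ at $\infty$ — after pulling back by $[n+1]$ it decomposes into Artin--Schreier sheaves $\scr{L}_{\psi(Ct)}$ whose $\varepsilon$-factors are Gauss sums, and the product of these Gauss sums over a Galois-stable set, combined with the $\sign$-isotypic projector, yields a clean power of $p$ when $n|\lambda|$ is even; (v) assemble (i)--(iv) and match with the claimed formula. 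I expect the main obstacle to be step (iv): controlling the product of Gauss sums appearing in $\varepsilon_\infty$ and showing that, after taking the $(G_\lambda,\chi_n)$-isotypic component and using $2\mid n|\lambda|$, all the "error" Gauss-sum phases cancel to leave exactly $p^{\frac{n|\lambda|+1}{2}\dim\rr{H}^1_{\et,\rr{mid}}}$ with no residual root number. A cleaner route to circumvent this, which I would pursue in parallel, is to avoid explicit local computation entirely: deduce the result from the functional equation of $L(\bb{P}^1_{\fpp},j_*\Kl_{n+1}^\lambda,T)$ (Poincaré duality plus the autoduality of $\Kl_{n+1}^\lambda$), which directly gives $\varepsilon = \pm p^{\frac{w+1}{2}\dim\rr{H}^1_{\rr{mid}}}$ with $w=n|\lambda|$, and then fix the sign to $+1$ by noting the symplectic nature of the pairing when $n|\lambda|$ is even (an orthogonal pairing would give the opposite parity of dimension and potentially a $-1$), invoking the fact that a symplectically self-dual $\ell$-adic sheaf on a curve has $\varepsilon$-constant a positive power of $q$.
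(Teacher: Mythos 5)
Your closing ``cleaner route'' is essentially the paper's own proof, and it is the only complete argument in your proposal. The paper argues in one step: by Corollary~\ref{cor::perfect-pariring-ell-adic} the space $\rr{H}^1_{\et,\rr{mid}}\bigl(\bb{G}_{m,\bfp},\Kl_{n+1}^\lambda\bigr)$ carries a perfect $(-1)^{n|\lambda|+1}$-symmetric pairing with values in $\bb{Q}_\ell(-n|\lambda|-1)$; since $n|\lambda|$ is even this pairing is symplectic, so $\frob_p$ lands in a symplectic similitude group with similitude factor $p^{n|\lambda|+1}$, and the identity $\det=\nu^{m}$ on $\mathrm{GSp}_{2m}$ gives $\det(\frob_p)=p^{(n|\lambda|+1)\dim/2}$ outright --- there is no residual sign to fix, so your two-stage version (first $\pm$ from the functional equation, then kill the sign) is a slight detour but lands in the same place for the same reason. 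By contrast, your primary route via Laumon's product formula is not carried out: step~(iv), controlling the product of Gauss sums at $\infty$ after applying the $(G_\lambda,\chi_n)$-projector, is exactly the hard part, you acknowledge it as the main obstacle, and you do not resolve it; that branch as written has a genuine gap and is in any case unnecessary. I would therefore promote the symplectic self-duality argument from ``parallel fallback'' to the actual proof and drop the local $\varepsilon$-factor computation.
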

\begin{proof}
	By applying Corollary~\ref{cor::perfect-pariring-ell-adic}, we find that the middle $\ell$-adic cohomology $\mathrm{H}^1_{\et,\rr{mid}}\big(\bb{G}_{m,\bfp},\Kl_{n+1}^\lambda\big)$ is a symplectic representation of $\gal(\bfp/\fpp)$. Consequently, the determinant of $\frob_p$ is a power of $p$. Taking into consideration both the dimension and the weight of $\mathrm{H}^1_{\et,\rr{mid}}\big(\bb{G}_{m,\bfp},\Kl_{n+1}^\lambda\big)$, we deduce that
		\begin{equation*}
			\begin{split}
				\varepsilon\big(\bb{P}^1_{\bb{F}_p},j_{*}\Kl_{n+1}^\lambda\big)
				&= \det\bigl(-\frob_p, \Het{1}\bigl(\bb{P}^1_{\bb{F}_p},j_{*}\Kl_{n+1}^\lambda\bigr)\bigr)\\
				&= \det\bigl(\frob_p, \mathrm{H}^1_{\et,\rr{mid}}\bigl(\bb{G}_{m,\bfp},\Kl_{n+1}^\lambda\bigr)\bigr)=p^{\frac{n|\lambda|+1}{2}\dim\mathrm{H}^1_{\et,\rr{mid}}\bigl(\bb{G}_{m,\bfp},\Kl_{n+1}^\lambda\bigr)}. 
			\end{split}
		\end{equation*}
\end{proof}

\subsubsection{\texorpdfstring{$\Sym^4\Kl_3$}{Sym4Kl3}}
The motive $\mathrm{M}_3^4$ is defined over $\bb{Q}$, pure of weight $9$ and equipped with a skew-symmetric perfect pairing, as described in  Proposition~\ref{prop::perfect-pairing}. It has dimension $2$, and the Hodge numbers $h^{p,9-p}$ of its de Rham realization are $1$ if $p=3$ or $6$, and $0$ otherwise by \cite[Thm.\,1.2]{Qin23Hodge}. Our goal is to show that the compatible system of Galois representations $\{ (\mathrm{M}_3^4)_{\ell}(6)\}$ is modular.

\begin{prop}\label{prop::modularity_sym4Kl3}
	There exists a (unique) modular form $f$ in $S_{4}(\Gamma_0(14))$, such that for each prime $p\not \in\{2,7\}$, the Fourier coefficient $a_p(f)$ satisfies
		$$a_f(p)=-\frac{1}{p^3}(m_3^4(p)+1+p^2+p^4),$$
	where $m_3^4(p)$ is the symmetric power moment of $\Sym^4\Kl_3$. In particular, the label of this modular form in the database LMFDB is $14.4.a.b$.
\end{prop}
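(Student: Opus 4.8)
The plan is to apply the modularity criterion of Theorem~\ref{thm::modularity} to the motive $\rr{M}_3^4$, and then pin down the resulting modular form using geometric constraints together with a finite numerical computation. The hypotheses of Theorem~\ref{thm::modularity} are mostly already in hand: $\rr{M}_3^4$ is a pure motive of dimension $2$ over $\bb{Q}$ with $\bb{Q}$-coefficients (Proposition~\ref{cor::dimension-mid-de-ell-adic} gives dimension $2$ for $(n+1,k)=(3,4)$, and Proposition~\ref{prop::perfect-pairing} gives purity and the pairing), and its de Rham Hodge numbers are $h^{3,6}=h^{6,3}=1$ by Theorem~\ref{thm:hodge}(1). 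So I would first verify the two remaining conditions in Theorem~\ref{thm::modularity}: oddness and absolute irreducibility of $(\rr{M}_3^4)_\ell$.

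Oddness follows from Proposition~\ref{prop::epsilon_factor} combined with Chebotarev's density theorem, exactly as flagged in the introduction: the $\varepsilon$-factor computation shows $\det (\rr{M}_3^4)_\ell$ is (a power of) the cyclotomic character up to sign, and evaluating at complex conjugation gives $-1$ — I would spell out that $n|\lambda| = 8$ is even so Proposition~\ref{prop::epsilon_factor} applies, and that the determinant of the symplectic representation is $\chi_{cyc}^{-9}$, whose value at complex conjugation is $-1$. For absolute irreducibility: the geometric monodromy group of $\Sym^4\Kl_3$ acting on the middle cohomology should force irreducibility of the associated Galois representation over $\bfp$ for good primes $p$; alternatively one can argue directly from the numerical data (a reducible two-dimensional motive would have Frobenius traces matching a sum of two Hecke characters, which one rules out by computing $a_p$ at a few primes and checking the trace is not of that shape), or invoke that the $L$-function is primitive. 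I expect this is the step requiring the most care, though it is not deep.

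Once Theorem~\ref{thm::modularity} applies, it yields $N\geq 1$, a Dirichlet character $\varepsilon\colon (\bb{Z}/N\bb{Z})\cros\to \bb{C}\cros$, and a modular form $f\in S_{6-3+1}(\Gamma_0(N),\varepsilon) = S_4(\Gamma_0(N),\varepsilon)$ with $\rho_{f,\ell}\simeq (\rr{M}_3^4)_\ell(3)$. The next step is to determine $N$ and $\varepsilon$. The weight $4$ is already forced by the Hodge numbers. For the level and character, I would analyze the compatible family $\{(\rr{M}_3^4)_\ell\}$ as in Section~\ref{sec::Galois-repr}: by Theorem~\ref{thm::unramified} the representation is unramified outside $S(4,3) = \{2,3,7\}$ (primes dividing $n+1=3$ or where $\cc{K}'$ is singular; here $d(4,3,p)$ jumps at $p=2,7$), and by Remark~\ref{rek::unramified-p=3} together with Corollary~\ref{cor::dimension_mid_p=3} the representation is in fact unramified at $p=3$ as well since $3\nmid 4$. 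The Artin conductor exponents at $2$ and $7$ are then read off from Corollary~\ref{cor:swan-conductor}(1) (with $n+1=3$ prime): since $p=2$ and $k=4$ is even the exponent at $2$ is $a(4,3,2)-a(4,3)-1 = 1$, and at $p=7$ it is $a(4,3,7)-a(4,3) = 1$, giving $N = 14$ and (comparing determinant, which must be $\chi_{cyc}^3$ up to a quadratic character, against the possible $\varepsilon$) trivial nebentypus $\varepsilon = 1$. Finally, with weight $4$, level $14$, trivial character fixed, the space $S_4(\Gamma_0(14))$ is finite-dimensional and contains a unique newform (the one labelled $14.4.a.b$ in LMFDB); I would compute $\tr(\frob_p\mid (\rr{M}_3^4)_\ell)$ for enough small primes $p$ using the moment formula $\tr = -(m_3^4(p)+1+p^2+p^4)$ (the invariants/coinvariants at $0$ and $\infty$ contributing $1+p^2+p^4$ by Theorem~\ref{thm::local-symk-at-0} and the $\infty$-analysis, as in Appendix~\ref{appx:computation}) and match against the $q$-expansion of $14.4.a.b$ to conclude $\rho_{f,\ell}\simeq (\rr{M}_3^4)_\ell(3)$ with this specific $f$, yielding the stated identity $a_f(p) = -p^{-3}(m_3^4(p)+1+p^2+p^4)$.

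The main obstacle is the absolute irreducibility verification — everything else is either a direct appeal to results already established in the excerpt (purity, Hodge numbers, dimension, unramifiedness, conductor exponents) or a finite and mechanical computation (matching Frobenius traces against LMFDB). Irreducibility needs either a monodromy-theoretic input about $G_{\rr{geom}}$ for $\Sym^4\Kl_3$ or a careful exclusion of all CM/reducible possibilities compatible with the known conductor and weight; I would pursue the monodromy route, using that $\SL_3$ (hence its symmetric-power representations restricted to the arithmetic fundamental group) acts irreducibly and that no finite-index subgroup obstruction arises at the relevant primes.
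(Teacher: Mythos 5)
Your proposal follows essentially the same route as the paper: apply Theorem~\ref{thm::modularity} after establishing oddness via Proposition~\ref{prop::epsilon_factor} and Chebotarev, pin down the level $N=14$ via Theorem~\ref{thm::unramified}, Remark~\ref{rek::unramified-p=3} and Corollary~\ref{cor:swan-conductor}, read off the weight from the Hodge numbers, and identify the form by matching Frobenius traces (correctly using the local contribution $1+p^2+p^4$ at $0$) against LMFDB. One remark on the irreducibility step you single out: the sheaf-monodromy route is not the right tool (the geometric monodromy group acts on the fibres of $\Sym^4\Kl_3$, not on the two-dimensional middle cohomology), but your purity fallback is the clean argument --- a one-dimensional constituent of a motive pure of odd weight $9$ would be a finite-order twist of $\chi_{cyc}^{-9/2}$, which is impossible --- and the paper in fact leaves this check implicit.
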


\begin{proof}
    By \eqref{eq:number-singular-points}, we find that the hypersurface $\mathcal{K}_{\bfp}$ in \eqref{eq:hypersurface} is smooth if the number $d(4,3,p)$ in \cref{nota::multi-indices} is $0$. 
	According to Theorem~\ref{thm::unramified}, we find that the $\ell$-adic representation $(\mathrm{M}_3^4)_{\ell}$ is unramified at $p\neq 2,3,7$. As noted in \cref{rek::unramified-p=3}, the $\ell$-adic representation $(\mathrm{M}_3^4)_{\ell}$ is also unramified at $p=3$, because the middle $\ell$-adic cohomology $\rr{H}^1_{\et,\rr{mid}}\bigl(\bb{G}_{m,\bar{\bb{F}}_3},\Sym^4\Kl_3\bigr)$ has dimension $2$ by \cref{cor::dimension_mid_p=3}. Additionally, \cref{cor:swan-conductor} tells us that the conductor of the compatible family $\{(\mathrm{M}_3^4)_{\ell}\}_\ell$ is $14$.

Since the motive $\mathrm{M}_3^4$ is pure of weight $9$ and its nonzero Hodge numbers are given by $h^{3,6}=h^{6,3}=1$, the Hodge--Tate weight of $ (\mathrm{M}_3^4)_{\ell}(6)$ is $(0,3)$ with multiplicity $1$ by the $p$-adic comparison theorem.
	
According to \cref{prop::epsilon_factor} and Chebotarev density theorem, we find that the determinant $\det\bigl((\mathrm{M}_3^4)_{\ell}(6)\bigr)$ is equal to $\chi_{cyc,\ell}^{3}$. As $\chi_{cyc,\ell}(c)=-1$, the representation $(\mathrm{M}_3^4)_{\ell}(6)$ is odd. Thus, $\{ (\mathrm{M}_3^4)_{\ell}(6)\}$ is modular according to \cref{thm::modularity}. 

By the exact sequence \eqref{eq::long-exact-sequence}, \cref{thm::local-symk-at-0}, and~\cref{thm::local-inv-infty}, we deduce that  
	$$\tr(\frob_p\mid(\mathrm{M}_3^4)_{\ell})=-(m_3^4(p)+1+p+p^2).$$ 
It follows that for any $p\not\in \{2,7,\ell\}$,
	\begin{equation*}
		\begin{split}
			a_f(p)&=\tr(\frob_p\inv\mid(\mathrm{M}_3^4)_{\ell}(6))
			=-\frac{1}{p^3}(m_3^4(p)+1+p+p^2).
		\end{split}
	\end{equation*}

	Now, the remaining task is to identify the modular form. The weight and the level of the corresponding modular form are $k=4$ and $N_f=14$. By computing the Fourier coefficient $a_f(3)$, as detailed in \cref{app::m_3^4(p)}, we find that this modular form $f$ is labeled $14.4.a.b$ in the database LMFDB.
\end{proof}

\subsubsection{\texorpdfstring{$\Sym^3\Kl_4$}{Sym3Kl4}}
The motive $\mathrm{M}_4^3$ is defined over $\bb{Q}$, pure of weight $10$, and equipped with a symmetric perfect pairing. It has dimension $2$, and the nonzero Hodge numbers $h^{p,10-p}$ of its de Rham realization are $1$ if $p=4$ or $6$ by \cite[Thm.\,1.2]{Qin23Hodge}. We aim to demonstrate that the compatible family of Galois representations $\{(\mathrm{M}_4^3)_{\ell}(6)\}$ is modular.

\begin{prop}\label{prop::modularity-Sym3Kl4}
	There exists a (unique) modular form $f$ in $S_{3}\bigl(\Gamma_0(15),\left(\frac{\cdot}{15}\right)\bigr)$ with complex multiplication, such that for each prime $p\not\in\{2,5\}$, the Fourier coefficient $a_f(p)$ satisfies
	\begin{equation*} 
		a_f(p)=-\left(\frac{p}{15}\right)\frac{1}{p^4}(m_4^3(p)+1+p^2+p^3).
   \end{equation*}
Here $m_4^3(p)$ is the symmetric power moment of $\Sym^3\Kl_4$. In particular, the label of the corresponding modular form is $15.3.d.a $ in the database LMFDB.
\end{prop}
\begin{proof}
	Based on \eqref{eq:number-singular-points}, \cref{thm::unramified}, and \cref{thm::ell-adic-galois-even}, we know that $(\mathrm{M}_4^3)_{\ell}$ is unramified if $p\neq 2,3,5$, and tamely ramified if $p=3,5$. Moreover, applying Proposition~\ref{prop::moments-p-large}, we obtain the dimension of the middle $\ell$-adic cohomologies of $\Sym^3\Kl_4$ at $p\neq 2$. Hence, $(\mathrm{M}_4^3)_\ell^{I_p}\simeq\mathrm{H}^1_{\et,\mathrm{mid}}\bigl(\bb{G}_{m,\bfp},\Sym^3\Kl_4\bigr)$ has dimension $1$ when $p=3$ or $5$. This implies that the conductor $N$ of $\{(\mathrm{M}_4^3)_{\ell}\}$ is of the form $2^s\cdot 15$ for some $s\in \bb{Z}_{\geq 0}$.

	\begin{lem}
		For each $\ell\neq 2$, the representation $(\mathrm{M}_4^3)_{\ell}$ is unramified at $p=2$. In particular, the conductor $N$ of $\{(\mathrm{M}_4^3)_{\ell}\}$ is $15$.
	\end{lem}
	\begin{proof}
		At $p=2$, the Swan conductor of $\Sym^3\Kl_4$ is at most $5$.  Since the monodromy group of $\Kl_4$ is $\mathrm{Sp_4}$ and the symmetric power of standard representation of $\mathrm{SP}_4$ remains irreducible, the $0$-th cohomology $\mathrm{H}^0_{\et}(\bb{G}_{m,\bfp},\Sym^3\Kl_4)$ vanishes. By the exact sequence \eqref{eq::long-exact-sequence} and Grothendieck--Ogg--Shafarevich formula, we deduce that 
			$$\dim \mathrm{H}^1_{\et,\mathrm{mid}}\bigl(\bb{G}_{m,\bfp},\Sym^3\Kl_4\bigr)=\sw(\Sym^3\Kl_4)-3-\dim (\Sym^3\Kl_4)^{I_{\bar\infty}}.$$
		As a result, we find that $3\leq \sw(\Sym^3\Kl_4)\leq 5$. 
		
		By \cref{app::m_4^3(2)}, the trace of Frobenius at $p=2$ on $\mathrm{H}^1_{\et,\mathrm{mid}}\bigl(\bb{G}_{m,\bfp},\Sym^3\Kl_4\bigr)$ is
			$$-(m_4^3(p)+1+p^2+p^3+\tr(\frob_p\mid (\Sym^3\Kl_4)^{I_{\bar \infty}}))=-16-\tr(\frob_p\mid (\Sym^3\Kl_4)^{I_{\bar \infty}}).$$
		We proceed by examining each possible value of $\sw(\Sym^3\Kl_4)$ as follows.
	\begin{itemize}
		\item If $\sw(\Sym^3\Kl_4)=5$, the sheaf $\Sym^3\Kl_4$ has only one slope (equal to $1/4$) at $\infty$, which implies that $ (\Sym^3\Kl_4)^{I_{\bar\infty}}=0$. So the dimension of the middle $\ell$-adic cohomology is $2$. As a result, the representation $(\mathrm{M}_4^3)_\ell$ is unramified at $p=2$, thanks to \cref{rek::unramified-p=3}.
		 \item If $\sw(\Sym^3\Kl_4)=4$, then $\dim (\Sym^3\Kl_4)^{I_{\bar\infty}}\leq 1$.  
			 \begin{itemize}
				 \item If $\dim (\Sym^3\Kl_4)^{I_{\bar\infty}}= 1$, the middle $\ell$-adic cohomology of $\Sym^3\Kl_4$ is $0$. The trace of $\frob_2$ on $\mathrm{H}^1_{\et,\mathrm{mid}}\bigl(\bb{G}_{m,\bfp},\Sym^3\Kl_4\bigr)$ is $0$. So we obtain 
					 $$0=-16-\tr(\frob_p\mid (\Sym^3\Kl_4)^{I_{\bar \infty}}).$$
				 This is impossible because $(\Sym^3\Kl_4)^{I_{\bar \infty}}$ is pure of weight $9$ and one-dimensional.
				 \item If $\dim (\Sym^3\Kl_4)^{I_{\bar\infty}}= 0$, the middle $\ell$-adic cohomology is one-dimensional. The trace of $\frob_2$ on $\mathrm{H}^1_{\et,\mathrm{mid}}\bigl(\bb{G}_{m,\bfp},\Sym^3\Kl_4\bigr)$ is $-16$. However, since $\mathrm{H}^1_{\et,\mathrm{mid}}\bigl(\bb{G}_{m,\bfp},\Sym^3\Kl_4\bigr)$ is pure of weight $10$ and one-dimensional, this situation is not possible.
		  \end{itemize}
		  \item If $\sw(\Sym^3\Kl_4)=3$, then $\dim (\Sym^3\Kl_4)^{I_{\bar\infty}}=0$. So the dimension of the middle $\ell$-adic cohomology is $0$. However, the trace of $\frob_2$ on $\mathrm{H}^1_{\et,\mathrm{mid}}\bigl(\bb{G}_{m,\bfp},\Sym^3\Kl_4\bigr)$ is at the same time $0$ and $-16$, which is absurd. 
	\end{itemize}
	In conclusion, we deduce that $\sw(\Sym^3\Kl_4)=5$ and the representation $(\mathrm{M}_4^3)_\ell$ is unramified at $2$. As a consequence, the conductor $N$ is $2^0\cdot 15=15$.
	\end{proof}

By the $p$-adic comparison theorem and our computation of the Hodge numbers for the motive $\mathrm{M}_4^3$, we determine that the Hodge--Tate weight of $\{(\mathrm{M}_4^3)_{\ell}(6)\}$ is $(0,2)$. Observe that these Galois representations $(\mathrm{M}_4^3)_\ell$ are orthogonal, as we have a symmetric perfect paring on the motive $\mathrm{M}_4^3$ given in Proposition~\ref{prop::perfect-pairing}. According to  \cite[1.4(2)]{livne1995motivic}, the associated Galois representation $\{(\mathrm{M}_4^3)_{\ell}(6)\}$ corresponds to a modular form $f=q+\sum_{n=2}^\infty a_nq^n\in S_{3}(15,\varepsilon_f)$ of complex multiplication for some characters $\varepsilon_f\colon \bb{Z}/15\bb{Z}\to \bb{C}\cros$. Moreover, for any $p\not\in \{3,5\}\cup\{\ell\}$ we deduce that
\begin{equation*}
	\begin{split}
		a_f(p)&=\tr(\frob_p\inv\mid(\mathrm{M}_4^3)_{\ell}(6))
		=\det\bigl((\mathrm{M}_3^4)_{\ell}(6)\bigr)\inv\cdot  \tr(\frob_p\mid(\mathrm{M}_4^3)_{\ell}(6))\\
		&=-\varepsilon_f\inv\cdot \frac{1}{p^4}(m_4^3(p)+1+p^2+p^3).
	\end{split}
\end{equation*}

At this point, the remaining task is to identify the modular form. We already know that this modular form has level $15$ and weight $3$.

\begin{lem}
	The character $\varepsilon_f$ is the Legendre symbol $\left(\frac{\bullet}{15}\right)$.
\end{lem}
\begin{proof}
	Using LMFDB, we find that there are only two modular forms with level $15$ and weight $3$. Their characters are both given by the Legendre symbol $\left(\frac{\bullet}{15}\right)$.
\end{proof}

To summarize, we have determined that the desired modular form has weight $k=3$, level $15$, and nebentypus $\varepsilon_f=(\frac{\cdot}{15})$. However, there are still two possibilities in LMFDB. To determine the correct one, we use the Frobenius trace $a_f(2)=-1$ of $(\mathrm{M}_4^3)_{\ell}(6)$ in \cref{app::m_4(p)}. Our search in the LMFDB database yields a unique match: the modular form labeled $15.3.d.b$.
\end{proof}

\subsubsection{\texorpdfstring{$\Sym^4\Kl_4$}{Sym4Kl4}}
The two-dimensional motive $\mathrm{M}_4^4$ is defined over $\bb{Q}$, pure of weight $13$, and equipped with an anti-symmetric perfect self-pairing.

\begin{prop}\label{prop::modularity-sym4kl4}
	There exists a (unique) modular form $f$ in $S_{6}(\Gamma_0(10))$, such that for each prime $p\not\in\{2,5\}$, the Fourier coefficient $a_f(p)$ satisfies
		$$a_f(p)=-\frac{1}{p^4}(m_4^4(p)+1+p^2+p^3+p^4+2p^6),$$
	where $m_4^4(p)$ is the symmetric power moment of $\Sym^4\Kl_4$. In particular, the label of the corresponding modular form is $10.6.a.a.$ in the database LMFDB.
\end{prop}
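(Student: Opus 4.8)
The plan is to follow verbatim the strategy used for $\Sym^4\Kl_3$ in Proposition~\ref{prop::modularity_sym4Kl3} and for $\Sym^3\Kl_4$ in Proposition~\ref{prop::modularity-Sym3Kl4}. First I would establish that the compatible family $\{(\mathrm{M}_4^4)_\ell\}$ has conductor supported at $\{2,5\}$ (hence of the form $2^a 5^b$), using Theorem~\ref{thm::unramified} to see that $(\mathrm{M}_4^4)_\ell$ is unramified at all $p\nmid 2\cdot 5$ where $\mathcal{K}'_{\bfp}$ is smooth, and Corollary~\ref{cor:swan-conductor} (for $n+1=4$, which is not prime, so here one would instead argue as in the $\Sym^3\Kl_4$ case) together with the local data at $\infty$ from Theorem~\ref{thm::local-inv-infty} and Proposition~\ref{prop::moments-p-large} to compute $\dim\rr{H}^1_{\et,\rr{mid}}$ at $p=2$ and $p=5$. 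As in the $\Sym^3\Kl_4$ argument, the delicate point is to pin down the exact Swan conductor of $\Sym^4\Kl_4$ at $p=2$: one bounds $\sw_\infty(\Sym^4\Kl_4)$ above by $\lfloor \tfrac14\cdot \rk\Sym^4\Kl_4\rfloor$, notes $\rr{H}^0_{\et}(\bb{G}_{m,\bfp},\Sym^4\Kl_4)$ is $1$-dimensional (since $k=4$ is even and the geometric monodromy of $\Kl_4$ is $\mathrm{Sp}_4$, so $(\Sym^4\mathrm{Std})^{\mathrm{Sp}_4}$ is a line), applies the long exact sequence \eqref{eq::long-exact-sequence} and Grothendieck--Ogg--Shafarevich to express $\dim\rr{H}^1_{\et,\rr{mid}}$ in terms of $\sw$ and $\dim(\Sym^4\Kl_4)^{I_{\bar\infty}}$, and then rules out the smaller values of $\sw$ by comparing with the computed trace $\tr(\frob_2\mid \rr{H}^1_{\et,\rr{mid}})$ from Appendix~\ref{app::m_4^4(2)} using purity (weight $13$, so any $1$-dimensional piece is impossible and any Frobenius trace on a fixed weight-$12$ or weight-$13$ space is constrained). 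This should force $\dim\rr{H}^1_{\et,\rr{mid}}(\bb{G}_{m,\bar{\bb{F}}_2},\Sym^4\Kl_4)=2$, hence $(\mathrm{M}_4^4)_\ell$ unramified at $2$ (in the spirit of Remark~\ref{rek::unramified-p=3}, using that the target and source dimensions in \eqref{eq::injection-isolated-sing}-type arguments match), so that the conductor is $10$.

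Next I would record the Hodge data. Although Theorem~\ref{thm:hodge} does not directly give the Hodge numbers of $\rr{M}_{4,\rr{dR}}^4$, by the remark following Corollary~\ref{cor::hodge} (see Remark~\ref{remark::hodge-m44}) the Hodge numbers of $\rr{M}_{4,\rr{dR}}^4$ are $0$ or $1$; since $\mathrm{M}_4^4$ is pure of weight $13$ and $2$-dimensional with an anti-symmetric pairing, the non-zero Hodge numbers are $h^{r,13-r}=h^{13-r,r}=1$ for a single $r<13/2$, and matching the weight $6=s-r+1$ of the target modular form forces $r=4$, $s=9$. Thus the Hodge--Tate weight of $\{(\mathrm{M}_4^4)_\ell(r)\}$ with $r$ chosen appropriately is $(0,5)$. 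Then I apply Proposition~\ref{prop::epsilon_factor} (with $n|\lambda|=16$ even): combined with the dimension $\dim\rr{H}^1_{\et,\rr{mid}}=2$ and Chebotarev density, this computes $\det((\mathrm{M}_4^4)_\ell(\tfrac{13+1}{2})) = \det((\mathrm{M}_4^4)_\ell(7))$ as $\chi_{cyc}$ up to a sign; since the antisymmetric pairing makes $(\mathrm{M}_4^4)_\ell$ symplectic the determinant is a power of $\chi_{cyc}$, and evaluating at complex conjugation via $\chi_{cyc,\ell}(c)=-1$ shows $(\mathrm{M}_4^4)_\ell$ is odd. Absolute irreducibility should follow because the geometric monodromy group of $\Sym^4\Kl_4$ acts irreducibly, so the middle cohomology (hence $\mathrm{M}_4^4$) cannot decompose over $\bq$.

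With oddness, absolute irreducibility, and the $0/1$ Hodge numbers in hand, Theorem~\ref{thm::modularity} applies and gives a modular form $f\in S_6(\Gamma_0(N),\varepsilon)$ with $\rho_{f,\ell}\simeq (\mathrm{M}_4^4)_\ell(4)$ for $N$ supported at $2,3$-powers times $5$ — but our conductor computation already pins $N=10$ and $\varepsilon$ trivial (as the determinant is an integral power of $\chi_{cyc}$, the nebentypus is trivial). The trace identity follows from \eqref{eq::long-exact-sequence}, Theorem~\ref{thm::local-symk-at-0} and Theorem~\ref{thm::local-inv-infty}: these give $\tr(\frob_p\mid(\mathrm{M}_4^4)_\ell)=-(m_4^4(p)+1+p^2+p^3+p^4+2p^6)$, where the extra $2p^6$ records the two-dimensional weight-$12$ contribution from $(\Sym^4\Kl_4)^{I_{\bar 0}}$ predicted by Fu--Wan's generating series \eqref{eq::counting_inv_at_0}; dividing by $\det=\chi_{cyc}^4$ (value $p^4$) yields $a_f(p)=-\tfrac{1}{p^4}(m_4^4(p)+1+p^2+p^3+p^4+2p^6)$. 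Finally I would identify the form in LMFDB: weight $6$, level $10$, trivial character, and the Frobenius trace $a_f(3)$ computed from the moment $m_4^4(3)$ in Appendix~\ref{app::m_4^4(p)} single out the form $10.6.a.a$. The main obstacle is the same as in the $\Sym^3\Kl_4$ case: the careful case analysis of the Swan conductor of $\Sym^4\Kl_4$ at $p=2$, which underlies both the conductor $N=10$ and the $\varepsilon$-factor computation, and which requires the explicit moment computations of the appendix to eliminate the spurious possibilities.
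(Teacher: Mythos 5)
Your overall architecture matches the paper's (ramification analysis, Hodge data, oddness via the $\varepsilon$-factor and Chebotarev, Serre's conjecture via Theorem~\ref{thm::modularity}, then numerical identification), and the trace identity and oddness steps are fine. But there are two genuine gaps, both at the points you yourself flag as delicate.

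First, the conductor at $p=2$. Your plan to transplant the Swan-conductor case analysis from the $\Sym^3\Kl_4$ proof does not go through here, because for $\Sym^4\Kl_4$ the bad prime $2$ \emph{divides} $n+1=4$: Theorems~\ref{thm::unramified}, \ref{thm::ell-adic-galois-even} and the local structure at $\infty$ from Theorem~\ref{thm::local-inv-infty} all assume $p\nmid n+1$, and the local monodromy of $\Kl_4$ at $\infty$ in characteristic $2$ is exactly the ``mysterious'' case of Section~\ref{sec::dim-middle}. Worse, your conclusion is internally inconsistent and contradicts the answer: if $(\mathrm{M}_4^4)_\ell$ were unramified at $2$ the conductor would be $5$, not $10$, whereas the form $10.6.a.a$ has level divisible by $2$, so the representation \emph{is} ramified at $2$. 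The paper does not attempt to pin down the $2$-part at all; it only bounds the exponent $s$ by $8$ using the bounds built into Serre's conjecture (Remark~\ref{rek:estimation-conductor}) and then eliminates the possibilities $(s,h)$ one by one with modular-symbol computations over finite fields and LMFDB.

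Second, the Hodge number (equivalently the Tate twist and the weight of $f$) is determined circularly in your argument: you fix $r=4$, $s=9$ by ``matching the weight $6$ of the target modular form,'' but that weight is part of the conclusion, and your appeal to Remark~\ref{remark::hodge-m44} is backwards --- that remark records the Hodge numbers as a \emph{consequence} of the proof, not an input. The paper instead leaves $h\in\{0,\ldots,6\}$ free (only Hodge symmetry and $\dim=2$ are used), obtains $a_f(p)=-p^{-h}(m_4^4(p)+\cdots)$ with $h$ undetermined, and then uses the numerical values $a_f(3)=-26\cdot3^{4-h}$ and $a_f(7)=-22\cdot7^{4-h}$ to rule out all pairs $(s,h)$ except the one giving weight $6$ and level $10$. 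A minor further point: your justification of absolute irreducibility from the geometric monodromy of the sheaf does not bear on the $\gal(\bq/\bb{Q})$-representation $(\mathrm{M}_4^4)_\ell$.
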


\begin{proof}
By Theorems~\ref{thm::unramified} the representation $(\mathrm{M}_4^4)_\ell$ is unramified at $p\neq 2,5$, as $\mathcal{K}'_{\bfp}$ in \cref{subsec::compactification} is smooth in this case, i.e., $d(4,4,p)-d(4,4)=0$ in \eqref{eq:number-singular-points}. Moreover, we deduce from Theorem~\ref{thm::ell-adic-galois-even} that the representation $(\mathrm{M}_4^4)_\ell$ is possibly wildly ramified at $p=2$, and is tamely ramified at $p=5$. According to \cref{cor:swan-conductor} and \cref{rek:estimation-conductor}, the conductor of the compatible family $\{(\mathrm{M}_4^4)_\ell\}_\ell$ is of the form $N=2^s\cdot 5$ for some $0\leq s\leq 8$.

	By the Hodge symmetry, there exists an integer $h\in \{0,1,\ldots,6\}$ such that the Hodge numbers $h^{p,13-p}$ of $\mathrm{M}_4^4$ are $1$ if $p=h$ or $13-h$, and $0$ otherwise. Hence, the Hodge--Tate weights of $(\mathrm{M}_4^{4})_\ell(13-h)$ are $(0,13-2h)$. 
	
	The determinant of the Galois representations $(\mathrm{M}_4^4)_{\ell}(13-h)$ is an odd character $\chi_{cyc}^{13-2h}$, according to \cref{prop::epsilon_factor} and the Chebotarev density theorem. Then, the existence of the modular form is provided by \cref{thm::modularity}. It follows that for any $p\not\in \{2,5,\ell\}$,
	\begin{equation*}
		\begin{split}
			a_f(p)&=\tr(\frob_p\inv\mid(\mathrm{M}_4^4)_{\ell}(13-h))
			=-\frac{1}{p^{h}}(m_4^4(p)+1+p^2+p^3+p^4+2p^6).
		\end{split}
	\end{equation*}

	At last, we can compute the Fourier coefficients $a_f(3)=-26\cdot 3^{4-h}$ and $a_f(7)=-22\cdot 7^{4-h}$ by numerical results in \cref{app::m_4^4(p)}. Notice that LMFDB  contains the complete list of modular forms when $k^2\cdot N\leq 40000$. We try $0\leq h\leq 6$ and $0\leq s\leq 8$ one by one. If $(s,h)= (8,0), (8,1), (8,2)$ $(8,3)$, $(8,4)$, $(7,0)$, $(7,1)$, $(7,2)$, $(7,3)$, $(6,0)$ or $(6,1)$, we have $k^2\cdot N> 40000$. In this case, the database LMFDB is insufficient for our needs. So we follow the appendix in \cite{yun2015galois} to compute the space of cuspidal new modular symbols over the finite field $\bb{F}_{p}$. We find that for some primes $p$, the numbers $a_f(p)$ are not roots of the characteristic polynomials of the Hecke operators $T_p$, as shown in the table in \cref{app::m_4^4(p)}. In the remaining possible cases, we find two remaining modular forms in the database of weight $6$ with the prescribed Fourier coefficients. By considering the level, there is only one left with the label $10.6.a.a$. in LMFDB because the other one is of level $400$. 	
	\end{proof}
\begin{rek}\label{rek::hodge-m44}
	We deduced from the proof above that the nonzero Hodge numbers of the de Rham realization of $\mathrm{M}_4^4$ are $h^{4,9} = h^{9,4} = 1$. Although the Hodge numbers weren't computed directly in \cite{Qin23Hodge}, they can still be calculated by following an argument similar to that of $\mathrm{M}_3^{3k}$.
\end{rek}

\subsubsection{\texorpdfstring{$\Sym^3\Kl_5$}{Sym3Kl5}}
The motive $\mathrm{M}_5^3$ is defined over $\mathbb{Q}$, pure of weight $13$, and equipped with an anti-symmetric perfect pairing. It has dimension $2$. According to \cite[Thm.\,1.2]{Qin23Hodge}, the Hodge numbers $h^{p,13-p}$ of its de Rham realization are $1$ if $p=5$ or $8$, and $0$ in other cases. We aim to show that the compatible family of Galois representations $\{(\mathrm{M}_5^3)_\ell (8)\}$ is modular.

\begin{prop}\label{prop::modularity_sym3Kl5}
	There exists a (unique) modular form $f$ in $S_{4}(\Gamma_0(33))$, such that for each prime $p\not\in\{3,11\}$, the Fourier coefficient $a_f(p)$ satisfies
		\begin{equation}\label{eq::fourier-coeff-sym3kl5}
		    a_f(p)=-\frac{1}{p^5}(m_5^3(p)+1+p^2+p^3+p^4+p^6),
		\end{equation}
	where $m_5^3(p)$ is the symmetric power moment of $\Sym^5\Kl_3$. In particular, the label of the corresponding modular form is $33.4.a.b$ in the database LMFDB.
\end{prop}
\begin{proof}
	The representation $(\rr{M}_5^2)_\ell$ is unramified at $p$ if $p\not\in \{3,5,11,\ell\}$ by \cref{thm::unramified} and \eqref{eq:number-singular-points}. According to \cref{cor:swan-conductor}, the conductor of $\{(\mathrm{M}_5^3)_{\ell}(8)\}$ is of the form $3^s\cdot 5^t\cdot 11^e$ for some $0\leq s,e\leq 2$ and $0\leq t$.
	
	\begin{lem}
		If $5\neq \ell$, the representation  $(\mathrm{M}_5^3)_{\ell}$ is unramified at $ 5$. 
	\end{lem}
	\begin{proof}
		At $p=5$, the Swan conductor of $\Sym^3\Kl_5$ is at most $7$. Given that the monodromy group of $\Kl_5$ is $\mathrm{SL_5}$ and the symmetric power of standard representation of $\mathrm{SL}_5$ remains irreducible, the $0$-th cohomology $\mathrm{H}^0_{\et}(\bb{G}_{m,\bfp},\Sym^3\Kl_5)$ vanishes. By the exact sequence \eqref{eq::long-exact-sequence} and the Grothendieck--Ogg--Shafarevich formula, we obtain that 
			$$\dim \mathrm{H}^1_{\et,\mathrm{mid}}\bigl(\bb{G}_{m,\bfp},\Sym^3\Kl_5\bigr)=\sw(\Sym^3\Kl_5)-5-\dim (\Sym^3\Kl_5)^{I_\infty}.$$
		Consequently, we have $5\leq \sw(\Sym^3\Kl_5)\leq 7$. According to the numerical results in \cref{app::m_5^3(p)}, the trace of $\mathrm{H}^1_{\et,\mathrm{mid}}\bigl(\bb{G}_{m,\bfp},\Sym^3\Kl_5\bigr)$ at   $p=5$ is given by
			$$-(m_5^3(p)+1+p^2+p^3+p^4+p^6+\tr(\frob_p\mid (\Sym^3\Kl_5)^{I_{\bar \infty}}))=-4\cdot 5^5-\tr(\frob_p\mid (\Sym^3\Kl_5)^{I_{\bar \infty}}).$$
	Now we proceed by examining each possible value of $\sw(\Sym^3\Kl_5)$ as follows.
	\begin{itemize}
		\item If $\sw(\Sym^3\Kl_5)=7$, the sheaf $\Sym^3\Kl_5$ only has one slope (=$1/5$) at $\infty$. We deduce that the dimension of $(\Sym^3\Kl_5)^{I_{\bar \infty}}$ is $0$. Thus, the dimension of the middle $\ell$-adic cohomology of $\Sym^3\Kl_5$ is $2$. By Remark~\ref{rek::unramified-p=3}, the representation $(\mathrm{M}_5^3)_\ell$ is unramified at $5$.
  		\item If $\sw(\Sym^3\Kl_5)=6$, then $\dim (\Sym^3\Kl_5)^{I_{\bar \infty}}\leq 1$. We consider two cases.  
  		\begin{enumerate}
	  		\item Assume that $\dim (\Sym^3\Kl_5)^{I_{\bar \infty}}= 1$, then the middle $\ell$-adic cohomology vanishes. The trace of Frobenius on $\mathrm{H}^1_{\et,\mathrm{mid}}\bigl(\bb{G}_{m,\bfp},\Sym^3\Kl_5\bigr)$ at  $p=5$ is $0$. So we have 
		  		$$0=-4\cdot 5^5-\tr(\frob_p\mid (\Sym^3\Kl_5)^{I_{\bar \infty}}).$$
	  		Since $(\Sym^3\Kl_5)^{I_{\bar \infty}}$ is pure of weight $12$ and of dimension $1$, this is impossible. 
	  		\item Assume that $\dim (\Sym^3\Kl_5)^{I_{\bar \infty}}= 0$, the middle $\ell$-adic cohomology is one-dimensional. The trace of $\mathrm{H}^1_{\et,\mathrm{mid}}\bigl(\bb{G}_{m,\bfp},\Sym^3\Kl_5\bigr)$ at prime $p=5$ is $-4\cdot 5^5.$ Since $\mathrm{H}^1_{\et,\mathrm{mid}}\bigl(\bb{G}_{m,\bfp},\Sym^3\Kl_5\bigr)$ is pure of weight $13$ and of dimension $1$, it leads to a contradiction.
  		\end{enumerate}
		\item If $\sw(\Sym^3\Kl_5)=5$, then $\dim (\Sym^3\Kl_5)^{I_{\bar \infty}}=0$. So the dimension of the middle $\ell$-adic cohomology of $\Sym^3\Kl_5$ is $0$. The trace of $\mathrm{H}^1_{\et,\mathrm{mid}}\bigl(\bb{G}_{m,\bfp},\Sym^3\Kl_5\bigr)$ at prime $p=5$ is at the same time $0$ and $-4\cdot 5^5$, which is absurd. 
	\end{itemize}
	In conclusion, we have $\sw(\Sym^3\Kl_5)=7$ and the representation $(\mathrm{M}_5^3)_\ell$ is unramified at $5$. 
	\end{proof}
	Consider the Galois representations $(\mathrm{M}_5^3)_{\ell}(8)$. The Hodge--Tate weights of $(\mathrm{M}_5^{3})_\ell(8)$ are $(0,3)$. Their determinants are the odd characters $\chi_{cyc}^{3}$ by \cref{prop::epsilon_factor} and the Chebotarev density theorem. The existence of the modular form is guaranteed by \cref{thm::modularity}. Consequently, we deduce \eqref{eq::fourier-coeff-sym3kl5} for any $p\not\in \{3,11,\ell\}$.

	Thus, the modular form we are seeking has weight $4$, and its level is of the form $N_f = 3^s \cdot 11^e \leq 1089$, with $0 \leq s, e \leq 2$. Furthermore, we compute the Fourier coefficients $a(2) = -1$ and $a(5) = -4$ in \cref{app::m_5^3(p)}. Given this information, there is only one remaining modular form, with weight $4$ and level $N = 33$, which is labeled as $33.4.a.b$ in the LMFDB database.
\end{proof}

\subsubsection{\texorpdfstring{$\Kl_3^{(2,1)}$}{Kl3V21}}
The motive $\mathrm{M}_3^{(2,1)}$ is defined over $\bb{Q}$, pure of weight $9$ and equipped with an anti-symmetric perfect pairing. It has dimension $2$, and the Hodge numbers $h^{p,9-p}$ of its de Rham realization is $1$ if $p=4$ or $5$ and is $0$ otherwise by \cite[Prop.\,5.20]{Qin23Hodge}. We want to show that the compatible family of Galois representations $\{\bigl(\mathrm{M}_2^{(2,1)}\bigr)_{\ell}(5)\}$ is modular.
\begin{prop}\label{prop::modularity_Kl3V21}
	There exists a (unique) modular form $f\in S_{2}(\Gamma_0(14))$, such that for each prime $p\not\in\{2,3,7\}$, the Fourier coefficient $a_p$ satisfies
	\begin{equation}\label{eq:fourier-coeff-Kl3v21}
	    a_f(p)=-\frac{1}{p^4}\Bigl(m_3^{(2,1)}(p)+p+p^2+p^3\Bigr),
	\end{equation}
	where $m_3^{(2,1)}(p)$ is the moment of the sheaf $\Kl_3^{(2,1)}$. In particular, this modular form is labeled $14.2.a.a$ in the database LMFDB.
\end{prop}
\begin{proof}
The sheaf $\Kl_{3}^{{(2,1)}}$ is tamely ramified at $0$ and wildly ramified at $\infty$. By Grothendieck--Ogg--Shafarevich formula \eqref{eq:GOS-formula}, the dimension of the $\ell$-adic cohomology is equal to the Swan conductor at $\infty$. Similar to \cref{cor::dimention_p=3}, since $\Kl_3^{(2,1)}\subset \Kl_3^{\otimes 4}$ and $\zeta_3$ acts on $(\Kl_3^{\otimes 4})_{\eta_\infty}$ freely, we can compute that the Swan conductor of $\Kl_3^{(2,1)}$ at $\infty$ is $5$ when $p=3$. By the exact sequence \eqref{eq::long-exact-sequence}, \cref{prop::more-example-at-0}, and~\cref{prop::more-example-at-infty}, we have
	$$\dim \mathrm{H}^1_{\et,\mathrm{mid}}\bigl(\bb{G}_{m,\bfp},\Kl_3^{(2,1)}\bigr) =	
	\begin{cases}
		2 & p\neq 2,7\\
		1 & p=2,7
	\end{cases}$$
and
	$$\tr\bigl(\frob_p,\bigl(\mathrm{M}^{(2,1)}_3\bigr)_\ell(4)\bigr)=-p^{-4}(m_3^{(2,1)}(p)+p+p^2+p^3).$$
By \cref{rek::unramified-p=3} and \cref{cor:swan-conductor}, the representation $\bigl( \mathrm{M}_3^{(2,1)}\bigr)_\ell$ is unramified at  $p\not\in \{2, 7,\ell\}$ and the conductor of the compatible family $\{\bigl( \mathrm{M}_3^{(2,1)}\bigr)_\ell\}_\ell$ is $14$.

Using \cref{prop::epsilon_factor} and the Chebotarev density theorem, the determinant of $\bigl(\mathrm{M}_3^{(2,1)}\bigr)_\ell(5)$ is $\chi_{cyc}\inv$, which is odd. Then Theorem~\ref{thm::modularity} shows the existence of the modular form and we deduce \eqref{eq:fourier-coeff-Kl3v21} for any $p\neq 2,7,\ell$.

At last, by computations of Fourier coefficients $a_f(p)$ in \cref{app::m3v21m3v22} for $p\leq 23$, we can determine the modular form in the database LMFDB.
\end{proof}

\subsubsection{\texorpdfstring{$\Kl_3^{(2,2)}$}{Kl3V22}}
The  motive $\mathrm{M}_3^{(2,2)}$ is defined over $\bb{Q}$, pure of weight $13$ and equipped with an anti-symmetric perfect pairing in Proposition~\ref{prop::perfect-pairing}.  
\begin{prop}\label{prop::modularity_Kl3V22}
	There exists a (unique) modular form $f=q+\sum_{n\geq 2}^\infty a_nq^n \in S_{4}(\Gamma_0(6))$, such that for each prime $p\not\in\{2,3\}$, the Fourier coefficient $a_f(p)$ satisfies
		\begin{equation*}
		    a_f(p)=-\frac{1}{p^5}\Bigl(m_3^{(2,2)}(p)+p^2 + p^3 + 2 p^4 + 2 p^6\Bigr),
		\end{equation*}
	where $m_3^{(2,2)}(p)$ is the moment of the sheaf $\Kl_3^{(2,2)}$. In particular, this modular form is labeled $6.4.a.a$ in the database LMFDB, the same as the modular form corresponding to $\Sym^6\Kl_2$.
\end{prop}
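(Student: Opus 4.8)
The strategy mirrors that of Propositions~\ref{prop::modularity_sym4Kl3}--\ref{prop::modularity_Kl3V21}: first establish that $\{(\mathrm{M}_3^{(2,2)})_\ell(5)\}$ is a compatible system of $2$-dimensional, odd, absolutely irreducible Galois representations of $\gal(\bq/\bb{Q})$ with regular Hodge--Tate weights, then invoke Theorem~\ref{thm::modularity} to produce a newform $f\in S_4(\Gamma_0(N),\varepsilon)$ with $\rho_{f,\ell}\simeq(\mathrm{M}_3^{(2,2)})_\ell(5)$, and finally pin down $f$ from its level, weight, nebentypus and a handful of Fourier coefficients using LMFDB. Here $n|\lambda|=12$ is even, so Proposition~\ref{prop::epsilon_factor} and Chebotarev's density theorem force $\det\bigl((\mathrm{M}_3^{(2,2)})_\ell(5)\bigr)=\chi_{cyc}^{-3}$, which has value $-1$ at complex conjugation; this is oddness. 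Absolute irreducibility follows as in the $\Kl_3^{(2,1)}$ case: $\mathrm{M}_3^{(2,2)}$ is pure of odd weight $13$, and any proper $\gal(\bq/\bb{Q})$-subrepresentation, even over $\bql$, would be a sum of characters, each pure of even weight, which is impossible. For regularity, one runs the argument behind Theorem~\ref{thm:hodge}(2) (as flagged in Remark~\ref{remark::hodge-m3(2,2)}) to see that the only nonzero Hodge numbers of $\mathrm{M}_{3,\dR}^{(2,2)}$ are $h^{5,8}=h^{8,5}=1$ (in particular $\dim\mathrm{M}_3^{(2,2)}=2$); after the twist by $5$ the Hodge--Tate weights are $\{0,3\}$ with multiplicity $1$, which also fixes the weight of $f$ as $4$.

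Next comes the conductor $N$, treated over three ranges of primes. For $p\neq2,3$ the variety $\mathcal{K}'_{\bfp}$ attached to $g_3^{\boxplus 6}$ is smooth, so by Theorem~\ref{thm::unramified} the representation $(\mathrm{M}_3^{(2,2)})_\ell$ is unramified at $p$. For $p=2$, Corollary~\ref{cor:swan-conductor}(3)---which rests on Theorem~\ref{thm::ell-adic-galois-even} together with the explicit $I_{\bar 0}$- and $I_{\bar\infty}$-structure of $\Kl_3^{(2,2)}$ from Propositions~\ref{prop::more-example-at-0}(2) and~\ref{prop::more-example-at-infty}(2)---gives Artin conductor exponent $1$. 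The remaining prime $p=3=n+1$ is the crux, since neither the Picard--Lefschetz analysis behind Theorem~\ref{thm::unramified} nor Theorem~\ref{thm::ell-adic-galois-even} applies. I will handle it by running the $p=3$ arguments for $\Sym^k\Kl_3$ (Theorem~\ref{thm::Kl_3_monodromy_infty_p=3}, Propositions~\ref{cor::dimention_p=3} and~\ref{eq::local-inv-infinity-p=3}, Corollary~\ref{cor::dimension_mid_p=3}) with $\Sym^k$ replaced by $V_{2,2}$: since $\Kl_3^{(2,2)}$ is a subsheaf of $\Kl_3^{\otimes 6}$, the element $\omega I_3$ of the local monodromy group $G_{108}$ at $\infty$ acts trivially on $(\Kl_3^{(2,2)})_{\bar\eta_\infty}$, so $\mathrm{Sw}_\infty(\Kl_3^{(2,2)})$ and $\dim(\Kl_3^{(2,2)})_{\bar\eta_\infty}^{I_{\bar\infty}}$ are computed from the invariants of $D_1=\langle S,T\rangle$ on $V_{2,2}\subset\Sym^2\rr{Std}\otimes\Sym^2\rr{Std}^\vee$. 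Feeding these, together with the characteristic-independent $I_{\bar 0}$-invariants of Proposition~\ref{prop::more-example-at-0}(2) (cf.\ Remark~\ref{req:inv-0-ind-p}), into the long exact sequence \eqref{eq::long-exact-sequence} and the Grothendieck--Ogg--Shafarevich formula yields $\dim\rr{H}^1_{\et,\rr{mid}}(\bb{G}_{m,\bfp},\Kl_3^{(2,2)})$ at $p=3$. Running the diagram-chase of Remark~\ref{rek::unramified-p=3} around \eqref{eq::injection-isolated-sing} with this dimension then identifies $(\mathrm{M}_3^{(2,2)})_\ell^{I_3}$ with the middle cohomology and shows that the conductor exponent at $3$ is $1$ (so, unlike in the $\Kl_3^{(2,1)}$ case, the motive is genuinely ramified at $3$). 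Altogether $N=2\cdot3=6$.

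With modularity established the Euler factors are routine. Because $V_{2,2}$ is an irreducible nontrivial representation of $\SL_3$, one has $\Het{0}(\bb{G}_{m,\bfp},\Kl_3^{(2,2)})=\Hetc{2}(\bb{G}_{m,\bfp},\Kl_3^{(2,2)})=0$, so the long exact sequence \eqref{eq::long-exact-sequence} together with the Frobenius structure of the $I_{\bar 0}$- and $I_{\bar\infty}$-invariants in Propositions~\ref{prop::more-example-at-0}(2) and~\ref{prop::more-example-at-infty}(2) gives $\tr\bigl(\frob_p\mid(\mathrm{M}_3^{(2,2)})_\ell\bigr)$ in closed form for $p\neq2,3$; dividing by $\det\bigl((\mathrm{M}_3^{(2,2)})_\ell(5)\bigr)$ yields
	$$a_f(p)=-\frac{1}{p^5}\Bigl(m_3^{(2,2)}(p)+p^2 + p^3 + 2 p^4 + 2 p^6\Bigr),\qquad p\notin\{2,3\}.$$
To identify $f$, one now knows it has weight $4$, level $6$ and trivial nebentypus; computing $a_f(p)$ for a few small primes from the moments tabulated in Appendix~\ref{app::m3v21m3v22} and matching against LMFDB (complete in this range, since $4^2\cdot6\le40000$) leaves the unique newform labeled $6.4.a.a$. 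The coincidence of the moments of $\Sym^6\Kl_2$ and $\Kl_3^{(2,2)}$ for $p\nmid6$, recorded after Theorem~\ref{thm::modular form}, is then immediate, as $S_4^{\mathrm{new}}(\Gamma_0(6))$ is one-dimensional.

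The main obstacle is the prime $p=3$. Because $3\mid n+1$ the clean geometric inputs break down, and one must extract the local behaviour of $\Kl_3^{(2,2)}$ at $\infty$ directly from the explicit local monodromy group $G_{108}$ and its ramification filtration $D_0\rhd D_1\rhd\langle\omega I_3\rangle\rhd\{1\}$; the delicate points are the exact value of $\dim V_{2,2}^{\langle S,T\rangle}$ and the cokernel analysis in \eqref{eq::injection-isolated-sing} that forces the conductor exponent at $3$ to be $1$ rather than $0$ --- which is precisely what produces level $6$ here (and explains the match with $\Sym^6\Kl_2$) instead of a power of $2$.
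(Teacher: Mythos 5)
Your overall architecture (oddness via Proposition~\ref{prop::epsilon_factor}, modularity via Theorem~\ref{thm::modularity}, trace formula from \eqref{eq::long-exact-sequence} and Propositions~\ref{prop::more-example-at-0} and~\ref{prop::more-example-at-infty}, identification in LMFDB) matches the paper, but two of your key inputs are asserted rather than available, and the paper's proof is structured precisely to avoid needing them.

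First, you claim the Hodge numbers $h^{5,8}=h^{8,5}=1$ can be obtained a priori ``by running the argument behind Theorem~\ref{thm:hodge}(2)''. Remark~\ref{remark::hodge-m3(2,2)} states the opposite: that method fails for $\mathrm{M}_3^{(2,2)}$ because the nilpotent local monodromy of $\Kl_3^{(2,2)}$ at $0$ has two Jordan blocks of the same size $4$ (Proposition~\ref{prop::more-example-at-0}(2)), so the Hodge filtration cannot be read off. Regularity itself is free here (a $2$-dimensional pure motive of odd weight is automatically regular by Hodge symmetry), but the weight of $f$ is not: the paper must carry an unknown $h\in\{0,\dots,6\}$ with $h^{h,13-h}=h^{13-h,h}=1$, giving a candidate weight $14-2h$, and only recovers $h=5$ at the very end by elimination. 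Second, your determination of the conductor exponent at $p=3$ does not go through. The mechanism of Remark~\ref{rek::unramified-p=3} produces only the injection \eqref{eq::injection-isolated-sing}, whose image is a lower bound for the inertia invariants; it is built to prove \emph{unramifiedness} when dimensions match, not to compute $\dim(\mathrm{M}_3^{(2,2)})_\ell^{I_3}$ exactly or to exclude wild ramification at $3$ when they do not. Even granting your proposed computation of $\mathrm{Sw}_\infty(\Kl_3^{(2,2)})$ and the $I_{\bar\infty}$-invariants at $p=3$ from $G_{108}$ (which the paper carries out only for $\Kl_3^{(2,1)}$), you would obtain $\dim(\mathrm{M}_3^{(2,2)})_\ell^{I_3}\geq 1$ and no matching upper bound, hence no conductor. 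The paper instead invokes Serre's bound (Remark~\ref{rek:estimation-conductor}) to write $N=2\cdot 3^s$ with $0\leq s\leq 5$, and then eliminates all pairs $(s,h)$ except $(1,5)$ by computing Frobenius traces for $3<p\leq 13$ and testing them against characteristic polynomials of Hecke operators on spaces of cuspidal new modular symbols (for the large-level, large-weight cases outside LMFDB's range) and against LMFDB otherwise. Without that elimination step, your final search ``weight $4$, level $6$'' is not justified.
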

\begin{proof}
The sheaf $\Kl_{3}^{{(2,2)}}$ is tamely ramified at $0$ and wildly ramified at $\infty$. By \cref{prop::more-example-at-0}, \cref{prop::more-example-at-infty}, and the long exact sequence \eqref{eq::long-exact-sequence}, we obtain that 
	$$\dim \mathrm{H}^1_{\et,\mathrm{mid}}\bigl(\bb{G}_{m,\bfp},\Kl_3^{(2,2)}\bigr) =\begin{cases}
		2 & p\neq 2,3\\
		1 & p=2
	\end{cases}$$
and
	$$\tr\bigl(\frob_p,\bigl(\mathrm{M}^{(2,2)}_3\bigr)_\ell(5)\bigr)=-p^{-5}(m_3^{(2,2)}(p)+p^2 + p^3 + 2 p^4 + 2 p^6)$$ 
if $p\neq 2,3$.
By Remark~\ref{rek::unramified-p=3}, the set of bad primes $S$ is a subset of $\{2,3\}$, and $\dim\bigl( \mathrm{M}_3^{(2,2)}\bigr)_\ell=2$. According to \cref{thm::unramified} and \cref{cor:swan-conductor}, the Galois representation $\bigl( \mathrm{M}_3^{(2,2)}\bigr)_\ell$ is tamely ramified at $p=2$ and its Artin conductor at $p=2$ is $1$. As consequence of \cref{rek:estimation-conductor}, the conductor of $\{\bigl( \mathrm{M}_3^{(2,2)}\bigr)_\ell\}_\ell$ is of the form $N=2\cdot 3^s$ for some $0\leq s\leq 5$.  

By the Hodge symmetry, there exists an integer $h\in \{0,1,\ldots,6\}$ such that the Hodge numbers $h^{p,13-p}$ are $1$ if  $p=h$ or $13-h$, and are $0$ otherwise. Hence, the Hodge--Tate weights of $\bigl(\mathrm{M}_3^{(2,2)}\bigr)_\ell(13-h)$ are $(0,13-2h)$.

By Proposition~\ref{prop::epsilon_factor} and Chebotarev density theorem, we have $\det\bigl(\mathrm{M}_3^{(2,2)}\bigr)_\ell=\chi_{cyc}^{-13}$. Thus, the determinant of $\bigl(\mathrm{M}_3^{(2,2)}\bigr)_\ell(13-h)$ is $\chi_{cyc}^{13-2h}$, which is an odd character. Therefore, Theorem~\ref{thm::modularity} guarantees the existence of a modular form of weight $14-2h$ and of level $2\cdot 3^s$ such that $\bigl(\mathrm{M}_3^{(2,2)}\bigr)_\ell(13-h)\simeq \rho_{f,\ell}$. It follows that for any $p\not\in S\cup\{\ell\}$,
\begin{equation*}
	\begin{split}
		a_f(p)&=\tr(\frob_p\inv\mid((\mathrm{M}_3^{(2,2)})_\ell(13-h))
		=-\frac{1}{p^h}\bigl(m_3^{(2,2)}(p)+p^2 + p^3 + 2 p^4 + 2 p^6\bigr).
	\end{split}
\end{equation*}

To determine the modular form, we use a similar argument to that in \cref{prop::modularity-sym4kl4}. 
We test the combinations $0\leq h\leq 6$ and $0\leq s\leq 5$ one by one. If $(s,h)= (5,0), (5,1)$ or $(5,2)$, we compute the space of cuspidal new modular symbols over the finite field $\bb{F}_{p}$. We find that for some primes $p$, the numbers $a_f(p)$ are not roots of the characteristic polynomials of the Hecke operators $T_p$, as shown in the table in \cref{app::m3v21m3v22}. Therefore, $(s,h)\neq (5,0), (5,1)$ or $(5,2)$, and we proceed to search the modular form within LMFDB. The remaining modular form has weight $4$ and level $6$, corresponding to $(s,h)=(1,5)$ in this case. 
\end{proof}

\begin{rek}\label{rek::hodge-m322}
	The nonzero Hodge numbers of the de Rham realization of $ \mathrm{M}_3^{(2,2)}$ are $h^{5,8}=h^{8,5}=1$. We cannot calculate these using the methods for \cite[Thm.\,1.2]{Qin23Hodge}, as the nilpotent part of the local monodromy of the connection $\Kl_3^{(2,2)}$ at $0$ is not a direct sum of Jordan blocks of different sizes (there are two blocks of size $4$).
\end{rek}

\subsubsection{A conjecture}\label{sec::a-conjecture}

One interesting corollary of \cref{prop::modularity_Kl3V22} is that for $p\nmid 6$, the moments of the sheaves $\Sym^6\Kl_2$ and $\Kl_3^{(2,2)}$, as they both correspond to the modular form with label $6.4.a.a.$ As a direct consequence, we have the identity
\begin{equation}\label{eq:identical-moments}
    m_3^{(2,2)}(p)-p^3m_2^6(p)=-2p^6-2p^4-p^2.
\end{equation}
Moreover, we have isomorphisms of $\ell$-adic Galois representations $(\mathrm{M}_2^6)_\ell(-3)\simeq \bigl(\mathrm{M}_3^{(2,2)}\bigr)_\ell$, which leads us the following conjecture. 
\begin{conj}\label{conj:identical-motives}
	The two motives $\mathrm{M}_2^6(-3)$ and $\mathrm{M}_3^{(2,2)}$ are isomorphic.
\end{conj}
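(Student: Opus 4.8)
The plan is to promote the already-established isomorphism of $\ell$-adic realizations $(\mathrm{M}_2^6)_\ell(-3)\simeq (\mathrm{M}_3^{(2,2)})_\ell$, valid for every prime $\ell$, to an isomorphism in the category of Nori motives. First I would assemble the common structure of the two motives: both are pure of weight $13$, of rank $2$, with de Rham Hodge numbers $h^{5,8}=h^{8,5}=1$ (Remark~\ref{remark::hodge-m3(2,2)} together with the Hodge numbers of $\mathrm{M}_2^6$), and by Proposition~\ref{prop::modularity_Kl3V22} and the $k=6$ case of Example~\ref{exe:evans-conj} their $\ell$-adic realizations are each isomorphic to $\rho_{f,\ell}(-5)$ for the modular form $f$ labelled $6.4.a.a$ in LMFDB, which is non-CM; equivalently, each is conjecturally isomorphic to the Tate twist $\mathrm{M}_f(-5)$ of Scholl's motive $\mathrm{M}_f$ of $f$. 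Since $\rho_{f,\ell}$ is absolutely irreducible, each of $\mathrm{M}_2^6(-3)$ and $\mathrm{M}_3^{(2,2)}$ is a \emph{simple} object of the abelian category of Nori motives — a proper nonzero submotive would have a proper nonzero sub-$\ell$-adic realization. Consequently any nonzero morphism $\mathrm{M}_2^6(-3)\to \mathrm{M}_3^{(2,2)}$ is automatically an isomorphism, and it suffices to produce one such morphism.

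To produce it, I would pass to the internal Hom motive $N:=\underline{\Hom}\bigl(\mathrm{M}_2^6(-3),\mathrm{M}_3^{(2,2)}\bigr)$, a rank-$4$ Nori motive. Because $f$ is non-CM one has $\mathrm{End}_{\gal(\bq/\bb{Q})}(\rho_{f,\ell})=\bb{Q}_\ell$, so $N_\ell\simeq \bb{Q}_\ell\oplus \mathrm{ad}(\rho_{f,\ell})$ with the trivial summand spanned by the given $\ell$-adic isomorphism; symmetrically, the de Rham and Betti realizations of $N$ carry a one-dimensional space of Hodge classes (the comparison isomorphism coming from modularity, and its multiples). Thus the conjecture is equivalent to the statement that this Hodge class in $N$ is \emph{motivic}, i.e.\ that $\bb{Q}(0)\hookrightarrow N$ as Nori motives; a retraction of this inclusion is then the desired isomorphism. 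The cleanest way to verify it would be to establish that $\mathrm{M}_2^6$ and $\mathrm{M}_3^{(2,2)}$ both lie in the Tannakian subcategory of Nori motives generated by abelian varieties — equivalently, that they are, \emph{as motives and not merely $\ell$-adically}, submotives of $\mathrm{M}_f$ up to twist, hence cut out of the cohomology of a Kuga--Sato variety, which is of abelian type. Granting this, Deligne's theorem that Hodge cycles on abelian varieties are absolutely Hodge (equivalently, André's formalism of motivated cycles) shows that the realization functors are full on the subcategory containing $N$, so the Hodge class lifts to a morphism; one then checks compatibility with the known realizations and, as a consistency check, recovers the identity $m_3^{(2,2)}(p)=p^3 m_2^6(p)-2p^6-2p^4-p^2$.

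The main obstacle is precisely the abelian-type input. A priori $\mathrm{M}_2^6$ and $\mathrm{M}_3^{(2,2)}$ are only Tate twists of direct summands of $\Hetc{\bullet}$ of high-dimensional hypersurfaces in tori, with no evident origin in an abelian variety, and passing from ``the $\ell$-adic realization is $\rho_{f,\ell}(-5)$'' to ``the motive is $\mathrm{M}_f(-5)$'' is exactly the gap that potential automorphy and Serre's conjecture do not bridge by themselves — it is a motivic-cycles question, not an automorphy question. I see two conceivable ways around it, both of which I expect to be delicate. The first is to construct directly an algebraic correspondence between the two hypersurfaces $\mathcal{K}$ (for $\Sym^6\Kl_2$ and for $\Kl_3^{(2,2)}$) that realizes the isomorphism; a natural source would be the restriction of the representation $V_{(2,2)}$ of $\mathrm{SL}_3$ along a principal $\mathrm{SL}_2$, or a convolution-theoretic comparison in the spirit of Katz, but any such relation must be promoted from the level of $\ell$-adic sheaves to the level of varieties and their Chow motives. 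The second is to prove the relevant instance of the Tate conjecture for $\mathcal{K}_1\times\mathcal{K}_2$, which would make the Tate class in $N_\ell$ algebraic unconditionally; absent either, the statement remains conditional on the standard conjectures relating Hodge and Tate classes to algebraic cycles.
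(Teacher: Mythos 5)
The statement you are addressing is stated in the paper as a \emph{conjecture} (Section~\ref{sec::a-conjecture}); the paper offers no proof, only the observation that both motives have $\ell$-adic realizations isomorphic to a twist of $\rho_{f,\ell}$ for the form $6.4.a.a$, whence $(\mathrm{M}_2^6)_\ell(-3)\simeq(\mathrm{M}_3^{(2,2)})_\ell$ and the trace identity. Your proposal is therefore not being measured against an existing argument, and to your credit you do not claim to close the conjecture: you correctly diagnose that the entire difficulty is promoting an isomorphism of realizations to an isomorphism of Nori motives, and that simplicity of both objects reduces this to producing a single nonzero motivic morphism, i.e.\ to showing that the evident Tate/Hodge class in $N=\underline{\Hom}\bigl(\mathrm{M}_2^6(-3),\mathrm{M}_3^{(2,2)}\bigr)$ is motivic.

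If one tried to read your text as a proof, the step that fails is the clause ``granting this'': nothing in the paper (or in your proposal) establishes that $\mathrm{M}_2^6$ and $\mathrm{M}_3^{(2,2)}$ lie in the subcategory of motives of abelian type. They are constructed as weight-graded pieces of isotypic components of $\mathrm{H}^{\bullet}_c$ of hypersurfaces in high-dimensional tori, and modularity as proved here is a statement about the compatible system $\{\mathrm{M}_\ell\}$, not an identification with Scholl's motive $\mathrm{M}_f$ in any motivic category. Moreover, even granting abelian type, Deligne's theorem produces \emph{absolute Hodge} classes; to conclude fullness of the realization functor on Nori motives one needs the stronger motivated-cycle formalism of Andr\'e together with a comparison of the relevant Tannakian categories, which is an additional layer of input you gloss over. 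Your two suggested workarounds --- an explicit algebraic correspondence between the two hypersurfaces $\mathcal{K}$ (perhaps via restriction of $V_{(2,2)}$ along a principal $\mathrm{SL}_2$), or the Tate conjecture for $\mathcal{K}_1\times\mathcal{K}_2$ --- are exactly the kind of thing that would settle the question, and your closing sentence that the statement remains conditional is the honest and correct conclusion; it is also why the paper leaves it as a conjecture.
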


\begin{appendices}
	
\section{Computation of moments}\label{appx:computation}

In this article, we used several numerical results computed using the software Sagemath \cite{sagemath}. This appendix explains the algorithms and all codes can be found on \href{https://yichenqin.net}{my web page}. We fix an embedding $\iota\colon \bql \hookrightarrow \bb{C}$ and identify $\ell$-adic numbers with their images in $\bb{C}$ via $\iota$.

\subsection{Computations of \texorpdfstring{$m_3^k(p)$}{m3k(p)}, \texorpdfstring{$m_{3}^{(2,1)}(p)$}{m321(p)} and  \texorpdfstring{$m_3^{{(2,2)}}(p)$}{m322(p)} }

\subsubsection{\texorpdfstring{$m_3^k(p)$}{m3k(p)}}\label{app::m_3^4(p)}
    
    For a prime number $p$, after Deligne \cite[Somme.\,Trig.]{SGA41/2}, we know that for each $a\in \bb{F}_p\cros$, there exist $3$ algebraic numbers $\alpha_a,\beta_a$ and $\gamma_a$, of absolute value $p$, such that
            $s_1(a)=\alpha_a+\beta_a+\gamma_a=\Kl_3(a;p)$
    and
            $s_3(a)=\alpha_a\cdot \beta_a\cdot \gamma_a=p^3.$
    Then the degree two elementary symmetric polynomials are
            $$\begin{aligned}
                s_2(a):=\alpha_a\beta_a+\beta_a\gamma_a+\gamma_a\alpha_a
                &=p^3(\alpha_a\inv+\beta_a\inv+\gamma_a\inv)
                =p(\bar{\alpha}_a+\bar{\beta}_a+\bar{\gamma}_a)=p\cdot\overline{\Kl_3(a;p)}.
            \end{aligned}$$
    
    The $k$-th symmetric power moments of $\Kl_3$ are integers of the form
        $$m_3^k(p):=\sum_{a\in \bb{F}_p\cros}\sum_{i+j+k=k}\alpha_a^i\beta_a^j\gamma_a^k,$$
    which can be computed using the value of elementary symmetric polynomials. 
    For example, the $3$-rd, $4$-th and $6$-th symmetric power moments can be computed by
        $$m_3^3(p)=\sum_a (s_1(a)^3 - 2 s_1(a) s_2(a) + p^3),$$
        $$m_3^4(p)=\sum_a (s_1(a)^4 - 3 s_1(a)^2 s_2(a) + s_2(a)^2 + 2 p^3 s_1(a) ),$$
        $$m_3^6(p)=\sum_a (s_1(a)^6 - 5 s_1(a)^4 s_2 + 6 s_1(a)^2 s_2(a)^2 - s_2(a)^3 + 4 p^3s_1(a)^3  - 6 p^3 s_1(a) s_2(a) + p^6,$$
    respectively. 
  	Hence, we obtain from \eqref{eq::long-exact-sequence} that 
        $$a_3^4(p)=-\frac{1}{p^3}(m_3^4(p)+1+p^2+p^4)$$
    is the trace of the middle cohomology $\rr{H}^1_{\et,\rr{mid}}(\bb{G}_{m,\bfp},\Sym^4\Kl_3)$. We list some numerical results as follows.
        \begin{center}
            \begin{tabular}{@{}l cccccccc} \toprule
                &\multicolumn{2}{c}{Primes} \\ \cmidrule(r){2-3}
                & 3&5\\ \midrule
                $m_3^3(p)$&-10&~\\
                $a_3^4(p)$&-2&-12\\
                $m_3^6(p)$&-820&~\\ \bottomrule
            \end{tabular}
        \end{center}

\subsubsection{\texorpdfstring{$m_{3}^{(2,1)}(p)$}{m321(p)} and  \texorpdfstring{$m_3^{{(2,2)}}(p)$}{m322(p)}       }\label{app::m3v21m3v22}
   By \eqref{eq::sheaf-repr-sl3}, the moment of $\Kl_{\mathrm{SL}_3}^{V_{2,1}}$ is the difference of the moment of $\Sym^2\Kl_3\otimes \wedge^2\Kl_3$ and that of $\Kl_3(-3)$. Hence, we obtain
            $$m_3^{(2,1)}(p)=\sum_a (s_1(a)^2s_2(a) - s_2^2(a) -p^3 s_1(a)).$$
    Let $a_3^{(2,1)}(p)$ be the traces of the middle cohomology $\rr{H}^1_{\et,\rr{mid}}(\bb{G}_{m,\bfp},\Kl_3^{(2,1)})$, which can be computed by 
        $$a_3^{(2,1)}(p)=-p^{-4}(m_3^{{(2,1)}}(p)+p+p^2+p^3).$$

    As for $\Kl_3^{(2,2)}$, we conclude similarly from \eqref{eq::sheaf-repr-sl3} that the moment of $\Kl_{3}^{(2,2)}$ is
            $$m_3^{(2,2)}(p)=\sum_a\q{ (s_1(a)^2-s_2(a))(s_2(a)^2-p^3s_1(a)) - p^3s_1(a)\cdot s_2(a)}.$$
    Let $a_3^{(2,2)}(p)$ be the traces of the middle cohomology $\rr{H}^1_{\et,\rr{mid}}(\bb{G}_{m,\bfp},\Kl_3^{(2,2)})$. Then we obtain
            $$a_3^{(2,2)}(p)=-p^{-5}(m_3^{({2,2})}(p)+p^2 + p^3 + 2 p^4 + 2 p^6).$$
    Some numerical results are as follows.

        \begin{center}
            \begin{tabular}{@{}l cccccccc} \toprule
                &\multicolumn{7}{c}{Primes} \\ \cmidrule(r){2-8}
                & 5&7 & 11 & 13 & 17 & 19 & 23\\ \midrule
                $a_3^{(2,1)}$ &0& &0&-4&6&2&0\\
                $a_3^{(2,2)}$ &6&-16&12&38&-126&20&168\\ \bottomrule
            \end{tabular}
        \end{center}
	Moreover, we compute the space of cuspidal new modular symbols over some finite fields $\bb{F}_\ell$ and verify whether the prescribed traces are roots of the characteristic polynomials of $T_p$. Below are some numerical results.
\begin{center}
    \begin{tabular}{@{}l cccccccc} \toprule
        \multicolumn{1}{c}{Level $N$}
        &\multicolumn{1}{c}{weight $k$}
        &\multicolumn{1}{c}{Prime $p$} 
        &\multicolumn{1}{c}{Finite field $\bb{F}_\ell$}
        &\multicolumn{1}{c}{$T_p(a_f(p))$}\\ \midrule
        $2\cdot 3^5$ & $14$ & $5$ & $\bb{F}_{23}$ &1\\
        $2\cdot 3^5$ & $12$ & $5$ & $\bb{F}_{23}$ &-1\\
        $2\cdot 3^5$ & $10$ & $5$ & $\bb{F}_{13}$ &5\\ 
       \bottomrule
    \end{tabular}
\end{center}

 \subsection{Computation of \texorpdfstring{$m_4^3(p)$}{m43(p)} and \texorpdfstring{$m_4^4(p)$}{m44(p)}}\label{app::m_4(p)}
    \subsubsection{\texorpdfstring{$m_4^3(2)$}{m34(2)}}\label{app::m_4^3(2)}
    Here we compute the third symmetric power moment at $p=2$. Using Sagemath \cite{sagemath}, we know that $\Kl_4(1;2)=1$ and $\Kl_4(1;4)=11$. Let $\alpha_1,\ldots,\alpha_5$ be the eigenvalues of $\frob_2$ acting on $(\Kl_4)_{\bar {1}}$ and let $s_1,\ldots,s_4$ be the elementary symmetric polynomials on $\alpha_i$. By the definition of $\Kl_4$, we have
        \begin{equation*}
        \begin{split}
            s_1=\sum \alpha_i=-\Kl_4(1;2)=-1,
        \end{split}
        \end{equation*} 
        \begin{equation*}
        \begin{split}
            s_1^2-2s_2=\sum \alpha_i^2=-\Kl_4(1;4)=-11.
        \end{split}
        \end{equation*}
    Therefore, $s_1=-1$ and $s_2=6$. Moreover, since $\det\Kl_4=E(-6)$, we have $s_4=\prod \alpha_i=p^{6}.$ Noticing that $\alpha_i\cdot \bar{\alpha}=p^3$, we have $s_3=p^3\bar{s}_1-8.$
    Then, the moments can be computed by
        \begin{equation*}
        \begin{split}
            m_4^3(2)=\sum_{i,j,k}\alpha_i\alpha_j\alpha_k=s_1^3-2s_1s_2+s_3=3.
        \end{split}
        \end{equation*}
    It follows that 
        $$a_4^3(2)=-\frac{1}{p^4}(m_4^3(p)+1+p^2+p^3)=-1.$$
    
    \subsubsection{\texorpdfstring{$m_4^3(p)$ and $m_4^4(p)$}{m43(p) and m44(p)}}\label{app::m_4^4(p)}
     
    Let $\alpha_1(a),\ldots,\alpha_4(a)$ be the eigenvalues of $\frob_p$ acting on $(\Kl_4)_{\bar {a}}$ for $a\in \bb{F}_p\cros$ and by $s_1(a),\ldots,s_4(a)$ the elementary symmetric polynomials on $\alpha_i(a)$. By the definition of $\Kl_4$, we have
    \begin{equation*}
    \begin{split}
        s_1(a)=\sum \alpha_i(a)=-\Kl_4(a;p) \text{ and } s_1(a)^2-2s_2(a)=\sum \alpha_i(a)^2=-\Kl_4(a;p^2).
    \end{split}
    \end{equation*} 
Furthermore, since $\det\Kl_4=E(-6)$, we have $s_4(a)=\prod \alpha_i=p^{6}.$
Noticing that $\alpha_i(a)\cdot \bar{\alpha}_i(a)=p^3$, we have $ s_3(a)=p^3\overline{s_2(a)} .$
Then, the moments can be computed as
    \begin{equation*}
    \begin{split}
        m_4^4(p)
        &=\sum_a (s_1(a)^4 - 3 s_1(a)^2 s_2(a) + s_2(a)^2 + 2p^3 s_1(a) \overline{s_1(a)} - p^6).
    \end{split}
    \end{equation*}
At last, the traces of the middle cohomology $\rr{H}^1_{\et,\rr{mid}}(\bb{G}_{m,\bfp},\Sym^4\Kl_4)$ are
    $$a_4^4(p)=-\frac{1}{p^4}(m_4^4(p)+1+p^2+p^3+p^4+2p^6).$$
Some numerical results are listed below.

            \begin{center}
                \begin{tabular}{@{}l cccc} \toprule
                    &\multicolumn{3}{c}{Primes} \\ \cmidrule(r){2-4}
                    &2 & 3 &  7 \\ \midrule
                    $a_4^3(p)$ &-1 & & \\
                    $a_4^4(p)$ & &-26& -22\\ \bottomrule
                \end{tabular}
            \end{center}

Similar to the end of \cref{app::m3v21m3v22}, we list some numerical results when $N\cdot k^2\geq 40000$.
\begin{center}
    \begin{tabular}{@{}l cccccccc} \toprule
        \multicolumn{1}{c}{Level $N$}
        &\multicolumn{1}{c}{weight $k$}
        &\multicolumn{1}{c}{Prime $p$} 
        &\multicolumn{1}{c}{Finite field $\bb{F}_\ell$}
        &\multicolumn{1}{c}{$T_p(a_f(p))$}\\ \midrule
        $2^8\cdot 5$ & $14$ & $7$ & $\bb{F}_{11}$ &3\\
        $2^8\cdot 5$ & $12$ & $3$ & $\bb{F}_{13}$ &10\\
        $2^8\cdot 5$ & $10$ & $7$ & $\bb{F}_{11}$ &3\\ 
        $2^8\cdot 5$ & $8$ & $7$ & $\bb{F}_{11}$ &5\\
        $2^8\cdot 5$ & $6$ & $7$ & $\bb{F}_{11}$ &4\\
        $2^7\cdot 5$ & $14$ & $3$ & $\bb{F}_{17}$ &8\\
        $2^7\cdot 5$ & $12$ & $7$ & $\bb{F}_{17}$ & 8\\
        $2^7\cdot 5$ & $10$ & $3$ & $\bb{F}_{11}$ &5\\
        $2^7\cdot 5$ & $8$ & $7$ & $\bb{F}_{11}$ &3\\
        $2^6\cdot 5$ & $14$ & $3$ & $\bb{F}_{17}$ &3\\
        $2^6\cdot 5$ & $12$ & $3$ & $\bb{F}_{29}$ &2\\
        \bottomrule
    \end{tabular}
\end{center}

\subsection{Computation of \texorpdfstring{$m_5^3(p)$}{m53(p)}}\label{app::m_5^3(p)}
    Let $\alpha_1(a),\ldots,\alpha_5(a)$ be the eigenvalues of $\frob_p$ acting on $(\Kl_5)_{\bar {a}}$ for $a\in \bb{F}_p\cros$ and by $s_1(a),\ldots,s_5(a)$ the elementary symmetric polynomials on $\alpha_i(a)$. By the definition of $\Kl_5$, we have
            \begin{equation*}
            \begin{split}
                s_1(a)=\sum \alpha_i(a)=\Kl_5(a;p) \text{ and } s_1(a)^2-2s_2(a)=\sum \alpha_i(a)^2=\Kl_5(a;p^2).
            \end{split}
            \end{equation*} 
    Furthermore, since $\det\Kl_5=E(-10)$, we have $ s_5(a)=\prod \alpha_i=p^{10}.$ Because $\alpha_i(a)\cdot \bar{\alpha}_i(a)=p^4$, we have $s_3(a)=p^2\overline{s_2(a)}$ and $s_4=p^6\overline{s_1(a)}.$
    Then, the moments can be calculated as
            \begin{equation*}
            \begin{split}
                m_5^3(p)=\sum_{a\in \bb{F}_p\cros}\sum_{i\leq j\leq k}\alpha_i(a)\alpha_j(a)\alpha_k(a)=\sum_a s_1(a)^3-2s_1(a)s_2(a)+3s_3(a).
            \end{split}
            \end{equation*}
    At last, the traces of middle cohomology  $\rr{H}^1_{\et,\rr{mid}}(\bb{G}_{m,\bfp},\Sym^3\Kl_5)$ are 
            $$a_5^3(p)=-\frac{1}{p^5}(m_5^3(p)+1+p^2+p^3+p^4+p^6).$$
    The values of moments and Frobenius traces at $p=2,5$ are listed below.
            \begin{center}
                \begin{tabular}{@{}l ccc} \toprule
                    &\multicolumn{2}{c}{Primes} \\ \cmidrule(r){2-3}
                    & 2 & 5\\ \midrule
                    $m_5^3(p)$  &   -61 &   3901\\
                    $a_5^3(p)$  &   -1  &   -4\\ \bottomrule
                \end{tabular}
            \end{center}

\end{appendices}

\bibliography{l-function}
\bibliographystyle{abbrv}

\end{document}